\def\Z{\mathbb{Z}}
\def\R{\mathbb{R}}
\def\N{\mathbb{N}}
\def\C{\mathbb{C}}
\newcommand{\mS}{\mathbb{S}}
\newcommand{\mC}{\mathbb{C}}
\newcommand{\mR}{\mathbb{R}}
\newcommand{\mZ}{\mathbb{Z}}
\newcommand{\ph}{\varphi}
\newcommand{\bm}{\begin{pmatrix}}
\newcommand{\ema}{\end{pmatrix}}
\newcommand{\bsm}{\left(\begin{smallmatrix}}
\newcommand{\esm}{\end{smallmatrix}\right)}
\newcommand{\Hom}{\operatorname{Hom}}
\newcommand{\Id}{\operatorname{Id}}
\newcommand{\id}{\operatorname{id}}
\newcommand{\Tr}{\operatorname{Tr}}
\newcommand{\Sldeux}{\operatorname{SL}_2}
\newcommand{\SU}{\operatorname{SU}}
\newcommand{\SO}{\operatorname{SO}}
\newcommand{\sudeux}{\mathfrak{su}_2}
\newcommand{\al}{\alpha}
\newcommand{\g}{\gamma}
\newcommand{\de}{\delta}
\newcommand{\la}{\lambda}
\newcommand{\benu}{\begin{enumerate}}
\newcommand{\eenu}{\end{enumerate}}
\newcommand{\T}{\mathbb{T}}
\newcommand{\su}{\operatorname{\mathfrak{su}}}
\tikzset{node distance=2cm, auto}
\newtheorem{theorem}{Theorem}[section]
\newtheorem{corollary}[theorem]{Corollary}
\newtheorem{lemma}[theorem]{Lemma}
\newtheorem{proposition}[theorem]{Proposition}
\newtheorem*{claim}{Claim}
\theoremstyle{definition}
\newtheorem{definition}[theorem]{Definition}
\newtheorem{example}[theorem]{Example}
\newtheorem{remark}[theorem]{Remark}
\DeclareMathOperator{\tr}{tr}
\begin{document}
\title{A multivariable Casson-Lin type invariant}
\begin{abstract}
We introduce a multivariable Casson-Lin type invariant for links in $S^3$. This invariant is defined as a signed count of irreducible $\SU(2)$ representations of the link group with fixed meridional traces. For 2-component links with linking number one, the invariant is shown to be a sum of multivariable signatures. We also obtain some results concerning deformations of $\SU(2)$ representations of link groups.
\end{abstract}
\author{Leo Benard}
\address{Institut Math\'ematique de Jussieu-Paris Rive Gauche, Sorbonne Universit\'e, France}
\email{leo.benard@imj-prg.fr}
\author{Anthony Conway}
\address{Department of Mathematics, Durham University, United Kingdom}
\email{anthonyyconway@gmail.com}
\subjclass[2000]{57M25} 
\maketitle


\section{Introduction}

The Casson-Lin invariant $h(K)$ of a knot $K$ was originally defined by Lin as a signed count of conjugacy classes of trace-free irreducible $\SU(2)$ representations of $\pi_1(S^3 \setminus K)$~\cite{Lin}. Lin proved furthermore that $h(K)$ equals half the (Murasugi) signature of $K$. Allowing for more general trace conditions, this result was later generalized by Herald~\cite{Herald} and Heusener-Kroll~\cite{HeusenerKroll} who defined an invariant $h_K(\alpha)$ for those $\alpha \in (0,\pi)$ which satisfy~$\Delta_K(e^{2i\alpha}) \neq 0$. They also related their invariant to the Levine-Tristram signature $\sigma_K$ by showing that
$$h_K(\alpha) =\frac{1}{2} \sigma_K(e^{2i \alpha}).$$
Similar invariants have been constructed for links: Harper-Saveliev~\cite{HarperSaveliev} defined a signed count of a certain type of \emph{projective} $\SU(2)$ representations for $2$-component links $L=K_1 \cup K_2$ and showed that their invariant coincides with the linking number $\pm \ell k(K_1,K_2)$. The sign was later determined by Boden-Herald~\cite{BodenHerald} and the construction was extended to $n$-component links by Boden-Harper~\cite{BodenHarper}. We also refer to~\cite{BodenHarper} for a construction involving the group $\SU(n)$ and to~\cite{HarperSavelievFloer, CollinSteer} for further gauge theoretic developments.

\medbreak

The first aim of this article is to produce a multivariable generalization of the Casson-Lin invariant. Namely, building on the approach of Lin~\cite{Lin} and Heusener-Kroll~\cite{HeusenerKroll}, we consider conjugacy classes of $\SU(2)$ representations with fixed meridional traces. More precisely, given an $n$-component ordered link $L$ and an $n$-tuple $(\alpha_1,\ldots,\alpha_n) \in (0,\pi)^n$ such that the multivariable Alexander polynomial $\Delta_L(t_1,\ldots,t_n)$ does not vanish on $\lbrace (e^{\varepsilon_1 2i\alpha_1},\ldots,e^{\varepsilon_n 2i\alpha_n}) \ \vert \ \varepsilon_j=\pm 1 \rbrace$, we define a \emph{multivariable Casson-Lin invariant} 
$$h_L(\alpha_1,\ldots,\alpha_n).$$
Generalizing the aforementioned authors' approach, this invariant is defined using (colored) braids. The invariance of $h_L$ is then proved by showing independence under the two colored Markov moves (Propositions~\ref{prop:InvarianceMarkov1} and~\ref{prop:InvarianceMarkov2}). By construction, $h_L$ recovers the invariant of Heusener-Kroll~\cite{HeusenerKroll} if $L$ is a knot, while Proposition~\ref{prop:locallyconst} shows that $h_L$ is locally constant. Note that since we are counting $\SU(2)$ representations and not projective $\SU(2)$ representations, our invariant $h_L$ is distinct from the link invariant constructed by Harper-Saveliev~\cite{HarperSaveliev} and Boden-Harper~\cite{BodenHarper}. The following paragraphs shall make this difference more concrete.
\medbreak

In~\cite{Lin, HeusenerKroll}, the invariants under consideration were related to signature invariants by studying the effect of crossing changes and computing the invariant on a ``base case", namely the unknot. In our setting, this task is complicated by the following fact: if $L$ and $L'$ are related by a crossing change and $\Delta_L$ is not identically zero, it might well be that $\Delta_{L'} \equiv 0$, and in this case, $h_{L'}$ is not defined. Furthermore, since the Alexander polynomial of the $n$-component unlink is trivial (for $n \geq 2$), there is no obvious ``base case".

While we have not managed to circumvent this issue in general, we nevertheless provide a formula relating $h_L(\alpha_1,\ldots,\alpha_n)$ to $h_{L'}(\alpha_1,\ldots,\alpha_n)$ whenever $L$ and $L'$ differ by a crossing change \emph{within} a component of $L$. In particular, for $2$-component links with linking number~$1$, we are then able to relate $h_L$ to the multivariable signature $\sigma_L$ of Cimasoni-Florens~\cite{CimasoniFlorens}: for this class of links, the Hopf link can be used as the ``base case". 
Here, the multivariable signature is a function on $\mathbb{T}^n_*:=(S^1 \setminus \lbrace 1 \rbrace )^n$ that generalizes the classical Levine-Tristram signature.

\begin{theorem}
\label{thm:Intro}
Let $L=K_1 \cup K_2$ be a $2$-component ordered link with $\ell k(K_1,K_2)=~1$, let~$(\alpha_1, \alpha_2) \in (0, \pi)^2$, and set $(\omega_1,\omega_2)=(e^{2i\alpha_1},e^{2i\alpha_2})$. 
If the multivariable Alexander polynomial satisfies $\Delta_L(\omega_1^{\varepsilon_1}, \omega_2^{\varepsilon_2}) \neq 0$ for all~$(\varepsilon_1,\varepsilon_2) \in \lbrace \pm 1 \rbrace^2$, then the following equality holds:
\begin{equation}
\label{eq:WantedTheorem} h_L(\alpha_1,\alpha_2)=\frac{-1}{2}(\sigma_L(\omega_1,\omega_2)+\sigma_L(\omega_1,\omega_2^{-1})).
\end{equation}
\end{theorem}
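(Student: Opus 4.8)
The plan is to prove that the quantity
$$D(L):=h_L(\alpha_1,\alpha_2)+\tfrac{1}{2}\big(\sigma_L(\omega_1,\omega_2)+\sigma_L(\omega_1,\omega_2^{-1})\big)$$
vanishes for every admissible $2$-component link $L=K_1\cup K_2$ with $\ell k(K_1,K_2)=1$. Two facts drive the argument: a crossing-change formula relating $h_L$ to $h_{L'}$ when $L$ and $L'$ differ by a crossing change within a single component, and the vanishing of $D$ on a base link. The topological input is Milnor's classification of link homotopy: since link homotopy is generated by within-component crossing changes together with isotopies, and since the linking number is a complete link-homotopy invariant of $2$-component links, every $L$ as above is connected to the positive Hopf link $H$ through a finite sequence of within-component crossing changes, all preserving $\ell k=1$. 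It therefore suffices to check that $D$ is unchanged under such a crossing change and that $D(H)=0$.

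First I would match the two sides under a within-component crossing change, say inside $K_1$. On the representation-theoretic side, the crossing-change formula announced in the introduction expresses $h_L-h_{L'}$ as a signed count recording how the eigenvalues of the meridional holonomies cross the fixed trace conditions as the crossing is changed; this count is governed by the roots of $\Delta_L$ and $\Delta_{L'}$ along $(\omega_1^{\varepsilon_1},\omega_2^{\varepsilon_2})$. On the signature side, a within-$K_1$ crossing change modifies $L$ by a local full twist supported in one component, and the induced jump of $\sigma_L(\omega_1,\omega_2)$ is controlled by the same Alexander data, in the spirit of the Murasugi-type jump formula for the Levine-Tristram signature. The symmetrization $\sigma_L(\omega_1,\omega_2)+\sigma_L(\omega_1,\omega_2^{-1})$ is exactly what pairs the $\varepsilon_2=+1$ and $\varepsilon_2=-1$ branches of the trace condition, and the factor $\tfrac{1}{2}$ reflects that a signature jump of $\pm2$ corresponds to a change of $\pm1$ in the representation count, as in Heusener-Kroll. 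Carrying this out yields $D(L)=D(L')$; the local constancy of $h_L$ (Proposition~\ref{prop:locallyconst}), together with the local constancy of the signature away from its jump loci, lets me restrict attention to the finitely many crossings of the trace condition.

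Next I would evaluate both terms on the positive Hopf link $H$. Its complement is a thickened torus, so $\pi_1(S^3\setminus H)\cong\Z^2$ is abelian; every $\SU(2)$ representation is then reducible, no matter which meridional traces are imposed, whence $h_H(\alpha_1,\alpha_2)=0$. The multivariable signature of $H$ can be read off from the standard C-complex built from two clasped disks: the first homology of this C-complex vanishes, so the generalized Seifert form has rank zero and $\sigma_H\equiv0$. Hence $D(H)=0$, and combined with the crossing-change invariance this gives $D(L)=0$, which is precisely~\eqref{eq:WantedTheorem}.

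The main obstacle I anticipate lies in the second step: matching, sign for sign, the signed count coming from the $\SU(2)$ representation variety (the Lagrangian-intersection picture underlying $h_L$) against the jump of the symmetrized Cimasoni-Florens signature under a within-component crossing change. One must verify that the relevant eigenvalue curves meet the trace condition transversally and in the direction dictated by $\Delta_L$, and that the nullity of $\sigma_L$ contributes no spurious term; pinning down these signs, rather than the reduction to $H$ or the computation $h_H=0$, is where the genuine difficulty resides. A secondary technical point is ensuring that the admissibility hypothesis $\Delta_L(\omega_1^{\varepsilon_1},\omega_2^{\varepsilon_2})\neq0$ can be maintained, or suitably bypassed via the crossing-change formula, all along the chosen sequence of crossing changes.
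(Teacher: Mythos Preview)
Your outline matches the paper's proof almost exactly: the Hopf link as base case (via Milnor's link-homotopy classification of $2$-component links), verification that both $h_H$ and $\sigma_H$ vanish there, and an induction that matches the crossing-change behavior of $h_L$ against that of the symmetrized signature. The paper resolves the two difficulties you flag by expressing both the jump of $h_L$ (Proposition~\ref{prop:CassonLinCrossingChange}) and the jump of $\sigma_L$ (Lemma~\ref{lem:SignatureCrossingChange}) through the common quantity $\operatorname{sign}\big(\nabla_{L_+}(\omega^{1/2})\nabla_L(\omega^{1/2})\big)$, and handles admissibility along the sequence by first proving the formula for $\alpha$ with transcendental $\arg(\omega_j)$---where the Torres formula together with $\ell k=1$ forces $|\Delta_L(1,1)|=1$, so the potential never vanishes at such $\omega$---and then extending to all admissible $\alpha$ by the local constancy of both sides.
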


First, we observe that despite appearances, the formula displayed in~\eqref{eq:WantedTheorem} is in fact symmetric.
\begin{remark}
\label{rem:FormulaIsSymmetric}
Since the multivariable signature is known to satisfy $\sigma_L(\omega_1^{-1},\omega_2^{-1})=\sigma_L(\omega_1,\omega_2)$, the conclusion of Theorem~\ref{thm:Intro} can be rewritten as
\begin{equation}
\label{eq:WantedTheoremSymmetric} 
h_L(\alpha_1,\alpha_2)=\frac{-1}{4}(\sigma_L(\omega_1,\omega_2)+\sigma_L(\omega_1,\omega_2^{-1})+\sigma_L(\omega_1^{-1},\omega_2)+\sigma_L(\omega_1^{-1},\omega_2^{-1})).
\end{equation}
\end{remark}

In fact, throughout this article, we work with colored links: an $n$-component oriented link~$L$ is \emph{$\mu$-colored} if its components are partitioned into sublinks $L_1 \cup \ldots \cup L_\mu$. For instance, taking $\mu=n$, a $\mu$-colored link is an ordered link, while a $1$-colored link is simply an oriented link. In particular, in this latter case, our construction defines a one variable Casson-Lin invariant which reduces to Heusener-Kroll's invariant if $L$ is a knot. 

\begin{remark}
\label{rem:1Colored}
Theorem~\ref{thm:Intro} (as stated in~\eqref{eq:WantedTheoremSymmetric}) does not hold for 1-colored links with more than one component. 
Reformulating, if $\alpha \in (0,\pi)$ and $L$ is an oriented link with at least two components, then~$-\frac{1}{4}\left(\sigma_L(e^{2 i \alpha})+\sigma_L(e^{-2 i \alpha}) \right)=-\frac{1}{2}\sigma_L(e^{2i\alpha})$ need not equal $h_L(\alpha)$, even under the assumptions of the theorem
(for knots $h_L(\alpha)=-\frac{1}{2}\sigma_L(e^{2i\alpha})$ holds by Heusener and Kroll's work; the sign difference is discussed in Remark~\ref{rem:CrossingChangeSignature}).
Indeed, the equality is  false for the (one-colored) Hopf link:
regardless of the number of colors, $h_J$ vanishes for the Hopf link~$J$ (since~$\pi_1(S^3 \setminus J)$ is abelian), while the 1-variable signature of $J$ (i.e. the Levine-Tristram signature) is equal to~$1$ or $-1$ depending on the orientation.
\end{remark}

Even though we ignore whether the linking number hypothesis is necessary in Theorem~\ref{thm:Intro}, some comments can be made nonetheless.

\begin{remark}
\label{rem:Formula1Colored}
We claim that if Theorem~\ref{thm:Intro} is true for an arbitrary $2$-component ordered link $L=K_1 \cup K_2$, then the equality  $h_L(\frac{\pi}{2})=- \sigma_L(-1)-\ell k(K_1,K_2)$ holds for the underlying oriented link (which we also denote by $L$) whenever $\Delta_L(-1) \neq 0$.
In particular, if~$\ell k(K_1,K_2)=1$, then Theorem~\ref{thm:Intro} implies the following equality (which sheds further light on Remark~\ref{rem:1Colored}):
$$  h_L\left(\frac{\pi}{2}\right)=-\sigma_L(-1)-1.$$
This contrasts with the knot case, where Lin showed that $h_K(\frac{\pi}{2})=\frac{1}{2}\sigma_K(-1)$~\cite{Lin}.
The claim is established by noting that for an $n$-component ordered link $L=K_1 \cup \ldots \cup~K_n$, the multivariable Casson-Lin invariant satisfies $h_L(\alpha,\ldots,\alpha)=h_L(\alpha)$ (Remark~\ref{rem:Diagonal} below),
while the multivariable signature satisfies $\sigma_L(\omega,\ldots,\omega)=\sigma_L(\omega)+\sum_{i<j} \ell k(K_i,K_j)$~\cite[Proposition~2.5]{CimasoniFlorens}, and $\Delta_L(t,\ldots,t)(t-1)=\Delta_L(t)$.
\end{remark}

Summarizing, the multivariable Casson-Lin invariant is related to the multivariable signature for knots and 2-component ordered links with linking number $1$, but is \emph{a priori} a new invariant in general. Note that the resemblance between (abelian invariants of) 2-component links with linking number $1$ and knots was already observed and exploited in~\cite{FriedlPowellConcordanceHopf}.

\begin{remark}
\label{rem:NotHomotopy}
It should now be clear that our multivariable Casson-Lin invariant $h_L$ differs from the invariant of Harper-Saveliev~\cite{HarperSaveliev} and Boden-Harper~\cite{BodenHarper}. As an additional remark in this direction, it is interesting to note that this latter count of projective representations might be a link homotopy invariant~\cite[discussion following Conjecture 4.7]{BodenHarper}, while this seems unlikely for our~$h_L$: the statement is already incorrect for 2-component links with linking number $1$ since the multivariable signature is not a link homotopy invariant.
\end{remark}

The second aim of this paper is to provide some results on deformations of $\SU(2)$ representations of link groups. In other words, we study whether an abelian $\SU(2)$ representation of a link group is a limit point of irreducible representations. Before providing some history and stating our results, we introduce some notation. Given an $n$-component ordered link $L=K_1 \cup \ldots \cup K_n$ (whose exterior in $S^3$ is denoted by $M_L$) and $\omega=(\omega_1,\ldots,\omega_n) \in \T_*^n$, we consider the abelian representation  
$$  \rho_\omega \colon \pi_1(M_L) \to \SU(2), \ \ \ \ \rho_\omega(\gamma) = \bm \prod\limits_{i=1}^n \omega_i^{\ell k(\gamma,K_i)} & 0 \\ 0 & \prod\limits_{i=1}^n \omega_i^{-\ell k(\gamma,K_i)}\ema.$$
In the knot case (i.e. $n=1$), it is known since Burde~\cite{Burde} and de Rham~\cite{DR67} that if~$\rho_\omega$ is a limit of irreducible $\SU(2)$ representations, then $\Delta_K(\omega^2)=0$. Frohman and Klassen have shown that the converse holds if $\omega$ is a \emph{simple} root of $\Delta_K(t)$~\cite{FrohmanKlassen}. This result was generalized by Herald~\cite{Herald} and Heusener-Kroll~\cite{HeusenerKroll}: these authors used Casson-Lin type invariants to show that if $\omega$ is a root of $\Delta_K(t)$ and if the Levine-Tristram signature $\sigma_K$ changes value at~$\omega$, then~$\rho_\omega$ is a limit of irreducible representations. We refer to~\cite{BenAbdelghani} for other results in this direction and to~\cite{HeusenerPortiSuarez,HeusenerPorti} (and references therein) for deformations of $\operatorname{SL}_n(\C)$ representations.

In the case of links, these questions seemed to have received less attention. Our first result in this context is a multivariable generalization of the theorem of Burde and de Rham. While our results hold for colored links and also concern $\operatorname{SL}_2(\C)$ representations (Theorems~\ref{thm:AlexReducible} and~\ref{thm:Alexclosed}), we only state the following result on $\SU(2)$ representations, see Corollary~\ref{cor:AlexReducibleSU2}:
 
\begin{proposition}
\label{prop:MultivarBurdedeRhamIntro}
Let $L$ be an $n$-component link and let $\omega=(\omega_1,\ldots,\omega_n) \in \T_*^n$. If the abelian representation~$\rho_\omega$ is a limit of irreducible $\SU(2)$ representations, then $\Delta_L(\omega_1^2,\ldots,\omega_n^2)=0$.
\end{proposition}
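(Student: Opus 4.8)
Since $\SU(2) \subset \Sldeux(\C)$, a limit of irreducible $\SU(2)$ representations is in particular a limit of irreducible $\Sldeux(\C)$ representations, so I would first pass to the $\Sldeux(\C)$ setting and work with the Lie algebra $\sldeux(\C)$. Write $\chi \colon \pi_1(M_L) \to \C^*$ for the abelian character $\gamma \mapsto \prod_i \omega_i^{\ell k(\gamma,K_i)}$, so that $\rho_\omega(\gamma) = \operatorname{diag}(\chi(\gamma),\chi(\gamma)^{-1})$. The first step is to record the weight decomposition of the adjoint action: as a $\pi_1(M_L)$-module one has $\sldeux(\C) \cong \C_1 \oplus \C_{\chi^2} \oplus \C_{\chi^{-2}}$, where $\C_{\chi^{\pm 2}}$ are the lines of strictly triangular matrices, on which $\gamma$ acts by $\chi(\gamma)^{\pm 2}$; in particular $\chi^2$ sends the $i$-th meridian to $\omega_i^2$.

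The heart of the argument is to extract from the convergence a nonzero class in $H^1(\pi_1(M_L);\C_{\chi^2})$. Fix a Wirtinger presentation with meridional generators $x_1,\ldots,x_m$ and a sequence of irreducible representations $\rho_k \to \rho_\omega$, and write $\rho_k(x_j) = \bsm a_j^{(k)} & b_j^{(k)} \\ c_j^{(k)} & d_j^{(k)} \esm$. Since $\rho_k$ is irreducible its generators have no common eigenvector, so $N_k := \max_j |b_j^{(k)}|$ is positive, while $N_k \to 0$ because $\rho_k \to \rho_\omega$. Setting $\beta_j^{(k)} := b_j^{(k)}/N_k$ and passing to a subsequence, I obtain a nonzero vector $\beta$ with $\max_j |\beta_j| = 1$. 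Linearizing each Wirtinger relation in $N_k$ and letting $k \to \infty$ shows that $\beta$ lies in the kernel of the Alexander matrix of $L$ evaluated at $(t_1,\ldots,t_n) = (\omega_1^2,\ldots,\omega_n^2)$; equivalently, $\beta$ represents a class in $H^1(\pi_1(M_L);\C_{\chi^2})$.

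To conclude I must check two things. First, that $\beta$ is not a coboundary: the coboundaries form the line $\beta_j = (\omega_{c(j)}^2 - 1)z$ coming from infinitesimal conjugation, and these correspond precisely to deformations keeping $\rho_\omega$ abelian. Here I would use irreducibility of the $\rho_k$ in an essential way — for instance by first conjugating each $\rho_k$ by an element tending to the identity so as to normalize away the coboundary direction, and then arguing that a coboundary limit would force $\rho_k$ to be reducible to the relevant order. Granting this, $\dim_\C H^1(\pi_1(M_L);\C_{\chi^2}) \geq 1$; since (when $\chi^2$ is nontrivial) the coboundaries already account for one dimension of the kernel, the nullity of the $(m-1)\times m$ Alexander matrix at $(\omega_1^2,\ldots,\omega_n^2)$ is at least two, so its rank drops below $m-1$. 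Hence all its maximal minors vanish at $(\omega_1^2,\ldots,\omega_n^2)$, and so does their gcd $\Delta_L$.

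I expect the genuine obstacle to be the non-coboundary step together with the multivariable bookkeeping in the last paragraph. The subtlety is that the mere existence of a nonzero solution $\beta$ is automatic, since the coboundary line already lies in the kernel of the Alexander matrix; everything therefore hinges on showing that irreducibility produces a deformation strictly transverse to the conjugation orbit of the abelian locus, i.e.\ a genuine class in $H^1$. In the multivariable setting one must also be careful about the exact relationship between the order of the twisted homology and $\Delta_L$ (the roles of the higher Alexander ideals and of the factors $t_i - 1$), which is why it is cleanest to phrase the final implication in terms of the rank of the Fox Jacobian rather than through an abstract vanishing statement for twisted homology.
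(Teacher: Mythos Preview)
Your approach and the paper's diverge exactly at the step you flag. The paper does not try to extract a tangent direction from the sequence $\rho_k$; instead it argues by contrapositive through the $\Sldeux(\C)$ character variety. First (Theorem~\ref{thm:AlexReducible}), if $\Delta_L(\omega^2)\neq 0$ then $H^1(M_L;\C_{\omega^2})=0$, so every reducible $\Sldeux(\C)$ representation with the same character as $\rho_\omega$ is abelian and conjugate to $\rho_\omega$. Then (Theorem~\ref{thm:Alexclosed}) a fiber-dimension argument finishes: if irreducibles accumulated on $\rho_\omega$, its character would lie in an irreducible component of $X_{\Sldeux}(M_L)$ containing irreducible characters; over those the fiber of $t\colon R_{\Sldeux}\to X_{\Sldeux}$ has dimension $3$, and by upper semi-continuity of fiber dimension the fiber over $\chi_\omega$ would also have dimension at least $3$, contradicting the fact that this fiber is $\Sldeux(\C)$ modulo a positive-dimensional stabilizer. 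No cocycle is ever extracted from the $\rho_k$.

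Your direct route can be salvaged, but ``normalize away the coboundary direction'' must be carried out, not granted. A concrete fix: before rescaling, conjugate each $\rho_k$ so that $\rho_k(x_1)$ is diagonal (possible in $\SU(2)$, with conjugators tending to the identity). Then $\beta_1=0$; since the coboundary line is spanned by $j\mapsto(\omega_{c(j)}^2-1)z$, the condition $\omega_{c(1)}\neq\pm 1$ forces $z=0$, so the limiting $\beta$ is not a coboundary. (The edge case $\omega_j=-1$ needs separate care, and this is one place where the paper's global argument is cleaner.) Your final step, from $H^1\neq 0$ to $\Delta_L(\omega^2)=0$, is handled in the paper via a universal coefficient spectral sequence (Lemma~\ref{lem:AlexanderVanish}) rather than a Fox-Jacobian rank count; the two are equivalent here, and your caution about the multivariable bookkeeping is justified but not fatal.
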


Just as in the knot case, one might now wonder about the converse of Proposition~\ref{prop:MultivarBurdedeRhamIntro}. Our final result uses Theorem~\ref{thm:Intro} to provide a partial converse for 2-component links with linking number~$1$ (in the spirit of Herald's and Heusener-Kroll's result~\cite{Herald, HeusenerKroll} which involved the Levine-Tristram signature). To state our result, we use $V(\Delta_L) \subset \T^n_*$ to denote the variety described by the intersection of $ \T^n_*$ with the zero-locus of the multivariable Alexander polynomial of an $n$-component link $L$.

\begin{theorem}
\label{thm:DeformationIntro}
Let $L$ be a 2-component ordered link with linking number $1$.
Let $(\omega_1,\omega_2) \in~\T_*^2$ be such that $\Delta_L(\omega_1,\omega_2)=0$ and $\Delta_L(\omega_1,\omega_2^{-1}) \neq 0$. 
 Assume that for any open subset $U \subset~\T_*^2$ containing~$(\omega_1,\omega_2)$, the multivariable signature $\sigma_L$ is not constant on~$U \setminus (V(\Delta_L) \cap U)$.
 Then the abelian representation $\rho_{(\omega_1,\omega_2)}$  is a limit of irreducible representations.
\end{theorem}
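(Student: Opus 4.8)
The plan is to run the wall-crossing strategy of Herald and Heusener--Kroll \cite{Herald, HeusenerKroll} in the multivariable setting, using Theorem~\ref{thm:Intro} to convert the hypothesis on the signature into a jump of the Casson--Lin count, and then reading off the required irreducible representations from that jump. Fix $\alpha_1, \alpha_2 \in (0,\pi)$ with $\omega_j = e^{2i\alpha_j}$, so that the abelian representation under consideration sits at the colored trace parameters $(\alpha_1, \alpha_2)$. By Proposition~\ref{prop:locallyconst} the invariant $h_L$ is defined and locally constant off the preimage of $V(\Delta_L)$ near $(\alpha_1,\alpha_2)$, and on this complement Theorem~\ref{thm:Intro} gives $h_L(\alpha_1', \alpha_2') = -\tfrac12\big(\sigma_L(\omega_1', \omega_2') + \sigma_L(\omega_1', {\omega_2'}^{-1})\big)$ with $\omega_j' = e^{2i\alpha_j'}$. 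The goal is therefore reduced to showing that $h_L$ is \emph{not} locally constant through $(\alpha_1, \alpha_2)$: it takes two distinct values on the two chambers that the wall $V(\Delta_L)$ cuts out near this point.

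First I would establish this jump. Since $\Delta_L(\omega_1, \omega_2^{-1}) \neq 0$ and the Cimasoni--Florens signature is locally constant on $\T_*^2 \setminus V(\Delta_L)$ \cite{CimasoniFlorens}, the second summand $\sigma_L(\omega_1', {\omega_2'}^{-1})$ is constant for $(\omega_1', \omega_2')$ near $(\omega_1, \omega_2)$. The standing hypothesis says that $\sigma_L$ is not constant on $U \setminus V(\Delta_L)$ for any neighborhood $U$ of $(\omega_1, \omega_2)$, so the first summand $\sigma_L(\omega_1', \omega_2')$ assumes at least two values arbitrarily close to $(\omega_1, \omega_2)$. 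Combining the two observations, $h_L$ assumes at least two distinct values on the complement of $V(\Delta_L)$ in every neighborhood of $(\alpha_1, \alpha_2)$; that is, $h_L$ jumps across the wall.

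Next comes the heart of the argument, which is a contrapositive. Assume that $\rho_{(\omega_1, \omega_2)}$ is \emph{not} a limit of irreducible representations. The invariant $h_L$ is by construction a signed count of conjugacy classes of irreducible $\SU(2)$ representations with the prescribed meridional traces, and as the parameters $(\alpha_1', \alpha_2')$ vary this count can only change when such a representation is created or annihilated, which (by the transversality and compactness built into the definition of $h_L$) can only occur at the reducible locus. For the trace data at hand the relevant reducible representations are $\rho_{(\omega_1, \omega_2)}$ and $\rho_{(\omega_1, \omega_2^{-1})}$, the two accounting for the sign ambiguity in the eigenvalue of the second meridian. The representation $\rho_{(\omega_1, \omega_2^{-1})}$ is rigid: since $\Delta_L(\omega_1, \omega_2^{-1}) \neq 0$, Proposition~\ref{prop:MultivarBurdedeRhamIntro} forbids it from being a limit of irreducibles. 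If, in addition, $\rho_{(\omega_1, \omega_2)}$ were rigid, then no irreducible representation could be born or die near $(\alpha_1, \alpha_2)$, forcing $h_L$ to be locally constant through the wall and contradicting the jump established above. Hence $\rho_{(\omega_1, \omega_2)}$ must be a limit of irreducible representations.

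The main obstacle is the claim used in the previous paragraph: that the signed count $h_L$ can change only through collisions at the reducible locus, with no irreducibles appearing, disappearing, or escaping elsewhere as the parameters cross the wall. Making this precise requires the local model of the character variety near the abelian representation $\rho_{(\omega_1, \omega_2)}$ together with the properness of the family of irreducible representation spaces underlying the definition of $h_L$ via colored braids; concretely, one must show that any sequence of irreducibles whose parameters converge to $(\alpha_1, \alpha_2)$ subconverges either to an irreducible or to one of the two reducibles above, and that the rigid reducible $\rho_{(\omega_1, \omega_2^{-1})}$ contributes nothing to the wall-crossing. All remaining steps (the signature bookkeeping and the reduction via Theorem~\ref{thm:Intro}) are then routine.
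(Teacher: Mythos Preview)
Your argument is correct and matches the paper's own proof almost exactly: both proceed by contradiction, assume $\rho_{(\omega_1,\omega_2)}$ is not a limit of irreducibles, and combine Theorem~\ref{thm:Intro} with the local constancy of $\sigma_L(\omega_1',{\omega_2'}^{-1})$ near $(\omega_1,\omega_2)$ to force $\sigma_L$ itself to be locally constant, contradicting the hypothesis.

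The only substantive difference is that you flag the ``wall-crossing only through reducibles'' step as the main obstacle still to be established, whereas the paper has already packaged exactly this into Remark~\ref{rem:HypothesisDefinition} and Proposition~\ref{prop:locallyconst}. Concretely: the contradiction assumption (together with $\Delta_L(\omega_1,\omega_2^{-1})\neq 0$ and Corollary~\ref{cor:AlexReducibleSU2}, plus the observation that $\rho_{(\omega_1^{-1},\omega_2^{-1})}$ is conjugate to $\rho_{(\omega_1,\omega_2)}$) places the parameter $(\alpha_1,\alpha_2)$ inside the set $D_L$ on which $h_L$ is \emph{already defined} (Remark~\ref{rem:HypothesisDefinition}) and locally constant (Proposition~\ref{prop:locallyconst}). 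So no separate local model or properness analysis is needed; the compactness argument underlying Proposition~\ref{prop:Compact} is precisely what rules out irreducibles escaping or being created away from the reducible locus. Once you invoke these two results, your last paragraph becomes a one-line citation rather than an obstacle.
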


This paper is organized as follows. In Section~\ref{sec:AlexanderAbelian}, we review some facts about representation spaces and prove Proposition~\ref{prop:MultivarBurdedeRhamIntro}. In Section~\ref{sec:MultivariableCassonLin}, we define the multivariable Casson-Lin invariant~$h_L$. In Section~\ref{sec:ColoredGassner}, we review some facts about the colored Gassner representation and the multivariable potential function. In Section~\ref{sec:CrossingChange}, we study the effect of crossing changes on~$h_L$ and, in Section~\ref{sec:Signature}, we prove Theorems~\ref{thm:Intro} and~\ref{thm:DeformationIntro}..

\subsection*{Acknowledgments}

We are indebted to Michael Heusener for sharing his knowledge of the Casson-Lin invariant and making available some of his computations. We also thank him and an anonymous referee for pointing out a mistake in a prior version of this work. We also wish to thank Solenn Estier for helpful discussions. We thank the University of Geneva and the Institut Math\'ematique de Jussieu - Paris Rive Gauche at which part of this work was conducted. 
LB is supported by the NCCR SwissMap, funded by the Swiss National Science Fondation.
AC thanks Durham University for its hospitality and was supported by an early Postdoc.Mobility fellowship, also funded by the Swiss National Science Fondation.

\section{$\Sldeux(\C)$ representations and the multivariable Alexander polynomial}
\label{sec:AlexanderAbelian}

The aim of this section is to obtain a necessary condition for the existence of reducible non-abelian $\Sldeux(\C)$ representations of link groups and prove Proposition~\ref{prop:MultivarBurdedeRhamIntro} from the introduction. In Subsection~\ref{sub:RepresentationSpaces}, we review representation spaces, in Subsection~\ref{sub:MultivarAlex}, we recall some facts about the multivariable Alexander polynomial and, in Subsection~\ref{sub:ProofReducible}, we state and prove our results on reducible representations of link groups. Note that while most of this paper deals with $\SU(2)$ representations, we hope that the more general~$\Sldeux(\C)$ statements of Theorems~\ref{thm:AlexReducible} and~\ref{thm:Alexclosed} might be of independent interest.

\subsection{Representation spaces}
\label{sub:RepresentationSpaces}
In this subsection, we review some basics facts and notations about representation spaces. References include~\cite{Klassen, AkbulutMcCarthy, Porti97}.
\medbreak

Let $\pi$ be a finitely generated group and let $G$ be either $\SU(2)$ or $\Sldeux(\C)$. The \emph{representation space} of $\pi$ is the set $R_G(\pi):=\Hom(\pi, G)$ endowed with the compact open topology. Choosing a set $\lbrace x_1,\ldots,x_n \rbrace$ of generators of $\pi$, the map  $R_G(\pi) \to G^n, \rho \mapsto \left(\rho(x_1), \ldots, \rho(x_n)\right) $ realizes~$R_G(\pi)$ as an algebraic subset of $G^n$. A representation $\rho \in R_G(\pi)$ is \emph{abelian} if its image is an abelian subgroup of $G$. The closed subset of abelian representations will be denoted by~$S(\pi)$. A representation is \emph{reducible} if it admits a non-trivial invariant subspace and \emph{irreducible} otherwise.

\begin{remark}
\label{rem:Reducible}
For $\SU(2)$ representation spaces, a representation $\rho$ is reducible if and only if it is abelian. This is well known not to be the case for other Lie groups such as $\Sldeux(\C)$.
\end{remark}

When $G=\SU(2)$, we write $R(\pi)$ instead of $R_{\operatorname{SU}(2)}(\pi)$. The group $\operatorname{SU}(2)$ acts on $R(\pi)$ by conjugation and the \emph{$\SU(2)$-character variety} of $\pi$ consists of the quotient $X(\pi) = R(\pi)/\SU(2)$.
After removing abelian representations, $\SO(3)=\SU(2)/\pm \Id$ acts freely and properly on $R(\pi) \setminus S(\pi)$. In practice, we shall frequently consider (subspaces of) the set of conjugacy classes of non abelian representations:
$$ \widehat{R}(\pi)=\left( R(\pi) \setminus S(\pi) \right) /\SO(3).$$
When $G=\Sldeux(\mC)$, the quotient $R_{\operatorname{SL}_2}(\pi)/\operatorname{SL}_2$ is not Hausdorff in general. In this case, the \emph{$\Sldeux$-character variety} of $\pi$ is the algebro-geometric quotient 
$$X_{\Sldeux}(\pi) = R_{\Sldeux}(\pi)/\!/ \Sldeux(\mC).$$
While references for the algebro-geometric quotient include~\cite{LM85,CS83, Porti97}, all we need in the sequel is the following fact: two representations $\rho$ and $\rho'$ are identified in $X_{\Sldeux}(\pi)$ if their traces are equal, i.e. if for all $\g \in \pi$ one has $\Tr \rho(\g) = \Tr \rho'(\g)$. As a consequence, this quotient is the usual one when restricted to the set of irreducible representations (see for instance \cite[Proposition 1.5.2]{CS83}).

Finally, when $\pi$ is the fundamental group of a manifold $M$, we write $R(M)$ instead of~$R(\pi)$. In fact, we are particularly interested in the case where $M$ is a link exterior.

\subsection{The multivariable Alexander polynomial and reducible $\Sldeux(\C)$ representations}
\label{sub:MultivarAlex}

In this subsection, we first briefly review the multivariable Alexander polynomial before stating a criterion for the existence of reducible non-abelian $\Sldeux(\C)$-representations. We also prove Proposition~\ref{prop:MultivarBurdedeRhamIntro} from the introduction.
\medbreak

A~\emph{$\mu$-colored link} is an oriented link~$L$ in~$S^3$ whose components are partitioned into~$\mu$ sublinks~$L_1\cup\dots\cup L_\mu$. Given an $n$-component $\mu$-colored link~$L$, we let~$M_L$ denote its exterior, we consider the homomorphism $\ph \colon \pi_1(M_L) \to \Z^\mu, \gamma \mapsto (\ell k(L_1,\gamma),\ldots,\ell k(L_\mu,\gamma))$ and use~$m_1,\ldots,m_n$ to denote the meridians of $L$.

\begin{remark}
\label{rem:ReducibleRepresentation}
Any reducible representation $\rho \colon \pi_1(M_L) \to \Sldeux(\mC)$ is conjugated to one which satisfies $\rho(m_i) = \left( \begin{smallmatrix}  \la_i & \ast\\0 & \la_i^{-1} \end{smallmatrix} \right)$ for some $\lambda_i \in \C$ and for $i=1,\ldots, n$. Using $K_1,\ldots,K_n$ to denote the connected components of $L$, observe that  for $\gamma \in \pi_1(M_L)$, this representation satisfies
\begin{equation}
\label{eq:Representation}
\rho(\g) = \bm \prod\limits_{i=1}^n \la_i^{\ell k(\gamma,K_i)} & \ast \\ 0 & \prod\limits_{i=1}^n \la_i^{-\ell k(\gamma,K_i)}\ema.
\end{equation}
Next we bring the colors into play. Assume that $\la=(\la_1,\ldots, \la_\mu)$ lies in $(\mC^*)^\mu$ and let $\rho_\la \colon \pi_1(M_L) \to \Sldeux(\C)$ be the representation which maps $m_j$ to $\left( \begin{smallmatrix}  \la_i & \ast\\0 & \la_i^{-1} \end{smallmatrix} \right)$ if $m_j$ belongs to the sublink $L_i$. Note that if $\mu=n$, this recovers the representation described in~(\ref{eq:Representation}). Consider the composition $\varphi_\la \colon \pi_1(M_L) \stackrel{\varphi}{\to} \Z^\mu \to  \C$, where the second map sends the canonical basis element $e_i$ to $\la_i$. If $\gamma$ lies in $\pi_1(M_L)$, then $\rho_\lambda(\gamma)$ can be written explicitly as
\begin{equation}
\label{eq:RepresentationColored}
\rho_\la(\gamma)=\left( \begin{smallmatrix} \varphi_\la(\gamma) & * \\ 0 & \varphi_\la(\gamma)^{-1}  \end{smallmatrix} \right)= \bm \prod\limits_{i=1}^\mu \la_i^{\ell k(\gamma,L_i)} & \ast \\ 0 & \prod\limits_{i=1}^\mu \la_i^{-\ell k(\gamma,L_i)}\ema. 
\end{equation}
\end{remark}

As observed in Remark~\ref{rem:Reducible}, reducible $\Sldeux(\C)$ representations need not to be abelian. In order to describe this situation in more details, we recall the definition of the Alexander polynomial of a colored link. The previously described epimorphism~$\varphi \colon \pi_1(M_L) \to \Z^\mu$ induces a regular~$\Z^\mu$-covering~$\widehat{M}_L \rightarrow M_L$. The homology of~$\widehat{M}_L$ is a module over~$\Lambda_\mu:=\Z[t_1^{\pm 1},\dots,t_\mu^{\pm 1}]$, and the~$\Lambda_\mu$-module~$H_1(\widehat{M}_L)$ is called the {\em Alexander module\/} of the colored link~$L$. 

\begin{definition}
\label{def:AlexanderColored}
The \textit{Alexander polynomial} $\Delta_L(t_1,\ldots,t_\mu)$ of a $\mu$-colored link $L$ is the order of its Alexander module.
\end{definition}

The Alexander polynomial is only well defined up to units of $\Lambda_\mu$, that is, up to multiplication by powers of $\pm t_i$. We refer to~\cite{KawauchiBook, CimasoniPotential} for further information on $\Delta_L$.
The main theorem of this section is the following.

\begin{theorem}
\label{thm:AlexReducible}
Let $L$ be a $\mu$-colored link and let $\la=(\la_1,\ldots,\la_\mu)$ lie in $(\mC^* \setminus \lbrace 1 \rbrace)^\mu$. There exists a reducible, non abelian $\Sldeux(\C)$-representation of the form $\rho_\la$ if and only if $\Delta_L(\la^2)=0$.
\end{theorem}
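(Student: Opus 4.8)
The plan is to translate the existence of a non-abelian reducible $\rho_\la$ into the non-vanishing of a twisted cohomology group, and then to identify that group with an evaluation of the Alexander module.

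First I would reduce everything to a cocycle computation. By Remark~\ref{rem:ReducibleRepresentation}, any reducible representation is conjugate to one of upper-triangular shape $\rho(\gamma)=\bsm \varphi_\la(\gamma) & \beta(\gamma) \\ 0 & \varphi_\la(\gamma)^{-1}\esm$ with fixed diagonal character $\varphi_\la$. Writing out $\rho(\gamma_1\gamma_2)=\rho(\gamma_1)\rho(\gamma_2)$, the diagonal entries agree automatically (since $\varphi_\la$ is a homomorphism), while the off-diagonal entry must satisfy $\beta(\gamma_1\gamma_2)=\varphi_\la(\gamma_1)\beta(\gamma_2)+\beta(\gamma_1)\varphi_\la(\gamma_2)^{-1}$. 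After the substitution $b(\gamma):=\varphi_\la(\gamma)\beta(\gamma)$ this becomes $b(\gamma_1\gamma_2)=b(\gamma_1)+\varphi_\la(\gamma_1)^2\,b(\gamma_2)$, which is exactly the $1$-cocycle condition for the $\pi_1(M_L)$-module $\C_{\varphi_\la^2}$, i.e.\ $\C$ with $\gamma$ acting by multiplication by $\varphi_\la(\gamma)^2=\prod_i\la_i^{2\,\ell k(\gamma,L_i)}$. Thus reducible representations with diagonal part $\varphi_\la$ correspond bijectively to cocycles $b\in Z^1(\pi_1(M_L);\C_{\varphi_\la^2})$.

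Next I would identify which of these are abelian. Conjugating such a representation by a unipotent $\bsm 1 & s \\ 0 & 1\esm$ replaces $\beta(\gamma)$ by $\beta(\gamma)+s(\varphi_\la(\gamma)^{-1}-\varphi_\la(\gamma))$, which in the variable $b$ amounts to adding the coboundary $\gamma\mapsto s(\varphi_\la(\gamma)^2-1)$. Hence $\rho_\la$ is conjugate to its diagonal part precisely when $b\in B^1(\pi_1(M_L);\C_{\varphi_\la^2})$. Provided the character $\varphi_\la^2$ is non-trivial (so some $\rho_\la(\gamma)$ has distinct eigenvalues), an abelian reducible representation is automatically diagonalizable; therefore a non-abelian reducible $\rho_\la$ exists if and only if some cocycle is not a coboundary, that is, if and only if $H^1(\pi_1(M_L);\C_{\varphi_\la^2})=Z^1/B^1\neq 0$. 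The degenerate case $\varphi_\la^2\equiv 1$ (possible only if every $\la_i=-1$) has to be treated separately: there every upper-triangular $\rho_\la$ is abelian, and the statement is checked directly.

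It then remains to prove $H^1(\pi_1(M_L);\C_{\varphi_\la^2})\neq 0 \iff \Delta_L(\la^2)=0$. Since first cohomology depends only on the fundamental group, $H^1(\pi_1(M_L);\C_{\varphi_\la^2})=H^1(M_L;\C_{\varphi_\la^2})$, and universal coefficients over the field $\C$ identify this, after dualizing and inverting the character, with the twisted homology $H_1(M_L;\C_{\omega})$ for $\omega=\la^2$; the symmetry $\Delta_L(\omega)=0\iff\Delta_L(\omega^{-1})=0$ makes the choice of character immaterial. This twisted homology is computed by base-changing the chain complex of the $\Z^\mu$-cover $\widehat{M}_L$ along the ring map $\Lambda_\mu\to\C$, $t_i\mapsto\la_i^2$, so $H_1(M_L;\C_\omega)$ is the specialization at $\omega$ of the Alexander module $H_1(\widehat{M}_L)$. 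Using the non-triviality of $\varphi_\la^2$ to force $H_0(M_L;\C_\omega)=0$, one concludes that $\omega$ lies in the support of the Alexander module --- equivalently $\Delta_L(\omega)=0$ --- exactly when $H_1(M_L;\C_\omega)\neq 0$.

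I expect the last step to be the main obstacle. The delicate points are the precise relationship between the vanishing of the order of a $\Lambda_\mu$-module and the non-triviality of its specialization (one wants a genuine equivalence, not just one implication), the control of $H_0$ via the non-trivial local system, and the bookkeeping of $\mathrm{Tor}$ terms and of the homology--cohomology duality under the specialization $\Lambda_\mu\to\C$. The cocycle dictionary in the first two paragraphs is essentially formal; the arithmetic of the Alexander module is where the care is required.
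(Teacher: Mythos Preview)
Your proposal is correct and follows essentially the same approach as the paper: the cocycle/coboundary dictionary is the paper's Lemma~\ref{lem:Cocycle} (with a cosmetically different normalization of the off-diagonal entry), and the identification $H^1(M_L;\C_{\la^2})\neq 0 \iff \Delta_L(\la^2)=0$ is the paper's Lemma~\ref{lem:AlexanderVanish}. For the step you flag as the main obstacle, the paper's tool is the universal coefficient spectral sequence $E^2_{p,q}=\operatorname{Tor}_p^{\Lambda_\mu}(H_q(M_L;\Lambda_\mu),\C_{\la^2})$, combined with the vanishing $\operatorname{Tor}_k^{\Lambda_\mu}(\Z,\C_{\la^2})=H_k(\T^\mu;\C_{\la^2})=0$ for $k=1,2$ when each $\la_i^2\neq 1$; this yields $H_1(M_L;\C_{\la^2})\cong H_1(M_L;\Lambda_\mu)\otimes_{\Lambda_\mu}\C_{\la^2}$, which is exactly the ``bookkeeping of $\operatorname{Tor}$ terms'' you anticipate.
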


We delay the proof of Theorem~\ref{thm:AlexReducible} to Subsection~\ref{sub:ProofReducible} and give some applications instead. 

\begin{theorem}\label{thm:Alexclosed}
Let $L$ be a $\mu$-colored link and let $\la=(\la_1,\ldots,\la_\mu)$ lie in $(\mC^* \setminus \lbrace 1 \rbrace)^\mu$.  If $\Delta_L(\la^2)\neq 0$, then a sufficiently small neighborhood of the representation $\rho_\la$ in $R_{\Sldeux(\mC)}(M_L)$ consists entirely of reducible representations.
\end{theorem}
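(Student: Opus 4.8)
The plan is to show that $\rho_\la$ is a smooth point of $R_{\Sldeux(\C)}(M_L)$ whose local dimension equals that of the subvariety of abelian representations passing through it. Since the abelian representations are contained in $R_{\Sldeux(\C)}(M_L)$, the two must then coincide in a neighborhood of $\rho_\la$, so that every nearby representation is abelian and \emph{a fortiori} reducible. Here $\rho_\la$ denotes the diagonal abelian representation: by Theorem~\ref{thm:AlexReducible}, the hypothesis $\Delta_L(\la^2) \neq 0$ guarantees that no reducible non-abelian representation of the form described in Remark~\ref{rem:ReducibleRepresentation} exists, so $\rho_\la$ is forced to be diagonal.

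First I would recall the standard fact (due to Weil) that the Zariski tangent space to $R_{\Sldeux(\C)}(M_L)$ at a representation $\rho$ embeds into the space of cocycles $Z^1(\pi_1(M_L); \mathfrak{sl}_2(\C)_{\Ad\rho})$. Since $\rho_\la$ is diagonal, the adjoint module splits as a direct sum of $\pi_1(M_L)$-modules $\mathfrak{sl}_2(\C) = \C_0 \oplus \C_{\la^2} \oplus \C_{\la^{-2}}$, where $\C_0$ carries the trivial action (spanned by the diagonal Cartan element) and $\C_{\la^{\pm 2}}$ is the rank-one module on which $\gamma$ acts by $\varphi_\la(\gamma)^{\pm 2} = \prod_i \la_i^{\pm 2\,\ell k(\gamma, L_i)}$ (spanned by the nilpotent elements). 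The off-diagonal cocycle computation underlying Theorem~\ref{thm:AlexReducible} identifies the reducible non-abelian representations of the form $\rho_\la$ with the nonzero classes of $H^1(\pi_1(M_L); \C_{\la^2})$; hence $\Delta_L(\la^2) \neq 0$ yields $H^1(\pi_1(M_L); \C_{\la^2}) = 0$, and the symmetry $\Delta_L(\la^{-2}) \doteq \Delta_L(\la^2)$ yields $H^1(\pi_1(M_L); \C_{\la^{-2}}) = 0$ as well.

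With these vanishings, the adjoint cohomology collapses to its untwisted summand: $H^1(\pi_1(M_L); \mathfrak{sl}_2(\C)_{\Ad\rho_\la}) \cong H^1(\pi_1(M_L); \C_0) = \Hom(H_1(M_L), \C) \cong \C^n$, where $n$ is the number of components of $L$. Assuming $\rho_\la$ is non-central (the remaining possibility occurs only for knots with $\la = -1$ and is treated by an analogous count), the invariant subspace $H^0(\pi_1(M_L); \mathfrak{sl}_2(\C)_{\Ad\rho_\la})$ is the one-dimensional Cartan, so $\dim_\C B^1 = 3 - 1 = 2$ and therefore $\dim_\C Z^1 = n + 2$. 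On the other side of the ledger, the abelian representations form a closed subvariety $W \subseteq R_{\Sldeux(\C)}(M_L)$ containing $\rho_\la$; since they factor through $H_1(M_L) \cong \Z^n$ and $\rho_\la$ sends some meridian to a regular semisimple element, the standard description of commuting tuples gives $\dim_{\rho_\la} W = 3 + (n-1) = n + 2$.

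Finally I would combine these counts through the chain of inequalities
\[
n + 2 = \dim_{\rho_\la} W \leq \dim_{\rho_\la} R_{\Sldeux(\C)}(M_L) \leq \dim_\C T_{\rho_\la} R_{\Sldeux(\C)}(M_L) \leq \dim_\C Z^1 = n + 2.
\]
Equality throughout shows that $\rho_\la$ is a smooth point of $R_{\Sldeux(\C)}(M_L)$, so this variety is locally irreducible of dimension $n + 2$ at $\rho_\la$; as $W$ is a closed subvariety of the same local dimension, it must fill out an entire neighborhood, i.e. $W = R_{\Sldeux(\C)}(M_L)$ near $\rho_\la$. Every representation in this neighborhood is thus abelian, hence reducible. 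I expect the main obstacle to lie in the two dimension counts—honestly pinning down $\dim_{\rho_\la} W$ as a local variety dimension and justifying the passage from the equality of tangent-space and variety dimensions to local irreducibility (the smoothness conclusion)—together with the careful extraction of the cohomology vanishings from Theorem~\ref{thm:AlexReducible} and the symmetry of $\Delta_L$, where the degenerate evaluations arising when some $\la_i = -1$ need separate attention.
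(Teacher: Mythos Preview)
Your argument is correct and takes a genuinely different route from the paper's. The paper argues by contradiction at the level of the character variety: if irreducible representations accumulated at $\rho_\la$, then $\chi_\la$ would lie in an irreducible component $X$ of $X_{\Sldeux}(M_L)$ containing irreducible characters, and upper semi-continuity of fiber dimension for the projection $t\colon R_{\Sldeux}(M_L)\to X_{\Sldeux}(M_L)$ would force $\dim t^{-1}(\chi_\la)\geq 3$; but since every representation with character $\chi_\la$ is abelian (this uses Theorem~\ref{thm:AlexReducible}), that fiber is $\Sldeux(\C)/G_{\rho_\la}$ with $\dim G_{\rho_\la}\geq 1$, a contradiction. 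Your approach instead computes the Zariski tangent space directly via the adjoint decomposition $\mathfrak{sl}_2(\C)_{\Ad\rho_\la}\cong \C_0\oplus\C_{\la^2}\oplus\C_{\la^{-2}}$ and the vanishing $H^1(M_L;\C_{\la^{\pm 2}})=0$ (which is exactly Lemma~\ref{lem:AlexanderVanish}), obtaining $\dim Z^1=n+2$ and matching it against the abelian locus. Your method yields the stronger byproduct that $\rho_\la$ is a \emph{smooth} point of $R_{\Sldeux}(M_L)$ and that a neighborhood consists of \emph{abelian} (not merely reducible) representations; the paper's argument is shorter and avoids any explicit cohomology dimension count, relying instead on black-box character-variety geometry following Porti. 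One small correction: the central case $\rho_\la(\pi_1)\subset\{\pm I\}$ occurs precisely when every $\la_i=-1$, not only for knots, though your acknowledgment that this edge case needs separate treatment stands.
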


\begin{proof}
The strategy of the proof follows~\cite[Lemma 3.9, (iii)]{Porti97}).
A representation $\rho \colon \pi \to \Sldeux(\mC)$ is reducible if and only if for any $\gamma, \delta \in \pi$, one has $\Tr \rho(\gamma\delta\gamma^{-1}\delta^{-1}) = 2$~\cite[Lemma 1.2.1]{CS83}. Consequently, reducibility is well defined at the level of character varieties, and the set of irreducible characters is open in both the representation variety $R_{\mathrm{SL}_2}(M_L)$ and in the character variety $X_{\mathrm{SL}_2}(M_L)$. 

Since $\Delta_L(\la^2)\neq 0$, Theorem \ref{thm:AlexReducible} implies that $\rho_\la$ is abelian. In fact, we claim that every representation $\rho'$ with the same character as $\rho_\lambda$ is abelian and is conjugated to $\rho_\la$. To see this, first note that since $\rho'$ has the same character as $\rho_\lambda$, the previous paragraph implies that~$\rho'$ is reducible. Using Theorem \ref{thm:AlexReducible}, $\rho'$ must in fact be abelian. Since $\rho_\lambda$ and $\rho'$ are abelian and have the same character, they must be conjugated, concluding the proof of the claim.

By way of contradiction, assume that the representation $\rho_\la$ has irreducible representations in anyone of its neighborhoods. Since we argued that the set of irreducible characters is open in the character variety, the character of the representation $\rho_\la$ lies in an irreducible component $X\subset X_{\mathrm{SL}_2}(M_L)$ that contains the character of an irreducible representation.

Next, consider the quotient map $t \colon R_{\mathrm{SL}_2}(M_L)  \to X_{\mathrm{SL}_2}(M_L)$. If $\chi \in  X$ is the character of an irreducible representation, then the fiber $t^{-1}(\lbrace \chi \rbrace)$ is homeomorphic to $\operatorname{PSL}(2,\C)$ and in particular has dimension $3$. Since irreducible characters form an open dense subset of~$X$ and since the dimension of the fiber $t^{-1}(\lbrace \chi \rbrace)$ is upper semi-continuous on $X$ for any character~$\chi$ in~$X$, the dimension of $t^{-1}(\lbrace \chi \rbrace)$ is at least 3.

Set $\chi_\lambda:=t(\rho_\lambda)$. Since $\rho_\la$ is abelian, the claim implies that $t^{-1}(\lbrace \chi_\lambda \rbrace)$ is isomorphic to $\operatorname{SL}_2(\C)/G_{\rho_\lambda}$, where $G_{\rho_\la} \leq \Sldeux(\mC)$ is the stabilizer of $\rho_\lambda$ and has positive dimension (since $\rho_\lambda$ is abelian). Therefore the fiber $t^{-1}(\lbrace \chi_\la \rbrace)$ has dimension strictly less than $3$ which contradicts the previous paragraph.
\end{proof}

In the next sections, our interest will lie in $\SU(2)$ representations. In this case, as recalled in Remark~\ref{rem:Reducible}, every reducible representation is abelian and the resulting eigenvalues lie on the unit circle. Using $\T_*^\mu$ to denote $(S^1\setminus \lbrace 1 \rbrace)^\mu$, we obtain the following result which generalizes a theorem of Burde~\cite{Burde} and de Rham~\cite{DR67}. This proves Proposition~\ref{prop:MultivarBurdedeRhamIntro} from the introduction.

\begin{corollary}
\label{cor:AlexReducibleSU2}
Let $L$ be a $\mu$-colored link and let $\omega$ lie in $\T_*^\mu$.  If $\Delta_L(\omega^2)\neq 0$, then a sufficiently small neighborhood of $\rho_\omega$ in $R(M_L)$ consists entirely of abelian representations.
\end{corollary}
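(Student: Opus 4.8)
The plan is to deduce Corollary~\ref{cor:AlexReducibleSU2} directly from Theorem~\ref{thm:Alexclosed} by transferring the $\Sldeux(\C)$ statement to the $\SU(2)$ setting. First I would observe that $\SU(2) \subset \Sldeux(\C)$, so the inclusion induces a natural embedding $R(M_L) = R_{\SU(2)}(M_L) \hookrightarrow R_{\Sldeux(\C)}(M_L)$, which is a topological embedding with respect to the compact-open topologies and is compatible with conjugation. The point $\omega \in \T_*^\mu$ lies in $(S^1 \setminus \lbrace 1 \rbrace)^\mu \subset (\C^* \setminus \lbrace 1 \rbrace)^\mu$, so the hypothesis $\Delta_L(\omega^2) \neq 0$ is exactly the hypothesis of Theorem~\ref{thm:Alexclosed} applied with $\la = \omega$, and the abelian representation $\rho_\omega$ coincides with $\rho_\la$.

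Next I would invoke Theorem~\ref{thm:Alexclosed} to obtain an open neighborhood $\mathcal{U}$ of $\rho_\omega$ in $R_{\Sldeux(\C)}(M_L)$ consisting entirely of reducible $\Sldeux(\C)$-representations. Setting $\mathcal{V} := \mathcal{U} \cap R(M_L)$, continuity of the inclusion guarantees that $\mathcal{V}$ is an open neighborhood of $\rho_\omega$ in $R(M_L)$. Every representation in $\mathcal{V}$ is an $\SU(2)$-representation that is reducible as an $\Sldeux(\C)$-representation; the key step is then to upgrade ``reducible over $\C$'' to ``abelian''. Here I would use Remark~\ref{rem:Reducible}: for $\SU(2)$-representations, reducibility is equivalent to abelianness. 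One should be slightly careful that the relevant notion of reducibility agrees between the two groups — a representation into $\SU(2)$ that is reducible when viewed in $\Sldeux(\C)$ admits a complex invariant line, and by unitarity (the orthogonal complement is also invariant) it decomposes into one-dimensional pieces and is therefore abelian, matching Remark~\ref{rem:Reducible}. Hence every element of $\mathcal{V}$ is abelian, which is the desired conclusion.

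The main obstacle, such as it is, is this last compatibility check: ensuring that the notion of reducibility used for $\Sldeux(\C)$ in Theorem~\ref{thm:Alexclosed} restricts correctly to $\SU(2)$, so that Remark~\ref{rem:Reducible} genuinely applies. This is where the unitary structure is essential — it is precisely the fact that a unitary invariant subspace has a unitary invariant complement that rules out the ``reducible non-abelian'' phenomenon possible over $\Sldeux(\C)$. Once this is in place, the argument is a short topological restriction and contains no further computation. I expect the entire proof to be only a few lines, since all the substantive work has already been done in establishing Theorems~\ref{thm:AlexReducible} and~\ref{thm:Alexclosed}.
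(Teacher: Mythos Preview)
Your proposal is correct and follows essentially the same route as the paper's proof, which is a two-line argument invoking Theorem~\ref{thm:Alexclosed} together with the embedding $R(M_L) \hookrightarrow R_{\Sldeux(\C)}(M_L)$. Your added care in verifying that an $\SU(2)$-representation reducible over $\Sldeux(\C)$ is genuinely abelian (via the unitary complement) is a useful clarification that the paper leaves implicit.
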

\begin{proof}
This follows directly from Theorem \ref{thm:Alexclosed} and the observation that~$R(M_L)$ embeds in~$R_{\mathrm{SL}_2}(M_L)$: any $\SU(2)$ representation is also an $\Sldeux(\C)$ representation.
\end{proof}

\subsection{Proof of Theorem~\ref{thm:AlexReducible}}
\label{sub:ProofReducible}

The map $\varphi_\la \colon \pi_1(M_L) \to \C$ described in Remark~\ref{rem:ReducibleRepresentation} endows~$\mC$ with a left $\mZ[\pi_1(M_L)]$-module structure;
we write $\C_\la$ for emphasis. Consider the twisted cochain complex $C^*(\pi_1(M_L), \mC_\la)$ and recall that a 1-cocycle $u\in Z^1(\pi_1(M_L), \mC_\la)$ is a map $u \colon \pi_1(M_L) \to \mC$ that satisfies 
\begin{equation}\label{cocycle}
 u(\g\de) = u(\g) +  \varphi_\la(\gamma) u(\de)
\end{equation}
for every $\g, \de$ in $\pi_1(M_L)$. The following lemma provides a cohomological obstruction for a reducible representation to be abelian. 

\begin{lemma}
\label{lem:Cocycle}
Given $\la \in (\mC^* \setminus \lbrace 1 \rbrace)^\mu$, the following assertions hold:
\begin{enumerate}
\item The representation $\rho_\la$ gives rise to a cocycle $u \in Z^1(M_L, \mC_{\la^2})$.
\item The representation $\rho_\la$ is abelian if and only if $[u]=0 \in H^1(M_L, \mC_{\la^2})$.
\end{enumerate}
\end{lemma}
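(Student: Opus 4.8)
The plan is to unwind the definition of a reducible representation $\rho_\la$ with diagonal entries governed by $\ph_\la$, and to recognize its off-diagonal entry as precisely the data of a twisted $1$-cocycle, with abelianness corresponding to the cocycle being a coboundary. Concretely, I would write $\rho_\la(\g) = \left(\begin{smallmatrix} \ph_\la(\g) & u(\g) \\ 0 & \ph_\la(\g)^{-1}\end{smallmatrix}\right)$ using the explicit form in~\eqref{eq:RepresentationColored}, thereby defining a function $u \colon \pi_1(M_L) \to \C$ as the upper-right entry.

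For part (1), I would impose the homomorphism condition $\rho_\la(\g\de) = \rho_\la(\g)\rho_\la(\de)$ and read off the upper-right entry of the matrix product. Multiplying the two upper-triangular matrices, the $(1,2)$-entry of the product equals $\ph_\la(\g) u(\de) + u(\g)\ph_\la(\de)^{-1}$. Setting this equal to $u(\g\de)$ and then normalizing (for instance by factoring out $\ph_\la(\de)^{-1}$, i.e. replacing $u$ by $\ti u(\g) := \ph_\la(\g)^{-1}u(\g)$ or an analogous rescaling) should produce a genuine cocycle for the action through $\ph_{\la^2} = \ph_\la^2$, matching the condition~\eqref{cocycle} with $\ph_{\la^2}$ in place of $\ph_\la$. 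The appearance of $\la^2$ rather than $\la$ is the key bookkeeping point: it comes from the difference of the two diagonal characters $\ph_\la$ and $\ph_\la^{-1}$, so that the natural module structure on the off-diagonal entry is the one twisted by the \emph{square} of the abelianization character. I would verify that after the correct rescaling the relation~\eqref{cocycle} holds verbatim with coefficients in $\C_{\la^2}$, giving $u \in Z^1(M_L, \C_{\la^2})$.

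For part (2), the representation $\rho_\la$ is abelian precisely when it can be conjugated to a diagonal representation, i.e. when the off-diagonal part can be removed by conjugating with an upper-triangular element $\left(\begin{smallmatrix} 1 & b \\ 0 & 1\end{smallmatrix}\right)$ (or by a diagonal conjugation that the diagonal characters absorb). I would compute the effect of such a conjugation on $u$ and observe that it shifts $u$ by exactly a principal cocycle, i.e. by a coboundary $\de b$ of the form $\g \mapsto b(\ph_{\la^2}(\g) - 1)$. Hence $\rho_\la$ is abelian if and only if $u$ is cohomologous to zero, which is the assertion $[u] = 0 \in H^1(M_L, \C_{\la^2})$. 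Here I would use the standing hypothesis $\la \in (\C^*\setminus\{1\})^\mu$ to ensure that the diagonal characters are genuinely nontrivial, so that the conjugation computation does not degenerate.

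The main obstacle I anticipate is purely bookkeeping rather than conceptual: tracking the correct twist ($\la^2$ versus $\la$) and the correct normalization of $u$ so that~\eqref{cocycle} holds on the nose, and making sure the conjugation in part (2) lands exactly in the space of coboundaries for that same twisted coefficient module. A secondary point is to confirm that the cohomology is taken with respect to $\pi_1(M_L)$ acting through $\ph_{\la^2}$ and that the group cohomology $H^1(\pi_1(M_L), \C_{\la^2})$ agrees with the notation $H^1(M_L, \C_{\la^2})$, which is standard since $M_L$ is aspherical (a link exterior is a $K(\pi,1)$), but worth noting. Once the twist and normalization are pinned down, both statements follow by direct matrix computation.
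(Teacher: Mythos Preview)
Your proposal is correct and follows essentially the same strategy as the paper. For part~(1) the paper makes the normalization choice of writing the upper-right entry as $\varphi_\la(\gamma)^{-1}u(\gamma)$ rather than $u(\gamma)$, which makes the $\la^2$-twist fall out directly from the homomorphism relation without the rescaling step you anticipate; for part~(2) the paper, instead of conjugating by a unipotent $\left(\begin{smallmatrix}1&b\\0&1\end{smallmatrix}\right)$, writes the diagonalizability condition $DA=A\rho_\la(\gamma)$ for an \emph{arbitrary} invertible $A$ and solves the resulting linear system, splitting into the cases $\varphi_\la(\gamma)=\pm 1$ and $\varphi_\la(\gamma)\neq\pm 1$. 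Your unipotent-conjugation argument is the more conceptual route, but note that the implication ``abelian $\Rightarrow$ simultaneously diagonalizable by an upper-triangular matrix'' needs at least one $\gamma$ with $\varphi_\la(\gamma)\neq\pm 1$ (so that its two eigenlines organize the common diagonalization); the hypothesis $\la_i\neq 1$ does not by itself rule out $\la_i=-1$. One small correction: link exteriors are not always aspherical (split links fail), but this is harmless here since $H^1(X;M)\cong H^1(\pi_1(X);M)$ for any space $X$.
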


\begin{proof}
Using the definition of $\rho_\lambda$, we may write $\rho_\la(\g)$ as $\left( \begin{smallmatrix} \varphi_\la(\gamma) &\varphi_\la(\gamma)^{-1} u(\g)  \\ 0 & \varphi_\la(\gamma)^{-1} \end{smallmatrix} \right)$ for each $\gamma$ in~$\pi_1(M_L)$ and this gives rise to a map $u \colon \pi_1(M_L) \to \mC$. Given~$\gamma$ and $\delta$ in $\pi_1(M_L)$, the equality $\rho(\g\de) = \rho(\g)\rho(\de)$ then shows that~$u$ satisfies the following relation:
$$\varphi_\la(\g\de)^{-1} u(\g\de) =\varphi_\la(\g\de^{-1}) u(\de) + \varphi_\la(\gamma \delta)^{-1} u(\g).$$ 
Multiplying this equation by $\varphi_\la(\gamma \delta)$, we deduce that $u$ must satisfy $ u(\g\de) = u(\g) + \varphi_\la(\gamma^2) u(\de)$ which is the cocycle condition from~(\ref{cocycle}). Thus $u$ is a cocycle and the first assertion is proved.

 To prove the second assertion, we must show that the reducible representation $\rho_\la$ is abelian if and only if the cocycle $u$ is a coboundary, that is if there exists a $z \in \mC$ such that for all~$\g \in \pi_1(M_L)$, one has
\begin{equation}
\label{eq:Coboundary}
u(\g)~=~(\varphi_\la(\gamma^2)-1)z.
\end{equation}
First, observe that $\rho_\la$ is abelian if and only if for each $\gamma \in \pi_1(M_L)$ there exists an invertible matrix $A = \left( \begin{smallmatrix} a&b\\ c&d \end{smallmatrix} \right)$ such that $DA=A \rho_\la(\gamma)$, where $D$ denotes the diagonal matrix $ \left( \begin{smallmatrix} \varphi_\la(\gamma) & 0 \\ 0 & \varphi_\la(\gamma)^{-1} \end{smallmatrix} \right) $. Writing out this equation coordinate by coordinate, one deduces that $\rho_\lambda$ is abelian if and only if the three following equations hold:
$$
\begin{cases}
b \varphi_\la(\gamma)&=  \ a  \varphi_\la(\gamma)^{-1} u(\g)+ b \varphi_\la(\gamma)^{-1},\\
c \varphi_\la(\gamma)^{-1}&= \ c  \varphi_\la(\gamma),\\
d  \varphi_\la(\gamma)^{-1}&= \ c  \varphi_\la(\gamma)^{-1} u(\g) + d  \varphi_\la(\gamma)^{-1}.
\end{cases}$$
If $\varphi_\la(\gamma) = \pm 1$, the representation is abelian if and only if there exists $a,c$ such that $a u(\gamma)=0$ and $c u(\gamma)=0$. Since $A$ must be invertible, either $a$ or $c$ must be non-zero, and in this case~$u(\gamma)$ must vanish for each $\gamma$. In particular $[u]$ vanishes in cohomology.

If $\varphi_\la(\gamma) \neq \pm 1$, the representation is abelian if and only if $c=0$ and $au(\g) =b(\varphi_\la(\gamma^2)-1)$. Since $A$ is invertible, we deduce that $a \neq 0$ and therefore $u(\g) = \frac{b}{a}(\varphi_\la(\gamma^2)-1)$. Consequently, looking back to~(\ref{eq:Coboundary}), we have obtained the defining equation for a coboundary. This concludes the proof of the second assertion and thus the proof of the lemma.
\end{proof}

As we shall see shortly, Theorem~\ref{thm:AlexReducible} will follow promptly from the following lemma.

\begin{lemma}
\label{lem:AlexanderVanish}
Let $\la$ lie in $(\mC^* \setminus \lbrace 1 \rbrace)^\mu$. The complex vector space $H^1(M_L;\C_\la)$ does not vanish if and only if $\la$ satisfies~$\Delta_L(\la) =0$.
\end{lemma}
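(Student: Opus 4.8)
=== BEGIN ===

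\textbf{Setup and strategy.} The plan is to identify the cohomology group $H^1(M_L;\C_\la)$ with an invariant of the Alexander module and then read off the vanishing condition from the definition of $\Delta_L$ as the order of that module. Concretely, the representation $\varphi_\la$ factors through the abelianization homomorphism $\varphi\colon \pi_1(M_L)\to \Z^\mu$ followed by the ring map $\Lambda_\mu\to\C$ sending $t_i\mapsto\la_i$; call this ring map $\la$ as well, so that $\C_\la$ is the $\Lambda_\mu$-module $\C$ obtained by restriction along $\la$. Since $M_L$ is homotopy equivalent to a finite CW-complex (indeed to a $2$-complex), the twisted cohomology $H^*(M_L;\C_\la)$ is computed by a finite cochain complex of finite-dimensional $\C$-vector spaces. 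The whole point is that this complex is obtained by tensoring a chain complex of free $\Lambda_\mu$-modules — the cellular complex of the universal abelian cover $\widehat{M}_L$ — with $\C$ along $\la$.

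\textbf{Main computation.} First I would set up the $\Lambda_\mu$-chain complex $C_*(\widehat{M}_L)$ of free modules and recall that $H_1(\widehat{M}_L)$ is by definition the Alexander module, whose order is $\Delta_L$. Then I would relate $H^1(M_L;\C_\la)$ to this picture. The cleanest route is via universal coefficients / the structure of the chain complex: tensoring the free resolution over $\Lambda_\mu$ with the residue-type module $\C_\la$ and dualizing, the dimension of $H^1(M_L;\C_\la)$ jumps precisely when $\la$ lies on the support of the relevant homology, i.e. when the presentation matrix of the Alexander module drops rank at $\la$. Since $\Delta_L$ is (a generator of) the order ideal, $\Delta_L(\la)=0$ is exactly the condition that the Alexander module has positive-dimensional $\la$-torsion, equivalently that $H_1(\widehat{M}_L)\otimes_\Lambda \C_\la$ is nonzero. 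The constraint $\la\in(\C^*\setminus\{1\})^\mu$ is what makes this clean: because no $\la_i$ equals $1$, the augmentation-type summand $H_0$ contributes nothing (the map $\varphi_\la$ is nontrivial on each meridian, so $H^0(M_L;\C_\la)=0$ and likewise the top terms are controlled), so the only source of cohomology in degree $1$ is the Alexander module itself.

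\textbf{Executing the rank argument.} The technical heart is to show $\dim_\C H^1(M_L;\C_\la)>0 \iff \Delta_L(\la)=0$ rather than merely $\Delta_L(\la)=0 \Rightarrow \dim>0$. I would argue through a square presentation or through Fox calculus: a Wirtinger-type presentation of $\pi_1(M_L)$ yields a presentation of the Alexander module whose Jacobian, evaluated at $\la$, is the matrix computing $H^1(M_L;\C_\la)$ in the twisted cochain complex, up to the boundary maps coming from $H_0$ and $H_2$. Using $\la_i\neq 1$ to kill the degree-$0$ cohomology, the Euler characteristic argument (or the acyclicity of the cover away from the zero locus) pins $\dim H^1$ to the corank of the evaluated Alexander matrix. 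The equality with $\Delta_L(\la)=0$ then follows because the first elementary ideal, hence $\Delta_L$, vanishes at $\la$ exactly when this matrix fails to be injective.

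\textbf{Main obstacle.} The delicate point I expect to fight with is the bookkeeping between homology and cohomology and the exact role of the hypothesis $\la\neq 1$ in each coordinate. One must be careful that the order of the Alexander module (a homological invariant of $\widehat{M}_L$) genuinely governs the \emph{cohomology} $H^1(M_L;\C_\la)$ with the \emph{dual/restricted} coefficients; over the field $\C$ and with $\la$ avoiding $1$, a universal-coefficient or Poincaré–Lefschetz duality argument should bridge this, but getting the degree shifts and the vanishing of $H^0$ and $H^2$ correct — so that no spurious contribution inflates or deflates $\dim H^1$ — is the part requiring genuine care rather than routine computation.

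=== END ===
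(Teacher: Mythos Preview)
Your outline is correct and is essentially the paper's approach: reduce $H^1$ to $H_1$ by universal coefficients over $\C$, then identify $H_1(M_L;\C_\la)$ with $H_1(M_L;\Lambda_\mu)\otimes_{\Lambda_\mu}\C_\la$, so that nonvanishing is equivalent to $\Delta_L(\la)=0$. The one place where the paper is sharper than your sketch is the second step: rather than a Fox--calculus presentation argument, it runs the universal coefficient spectral sequence $E^2_{p,q}=\operatorname{Tor}_p^{\Lambda_\mu}(H_q(M_L;\Lambda_\mu),\C_\la)\Rightarrow H_*(M_L;\C_\la)$ and disposes of the Tor terms coming from $H_0=\Z$ by identifying $\operatorname{Tor}_k^{\Lambda_\mu}(\Z,\C_\la)\cong H_k(\mathbb{T}^\mu;\C_\la)$, which vanishes for $k\ge 1$ exactly because each $\la_i\neq 1$ --- this is precisely the ``spurious contribution'' you flagged as the main obstacle.
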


\begin{proof}
First, since we are dealing with $\C$-vector spaces,
the universal coefficient theorem shows that the vanishing of $H^1(M_L;\C_\la)$ is equivalent to the vanishing of $H_1(M_L;\C_\la)$. 
To prove the lemma, we must show that $H_1(M_L;\C_\la)$ does not vanish if and only if~$\Delta_L(\la)$ vanishes. It is enough to show that  the order of this latter vector space is zero if and only if~$\Delta_L(\la)$ is zero. This will immediately follow if we prove that 
$$H_1(M_L;\C_\la) \cong H_1(M_L;\Lambda_\mu) \otimes_{\Lambda_\mu} \C_\la.$$
To prove this assertion, we will use (a particular case of) the universal coefficient spectral sequence (UCSS) whose second page is given by $E^2_{p,q}=\operatorname{Tor}_p^{\Lambda_\mu}(H_q(M_L;\Lambda_\mu),\C_\la)$ and which converges to $H_*(M_L;\C_\la)$, see~\cite[Chapter 2]{Hillman}. We start with the following claim.

\begin{claim}
 Endow $\Z=H_0(M_L;\Lambda_\mu)$ with the  $\Lambda_\mu$-module structure coming from the augmentation homomorphism $\Lambda_\mu \to \Z, t_i \mapsto 1$. If $\la$ lies in $(\mC^* \setminus \lbrace 1 \rbrace)^\mu$, then the complex vector space $\operatorname{Tor}_k^{\Lambda_\mu}(\Z,\C_\la)$ vanishes for $k=1,2$.
\end{claim}
\begin{proof}
Using the $\Lambda_\mu$-resolution of $\Z$ given by the chain complex for the universal cover of the torus~$\mathbb{T}^\mu$,
we have $\operatorname{Tor}_k^{\Lambda_\mu}(\Z,\C_\la)=H_k(\mathbb{T}^\mu;\C_\la)$. As the $\lambda_i$ are not equal to $1$, the claim now follows from considerations involving cellular homology, see~\cite{Nicolaescu} and~\cite[Lemma 2.2]{ConwayFriedlToffoli}.
\end{proof}

Using the claim, we know that $E^2_{2,0}=0$. The UCSS then gives $E_{2,0}^\infty=0$ and provides a filtration $0 \subset F_1^0 \subset F_1^1=H_1(M_L;\C_\la)$. As the UCSS also implies that  $F_1^0=E_{0,1}^\infty=H_1(M_L;\Lambda_\mu) \otimes_{\Lambda_\mu} \C_\la$ and~$E_{1,0}^\infty \cong F_1^1/F_1^0$, we obtain the following short exact sequence:
$$ 0 \to H_1(M_L;\Lambda_\mu) \otimes_{\Lambda_\mu} \C_\la \to H_1(M_L;\C_\la) \to \operatorname{Tor}_1^{\Lambda_\mu}(H_0(M_L;\Lambda_\mu),\C_\la) \to 0.$$
Since we showed in the claim that $\operatorname{Tor}_1^{\Lambda_\mu}(H_0(M_L;\Lambda_\mu),\C_\la)$ vanishes, the lemma follows. 
\end{proof}

Combining these two lemmas, we are now in position to conclude the proof of Theorem~\ref{thm:AlexReducible}.

\begin{proof}[Proof of Theorem \ref{thm:AlexReducible}]
Let $u_{\rho_\la}$ be the $1$-cocycle described in Lemma~\ref{lem:Cocycle}. Using the second point of this same lemma, the existence of a reducible non abelian representation~$\rho_\la$ is equivalent to the cohomology class $[u_{\rho_\la}]$ being non zero in $H^1(M_L,\C_{\la^2})$. Thus, if there exists a reducible non-abelian representation of the form $\rho_\la$, then $H^1(M_L,\C_{\la^2})$ is non-trivial and Lemma~\ref{lem:AlexanderVanish} implies that the multivariable Alexander polynomial $\Delta_L$ vanishes at $\la^2$. 

Conversely, if the multivariable Alexander polynomial vanishes at $\la^2$, then Lemma~\ref{lem:AlexanderVanish} implies that $H^1(M_L;\C_{\la^2})$ does not vanish. Since $H^1(M_L;\C_{\la^2})=H^1(\pi_1(M_L);\C_{\la^2})$, we deduce that there is a non-zero cocycle $u$ in $Z^1(\pi_1(M_L);\C_{\la^2})$. Defining a representation $\rho$ from~$u$ just as in the proof of Lemma~\ref{lem:Cocycle} produces the desired non-abelian representation.
\end{proof}

\section{The multivariable Casson-Lin invariant}
\label{sec:MultivariableCassonLin}

The goal of this section is to define the multivariable Casson-Lin invariant. More precisely, in Subsection~\ref{sub:ColoredBraids}, we review colored braids, in Subsection~\ref{sub:TheInvarianth}, we define our invariant on braids and in Subsection~\ref{sub:Markov} we verify its invariance under the colored Markov moves.

\subsection{Colored braids}
\label{sub:ColoredBraids}
In this subsection, we briefly review colored braids and discuss the action of the colored braid groups on $\SU(2)^n$. References for colored braids include~\cite{Murakami, ConwayTwistedBurau}, while discussions of the action of the braid group $B_n$ on $\SU(2)^n$ include~\cite{Lin, HeusenerKroll, HeusenerOrientation, Long}.
\medbreak

The braid group $B_n$ admits a presentation with $n-1$ generators $\sigma_1,\sigma_2, \dots, \sigma_{n-1}$ subject to the relations $\sigma_i \sigma_{i+1} \sigma_i=\sigma_{i+1} \sigma_i \sigma_{i+1}$ for each $i$, and $\sigma_i \sigma_j = \sigma_j \sigma_i$ if $|i-j|>2$. Topologically, the generator $\sigma_i$ is the braid whose $i$-th component passes over the $(i+1)$-th component. The \textit{closure} of a braid $\beta$ is the link $\widehat{\beta}$ obtained from $\beta$ by adding parallel strands in $S^3 \setminus (D^2 \times [0,1])$.

\begin{figure}[!htb]
\centering
\includegraphics[scale=0.5]{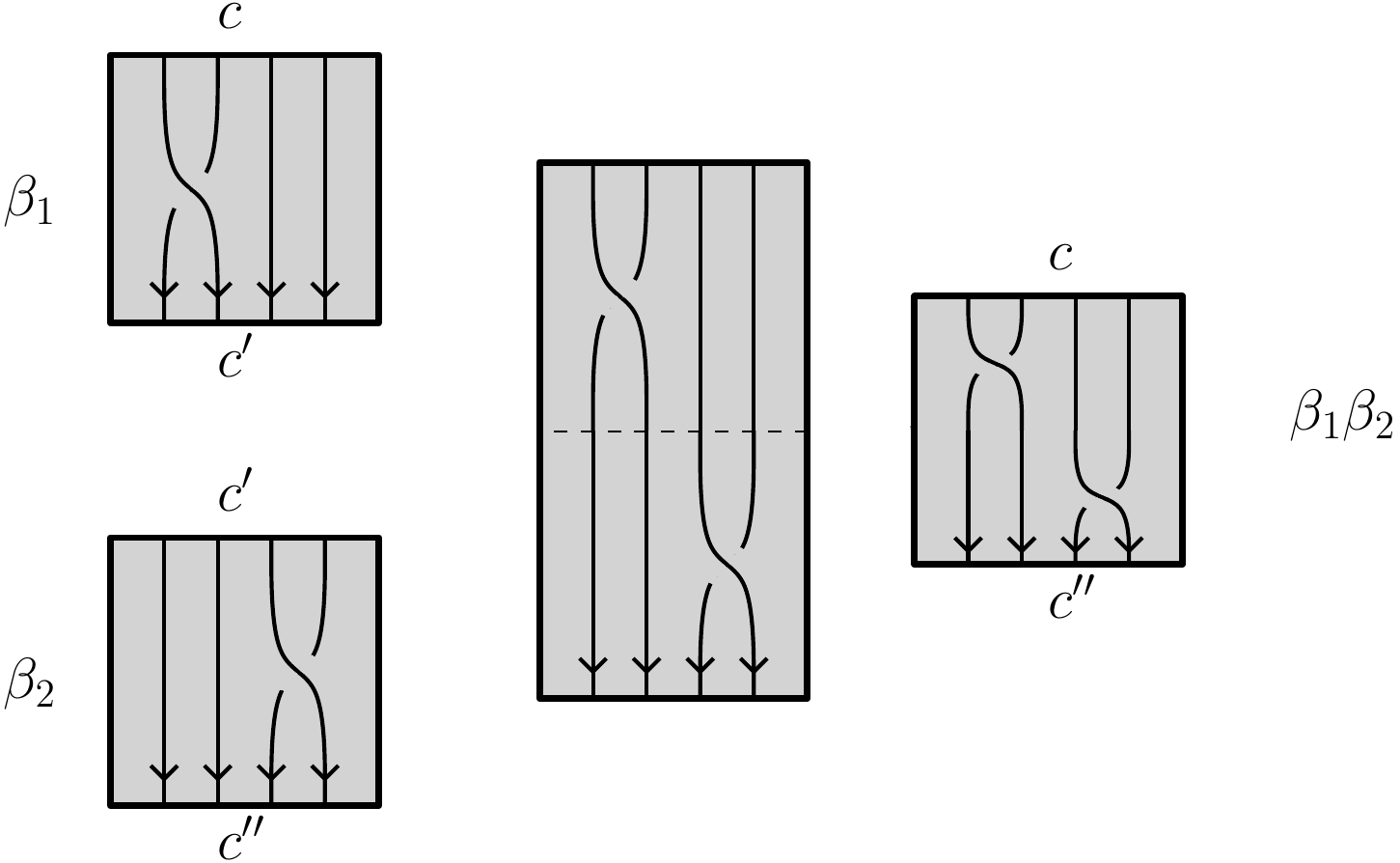}
\caption{A $(c,c')$-braid $\beta_1$, a $(c',c'')$-braid $\beta_2$ and their composition, the $(c,c'')$-braid $\beta_1\beta_2$. Here $\beta_1$ is the generator $\sigma_1$ of $B_4$, while $\beta_2$ is $\sigma_3$.}
\label{fig:CompositionThesisColor}
\end{figure}

A braid~$\beta$ is{\em~$\mu$-colored\/} if each of its components is assigned (via a surjective map) an integer in~$\{1,2,\dots,\mu\}$ (which we call a \emph{color}). A~$\mu$-colored braid induces a coloring on its top and bottom boundary components.  A~$\mu$-colored braid is then called a~\emph{$(c,c')$-braid}, where~$c$ and~$c'$ are the sequences of~$ 1, 2,\dots,\mu$ induced by the coloring of the braid (these sequence will be referred to as \emph{$\mu$-colorings}). We shall denote by~$\id_c$ the isotopy class of the trivial~$(c,c)$-braid.  The composition of a $(c,c')$-braid $\beta_1$ with a $(c',c'')$-braid $\beta_2$ is the $(c,c'')$-braid $\beta_1 \beta_2$ depicted in Figure~\ref{fig:CompositionThesisColor}. Thus, for any $c$, we obtain a \emph{colored braid group} $B_c$ which consists of isotopy classes of $(c,c)$-braids. For instance, if $\mu=1$ (so that $c=(1,\ldots,1)$), then $B_c$ is the braid group $B_n$, while if $\mu = n$ and $c_i=i$ for each $i$, then $B_c$ is the pure braid group $P_n$. We shall often use the map $i_{c_{n+1}} \colon B_c \hookrightarrow B_{(c_1, \ldots, c_n, c_{n+1})}$ which sends $\alpha$ to the disjoint union of $\alpha $ with a trivial strand of color $c_{n+1}$, see Figure~\ref{fig:Inclusion}. Here,~$c_{n+1}$ can be equal to one of the~$n$ first~$c_i$'s.

\begin{figure}[!htb]
\centering
\includegraphics[scale=0.5]{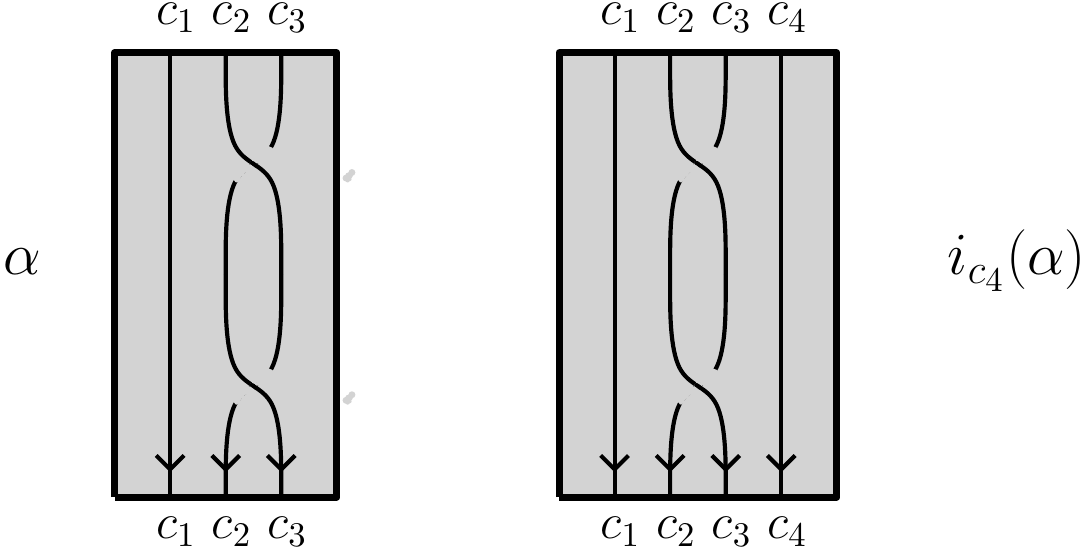}
\caption{An example of the inclusion map $i_{c_4}$.}
\label{fig:Inclusion}
\end{figure}

Finally, the closure of a $\mu$-colored braid $\beta \in B_c$ is the $\mu$-colored link $\widehat{\beta}$ obtained from $\beta$ by adding colored parallel strands in $S^3 \setminus (D^2 \times [0,1]).$ We refer to~\cite[Theorem 3.3]{Murakami} for the colored version of Alexander's theorem and instead focus on the colored version of Markov's theorem, referring to~\cite[Theorem 3.5]{Murakami} for the proof. 

\begin{proposition}
\label{prop:MarkovColored}
Two $(c,c)$-braids have isotopic closures if and only if they are related by a sequence of the following moves and their inverses: 
\begin{enumerate}
\item replace $\gamma\beta$ by $\beta \gamma$, where $\gamma$ is a $(c,c')$-braid and $\beta$ is a $(c',c)$-braid,
\item replace $\gamma$ by $\sigma^{\varepsilon}_n i_{c_n}(\gamma)$, where $\gamma$ is a $(c,c)$-braid with $n$ strands, $\sigma_n$ is viewed as a $((c_1, \ldots , c_n, c_n),(c_1, \ldots , c_n, c_n))$-braid, and $\varepsilon$ is equal to $\pm 1$.
\end{enumerate}
\end{proposition}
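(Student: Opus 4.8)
The plan is to prove the two implications separately: the ``if'' direction (the two colored moves preserve the colored closure) is a matter of direct inspection, while the ``only if'' direction is obtained by bootstrapping the classical (uncolored) Markov theorem and keeping track of colors.

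For the easy direction I would check each move by a picture. Move~(1) is the colored analogue of conjugation-invariance of the trace closure: writing $\gamma\beta$ as a $(c,c)$-braid and $\beta\gamma$ as a $(c',c')$-braid, closing up allows one to slide the factor $\gamma$ around the braid axis, producing a color-preserving isotopy between $\widehat{\gamma\beta}$ and $\widehat{\beta\gamma}$. The colors match automatically because $\gamma$ and $\beta$ are composable, so the bottom coloring of $\gamma$ agrees with the top coloring of $\beta$ and vice versa. Move~(2) adds a small kink via $\sigma_n^{\varepsilon}$; upon closure this is an isotopy of Reidemeister-I type, and since the new strand is assigned the color $c_n$ of the strand it runs alongside, the closure $\widehat{\sigma_n^{\varepsilon} i_{c_n}(\gamma)}$ is color-isotopic to $\widehat{\gamma}$.

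For the hard direction, suppose $\beta$ and $\beta'$ have color-isotopic closures. Forgetting colors, the classical Markov theorem produces a finite sequence of (uncolored) conjugations and stabilizations relating $\beta$ and $\beta'$. The key observation is that a coloring is nothing but a locally constant function on the components of the closure, and a color-preserving isotopy fixes the induced bijection between the components of $\widehat{\beta}$ and $\widehat{\beta'}$. I would therefore upgrade the uncolored sequence to a colored one by transporting this component data through each elementary move: in a conjugation $\gamma\beta \leftrightarrow \beta\gamma$ the strands, hence their colors, are merely permuted cyclically, while in a stabilization the newly created strand merges into the component of the $n$-th strand, so that its color is \emph{forced} to equal $c_n$---which is precisely the constraint imposed in move~(2). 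Assigning to each strand of each intermediate braid the color of the component of $\widehat{\beta}$ it belongs to should then yield a sequence of legal colored moves.

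The main obstacle is making this ``coloring of the sequence'' rigorous: a priori the classical Markov sequence consists of \emph{uncolored} braids, and one must verify that colors can be assigned to every intermediate braid in a way simultaneously compatible with all the moves. This amounts to checking that a standard proof of Markov's theorem can be run ``component-wise'', so that each elementary modification preserves the partition of strands into components, up to the controlled merging occurring at a stabilization. Rather than invoking the uncolored theorem as a black box, the cleanest route is probably to adapt a geometric proof directly---for instance the braid-foliation argument of Birman--Menasco, or the Reidemeister-move bookkeeping underlying Alexander's and Markov's theorems---carrying the color function along each step; the coloring then never has to be reconstructed, only transported.
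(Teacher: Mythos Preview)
The paper does not prove this proposition at all: it simply refers the reader to \cite[Theorem 3.5]{Murakami} for the proof. So there is no ``paper's own proof'' to compare against, only an external citation.

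Your sketch is reasonable and the outline is the natural one. The easy direction is fine. For the hard direction, your idea of coloring each intermediate braid in an uncolored Markov sequence by the component of $\widehat{\beta}$ each strand belongs to is essentially correct: since top and bottom endpoints at position $i$ are identified in the closure, each intermediate braid automatically becomes a $(c'',c'')$-braid for the induced coloring $c''$, and the stabilization strand is forced to carry color $c_n$ as you observe. The conjugation move then factors as a product of a $(c'',c''')$-braid and a $(c''',c'')$-braid, with $c'''$ determined by the intermediate cut. So the bookkeeping can be made to work.

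That said, you correctly identify that this is not yet a proof: you have a strategy and a plausibility argument, but the step ``assigning to each strand the color of its component yields a sequence of legal colored moves'' still needs to be written out carefully, and your final paragraph essentially concedes this. Since the paper is content to cite Murakami, either doing the same or completing your argument in full detail would be acceptable; the sketch as it stands is an outline rather than a proof.
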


We conclude this subsection by discussing the action of (colored) braids on $\SU(2)^n$. Topologically, this action can be understood as follows. Any braid $\beta$ can be represented by a homeomorphism  of the punctured disk $D_n$ which fixes the boundary pointwise~\cite{Birman}. As a consequence, the braid group induces a \emph{right} action of $B_n$ on the free group $F_n=\pi_1(D_n).$ More explicitly, this action can be described on the generators $x_1,\ldots,x_n$ of $F_n$ as follows:
\begin{equation}
\label{eq:Action}
x_j \sigma_i =
\begin{cases}
x_i x_{i+1} x_i^{-1}  & \mbox{if }  j=i, \\
 x_i                            & \mbox{if }  j=i+1, \\ 
x_j                             & \mbox{otherwise. } \\ 
 \end{cases}
 \end{equation}
In particular, every braid $\beta$ induces a homeomorphism $R(D_n)  \to R(D_n)$, which we still denote by $\beta$. More concretely, identifying $R(D_n)$ with $\SU(2)^n$, this homeomorphism maps $(X_1,\ldots,X_n)$ to $ (X_1 \beta,\ldots,X_n\beta)$. So, for instance, the generator $\sigma_1 \in B_2$ acts as~$(X_1,X_2)\sigma_1=(X_1X_2X_1^{-1},X_1)$.  Note that we chose to follow Birman's conventions~\cite{Birman} and to think of~(\ref{eq:Action}) as a right action. In particular, we obtain a homomorphism $B_n \to \operatorname{Aut}(F_n)$. Working with left actions would lead to an \emph{anti}-homomorphism (see e.g.~\cite{CimasoniTuraev, ConwayEstier}).

\begin{remark}
\label{rem:ConventionsBraids}
Our conventions match those of Lin~\cite[page 339]{Lin}. On the other hand, given a braid $\beta \in B_n$ and $w \in F_n$, some authors, such as Long~\cite[page 539]{Long}, choose to define $\beta \cdot w$ as~$w \beta^{-1}$; however given an automorphism $\theta$ of $F_n$, Long then sets $\theta(X_1,\ldots,X_n):=(\theta^{-1} X_1,\ldots, \theta^{-1}X_n)$ and therefore obtains the same action of $B_n$ on $\SU(2)^n$ as we do~\cite[page 537]{Long}. On the other hand, Heusener-Kroll also use the action $\beta \cdot w=w \beta^{-1}$~\cite[Example 3.1]{HeusenerKroll} but define $\beta (X_1,\ldots,X_n)$ as $(\beta \cdot X_1,\ldots,\beta \cdot X_n)$~\cite[bottom of page 484]{HeusenerKroll}. 
\end{remark}

\begin{remark}
\label{rem:FixedPoint}
The fixed point set of the homeomorphism $\beta \colon \SU(2)^n \to \SU(2)^n$ induced by~$\beta$ can be identified with the representation space of $X_{\widehat{\beta}}$, see for instance~\cite[Lemma 1.2]{Lin}. Reformulating, $R(X_{\widehat{\beta}})$ is equal to the intersection of the diagonal~$\Lambda_n \subset \SU(2)^n \times \SU(2)^n$ with the graph $\Gamma_\beta \subset \SU(2)^n \times \SU(2)^n$ of the homeomorphism of $\SU(2)^n$ induced by $\beta$.
\end{remark}

Building on the work of Lin~\cite{Lin} and Heusener-Kroll~\cite{HeusenerKroll}, the invariant we shall define in Subsection~\ref{sub:TheInvarianth} makes crucial use of Remark~\ref{rem:FixedPoint}. Indeed we wish to ``count" (conjugacy classes of) irreducible representations in $R(X_{\widehat{\beta}})=\Lambda_n \cap \Gamma_\beta$ with certain traces fixed. For this reason, given a $\mu$-tuple $\alpha=(\alpha_1,\ldots,\alpha_\mu)$ of real numbers in $(0,\pi)^\mu$ and a coloring $c$, we shall frequently consider the following subspace of $\SU(2)^n$:
$$ R_n^{\alpha,c}=\lbrace  (X_1,\ldots,X_n) \in \SU(2)^n \vert \ \tr(X_i)=2\operatorname{cos}(\alpha_{c_i}) \text{ for } i=1,\ldots,n \rbrace. $$
In particular, observe that if $\beta$ is an $n$-stranded $(c,c')$-braid, then the aforementioned homeomorphism $\beta \colon \SU(2)^n \to \SU(2)^n$ descends to a well defined homeomorphism $\beta \colon R_n^{\alpha,c} \to R_n^{\alpha,c'}$.
 Of particular interest is the graph of this homeomorphism:
$$ \Gamma_\beta^{\alpha} = \lbrace (A_1, \ldots, A_n, A_1\beta, \ldots, A_n\beta) \ \vert \ (A_1,\ldots,A_n) \in R_n^{\alpha,c}  \rbrace \subset R_n^{\alpha,c}  \times R_n^{\alpha,c'}.$$
For instance, the trivial $(c,c)$-braid $\beta=\id_c$ induces the identity automorphism on the free group and thus on $R(F_n)=\SU(2)^n$. Thus the graph $\Gamma_{\id_c} \subset \SU(2)^n \times \SU(2)^n$ coincides with the diagonal~$\Lambda_n$. We use the following notation for the corresponding space of fixed traces:
$$\Lambda_n^{\alpha,c}= \lbrace (A_1, \ldots, A_n,A_1,\ldots, A_n)  \ \vert \ (A_1, \ldots, A_n) \in R_n^{\alpha,c} \rbrace.$$
As we alluded to above, our goal is to make sense of a signed count of conjugacy classes of irreducible representations $\rho \colon \pi_1(X_{\widehat{\beta}}) \to \SU(2)$ such that the trace of any meridian of the sublink $\widehat{\beta}_j$ of $\widehat{\beta}$ is equal to $2\cos(\al_{c_j})$. In other words, using Remark~\ref{rem:FixedPoint} and the notations of Subsection~\ref{sub:RepresentationSpaces}, we are trying to make sense of a signed count of the elements of $\widehat{\Lambda}_n^{\alpha,c} \cap \widehat{\Gamma}_\beta^{\alpha}$.

\subsection{Definition of the invariant}
\label{sub:TheInvarianth}
The goal of this subsection is to define the multivariable Casson-Lin invariant of a $(c,c)$-braid. Our approach builds on the work of Lin~\cite{Lin} and of Heusener-Kroll~\cite{HeusenerKroll}, see also~\cite{HeusenerOrientation} and~\cite{HarperSaveliev}.
\medbreak
Let $\beta$ be a $\mu$-colored $n$-stranded $(c,c)$-braid and let $\al = (\al_1, \ldots, \al_\mu)$ lie in $(0,\pi)^\mu$. The invariant that we shall consider requires us to make sense of the algebraic intersection of (quotients of)~$\Lambda_n^{\alpha,c}$ with $\Gamma_\beta^{\alpha}$ inside (a quotient of) the following space:
$$ H_n^{\alpha,c} = \lbrace(A_1, \ldots, A_n, B_1, \ldots, B_n) \in R_n^{\alpha,c} \times R_n^{\alpha,c}  \ \vert \ \prod_{i=1}^n A_i = \prod_{i=1}^n B_i \rbrace. $$
The inclusion $\Gamma_\beta^{\alpha} \subset H_n^{\alpha,c}$ holds because any braid $\xi \in B_n$ fixes $x_1\cdots x_n \in F_n$; see~\eqref{eq:Action}.
In order to count conjugacy classes of the aforementioned irreducible representations, we first need to avoid the abelian locus of the various representation varieties. For this reason, we consider the following set which should be understood (under the isomorphism~$R(F_n) \cong \SU(2)^n$) as the subspace of abelian representations of $R(F_n)$:
\begin{equation}
\label{eq:AbelianLocus}
S_n^{\alpha,c} = \lbrace(A_1, \ldots, A_n, B_1, \ldots, B_n) \in R_n^{\alpha,c} \times R_n^{\alpha,c} \ \vert \ A_iA_j = A_jA_i , A_iB_j = B_jA_i, B_iB_j=B_jB_i\rbrace.
\end{equation}
Slightly abusing notation, we shall write $S_n^{\alpha,c}$ instead of $S_n^{\alpha,c} \cap \Theta^{\alpha,c}$, where $\Theta^{\alpha,c}$ is any of the previously defined spaces $\Gamma_\beta^{\alpha}, \Lambda_n^{\alpha,c}$ or $H_n^{\alpha,c}$. As described in Subsection~\ref{sub:RepresentationSpaces}, $\SO(3)$ acts freely on the resulting sets of irreducible representations and we make the following definitions:
$$  \widehat{\Lambda}_n^{\alpha,c}=(\Lambda_n^{\alpha,c} \setminus S_n^{\alpha,c})/\SO(3), \ \ \ \ 
 \widehat{\Gamma}_\beta^\alpha=(\Gamma_\beta^\alpha \setminus S_n^{\alpha,c})/\SO(3),  \ \ \ \ 
  \widehat{H}_n^{\alpha,c}=(H_n^{\alpha,c} \setminus S_n^{\alpha,c})/\SO(3). 
$$
Observe that both $\widehat{\Lambda}_n^{\alpha,c}$ and $\widehat{\Gamma}_\beta^{\alpha}$ are smooth open $(2n-3)$-dimensional manifolds:~$\Lambda_n^{\alpha,c}$ and $\Gamma_\beta^{\alpha}$ are~$2n$ dimensional (the subspaces of matrices in $\SU(2)$ with fixed trace are 2-dimensional) and the 3-dimensional Lie group $\SO(3)$ acts freely and properly on the open manifolds $\Lambda_n^{\alpha,c} \setminus S_n^{\alpha,c}$ and $\Gamma_\beta^{\alpha} \setminus S_n^{\alpha,c}$. Recalling Remark~\ref{rem:FixedPoint}, the representations we wish to consider lie in the intersection~$\widehat{\Gamma}_\beta^{\alpha} \cap \widehat{\Lambda}_n^{\alpha,c}$, viewed as a subspace of $\widehat{H}_n^{\alpha,c}$. In order for a ``count" to make sense, we must now check that this intersection is compact and that $\widehat{\Gamma}_\beta^{\alpha}$ and $\widehat{\Lambda}_n^{\alpha,c}$ are half dimensional in $\widehat{H}_n^{\alpha,c}$. We start by proving the latter, namely we prove that $\widehat{H}_n^{\alpha,c}$ is $4n-6$ dimensional.

\begin{lemma}
\label{lem:DimensionIrred}
The space $H_n^{\alpha,c} \setminus S_n^{\alpha,c}$ is a smooth open $(4n-3)$-dimensional manifold. In particular $\widehat{H}_n^{\alpha,c}$ is $(4n-6)$ dimensional.
\end{lemma}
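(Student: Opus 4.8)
The plan is to exhibit $H_n^{\alpha,c}$ as a level set of a smooth map into $\SU(2)$ and to apply the regular value theorem on the complement of the abelian locus. Since $\alpha_{c_i}\in(0,\pi)$, the set of matrices in $\SU(2)$ of trace $2\cos(\alpha_{c_i})$ is a single conjugacy class, diffeomorphic to $S^2$; hence $R_n^{\alpha,c}$ is a smooth (compact) $2n$-manifold and $R_n^{\alpha,c}\times R_n^{\alpha,c}$ is a smooth $4n$-manifold. I would define the smooth map
\[
P\colon R_n^{\alpha,c}\times R_n^{\alpha,c}\to \SU(2),\qquad P(A_1,\ldots,A_n,B_1,\ldots,B_n)=\Big(\prod_{i=1}^n A_i\Big)\Big(\prod_{i=1}^n B_i\Big)^{-1},
\]
so that $H_n^{\alpha,c}=P^{-1}(\Id)$. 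As $S_n^{\alpha,c}$ is closed, its complement $\mathcal{U}$ is open, and it suffices to show that $dP$ is surjective at every point of $H_n^{\alpha,c}\cap\mathcal{U}=H_n^{\alpha,c}\setminus S_n^{\alpha,c}$. The regular value theorem then yields that this space is a smooth manifold of dimension $4n-\dim\SU(2)=4n-3$.

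To compute $dP$ at $(A,B)\in H_n^{\alpha,c}$, set $a:=\prod_i A_i=\prod_i B_i=:b$. Tangent vectors to the conjugacy class at $A_k$ have the form $[\eta_k,A_k]$ with $\eta_k\in\sudeux$, and a direct computation for a variation in the $A_k$-direction alone gives
\[
dP([\eta_k,A_k])=\Ad_{c_k}\big((\id-\Ad_{A_k})\eta_k\big),\qquad c_k:=A_1\cdots A_{k-1},
\]
while the $B_k$-variations contribute $-\Ad_{c'_k}\big((\id-\Ad_{B_k})\zeta_k\big)$ with $c'_k:=B_1\cdots B_{k-1}$. Writing $v_k,w_k\in\sudeux$ for the axes of $A_k,B_k$ (well defined since $\alpha_{c_i}\in(0,\pi)$ forces $A_k,B_k\neq\pm\Id$), the operator $\id-\Ad_{A_k}$ vanishes on $\R v_k$ and restricts to an invertible endomorphism of the plane $v_k^\perp$ (because $\Ad_{A_k}$ is a rotation by $2\alpha_{c_k}\in(0,2\pi)$, hence nontrivial), so its image is exactly $v_k^\perp$. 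Since the tangent space of the product splits over its factors, $dP$ is the sum of these partial contributions and
\[
\im(dP)=\sum_{k=1}^n \Ad_{c_k}(v_k^\perp)+\sum_{k=1}^n \Ad_{c'_k}(w_k^\perp)\subseteq \sudeux.
\]

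It remains to show this sum is all of $\sudeux\cong\R^3$ whenever $(A,B)\notin S_n^{\alpha,c}$. Each summand is a $2$-plane with normal $\Ad_{c_k}(v_k)$ (resp.\ $\Ad_{c'_k}(w_k)$), so the sum fails to be surjective only if all these normals are parallel. I would argue by cases. If the $A_i$ do not all commute, then since $\Ad_{A_1}$ preserves the decomposition $\R v_1\oplus v_1^\perp$, an induction on $k$ shows the normals $\Ad_{c_k}(v_k)$ cannot all be parallel, whence already $\sum_k \Ad_{c_k}(v_k^\perp)=\sudeux$; symmetrically if the $B_i$ do not all commute. In the remaining case the $A_i$ share a common axis $u$ and the $B_i$ a common axis $u'$, so the two sums reduce to $u^\perp$ and $u'^\perp$; but $(A,B)\notin S_n^{\alpha,c}$ forces some $A_i,B_j$ not to commute, i.e.\ $u\neq\pm u'$, and then $u^\perp+u'^\perp=\sudeux$. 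This proves surjectivity of $dP$, hence that $H_n^{\alpha,c}\setminus S_n^{\alpha,c}$ is a smooth open $(4n-3)$-manifold. The final assertion follows because $\SO(3)$ acts freely and properly on the irreducible locus (Subsection~\ref{sub:RepresentationSpaces}) while preserving $H_n^{\alpha,c}$ and $S_n^{\alpha,c}$, so $\widehat{H}_n^{\alpha,c}$ has dimension $(4n-3)-3=4n-6$.

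The main obstacle is the surjectivity computation, and specifically its third case: there neither the $A$-variations nor the $B$-variations span $\sudeux$ on their own, and one genuinely needs the cross-commutation conditions $A_iB_j=B_jA_i$ built into the definition of $S_n^{\alpha,c}$. I would also take care to verify that $\id-\Ad_{A_k}$ has image exactly $v_k^\perp$ (which uses $2\alpha_{c_k}\in(0,2\pi)$) and that the $\SO(3)$-action is genuinely free off $S_n^{\alpha,c}$, both being routine but essential to the dimension count.
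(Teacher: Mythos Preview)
Your proof is correct and follows essentially the same approach as the paper: exhibit $H_n^{\alpha,c}$ as the preimage of $\Id$ under the product map $P=f_n$ into $\SU(2)$, then verify that $\Id$ is a regular value off the abelian locus. The paper simply cites \cite[Lemma~1.5]{Lin} and \cite[Lemma~3.3]{HeusenerKroll} for the submersion claim, whereas you spell out the differential computation and the case analysis on the axes; your argument is thus a fleshed-out version of the same proof. One cosmetic point: you use the symbol $c_k$ both for the partial product $A_1\cdots A_{k-1}$ and for the $k$-th color (in $\alpha_{c_k}$), which is harmless in context but worth disambiguating.
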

\begin{proof}
Consider the map $f_n \colon R_{n}^{\alpha,c} \times R_{n}^{\alpha,c}   \to\SU(2)$ defined~by $f_n(A_1, \ldots, A_n, B_1, \ldots, B_n) = A_1\cdots A_n B_n^{-1} \cdots B_1^{-1}.$ Observe that $H_n^{\alpha,c}=f_n^{-1}(\Id)$. The same arguments as in \cite[Lemma~1.5]{Lin} and~\cite[Lemma 3.3]{HeusenerKroll}) show that $f_n$ restricts to a submersion ${f_n}_|$ on $H_n^{\alpha,c} \setminus S_n^{\alpha,c}$. As a consequence, $H_n^{\alpha,c} \setminus S_n^{\alpha,c}={f_n}_|^{-1}(\Id)$ is a smooth manifold whose dimension is equal to $\dim(R_n^{\alpha,c} \times R_n^{\alpha,c})-\dim(\SU(2))=4n-3$. This concludes the proof of the lemma.
\end{proof}

Next, making use of Section~\ref{sec:AlexanderAbelian}, we show that the space $\widehat{\Gamma}_\beta^\alpha \cap \widehat{\Lambda}_n^{\alpha,c}$ is compact.
For a fixed $\alpha = (\al_1, \ldots, \al_\mu)$ in $(0,\pi)^\mu$, we consider the finite set 
$$S(\alpha) = \lbrace (e^{\varepsilon_1 2i\al_1}, \ldots, e^{\varepsilon_\mu 2i\al_\mu}) \ | \ \varepsilon_i = \pm 1 \text{ for } i=1,\ldots,\mu \rbrace .$$
The set $S(\alpha)$ contains $2^\mu$ elements, indexed by the $\varepsilon=(\varepsilon_1,\ldots,\varepsilon_\mu)$ in $\lbrace \pm 1 \rbrace^\mu$.
For this reason, we will sometimes write elements of $\mathcal{S}(\alpha)$ as $\omega_\varepsilon$, where $\varepsilon \in \lbrace \pm 1 \rbrace^\mu$.
We will only do this when no confusion occurs with the coordinates of $\omega$.

\begin{proposition}
\label{prop:Compact}
Let $\al = (\al_1, \ldots, \al_\mu)$ lie in $(0,\pi)^\mu$ and let $\beta$ be an $n$-stranded $\mu$-colored $(c,c)$-braid. If~$\Delta_{\widehat{\beta}}(\omega_\varepsilon) \neq 0$ for all $\omega_\varepsilon \in S(\alpha)$, then $\widehat{\Gamma}_\beta^\alpha \cap \widehat{\Lambda}_n^{\alpha,c}$ is compact.
\end{proposition}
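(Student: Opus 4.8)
The plan is to reduce the compactness of $\widehat{\Gamma}_\beta^\alpha \cap \widehat{\Lambda}_n^{\alpha,c}$ to the Burde--de Rham type statement of Corollary~\ref{cor:AlexReducibleSU2}, working first downstairs, before taking the $\SO(3)$-quotient. By Remark~\ref{rem:FixedPoint}, the intersection $\Gamma_\beta^\alpha \cap \Lambda_n^{\alpha,c}$ is exactly the set of representations $\rho \colon \pi_1(M_{\widehat\beta}) \to \SU(2)$ of the link group of $\widehat\beta$ whose meridians satisfy $\tr \rho(m_i) = 2\cos(\alpha_{c_i})$. Since the graph $\Gamma_\beta^\alpha$ (of a homeomorphism) and the diagonal $\Lambda_n^{\alpha,c}$ are both closed in the compact space $R_n^{\alpha,c} \times R_n^{\alpha,c}$, their intersection is compact. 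The irreducible locus $(\Gamma_\beta^\alpha \cap \Lambda_n^{\alpha,c}) \setminus S_n^{\alpha,c}$ is open in it, so to prove it is compact it suffices to show it is closed; that is, that no sequence of irreducible representations in $\Gamma_\beta^\alpha \cap \Lambda_n^{\alpha,c}$ can converge to an abelian one.

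Suppose then that $\rho_k \to \rho_\infty$ with each $\rho_k$ irreducible and $\rho_\infty \in S_n^{\alpha,c}$ abelian; note $\rho_\infty$ still lies in $\Gamma_\beta^\alpha \cap \Lambda_n^{\alpha,c}$ because this set is closed. I would first identify $\rho_\infty$. Being abelian with $\tr\rho_\infty(m_i) = 2\cos(\alpha_{c_i})$, it is conjugate to a diagonal representation sending each meridian to $\operatorname{diag}(e^{\pm i \alpha_{c_i}}, e^{\mp i \alpha_{c_i}})$; factoring through $H_1(M_{\widehat\beta})$, it is therefore conjugate to a colored abelian representation $\rho_\omega$ for some $\omega \in \T_*^\mu$ whose square $\omega^2$ is one of the points $\omega_\varepsilon \in S(\alpha)$. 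By hypothesis $\Delta_{\widehat\beta}(\omega^2) \neq 0$, so Corollary~\ref{cor:AlexReducibleSU2} applies and provides a neighborhood of $\rho_\omega = \rho_\infty$ in $R(M_{\widehat\beta})$ consisting entirely of abelian representations. This contradicts the convergence of the irreducible $\rho_k$ to $\rho_\infty$, and the contradiction shows that the irreducible locus is closed, hence compact.

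Finally I would pass to the quotient. Since $\Gamma_\beta^\alpha$, $\Lambda_n^{\alpha,c}$ and $S_n^{\alpha,c}$ are all invariant under the conjugation action of $\SO(3)$, the intersection $\widehat{\Gamma}_\beta^\alpha \cap \widehat{\Lambda}_n^{\alpha,c}$ inside $\widehat{H}_n^{\alpha,c}$ is precisely the image of the compact set $(\Gamma_\beta^\alpha \cap \Lambda_n^{\alpha,c}) \setminus S_n^{\alpha,c}$ under the continuous quotient map by the compact group $\SO(3)$. The continuous image of a compact set being compact, the proposition follows.

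The step I expect to be the main obstacle is the identification of $\rho_\infty$ in the middle paragraph: one must match \emph{every} possible abelian limit to one of the finitely many points $\omega_\varepsilon \in S(\alpha)$ at which the Alexander polynomial is assumed nonzero. The delicate point is that the limiting meridional eigenvalues carry a sign $\varepsilon_i = \pm 1$ for each strand, and one has to verify that the resulting evaluation point genuinely lies in $S(\alpha)$ so that the colored hypothesis $\Delta_{\widehat\beta}(\omega_\varepsilon)\neq 0$ really covers all abelian characters compatible with the trace constraints $R_n^{\alpha,c}$. In particular, when two components share a color, one should check that the relevant sign patterns are still controlled by the hypothesis (passing, if necessary, through the ordered refinement of the coloring), after which the Burde--de Rham corollary applies and the remaining arguments are routine.
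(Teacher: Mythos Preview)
Your proposal is correct and follows essentially the same route as the paper's proof: reduce to showing that $(\Gamma_\beta^\alpha \cap \Lambda_n^{\alpha,c}) \setminus S_n^{\alpha,c}$ is closed in the compact set $\Gamma_\beta^\alpha \cap \Lambda_n^{\alpha,c}$, take a sequence of irreducibles converging to an abelian limit, identify that limit with some $\rho_{\omega}$ with $\omega^2=\omega_\varepsilon\in S(\alpha)$, and invoke Corollary~\ref{cor:AlexReducibleSU2} for a contradiction; the passage to the $\SO(3)$-quotient is handled in the same way (the paper phrases it as ``since $\SO(3)$ is compact'' at the outset, you phrase it as ``continuous image of a compact set'' at the end). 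The subtlety you flag in your last paragraph---whether, when two distinct components share a color, the abelian limit is still of the form $\rho_{\omega_\varepsilon}$ for some $\omega_\varepsilon\in S(\alpha)$---is a genuine point of care; the paper's proof asserts this identification without further comment, so your proof is no less complete than the paper's on this score, and in the ordered case (which is what the main theorems use) the issue does not arise.
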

\begin{proof}
Since $\SO(3)$ is compact, it is sufficient to prove that $(\Lambda_n^{\alpha,c} \setminus S_n^{\alpha,c}) \cap (\Gamma_\beta^\alpha \setminus S_n^{\alpha,c})$ is compact. As this set lies in the compact set $\SU(2)^{2n}$, we are reduced to proving that it is closed. Let~$(\rho_k)_{k \in \N}$ be a convergent sequence of representations in $(\Lambda_n^{\alpha,c} \setminus S_n^{\alpha,c}) \cap (\Gamma_\beta^\alpha \setminus S_n^{\alpha,c})$, with limit $\rho_\infty \in \SU(2)^{2n}$. Since $\Lambda_n^{\alpha,c}$ and $ \Gamma_\beta^\alpha$ are closed in $\SU(2)^{2n}$, it follows that $\rho_\infty$ lies in $\Lambda_n^{\alpha,c}\cap\Gamma_\beta^\alpha$. 
 By way of contradiction, assume that $\rho_\infty$ is abelian. The $n$ components of~$\rho_\infty \in \SU(2)^n$ are therefore simultaneously conjugated to~$\bsm \omega_{c_i}^{1/2}&0\\0&\omega_{c_i}^{-1/2}\esm$ for some $\omega:=\omega_\varepsilon$ in $S(\alpha)$. Since $\Delta_{\widehat{\beta}}(\omega) \neq 0$, Corollary~\ref{cor:AlexReducibleSU2} implies that $\rho_k$ is abelian for $k$ big enough, a contradiction. We therefore deduce that $\rho_\infty$ lies in~$(\Lambda_n^{\alpha,c} \setminus S_n^{\alpha,c}) \cap (\Gamma_\beta^\alpha \setminus S_n^{\alpha,c})$, concluding the proof of the proposition.
\end{proof}


Perturbing $\widehat{\Gamma}_\beta^\alpha$ if necessary, we can assume that it intersects transversally the diagonal~$\widehat{\Lambda}_n^{\alpha,c}$. Consequently, thanks to Proposition~\ref{prop:Compact}, we know that $\widehat{\Gamma}_\beta^\alpha \cap \widehat{\Lambda}_n^{\alpha,c}$ is a $0$-dimensional manifold. We now orient these manifolds. Use $\mathbb{S}_\theta$ to denote the set of matrices in $\SU(2)$ with trace~$2\operatorname{cos}(\theta)$. Orient this copy of $S^2$ in a fixed (but arbitrary) way. Since $R_n^{\alpha,c}$ consists of an $n$-fold product of $\mathbb{S}_{\alpha_{c_i}}$, we endow it with the product orientation. The diagonal $\Lambda_n^{\alpha,c}$ and the graph~$\Gamma_\beta^\alpha$ are naturally diffeomorphic to $R_n^{\alpha,c}$ via the projection on the first factor and they are given the induced orientations. Next, consider the map
$$ f_n \colon R_n^{\alpha,c} \times R_n^{\alpha,c} \to \SU(2)$$
which we encountered in the proof of Lemma~\ref{lem:DimensionIrred}. Using this map, we can pull back the orientation of $\SU(2)$ to obtain an orientation on $H_n^{\alpha,c} \setminus S_n^{\alpha,c}$. The adjoint action of $\SO(3)$ on~$\mathbb{S}_\theta$ is orientation preserving, hence the $\SO(3)$-quotients $\widehat{\Gamma}_\beta^\alpha, \widehat{\Lambda}_n^{\alpha,c}$ and $\widehat{H}_n^{\alpha,c}$ are orientable and we endow them with the quotient orientation.

\begin{definition}
\label{def:MultivariableCassonLin}
Let $\beta$ be a $\mu$-colored $(c,c)$-braid and let $\alpha \in (0,\pi)^\mu$. 
If $\Delta_{\widehat{\beta}}(\omega_\varepsilon) \neq 0$ for all~$\omega_\varepsilon \in S(\alpha)$, then the \emph{multivariable Casson-Lin invariant} of $\beta$ at $\alpha$ is defined as the algebraic intersection number of $\widehat{\Gamma}_\beta^\alpha$ and $\widehat{\Lambda}_n^{\alpha,c}$ inside $\widehat{H}_n^{\alpha,c}$:
$$ h^c_\beta(\alpha):=\langle \widehat{\Lambda}_n^{\alpha,c}, \widehat{\Gamma}_\beta^\alpha \rangle_{\widehat{H}_n^{\alpha,c}}.$$
\end{definition}

Given a $\mu$-colored link $L$, we wish to define $h_L$ as $h^c_\beta$, where $\beta$ is any $(c,c)$-braid whose closure is $L$. In order to obtain a well defined link invariant, we must check that $h^c_\beta$ is invariant under the colored Markov moves which were described in Proposition~\ref{prop:MarkovColored}.

\subsection{Invariance under Markov moves}
\label{sub:Markov}

In this subsection, we prove that $h^c_\beta(\alpha)$ is invariant under the two colored Markov moves described in Proposition~\ref{prop:MarkovColored}. Since the key ideas of the proofs are present in~\cite[Theorem 1.8]{Lin} and~\cite[Proposition 4.2 and Proposition 4.3]{HarperSaveliev}, we place emphasis on the role of the colors, while referring to the original references for details.
\medbreak
The invariance under the first Markov move will follow promptly from the following lemma.

\begin{lemma}
\label{lem:Markov1Prelim}
Let $\alpha$ lie in $(0,\pi)^\mu$ and let $c$ and $c'$ be $\mu$-colorings. Let $\xi_1$ be a $(c,c)$-braid, let~$\xi_2$ be a $(c,c')$-braid and view $\xi_2^{-1}$ as a $(c',c)$-braid. The multivariable Casson-Lin invariants of the $(c,c)$-braid $\xi_1$ and the $(c',c')$-braid $\xi_2^{-1}\xi_1 \xi_2 $ are related by the following equation:
$$ h^{c}_{\xi_1}(\alpha)=h^{c'}_{\xi_2^{-1}\xi_1 \xi_2}(\alpha).$$
\end{lemma}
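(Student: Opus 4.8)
The plan is to deduce the equality from an orientation-preserving diffeomorphism induced by the braid~$\xi_2$. Concretely, I would let $\Psi \colon R_n^{\alpha,c} \times R_n^{\alpha,c} \to R_n^{\alpha,c'} \times R_n^{\alpha,c'}$ be the map $(A,B) \mapsto (A\xi_2, B\xi_2)$ obtained by applying the braid action of $\xi_2$ to each factor. Since every braid fixes $x_1\cdots x_n \in F_n$ (see~\eqref{eq:Action}), one has $f_n \circ \Psi = f_n$, so $\Psi$ restricts to a diffeomorphism $H_n^{\alpha,c} \to H_n^{\alpha,c'}$.

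First I would check that $\Psi$ carries the relevant submanifolds to one another. The diagonal case is immediate: $(A,A) \mapsto (A\xi_2, A\xi_2)$, so $\Psi(\Lambda_n^{\alpha,c}) = \Lambda_n^{\alpha,c'}$. For the graphs I would use the composition law of the right action: writing $C = A\xi_2$, one computes $(A\xi_2)(\xi_2^{-1}\xi_1\xi_2) = A\xi_1\xi_2$, whence $(A, A\xi_1) \mapsto (C, C(\xi_2^{-1}\xi_1\xi_2))$ and thus $\Psi(\Gamma_{\xi_1}^\alpha) = \Gamma_{\xi_2^{-1}\xi_1\xi_2}^\alpha$. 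Because the braid action is defined by words in the $X_i$, it commutes with simultaneous conjugation and is hence $\SO(3)$-equivariant; and because it descends to an automorphism of the abelianization, it preserves the abelian locus, $\Psi(S_n^{\alpha,c}) = S_n^{\alpha,c'}$. Consequently $\Psi$ descends to a diffeomorphism $\widehat{\Psi} \colon \widehat{H}_n^{\alpha,c} \to \widehat{H}_n^{\alpha,c'}$ carrying $\widehat{\Lambda}_n^{\alpha,c} \to \widehat{\Lambda}_n^{\alpha,c'}$ and $\widehat{\Gamma}_{\xi_1}^\alpha \to \widehat{\Gamma}_{\xi_2^{-1}\xi_1\xi_2}^\alpha$. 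Note that both invariants are simultaneously defined, since $\widehat{\xi_1}$ and $\widehat{\xi_2^{-1}\xi_1\xi_2}$ are isotopic (first Markov move with $\gamma = \xi_2^{-1}$, $\beta = \xi_1\xi_2$), so the Alexander hypothesis holds for both.

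The final---and main---step is the orientation bookkeeping. Since $f_n \circ \Psi = f_n$, the orientation that $f_n$ pulls back onto $H_n^{\alpha,c}$ is preserved by $\Psi$ provided $\Psi$ respects the product orientation on $R_n^{\alpha,c} \times R_n^{\alpha,c}$; this in turn reduces to the statement that the braid action $\xi_2 \colon R_n^{\alpha,c} \to R_n^{\alpha,c'}$ is orientation preserving for the product orientation on the fixed-trace spheres $\mathbb{S}_{\alpha_{c_i}}$. I expect this to be the delicate point, since a braid both permutes the factors and acts by a nonlinear word map; I would establish it by reducing to the generators $\sigma_i^{\pm 1}$ (orientation preservation being multiplicative under composition) and checking the sign of the Jacobian of $(X_1, X_2) \mapsto (X_1 X_2 X_1^{-1}, X_1)$ on $\mathbb{S}_a \times \mathbb{S}_b$, exactly as in~\cite[Theorem 1.8]{Lin}. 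Granting this, $\widehat{\Psi}$ is an orientation-preserving diffeomorphism matching the oriented pair $(\widehat{\Lambda}_n^{\alpha,c}, \widehat{\Gamma}_{\xi_1}^\alpha)$ with $(\widehat{\Lambda}_n^{\alpha,c'}, \widehat{\Gamma}_{\xi_2^{-1}\xi_1\xi_2}^\alpha)$; since algebraic intersection numbers are invariant under such diffeomorphisms, $h^c_{\xi_1}(\alpha) = \langle \widehat{\Lambda}_n^{\alpha,c}, \widehat{\Gamma}_{\xi_1}^\alpha\rangle_{\widehat{H}_n^{\alpha,c}} = \langle \widehat{\Lambda}_n^{\alpha,c'}, \widehat{\Gamma}_{\xi_2^{-1}\xi_1\xi_2}^\alpha\rangle_{\widehat{H}_n^{\alpha,c'}} = h^{c'}_{\xi_2^{-1}\xi_1\xi_2}(\alpha)$, as desired.
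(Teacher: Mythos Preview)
Your proposal is correct and follows essentially the same approach as the paper: both use the diagonal action $\xi_2 \times \xi_2$ to produce an orientation-preserving homeomorphism $\widehat{H}_n^{\alpha,c} \to \widehat{H}_n^{\alpha,c'}$ carrying $(\widehat{\Lambda}_n^{\alpha,c}, \widehat{\Gamma}_{\xi_1}^\alpha)$ to $(\widehat{\Lambda}_n^{\alpha,c'}, \widehat{\Gamma}_{\xi_2^{-1}\xi_1\xi_2}^\alpha)$, with the orientation check deferred to~\cite[Theorem~1.8]{Lin}. Your write-up is in fact more explicit than the paper's, which simply asserts the key properties and cites Lin and Harper--Saveliev for the details.
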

\begin{proof}
Recalling Subsection~\ref{sub:ColoredBraids}, the $(c,c')$-braid $\xi_2$ gives rise to an orientation preserving homeomorphism $\xi_2 \colon R_n^{\alpha,c} \to R_n^{\alpha,c'}$. One can then argue that it induces a well defined orientation preserving homeomorphism $\xi_2 \times \xi_2 \colon \widehat{H}_n^{\alpha,c}  \to \widehat{H}_n^{\alpha,c'}$. A short computation (using right actions) shows that $(\xi_2 \times \xi_2)(\widehat{\Lambda}_n^{\alpha,c})=\widehat{\Lambda}_n^{\alpha,c'}$ and $(\xi_2 \times \xi_2)(\widehat{\Gamma}^\alpha_{\xi_1})=\widehat{\Gamma}^\alpha_{\xi_2^{-1}\xi_1 \xi_2 }$.
 The result now follows promptly, see~\cite[first part of the proof of Theorem 1.8]{Lin} and~\cite[proof~of~Proposition~4.2]{HarperSaveliev}.
\end{proof}

Using Lemma~\ref{lem:Markov1Prelim}, we can prove the invariance under the first colored Markov move.

\begin{proposition}
\label{prop:InvarianceMarkov1}
The multivariable Casson-Lin invariant is preserved under the first colored Markov move.
\end{proposition}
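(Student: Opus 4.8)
The plan is to deduce Proposition~\ref{prop:InvarianceMarkov1} directly from Lemma~\ref{lem:Markov1Prelim}, after reconciling the statement of the first colored Markov move in Proposition~\ref{prop:MarkovColored} with the conjugation formula appearing in the lemma. Recall that the first move replaces a product $\gamma\beta$ by $\beta\gamma$, where $\gamma$ is a $(c,c')$-braid and $\beta$ is a $(c',c)$-braid. First I would observe that $\gamma\beta$ is a $(c,c)$-braid and $\beta\gamma$ is a $(c',c')$-braid, so that $h^{c}_{\gamma\beta}(\alpha)$ and $h^{c'}_{\beta\gamma}(\alpha)$ are both defined (note the closures $\widehat{\gamma\beta}$ and $\widehat{\beta\gamma}$ coincide as colored links, so the hypothesis $\Delta_{\widehat{\gamma\beta}}(\omega_\varepsilon)\neq 0$ for all $\omega_\varepsilon \in S(\alpha)$ simultaneously guarantees both invariants are well defined via Definition~\ref{def:MultivariableCassonLin}).

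The key algebraic step is to rewrite the move as a conjugation so that Lemma~\ref{lem:Markov1Prelim} applies. Set $\xi_2 := \beta$, viewed as a $(c',c)$-braid, and set $\xi_1 := \gamma\beta$, which is a $(c,c)$-braid. Then $\xi_2 \xi_1 \xi_2^{-1} = \beta(\gamma\beta)\beta^{-1} = \beta\gamma$, which is precisely the $(c',c')$-braid obtained after the move. I would apply Lemma~\ref{lem:Markov1Prelim} (with the roles of $c$ and $c'$ interchanged, or equivalently with $\xi_2^{-1}$ in place of $\xi_2$) to conclude
\begin{equation}
\label{eq:MarkovConjugation}
h^{c}_{\gamma\beta}(\alpha) = h^{c}_{\xi_1}(\alpha) = h^{c'}_{\xi_2\xi_1\xi_2^{-1}}(\alpha) = h^{c'}_{\beta\gamma}(\alpha),
\end{equation}
which is exactly the desired invariance. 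Thus the proof amounts to matching notation: the abstract conjugation $\xi_2^{-1}\xi_1\xi_2$ of the lemma specializes, upon the substitution $\xi_1 = \gamma\beta$ and $\xi_2 = \beta^{-1}$, to $\beta(\gamma\beta)\beta^{-1} = \beta\gamma$.

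The main (and essentially only) obstacle is bookkeeping rather than mathematics: one must verify that the colorings match up correctly, namely that conjugating a $(c,c)$-braid by a $(c',c)$-braid yields a $(c',c')$-braid, and that the homeomorphism $\xi_2 \times \xi_2$ constructed in the proof of Lemma~\ref{lem:Markov1Prelim} carries the relevant hypothesis about non-vanishing of $\Delta_{\widehat{\beta}}$ across the two color schemes $c$ and $c'$ (this is automatic since the closures agree). I would therefore keep the argument brief, noting that Proposition~\ref{prop:InvarianceMarkov1} follows immediately by applying Lemma~\ref{lem:Markov1Prelim} to the conjugation in~\eqref{eq:MarkovConjugation}, and leaving the routine verification of the coloring compatibility to the reader.
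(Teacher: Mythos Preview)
Your proposal is correct and takes essentially the same approach as the paper: both deduce the result immediately from Lemma~\ref{lem:Markov1Prelim} by recognizing the first Markov move as a conjugation. The only cosmetic difference is that the paper conjugates by $\gamma$ (taking $\xi_1=\gamma\beta$ and $\xi_2=\gamma$, so that $\xi_2^{-1}\xi_1\xi_2=\beta\gamma$ directly), whereas you conjugate by $\beta^{-1}$; either choice works equally well.
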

\begin{proof}
Let $\alpha$ lies in $(0,\pi)^\mu$, let $\xi$ be a $(c,c')$-braid and let $\eta$ be a $(c',c)$-braid. Applying Lemma~\ref{lem:Markov1Prelim} to the $(c,c)$-braid $\xi \eta$ and to the $(c,c')$-braid $\xi$, we obtain $h^{c}_{\xi\eta}(\alpha)=h^{c'}_{\xi^{-1}(\xi \eta) \xi}(\alpha)=h^{c'}_{\eta\xi}(\alpha)$. This concludes the proof of the proposition.
\end{proof}

\begin{proposition}
\label{prop:InvarianceMarkov2}
The multivariable Casson-Lin invariant is preserved under the second colored Markov move.
\end{proposition}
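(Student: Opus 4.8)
The plan is to show invariance of $h^c_\beta(\alpha)$ under the second colored Markov move, which replaces a $(c,c)$-braid $\gamma$ on $n$ strands by $\sigma_n^{\varepsilon} i_{c_n}(\gamma)$ on $n+1$ strands, where the new strand carries the same color $c_n$ as the $n$-th strand. Since the closures $\widehat{\gamma}$ and $\widehat{\sigma_n^\varepsilon i_{c_n}(\gamma)}$ are isotopic, the Alexander polynomial hypothesis in Definition~\ref{def:MultivariableCassonLin} holds for one braid if and only if it holds for the other, so both invariants are defined simultaneously. The goal is then to exhibit an orientation-preserving diffeomorphism between the relevant intersection $0$-manifolds that matches up the signed counts. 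I would treat the two cases $\varepsilon = +1$ and $\varepsilon = -1$ in parallel, as they differ only by a sign convention that is absorbed into the orientation bookkeeping.

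First I would set up the stabilization geometrically. The inclusion $i_{c_n}$ adds a trivial strand, so on representation spaces it corresponds to the embedding $R_n^{\alpha,c} \hookrightarrow R_{n+1}^{\alpha,c'}$, $(A_1,\dots,A_n) \mapsto (A_1,\dots,A_n, A_{n})$ — or more precisely, the new generator $X_{n+1}$ is constrained by the braid $\sigma_n^\varepsilon$. Using the action formula~\eqref{eq:Action}, the generator $\sigma_n$ acts by $x_n \mapsto x_n x_{n+1} x_n^{-1}$, $x_{n+1}\mapsto x_n$, fixing the others. The key computation is to understand, for a representation fixed by $\sigma_n^\varepsilon i_{c_n}(\gamma)$ lying in $\Gamma^\alpha_{\sigma_n^\varepsilon i_{c_n}(\gamma)} \cap \Lambda^{\alpha,c'}_{n+1}$, how the $(n+1)$-th coordinate $A_{n+1}$ is determined by the first $n$ coordinates. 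The fixed-point condition from the extra crossing forces $A_{n+1}$ to satisfy an equation of the form $A_{n+1} = A_n^{-1}A_{n+1}A_n$ up to the action of $\gamma$, and together with the trace constraint $\tr(A_{n+1}) = 2\cos(\alpha_{c_n})$ this pins down $A_{n+1}$ in terms of the data already present. The plan is to show that the projection forgetting the last coordinate gives a bijection from $\widehat{\Gamma}^\alpha_{\sigma_n^\varepsilon i_{c_n}(\gamma)} \cap \widehat{\Lambda}^{\alpha,c'}_{n+1}$ onto $\widehat{\Gamma}^\alpha_\gamma \cap \widehat{\Lambda}^{\alpha,c}_n$, and that this bijection respects the orientations and hence the local intersection signs.

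The main obstacle, and where I would invest the real work, is the orientation and sign analysis: one must verify that the stabilization diffeomorphism preserves (rather than reverses) the product orientations used to define the intersection number, and that the extra $S^2 = \mathbb{S}_{\alpha_{c_n}}$ factor introduced by the new strand contributes trivially to the count. Concretely, the dimension of $\widehat{H}^{\alpha,c'}_{n+1}$ is $4(n+1)-6 = 4n-2$ while $\widehat{H}^{\alpha,c}_n$ has dimension $4n-6$, so the added strand enlarges the ambient space by $4$ and enlarges each of $\widehat{\Gamma}$, $\widehat{\Lambda}$ by $2$; one must check these added directions pair off to give a transverse normal framing with trivial sign contribution. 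Since the new strand is \emph{parallel} (the trace condition forces $A_{n+1}$ to be conjugate, and in fact equal along the fixed locus, to $A_n$), the intersection stays in the non-abelian stratum precisely when the original one does, so no representation is lost or gained near the abelian locus $S^{\alpha,c}_n$.

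I would then conclude by citing the analogous computations in the uncolored case, adapting them to track the color $c_n$ of the added strand. The decisive points are already present in~\cite[Theorem 1.8]{Lin} (second Markov move for the classical Casson–Lin invariant) and in~\cite[Proposition 4.3]{HarperSaveliev}; the only genuinely new feature here is bookkeeping the color, and since the stabilizing strand inherits the color $c_n$, the trace constraint $2\cos(\alpha_{c_n})$ on the new coordinate matches that on the $n$-th coordinate, so the color-refined argument goes through verbatim. Thus I expect the proof to reduce to: (i) establishing the projection bijection via the explicit $\sigma_n^\varepsilon$-action, (ii) checking transversality is preserved, and (iii) confirming the orientation sign is $+1$, with step (iii) being the technical crux requiring careful reference to the orientation conventions fixed just before Definition~\ref{def:MultivariableCassonLin}.
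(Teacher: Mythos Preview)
Your plan is essentially the same as the paper's: construct an embedding relating the $n$-strand and $(n{+}1)$-strand intersection problems, verify it carries $\widehat{\Lambda}_n^{\alpha,c}\cap\widehat{\Gamma}_\beta^\alpha$ bijectively onto $\widehat{\Lambda}_{n+1}^{\alpha,c'}\cap\widehat{\Gamma}_{\sigma_n^\varepsilon i_{c_n}(\beta)}^\alpha$, and then defer the orientation/sign comparison to Lin~\cite[p.~346]{Lin} and Harper--Saveliev~\cite[Proposition~4.3]{HarperSaveliev}.

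There is one concrete simplification in the paper's proof that you do not use and that would clean up your argument. Before attempting the embedding, the paper applies Lemma~\ref{lem:Markov1Prelim} (invariance under conjugation) to replace $\sigma_n i_{c_n}(\beta)$ by $i_{c_n}(\beta)\sigma_n$. With the braid in this order, the embedding becomes completely explicit at the level of $R_n^{\alpha,c}\times R_n^{\alpha,c}$ (not just $R_n^{\alpha,c}$): one sends
\[
(X_1,\ldots,X_n,Y_1,\ldots,Y_n)\longmapsto (X_1,\ldots,X_n,Y_n,\,Y_1,\ldots,Y_n,Y_n),
\]
and then the verifications that $g(H_n^{\alpha,c})\subset H_{n+1}^{\alpha,c'}$, $\widehat{g}(\widehat{\Lambda}_n^{\alpha,c})\subset\widehat{\Lambda}_{n+1}^{\alpha,c'}$, $\widehat{g}(\widehat{\Gamma}_\beta^\alpha)\subset\widehat{\Gamma}_{i_{c_n}(\beta)\sigma_n}^\alpha$, and equality of the intersections are routine. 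Without this conjugation step your description of how ``$A_{n+1}$ is determined'' stays somewhat implicit (the equation $A_{n+1}=A_n^{-1}A_{n+1}A_n$ you write down is not quite the right fixed-point constraint), and the embedding you propose, $(A_1,\ldots,A_n)\mapsto(A_1,\ldots,A_n,A_n)$, lives only on the diagonal rather than on all of $H_n^{\alpha,c}$, which is what you need to compare ambient orientations. None of this is a fatal gap, but the conjugation trick is worth incorporating.
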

\begin{proof}
Fix $\alpha \in (0,\pi)^\mu$, a $\mu$-coloring $c$ and a $(c,c)$-braid $\beta$. For the sake of conciseness, we write~$c'$ instead of $(c_1,\ldots,c_n,c_n)$ and we recall from Subsection~\ref{sub:ColoredBraids} that $i_{c_n} \colon B_c \to B_{c'}$ denotes the natural inclusion which adds a trivial strand of color $c_n$ to a given $(c,c)$-braid. Viewing the generator~$\sigma_n \in B_{n+1}$ as a $(c',c')$-braid, our goal is to show that $h_{\sigma_n i_{c_n}(\beta)}^{c'}(\alpha)=h^c_\beta(\alpha)$. Using Lemma~\ref{lem:Markov1Prelim}, this is equivalent to showing that
\begin{equation}
\label{eq:WantMarkov2}
h_{i_{c_n}(\beta)\sigma_n}^{c'}(\alpha)=h^c_\beta(\alpha).
\end{equation}
Recall (arranging transversality if necessary) that the right hand side of~(\ref{eq:WantMarkov2}) is defined as the algebraic intersection of the diagonal $\widehat{\Lambda}_n^{\alpha,c}$ with the graph $\widehat{\Gamma}^{\alpha}_{\beta}$. Similarly, the left hand side of~(\ref{eq:WantMarkov2}) is the algebraic intersection of $\widehat{\Lambda}_{n+1}^{\alpha,c'}$ with $\widehat{\Gamma}^{\alpha}_{i_{c_n}(\beta)\sigma_n}.$ In order to relate these various spaces, consider the embedding $g \colon R_n^{\alpha,c} \times R_n^{\alpha,c} \to R_{n+1}^{\alpha,c'} \times R_{n+1}^{\alpha,c'}$ defined by
$$ (X_1,\ldots,X_n,Y_1,\ldots,Y_n) \mapsto (X_1,\ldots,X_n,Y_n,Y_1,\ldots,Y_n,Y_n).$$
One can check that $g(H_n^{\alpha,c}) \subset H_{n+1}^{\alpha,c'}$ and that $g$ commutes with the conjugation, thus giving rise to an embedding $\widehat{g} \colon \widehat{H}_n^{\alpha,c} \to \widehat{H}_{n+1}^{\alpha,c'}$. It can also be checked that $\widehat{g}(\widehat{\Lambda}_n^{\alpha,c})$ is contained in~$\widehat{\Lambda}_{n+1}^{\alpha,c'}$,
that $\widehat{g}(\widehat{\Gamma}_{\beta}^{\alpha})$ is contained in~$\widehat{\Gamma}_{i_{c_n}(\beta)\sigma_n}^{\alpha}$ and that $\widehat{g}(\widehat{\Lambda}_n^{\alpha,c} \cap \widehat{\Gamma}_{\beta}^{\alpha} )$ is equal to $\widehat{\Lambda}_{n+1}^{\alpha,c'} \cap \widehat{\Gamma}_{i_{c_n}(\beta)\sigma_n}^{\alpha}.$ Given $X=(X_1,\ldots,X_n)$ in $\Lambda_n^{\alpha,c} \cap \Gamma_\beta^\alpha$, the same arguments as in~\cite[page 346]{Lin} show that the intersection number of $\widehat{\Lambda}_{n+1}^{\alpha,c'}$ and~$\widehat{\Gamma}_{i_{c_n}(\beta)\sigma_n}^\alpha$ at $\widehat{g}(X,X)$ is equal to the intersection number of~$\widehat{\Lambda}_n^{\alpha,c}$ and $\widehat{\Gamma}_{\beta}^{\alpha,c}$ at~$(X,X)$. This proves~\eqref{eq:WantMarkov2} and concludes the proof of the proposition.
\end{proof}

Using the invariance under Markov moves, we now define the main invariant of this paper.

\begin{definition}
\label{def:MultivariableCassonLinLink}
Let $L$ be a $\mu$-colored link and fix $\alpha = (\alpha_1, \ldots, \alpha_{\mu}) \in (0, \pi)^\mu$. Assume that~$\Delta_L(\omega_\varepsilon) \neq 0$ for all $\omega_\varepsilon \in S(\alpha)$. The \emph{multivariable Casson-Lin invariant} of $L$ at $\alpha$ is 
defined~as
$$ h_L(\alpha):=h_\beta^c(\alpha), $$
where $\beta$ is any $(c,c)$-braid whose closure is $L$.
\end{definition}
\begin{remark}
\label{rem:HypothesisDefinition}
The multivariable Casson-Lin invariant $h_L$ can be defined for a larger subset of~$(0, \pi)^\mu$. More precisely, one can define $h_L$ on the subset $D_L$ of those~$\alpha \in (0, \pi)^\mu$ such that none of the abelian representations $\rho_{\omega_\varepsilon}$ (recall Section \ref{sub:MultivarAlex}) is a limit of irreducible representations, for any $\omega_\varepsilon \in S(\alpha)$. 
Indeed, looking at the proof of Proposition~\ref{prop:Compact}, this assumption is sufficient to guarantee that $\widehat{\Lambda}_n^{\al,c} \cap \widehat{\Gamma}_\beta^{\al}$ is compact in $\widehat{H}_n^{\al,c}$. 
In particular, note that Corollary~\ref{cor:AlexReducibleSU2} implies that $D_L$ contains the set $\lbrace \alpha \in (0,\pi)^\mu \ | \ \Delta_L(\omega_\varepsilon) \neq 0 \text{ for all } \omega_\varepsilon \in S(\alpha)  \rbrace$.
\end{remark}

We conclude this section with a last remark that was used in Remark~\ref{rem:Formula1Colored} of the introduction.
\begin{remark}
\label{rem:Diagonal}
Every ordered link $L$ has an underlying oriented link which we also denote by~$L$.
Observe that given $\alpha \in (0,\pi)$, the following equality holds provided the multivariable and single variable Casson-Lin invariants are defined:
$$h_L(\alpha,\ldots,\alpha)=h_L(\alpha).$$
Indeed, in both cases we are counting the irreducible $\SU(2)$-representations of $\pi_1(X_L)$ for which all meridional traces are fixed to be $2\operatorname{cos}(\alpha)$.
Notice furthermore since $\Delta_L(t,\ldots,t)=(t-1)\Delta_L(t)$ for a link $L$ with $n>2$ components, the multivariable Casson-Lin invariant is defined at~$(\alpha,\ldots,\alpha)$ if and only if the single-variable Casson-Lin invariant is defined at $\alpha$.
\end{remark}

\section{The colored Gassner matrices and the Potential function}
\label{sec:ColoredGassner}
This section is organized as follows. In Subsection~\ref{sub:ColoredGassner}, we recall the definition of the colored Gassner matrices, in Subsection~\ref{sub:Long}, we review a result due to Long, in Subsection~\ref{sub:Potential}, we recall the definition of the multivariable potential function. Finally, in Subsection~\ref{sub:Technical}, we prove a technical result which shall frequently be used in Section~\ref{sec:CrossingChange}. 

\subsection{The colored Gassner matrices} 
\label{sub:ColoredGassner}
In this subsection, we recall the definition of the colored Gassner matrices and of the reduced colored Gassner matrices which are multivariable generalizations of the (reduced) Burau matrices. Although references include~\cite{KirkLivingstonWang, CimasoniTuraev, ConwayEstier}, our conventions are closest to those of~\cite{ConwayTwistedBurau}. 
\medbreak
Let $F_n$ be the free group on $x_1,\ldots,x_n$. Recall from Subsection~\ref{sub:ColoredBraids} that the braid group~$B_n$ acts on $F_n$ from the right and that each $n$-stranded braid $\beta$ gives rise to an automorphism of~$F_n$ which is also denoted by $\beta$. Given a $\mu$-coloring $c=(c_1,\ldots,c_n)$, consider the map
$$\psi_c \colon F_n \rightarrow \Z^\mu=\langle t_1,\dots,t_\mu \rangle$$
which sends each~$x_i$ to $t_{c_i}$ and extend it to a homomorphism $\psi_c \colon \Z[F_n] \to \Lambda_\mu$. For later use, observe that if $\beta$ is a $(c,c)$-braid, then~$\psi_c(x_i)$ is equal to $\psi_c(x_i \beta)$ and in fact, both are equal to $t_{c_i}$. Next, consider the element $\frac{\partial (x_i \beta)}{\partial x_j}$ of the group ring $\Z[F_n]$, where $\frac{\partial}{\partial x_j} \colon \Z[F_n] \to \Z[F_n]$ denotes the Fox derivative associated to $x_i$ (see e.g.~\cite[Chapter 11]{Lickorish}). 

The main definition of this section is the following.
\begin{definition}
\label{def:Unreduced}
The (unreduced) \emph{colored Gassner matrix} of an $n$-stranded $(c,c)$-braid~$\beta$ is defined as the $n \times n$ matrix $\mathcal{B}^c_t(\beta)$ whose $i,j$-coefficient is $\psi_c \left( \frac{\partial (x_i \beta)}{\partial x_j} \right)$.  
 \end{definition}

The notation $\mathcal{B}^c_t(\beta)$ is meant to indicate that the coefficients of the colored Gassner matrix lie in $\Lambda_\mu=\Z[t_1^{\pm 1},\ldots,t_\mu^{\pm 1}]$ (i.e. $t$ is used as a shorthand for $(t_1,\ldots,t_\mu))$. When $\mu=1$, the colored Gassner matrices recover the usual matrices for the Burau representation of $B_n$. We refer the interested reader to~\cite{KirkLivingstonWang, CimasoniTuraev} for more intrinsic approaches and to~\cite[Example 3.5]{ConwayTwistedBurau} for $(c,c')$-braids. Instead, we note that the unreduced colored Gassner matrix of the generator~$\sigma_i \in B_n$, viewed as a $(c,c)$-braid, is given by
 \begin{equation}
\label{eq:GassnerFoxFurther}
\mathcal{B}^c_t(\sigma_i) = I_{i-1} \oplus
\begin{pmatrix} 1-t_{c_{i}} & t_{c_{i}} \\ 1 & 0 \end{pmatrix}
 \oplus I_{n-i-1}.
\end{equation}
Here, note that viewing $\sigma_i$ as a $(c,c)$-braid necessitates that $c_i=c_{i+1}$.
Next, following~\cite{Birman} and~\cite[Section 3 (c)]{ConwayTwistedBurau}, we deal with the reduced colored Gassner matrices. Instead of working with the free generators~$x_1,x_2\dots,x_n$ of~$F_n$, we consider the elements~$g_1, g_2, \dots,g_n,$ defined by~$g_i:=x_1 x_2 \cdots x_i$. 
 As~$g_n$ is always fixed by the action of the braid group, 
the matrix whose~$i,j$-coefficient is $\psi_c \left( \frac{\partial (g_i \beta)}{\partial g_j} \right)$ is equal to $\widetilde{B}^c_t(\beta):=\left( \begin{smallmatrix}
 \overline{\mathcal{B}}^c_t(\beta) & v \\ 0 & 1  \end{smallmatrix} \right) $ for some column vector $v$. This motivates the following definition.

\begin{definition}
\label{def:Reduced}
The \emph{reduced colored Gassner matrix} of an $n$-stranded $(c,c)$-braid $\beta$ is defined as the size $n-1$ matrix $\overline{\mathcal{B}}_t^c(\beta)$ whose $i,j$-coefficient is $\psi_c \left(\frac{\partial (g_i \beta)}{\partial g_j} \right)$.  
 \end{definition} 

When $\mu=1$, the reduced colored Gassner matrices recover matrices for the reduced Burau representation of the braid group $B_n$. We once again avoid the more intrinsic definition of the reduced colored Gassner \emph{representation} which involves homology and covering spaces, but instead refer the interested reader to~\cite{KirkLivingstonWang, CimasoniTuraev} and \cite[Theorem 1.2]{ConwayEstier}. 

We conclude this subsection with a technical lemma which will be needed in Section~\ref{sec:CrossingChange}.

\begin{lemma}
\label{lem:FixedPointGassner}
For any $(c,c)$-braid $\beta$, the submodule of fixed points of the unreduced colored Gassner matrix $\mathcal{B}_{\omega}^c(\beta)$ is generated by $\widetilde{g}_n=\left( \begin{smallmatrix} 1 & \omega_{c_1} & \omega_{c_1}\omega_{c_2} & \ldots &\omega_{c_1}\cdots \omega_{c_{n-1}} \end{smallmatrix} \right)$ whenever $\omega \in \T^\mu$ satisfies both~$\omega_{c_1}\cdots \omega_{c_n} \neq 1$ and $\Delta_{\widehat{\beta}}(\omega) \neq~0$.
\end{lemma}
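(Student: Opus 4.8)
The plan is to identify the fixed submodule of $\mathcal{B}^c_\omega(\beta)$ with a conjugate of the (easily analyzed) fixed submodule of the reduced Gassner matrix $\overline{\mathcal{B}}^c_\omega(\beta)$, and then to control the latter through the relationship between the reduced colored Gassner matrix and the Alexander polynomial. Throughout I evaluate at $\omega \in \T^\mu$ and work over $\C$, and I read ``fixed point'' as a row vector $w$ satisfying $w\,\mathcal{B}^c_\omega(\beta)=w$ (this is the convention for which $\widetilde{g}_n$, displayed as a row, is genuinely fixed; the transpose convention would single out a different vector). First I would record the change of variables between the free generators $x_1,\dots,x_n$ and the generators $g_i=x_1\cdots x_i$. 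Let $T$ be the $n\times n$ matrix with $(i,j)$-entry $\psi_c(\partial g_i/\partial x_j)$; since $\partial g_i/\partial x_j = x_1\cdots x_{j-1}$ for $j\le i$ and vanishes for $j>i$, the matrix $T$ is lower triangular with diagonal entries $\omega_{c_1}\cdots\omega_{c_{i-1}}$, all of modulus one, hence invertible over $\C$. By construction its last row is exactly $\widetilde{g}_n = e_n T$.

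Next I would establish the conjugacy relating the two Gassner matrices. The Fox-calculus chain rule for the two generating sets, combined with the identity $\psi_c\circ\beta=\psi_c$ valid for a $(c,c)$-braid (noted in Subsection~\ref{sub:ColoredGassner}), yields
$$ T\,\mathcal{B}^c_\omega(\beta) = \widetilde{B}^c_\omega(\beta)\,T, \qquad \widetilde{B}^c_\omega(\beta) = \bsm \overline{\mathcal{B}}^c_\omega(\beta) & v \\ 0 & 1 \esm, $$
with $\widetilde{B}^c_\omega(\beta)$ the Gassner matrix in the $g$-basis recorded before the statement. Since the last row of $\widetilde{B}^c_\omega(\beta)$ is $e_n$, this simultaneously reproves that $\widetilde{g}_n$ is fixed, namely $\widetilde{g}_n\,\mathcal{B}=e_n T\,\mathcal{B}=e_n\widetilde{B}\,T=e_n T=\widetilde{g}_n$. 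As $T$ is invertible, the map $w\mapsto wT^{-1}$ is a bijection from $\{w : w\mathcal{B}=w\}$ onto $\{u : u\widetilde{B}=u\}$, so it suffices to describe the fixed row vectors of $\widetilde{B}^c_\omega(\beta)$. Writing $u=(u',u_n)$ with $u'$ of length $n-1$, the block form shows that $u\widetilde{B}=u$ is equivalent to the two conditions $u'\,\overline{\mathcal{B}}^c_\omega(\beta)=u'$ and $u'\cdot v=0$. Thus everything reduces to showing that $\overline{\mathcal{B}}^c_\omega(\beta)$ admits no nonzero fixed row vector: then $u'=0$, the second condition is automatic, $u_n$ is free, the fixed submodule of $\widetilde{B}$ is spanned by $e_n$, and pulling back through $w=uT$ shows the fixed submodule of $\mathcal{B}^c_\omega(\beta)$ is spanned by $e_n T=\widetilde{g}_n$, as claimed.

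The remaining input, forcing $u'=0$, is the invertibility of $\overline{\mathcal{B}}^c_\omega(\beta)-I_{n-1}$. Here I would invoke the colored generalization of the classical relationship between the reduced Burau representation and the Alexander polynomial (see~\cite{KirkLivingstonWang, CimasoniTuraev, ConwayTwistedBurau, ConwayEstier}): up to a unit of $\Lambda_\mu$, the determinant $\det\bigl(\overline{\mathcal{B}}^c_t(\beta)-I_{n-1}\bigr)$ is the product of $\Delta_{\widehat{\beta}}(t)$ with a factor whose zero locus on $\T^\mu$ is precisely $\{\omega_{c_1}\cdots\omega_{c_n}=1\}$ (in the single-colored case this factor is $1+t+\cdots+t^{n-1}$). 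Consequently the two hypotheses $\omega_{c_1}\cdots\omega_{c_n}\neq 1$ and $\Delta_{\widehat{\beta}}(\omega)\neq 0$ together guarantee $\det\bigl(\overline{\mathcal{B}}^c_\omega(\beta)-I_{n-1}\bigr)\neq 0$, whence $u'=0$ and the proof concludes.

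The main obstacle is this last step: pinning down the exact normalization of the Gassner–Alexander determinant identity in the colored setting and verifying that the extraneous factor excluded by $\omega_{c_1}\cdots\omega_{c_n}\neq 1$ is exactly the one that appears, so that both hypotheses are used and neither is superfluous. Once this determinant computation is in hand, the two structural steps — the conjugacy $T\mathcal{B}=\widetilde{B}T$ coming from the $x$-to-$g$ change of basis, and the block triangularity of $\widetilde{B}$ — reduce the whole statement to elementary linear algebra over $\C$.
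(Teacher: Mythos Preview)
Your proposal is correct and follows essentially the same route as the paper: both pass to the $g$-basis via the change of variables, use the block-triangular form $\widetilde{B}^c_\omega(\beta)=\bsm \overline{\mathcal{B}}^c_\omega(\beta) & v \\ 0 & 1 \esm$ to reduce to showing the reduced Gassner matrix has no nonzero fixed vector, and then conclude via the determinant identity relating $\det(\overline{\mathcal{B}}^c_\omega(\beta)-I_{n-1})$ to $\Delta_{\widehat{\beta}}$. The ``obstacle'' you flag---the precise colored Gassner--Alexander identity with the $(\omega_{c_1}\cdots\omega_{c_n}-1)$ factor---is exactly what the paper supplies later in Remark~\ref{rem:GassnerConway} (via the potential function), so your instinct about where the content lies is accurate.
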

\begin{proof}
We first translate the statement into the $g_1,\ldots,g_n$ basis of $F_n$. Namely, computing the change of basis matrix between $\mathcal{B}_{\omega}^c(\beta)$ and $\widetilde{\mathcal{B}}_{\omega}^c(\beta)$ (see~\eqref{eq:ChangeOfBasis} below), the statement is equivalent to the claim that the submodule of fixed points of $\widetilde{\mathcal{B}}_{\omega}^c(\beta)$ is freely generated by $x=\left( \begin{smallmatrix} 0  & \ldots &0 &1 \end{smallmatrix} \right)$. Here, our convention is that the Burau matrices act on the right on row vectors.

Since~$x$ is fixed by~$\widetilde{\mathcal{B}}_{\omega}^c(\beta)$, we suppose that $w=\left( \begin{smallmatrix} w_1  & \ldots &w_{n-1} &w_n \end{smallmatrix} \right)$ is fixed by $\widetilde{\mathcal{B}}_{\omega}^c(\beta)$ and wish to show that~$w$ lies in the span of $x$. Using Definition~\ref{def:Reduced}, the assumption on $w$ implies that the reduced colored Gassner matrix $\overline{\mathcal{B}}_{\omega}^c(\beta)$ must fix $w':=\left( \begin{smallmatrix} w_1  & \ldots &w_{n-1} \end{smallmatrix} \right)$ (recall that we are using right actions). 
This implies that $(\overline{\mathcal{B}}_{\omega}^c(\beta)-I_{n-1})w'=0$, and we therefore deduce that~$\det(\overline{\mathcal{B}}_{\omega}^c(\beta)-I_{n-1})=0$.
Using the relation between the multivariable Alexander polynomial and the colored Gassner representation (see e.g. Remark~\ref{rem:GassnerConway} below), we infer that~$(\omega_{c_1}\cdots \omega_{c_n}-1)\Delta_{\widehat{\beta}}(\omega)=0.$
This contradicts our assumptions on $\omega$ and concludes the proof of the lemma.
\end{proof}

\subsection{A result due to Long}
\label{sub:Long}
The goal of this subsection is to recall a theorem due to Long~\cite[Theorem 2.4]{Long}. In order to state this result, we use Long's conventions regarding automorphisms of the free group. As we observed in Remark~\ref{rem:ConventionsBraids}, these conventions match ours when dealing with the action of the braid group $B_n$ on $\SU(2)^n$.

\medbreak
For an automorphism $\theta \colon F_n \to F_n$ of the free group, consider the diffeomorphism $\theta^* \colon R(F_n) \to R(F_n), \rho \mapsto \rho \circ \theta^{-1}$. Picking free generators $x_1,\ldots,x_n$ of $F_n$ and identifying~$R(F_n)$ with $\SU(2)^n$, the diffeomorphism $\theta^*$ is described as $\theta^*(X_1,\ldots,X_n)=(\theta^{-1}X_1,\ldots,\theta^{-1}X_n).$ The assignment $\theta \mapsto \theta^*$ gives rise to a homomorphism $ \operatorname{Aut}(F_n) \to \operatorname{Diff}(\SU(2)^n)$. Fixing a subgroup~$H$ of $\operatorname{Aut}(F_n)$, the restriction of this assignment produces a homomorphism $H \to \operatorname{Diff}(\SU(2)^n)$. To get a linear representation of $H$, pick a function $f \colon (0,\pi)^\mu \to \SU(2)^n$ such that $h^*f(\alpha)=f (\alpha)$ for every $\alpha=(\alpha_1,\ldots,\alpha_\mu)$ in~$(0,\pi)^\mu$ and for every $h$ in $H$, and set
\begin{align*}
\rho_\alpha \colon &H \to \operatorname{Aut}(T_{f(\alpha)}\SU(2)^n) \\
& h \mapsto T_{f(\alpha)}(h^*).
\end{align*}
The fact that $\rho_\alpha$ is a representation follows from the chain rule~\cite[Theorem 2.3]{Long}. We now restrict to the colored braid group $H=B_c$ and, for $\theta \in (0,\pi)$, we set $\mathbf{e}^{i\theta}:=\left( \begin{smallmatrix} e^{i\theta} & 0 \\ 0 & e^{-i\theta} \end{smallmatrix}  \right)$. 
Recalling the notations and conventions discussed in Remark~\ref{rem:ConventionsBraids}, we observe  that for any $\mu$-tuple $\varepsilon=(\varepsilon_{1}, \ldots, \varepsilon_{\mu}) \in \lbrace \pm1\rbrace ^\mu$, the action of a $(c,c)$-braid $\beta$ on the $n$-tuple of matrices $f(\alpha):= (\mathbf{e}^{\varepsilon_{c_1}i\alpha_{c_1}},\ldots,\mathbf{e}^{\varepsilon_{c_n}i\alpha_{c_n}})$ satisfies $ f(\alpha)\beta =f(\alpha)$.
As a consequence, we obtain representations~$\rho_\alpha$ of $B_c$. Long~\cite[Theorem 2.4]{Long} proves the following result: 

\begin{proposition}
\label{prop:Long}
Let $c=(c_1,\ldots,c_n)$ be a $\mu$-coloring, let $\alpha=(\alpha_1,\ldots,\alpha_\mu)$ lie in $(0,\pi)^\mu$ and let $\varepsilon = (\varepsilon_1, \ldots, \varepsilon_\mu) \in \lbrace \pm1\rbrace^{\mu}$.
If one sets $\mathbf{a}_\varepsilon=(\mathbf{e}^{\varepsilon_{c_1} i \alpha_{c_1}},\ldots,\mathbf{e}^{\varepsilon_{c_n}i \alpha_{c_n}})$, then the representation $ \rho_{\alpha} \colon B_c \to \operatorname{Aut}(T_{\mathbf{a}_\varepsilon}\SU(2)^n)$ is a direct sum of a permutation representation with the colored Gassner matrix evaluated at $\omega_\varepsilon=~(e^{\varepsilon_1 2i\alpha_1}, \ldots, e^{\varepsilon_\mu 2i \alpha_\mu})$.
\end{proposition}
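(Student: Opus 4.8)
The statement identifies the derivative representation $\rho_\alpha$ of the colored braid group $B_c$ at the fixed point $\mathbf{a}_\varepsilon=(\mathbf{e}^{\varepsilon_{c_1}i\alpha_{c_1}},\ldots,\mathbf{e}^{\varepsilon_{c_n}i\alpha_{c_n}})$ with a direct sum of a permutation representation and the colored Gassner matrix evaluated at $\omega_\varepsilon$. The plan is to compute the differential $T_{\mathbf{a}_\varepsilon}(\beta^*)$ explicitly on the tangent space $T_{\mathbf{a}_\varepsilon}\SU(2)^n$, decompose this tangent space into a diagonal and an off-diagonal part adapted to each factor, and show that $\beta^*$ preserves this splitting, acting by permutation on the diagonal part and by the colored Gassner matrix on the off-diagonal part. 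Since $\theta\mapsto\theta^*$ is already known to be a homomorphism (by the chain rule, as recalled from~\cite[Theorem 2.3]{Long}), it suffices to verify the identification on the generators $\sigma_i$ of $B_c$ and then invoke functoriality.

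\textbf{Tangent space decomposition.} First I would fix, for each strand $i$, an identification of $T_{\mathbf{e}^{\theta}}\SU(2)$ (where $\theta=\varepsilon_{c_i}\alpha_{c_i}$) with the Lie algebra $\su_2$ via left translation: a tangent vector is written $\mathbf{e}^{\theta}\cdot\xi$ with $\xi\in\su_2$. Decompose $\su_2=\mathfrak{h}\oplus\mathfrak{m}$, where $\mathfrak{h}$ is the Cartan subalgebra spanned by the diagonal element $\left(\begin{smallmatrix} i & 0 \\ 0 & -i\end{smallmatrix}\right)$ and $\mathfrak{m}$ is its orthogonal complement (spanned by the off-diagonal elements). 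The diagonal directions $\mathfrak{h}$ are precisely the tangent directions \emph{along} the trace-fixing sphere that stay diagonal, while $\mathfrak{m}$ captures the two remaining directions. I expect the $\mathfrak{h}$-summands to assemble into the permutation representation (since a braid permutes the strands, and on the abelian diagonal locus the conjugation $x_ix_{i+1}x_i^{-1}\mapsto$ acts trivially up to permutation), and the $\mathfrak{m}$-summands to assemble into the Gassner part.

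\textbf{Computing on a generator.} The key calculation is to differentiate the action of $\sigma_i$. Recall from~\eqref{eq:Action} that $\sigma_i$ sends $(X_i,X_{i+1})\mapsto(X_iX_{i+1}X_i^{-1},X_i)$, and correspondingly $\sigma_i^*$ acts by $(\ldots,X_i,X_{i+1},\ldots)\mapsto(\ldots,\sigma_i^{-1}X_i,\sigma_i^{-1}X_{i+1},\ldots)$. I would differentiate this map at $\mathbf{a}_\varepsilon$, using the fact that $\mathbf{a}_\varepsilon$ is a fixed point with all matrices diagonal, so that products like $\mathbf{e}^{\theta_i}\mathbf{e}^{\theta_{i+1}}(\mathbf{e}^{\theta_i})^{-1}=\mathbf{e}^{\theta_{i+1}}$ simplify drastically. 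The differential of conjugation by a diagonal $\mathbf{e}^{\theta_i}$ acts on $\mathfrak{m}$ by multiplication by $e^{2i\theta_i}=\omega_{c_i}^{\varepsilon}$ and fixes $\mathfrak{h}$; this is exactly where the eigenvalue entries $1-t_{c_i}$ and $t_{c_i}=\omega_{c_i}$ of the Gassner matrix~\eqref{eq:GassnerFoxFurther} should emerge on the $\mathfrak{m}$-block, while the $\mathfrak{h}$-block records the transposition swapping strands $i$ and $i+1$. Matching the resulting $2\times 2$ block on $\mathfrak{m}\oplus\mathfrak{m}$ against $\left(\begin{smallmatrix} 1-t_{c_i} & t_{c_i} \\ 1 & 0\end{smallmatrix}\right)$ (and the $\mathfrak{h}\oplus\mathfrak{h}$ block against the transposition) completes the identification on generators.

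\textbf{Main obstacle.} The delicate point will be bookkeeping the conventions: the action is a \emph{right} action on $F_n$ but induces a left action via $\theta^*=\rho\mapsto\rho\circ\theta^{-1}$, and the Gassner matrix is defined via Fox derivatives with matrices acting on the right on row vectors. Getting the transpose/inverse placement correct so that the $\mathfrak{m}$-block genuinely equals the colored Gassner matrix at $\omega_\varepsilon$ (and not, say, its transpose-inverse) is where the real care is needed; the discussion in Remark~\ref{rem:ConventionsBraids} is precisely designed to pin down these conventions so that they agree with Long's. Once the generator computation is done with the conventions aligned, extending to all of $B_c$ is automatic because both $\rho_\alpha$ and the proposed model (permutation $\oplus$ Gassner) are representations agreeing on generators. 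The entry-by-entry differentiation itself is routine linear algebra once the splitting is fixed, so I would relegate it to the observation that it matches~\cite[Theorem 2.4]{Long}.
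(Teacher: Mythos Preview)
The paper does not give its own proof of this proposition: it attributes the result to Long~\cite[Theorems 2.4 and 2.5]{Long} and merely remarks that ``his proof goes through for arbitrary colored braid groups.'' Your sketch is precisely Long's argument---decompose $T_{\mathbf{a}_\varepsilon}\SU(2)^n\cong\su_2^n$ as $(\mathfrak{h}\oplus\mathfrak{m})^n$, check on the generators $\sigma_i$ that conjugation by the diagonal matrix $\mathbf{e}^{\varepsilon_{c_i}i\alpha_{c_i}}$ fixes $\mathfrak{h}$ and scales $\mathfrak{m}\cong\C$ by $e^{\varepsilon_{c_i}2i\alpha_{c_i}}$, and conclude by multiplicativity---so there is nothing to compare: you have simply supplied the details the paper omits.

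One small slip worth correcting: you write that ``the diagonal directions $\mathfrak{h}$ are precisely the tangent directions along the trace-fixing sphere that stay diagonal, while $\mathfrak{m}$ captures the two remaining directions.'' This is backwards. The one-dimensional $\mathfrak{h}$ is the direction \emph{transverse} to the trace-fixing sphere $\mathbb{S}_\theta$ (it moves $\theta$ along the maximal torus, changing the trace), while the two-dimensional $\mathfrak{m}$ is tangent to $\mathbb{S}_\theta$. This is exactly what the paper records in Remark~\ref{rem:MultiplicationByJ} and the sentence following it: ``the complex lines are tangent to the leaves $\mathbb{S}_\theta$ and the real lines are tangent to the transverse directions.'' Your subsequent computation is unaffected, since you correctly identify $\mathfrak{m}$ as the Gassner summand and $\mathfrak{h}$ as the permutation summand; only the geometric gloss is inverted.
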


Note that Long proved this result for $\mu=1$~\cite[Theorem 2.4]{Long} and $\mu=n$~\cite[Theorem 2.5]{Long} but his proof goes through for arbitrary colored braid groups. In order to make some further remarks on Proposition~\ref{prop:Long}, we recall some known facts regarding the field $\mathbb{H}$ of quaternions.

\begin{remark}
\label{rem:Quaternions}
We think of $\mathbb{H}$ using the isomorphisms $\mathbb{H} \cong \mC \oplus \textbf{j}\mC \cong (\mR \oplus i\mR) \oplus (j\mR \oplus k\mR)$ and recall that a quaternion is \emph{pure} if its real part is zero. Matrices in $\SU(2)$ can be identified with unit quaternions via the map which sends $\bsm a&b\\-\bar{b}&\bar{a} \esm$ to $a+ \textbf{j} b$, for any $a,b \in \C$ which satisfy~$|a|^2+|b|^2=1$. On the Lie algebra level, for $r\in \R$ and $z \in \C$, matrices $\bsm ir&z\\-\bar{z}&-ir \esm$ in $\sudeux$ correspond to quaternions $ir+\mathbf{j}z$, and in particular $\sudeux$ splits as $i\mR \oplus \mathbf{j}\mC$. 
\end{remark}

Using Remark~\ref{rem:Quaternions} and working with the notations of Proposition~\ref{prop:Long}, Long's result shows that the restriction of the differential of $\beta \colon \SU(2)^n \to \SU(2)^n$ at $\mathbf{a}_\varepsilon$ to the complex summand of $\mathfrak{su}_2$ is $\mathcal{B}^c_{\omega_\varepsilon}(\beta)$ (i.e. the colored Gassner matrix evaluated~at~$\omega_\varepsilon$). In Section~\ref{sec:CrossingChange} however, we shall study the restriction of $\beta$ to $R_n^{\alpha,c}$. Since this latter space is homeomorphic to a product of 2-spheres $\mathbb{S}_{\alpha_j}$ which consist of those matrices with trace~$2\operatorname{cos}(\alpha_j)$, we adapt some observations from~\cite[Section 2.3]{HeusenerKroll} to the multivariable case.

\begin{remark}
\label{rem:MultiplicationByJ}
Matrices in $\SU(2) \setminus \pm I$ can be identified with pairs $(\theta,Q)$, where $\theta \in (0,\pi)$ and $Q=xi+yj+zk$ is a pure quaternion of norm~$1$. More explicitly, the quaternion $\cos(\theta)+\sin(\theta)Q$ associated to a pair $(\theta,Q)$ corresponds to the $\SU(2)$-matrix
$$X = \bsm \cos (\theta) + i x \sin (\theta) & (y+iz) \sin (\theta)\\ (-y+iz)\sin(\theta) & \cos(\theta)-ix\sin(\theta)\esm.$$
On the Lie algebra level, using $\mathbf{j}^2=-1$ and the identification of $\mathfrak{su}_2$ with $i\R \oplus \mathbf{j}\C$, multiplication by $-\mathbf{j}$ picks out the complex component $z$ of the matrix $\bsm 0&z\\-\bar{z}& 0 \esm \in \mathfrak{su}_2$.
 In particular, since~$R_n^{\alpha,c}$ is a product of~$\mathbb{S}_{\theta}$, Proposition~\ref{prop:Long} implies that the following diagram commutes:
$$ \xymatrix{ 
T_{\mathbf{a}}R_n^{\alpha,c} \ar[rr]^{(-\mathbf{j},\ldots,-\mathbf{j})} \ar[d]^{T_{\mathbf{a}}\beta}&& \C^n \ar[d]^{\mathcal{B}_\omega^c(\beta)} \\
T_{\mathbf{a}}R_n^{\alpha,c} \ar[rr]^{(-\mathbf{j},\ldots,-\mathbf{j})}&& \C^n.
} $$
\end{remark}

On the topological level, it is helpful to think of $\SU(2)$ as foliated by the spheres $\mathbb{S}_{\theta}$: indeed the quaternionic expression $\cos (\theta) + \sin (\theta) Q$, specifies a $2$-sphere $\mathbb{S}_{\theta}$ and a position $Q$ on this sphere. On the Lie algebra level, the complex lines are tangent to the leaves $\mS_{\theta}$ and the real lines are tangent to the transverse directions. 

\subsection{The potential function}
\label{sub:Potential}
In this section, we review some facts about the multivariable potential function. References include~\cite{ConwayJohn, Hartley, CimasoniPotential, Murakami}.
\medbreak
As we recalled in Section~\ref{sec:AlexanderAbelian}, the multivariable Alexander polynomial $\Delta_L$ of a $\mu$-colored link~$L$ is only well defined up to multiplication by units of $\Lambda_\mu$. The \emph{multivariable potential function} of a $\mu$-colored link~$L$ is a rational function $\nabla_L(t_1,\ldots,t_\mu)$ which satisfies 
\begin{equation}
\label{eq:ConwayVSAlexander}
 \nabla_L(t_1,\ldots,t_\mu)\stackrel{.}{=}
\begin{cases}
\frac{1}{t_1-t_1^{-1}}\Delta_L(t_1^2) & \mbox{if }  \mu=1, \\
\Delta_L(t_1^2,\ldots,t_\mu^2)   & \mbox{if } \mu>1. \\ 
 \end{cases} 
 \end{equation}
In this paper, we use a construction of the potential function which arises from the reduced colored Gassner representation~\cite[Theorem 1.1]{ConwayEstier}. The next remark briefly recalls this result.

\begin{remark}
\label{rem:GassnerConway}
Any $(c,c)$-braid $\beta$ can be decomposed into a product $\prod_{j=1}^{m} \sigma_{i_j}^{\varepsilon_{j}}$, where $\sigma_{i_j}$ denotes the $i_j$-th generator of the braid group (viewed as an appropriately colored braid) and each $\varepsilon_j$ is equal to $\pm 1$. For each $j$, use~$b_j$ to denote the color of the over-crossing strand in the generator $\sigma_{i_j}^{\varepsilon_{j}}$ and consider the Laurent monomial 
$$ \langle \beta \rangle:=\prod_{j=1}^m t_{b_j}^{-\varepsilon_j}.$$ 
Define $g \colon \Lambda_\mu \to \Lambda_\mu$ by extending $\Z$-linearly the group endomorphism of $\Z^\mu=\langle t_1,\ldots,t_\mu \rangle$ which sends $t_i$ to $t_i^2$. Given an $n$-stranded $\mu$-colored $(c,c)$-braid $\beta$,~\cite[Theorem 1.1]{ConwayEstier} shows that the multivariable potential function of the closure $\widehat{\beta}$ can be described as:
\begin{equation}
\label{eq:PotentialConway}
 \nabla_{\widehat{\beta}}(t_1,\ldots,t_\mu)=
 (-1)^{n+1} \cdot \frac{1}{t_{c_1} \cdots  t_{c_n} - t^{-1}_{c_1} \cdots  t^{-1}_{c_n}} \cdot \langle \beta \rangle \cdot g(\det(\overline{\mathcal{B}}^c_t(\beta) - I_{n-1})).
 \end{equation}
Note that in~\cite{ConwayEstier}, the matrices $\overline{\mathcal{B}}_t^c$ are the transposes of the ones used here (and in particular~\cite{ConwayEstier} deals with \emph{anti}-representations). Naturally, this does not affect~(\ref{eq:PotentialConway}).
\end{remark}

In the one-variable case, some care is needed with the terminology.

\begin{remark}
\label{rem:TerminologyPotential}
The expression $D_L(t):=\nabla(t)(t-t^{-1})$ is usually referred to as the \emph{Alexander-Conway polynomial} of $L$ and satisfies $D_L(t)\stackrel{.}{=}\Delta_L(t^2)$. On the other hand, some authors call~$D_L(\sqrt{t})$ the \emph{Conway-normalized Alexander polynomial}. For instance Heusener-Kroll use~$\Delta_L(t)$ to denote the Conway-normalized Alexander polynomial~\cite[Section 2.1]{HeusenerKroll}. These distinctions do matter: for a knot $K$ and $\omega \in S^1$, it is known that $D_K(\sqrt{\omega})$ is real, while this statement is incorrect for $\nabla_K$ and makes no sense for $\Delta_K$ (because of the indeterminacy).
\end{remark}

We conclude with some remarks on evaluations of $\nabla_L$ at elements of $\mathbb{T}^\mu=(S^1)^\mu$.

\begin{remark}
\label{rem:PotentialEvaluations}
The potential function $\nabla_L$ of an $n$-component $\mu$-colored link is known to be~$(-1)^n$-symmetric~\cite[Proposition 1]{CimasoniPotential}. Thus, for $\omega \in \T^\mu$, the evaluation $\nabla_L(\omega)$ need not be real. In fact, for $\omega \in \T^\mu$, the aforementioned symmetry property yields $ \overline{\nabla_L(\omega)}=\nabla_L(\overline{\omega})=(-1)^n \nabla_L(\omega)$, and therefore~$\nabla_L(\omega)$ belongs to~$\R$ (resp. $i\R$) if $n$ is even (resp. odd). In particular, if two $\mu$-colored links differ by a crossing change within a sublink, then the quotient of the two potential functions evaluated at $\omega \in \T^\mu$ is real (assuming the quotient is defined).
\end{remark}

\subsection{A technical proposition}
\label{sub:Technical}

The aim of this section is to prove the following multivariable generalization of~\cite[Lemma 4.4]{HeusenerKroll}. This result will be frequently used in Section~\ref{sec:CrossingChange}. 
Due to the technical nature of this result and its proof, we suggest the reader skip this subsection upon his first reading.

\begin{proposition}
\label{prop:TechnicalLemma}
Let $c$ be a $\mu$-coloring such that $c_1=c_2$ and let $\omega \in \T^\mu$.
Given a $(c,c)$-braid~$\beta$, use $\left( \begin{smallmatrix} A(\omega) & B(\omega)  \\ C(\omega) & D(\omega) \end{smallmatrix} \right)$ to denote the unreduced colored Gassner matrix of $\beta$ evaluated at~$\omega \in \mathbb{T}^\mu$, where $D(\omega)$ is a size $n-2$ square matrix. If we assume that $\omega_{c_1}^2 \neq 1$ and~$\nabla_{\widehat{\beta}}(\omega)\neq 0$, then~$\operatorname{det}(D(\omega)-I_{n-2}) \neq 0$.
\end{proposition}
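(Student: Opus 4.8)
The plan is to argue by contradiction, converting a nontrivial kernel vector of $D(\omega)-I_{n-2}$ into a vector that $\mathcal{B}:=\mathcal{B}^c_\omega(\beta)$ moves only in its first two coordinates, and then contradicting the known description of the (co)fixed spaces of $\mathcal{B}$. Write $\mathcal{B}=\left(\begin{smallmatrix}A&B\\C&D\end{smallmatrix}\right)$ (suppressing $\omega$), with $A$ of size $2$. Two structural facts underpin the argument. On the one hand, \eqref{eq:GassnerFoxFurther} shows that each generator $\mathcal{B}^c_\omega(\sigma_i)$ fixes the column vector $\mathbf{1}=(1,\dots,1)^{T}$, so that $\mathcal{B}\,\mathbf{1}=\mathbf{1}$. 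On the other hand, Lemma~\ref{lem:FixedPointGassner} identifies the left--fixed submodule of $\mathcal{B}$ with the single line spanned by $\widetilde{g}_n=(1,\omega_{c_1},\dots,\omega_{c_1}\cdots\omega_{c_{n-1}})$. Together with the relation between $\det(\overline{\mathcal{B}}^c_\omega(\beta)-I_{n-1})$ and the potential function (Remark~\ref{rem:GassnerConway} and~\eqref{eq:PotentialConway}), the hypotheses $\omega_{c_1}^2\neq1$ and $\nabla_{\widehat{\beta}}(\omega)\neq0$ ensure that $\mathcal{B}-I_n$ has rank exactly $n-1$, with left and right kernels the lines $\langle\widetilde{g}_n\rangle$ and $\langle\mathbf{1}\rangle$. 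Consequently $\operatorname{Im}(\mathcal{B}-I_n)$ is the hyperplane $\{\, y : \widetilde{g}_n\,y=0 \,\}$, and $\mathcal{B}-I_n$ descends to an automorphism $\bar N$ of $V:=\C^{n}/\langle\mathbf{1}\rangle$, invertible precisely because the rank is $n-1$, whose determinant is a nonzero multiple of $\nabla_{\widehat{\beta}}(\omega)$.

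Suppose now that $\det(D(\omega)-I_{n-2})=0$ and pick $0\neq z\in\C^{n-2}$ with $(D-I_{n-2})z=0$. Put $\widetilde{z}=(0,0,z)^{T}$; the block decomposition gives $(\mathcal{B}-I_n)\widetilde{z}=(Bz,0)^{T}$, supported on the first two coordinates. As this vector lies in $\operatorname{Im}(\mathcal{B}-I_n)$ and $(\widetilde{g}_n)_1=1$, $(\widetilde{g}_n)_2=\omega_{c_1}$, we obtain $(Bz)_1+\omega_{c_1}(Bz)_2=0$, so that $(\mathcal{B}-I_n)\widetilde{z}=\kappa\,s$ with $s=(-\omega_{c_1},1,0,\dots,0)^{T}$ and $\kappa=(Bz)_2$. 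If $\kappa=0$ then $\widetilde{z}$ is fixed by $\mathcal{B}$, hence lies in $\langle\mathbf{1}\rangle$; but the first coordinate of $\widetilde{z}$ vanishes while that of $\mathbf{1}$ does not, forcing $z=0$, a contradiction. Therefore $\kappa\neq0$, and $u:=\widetilde{z}/\kappa$ is a preimage of $s$ under $\mathcal{B}-I_n$ with $u_1=u_2=0$; in particular $u_1-u_2=0$.

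The contradiction will come from showing that $u_1-u_2\neq0$ for \emph{every} preimage $u$ of $s$. Since two preimages differ by an element of $\langle\mathbf{1}\rangle$, which the functional $e_1-e_2$ annihilates, the quantity $u_1-u_2=(e_1-e_2)\,u$ is well defined and equals $(e_1-e_2)\,\bar N^{-1}\bar s$, where $\bar s$ is the image of $s$ in $V$. This is exactly where the colouring hypothesis enters: because $c_1=c_2$, the generator $\sigma_1$ is a legitimate $(c,c)$--braid whose $2\times2$ Gassner block is $\left(\begin{smallmatrix}1-\omega_{c_1}&\omega_{c_1}\\1&0\end{smallmatrix}\right)$, and $s$ is its eigenvector for the eigenvalue $-\omega_{c_1}$ (the eigenvalue $1$ belonging to $\mathbf{1}$); the assumption $\omega_{c_1}^2\neq1$ is precisely the statement that $-\omega_{c_1}\neq\pm1$, so that $s$ is transverse to $\mathbf{1}$ and $\bar s\neq0$. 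Evaluating $(e_1-e_2)\,\bar N^{-1}\bar s$ by Cramer's rule produces a ratio whose denominator is $\det\bar N\neq0$ and whose numerator I expect to be a nonzero multiple of $\omega_{c_1}^2-1$; since $\omega_{c_1}^2\neq1$ this gives $u_1-u_2\neq0$, contradicting the previous paragraph.

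The main obstacle is this last determinant evaluation: making the numerator of the Cramer ratio explicit and verifying that, up to a unit, it is $\omega_{c_1}^2-1$, so that $\omega_{c_1}^2\neq1$ is exactly the hypothesis required. This is the multivariable counterpart of~\cite[Lemma 4.4]{HeusenerKroll}, and the role of the equality $c_1=c_2$ is to guarantee that the two strands being compared carry a common weight $\omega_{c_1}$, so that a single scalar $\omega_{c_1}^2-1$ governs the outcome.
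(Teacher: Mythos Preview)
Your proof has a genuine error and a genuine gap. The error is the claim $\mathcal{B}\,\mathbf{1}=\mathbf{1}$: the formula~\eqref{eq:GassnerFoxFurther} describes $\sigma_i$ only when it is itself a $(c,c)$-braid (i.e.\ when $c_i=c_{i+1}$), and a general $(c,c)$-braid is \emph{not} a product of such generators. Indeed, already for $\sigma_1^2\in B_{(1,2)}$ (Example~\ref{ex:RealityCheck}) one computes $\mathcal{B}^{(1,2)}_\omega(\sigma_1^2)\,\mathbf{1}\neq\mathbf{1}$. The correct right fixed vector of $\mathcal{B}^c_\omega(\beta)$ is $(\omega_{c_1}-1,\dots,\omega_{c_n}-1)^{T}$, by item~(1) of Lemma~\ref{lem:Fox}; since $c_1=c_2$, its first two entries agree, so $e_1-e_2$ still descends to the quotient and much of your outline can be repaired with this replacement. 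Note also that your invocation of Lemma~\ref{lem:FixedPointGassner} requires $\omega_{c_1}\cdots\omega_{c_n}\neq 1$, a hypothesis not present in the proposition.

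The gap is the one you yourself flag: the Cramer computation showing that $(e_1-e_2)\bar N^{-1}\bar s$ is a nonzero multiple of $\omega_{c_1}^2-1$ is never carried out, only ``expected''. This is the crux of the argument, and without it there is no proof. The paper sidesteps this entirely by a different route: assuming $\det(D(\omega)-I_{n-2})=0$, Lemma~\ref{lem:HeusenerKrollLemma223} yields $\mathcal{B}^c_\omega(\sigma_1^2\beta,1,1)=\omega_{c_1}^2\,\mathcal{B}^c_\omega(\beta,1,1)$, which via Lemma~\ref{lem:PotentialC11} becomes $\nabla_{\widehat{\sigma_1^2\beta}}(\omega)=\omega_{c_1}^2\,\nabla_{\widehat{\beta}}(\omega)$. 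Since $\widehat{\sigma_1^2\beta}$ and $\widehat{\beta}$ differ by a crossing change within a sublink, Remark~\ref{rem:PotentialEvaluations} forces this quotient to be real, contradicting $\omega_{c_1}^2\neq 1$. Thus the paper leverages the auxiliary braid $\sigma_1^2\beta$ and the symmetry of the potential rather than a direct linear-algebraic analysis of $\mathcal{B}-I$; if you want to pursue your approach, the minor identity of Lemma~\ref{lem:HeusenerKrollLemma223} is likely the cleanest way to close the Cramer step.
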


The proof of Proposition~\ref{prop:TechnicalLemma} follows the strategy of~\cite[Lemma 4.4]{HeusenerKroll}. However several of the preliminary results require some additional work in the multivariable case. 

\subsubsection{Rows and columns of $\mathcal{B}_t^c(\beta)$}

We temporarily adopt the following conventions: given a matrix $\Psi$, we write $\Psi^i_j$ for the $(i,j)$-coefficient of~$\Psi$, instead of the more standard~$\Psi_{ij}$; apart if mentioned otherwise, $I$ denotes any identity matrix, regardless of its size.

The following lemma (which generalizes well known results for the Burau representation) describes the result of summing (linear combinations of) the rows and columns of the unreduced colored Gassner matrices.

\begin{lemma}
\label{lem:Fox}
Given a $(c,c)$-braid $\beta$, the rows and columns of the colored Gassner matrix satisfy the following properties:
\begin{enumerate}
\item For each $i$, one has $\sum_{j=1}^n (t_{c_j}-1) \mathcal{B}^c_t(\beta)_j^i=t_{c_i}-1$.
\item For each $j$, one has $\sum_{i=1}^n t_{c_1}\cdots t_{c_{i-1}} \mathcal{B}^c_t(\beta)^i_j=t_{c_1}\cdots t_{c_{j-1}}$.
\end{enumerate}

\end{lemma}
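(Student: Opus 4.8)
The two identities are statements about the colored Gassner matrix $\mathcal{B}^c_t(\beta)=\left(\psi_c\!\left(\frac{\partial(x_i\beta)}{\partial x_j}\right)\right)_{i,j}$, so the natural strategy is to reduce each of them to a basic property of the Fox calculus that holds for \emph{any} element of $F_n$, and then apply it to the words $w_i:=x_i\beta$. The key tool is the fundamental formula of Fox calculus: for any $w\in\Z[F_n]$,
\begin{equation}
\label{eq:FoxFund}
w-\varepsilon(w)=\sum_{j=1}^n \frac{\partial w}{\partial x_j}\,(x_j-1),
\end{equation}
where $\varepsilon$ is the augmentation. Applying the ring homomorphism $\psi_c\colon\Z[F_n]\to\Lambda_\mu$ (which sends $x_j\mapsto t_{c_j}$) to both sides of~\eqref{eq:FoxFund} with $w=x_i\beta$ yields $\psi_c(x_i\beta)-1=\sum_{j=1}^n\psi_c\!\left(\frac{\partial(x_i\beta)}{\partial x_j}\right)(t_{c_j}-1)$. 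Since $\beta$ is a $(c,c)$-braid, we noted in Subsection~\ref{sub:ColoredGassner} that $\psi_c(x_i\beta)=\psi_c(x_i)=t_{c_i}$, so the right-hand side collapses to $t_{c_i}-1$. This is exactly assertion~(1), reading $\mathcal{B}^c_t(\beta)^i_j=\psi_c\!\left(\frac{\partial(x_i\beta)}{\partial x_j}\right)$.

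For assertion~(2), the plan is to exploit the defining property that $\beta$ fixes the top word $g_n=x_1x_2\cdots x_n$, together with the behaviour of Fox derivatives under the change of free basis from the $x_i$ to the $g_i=x_1\cdots x_i$. The relevant statement to extract is a ``column sum'' identity that is dual to~\eqref{eq:FoxFund}: for the product $g_i=x_1\cdots x_i$ one computes directly from the Leibniz rule $\frac{\partial(uv)}{\partial x_j}=\frac{\partial u}{\partial x_j}+u\,\frac{\partial v}{\partial x_j}$ that the weights $\psi_c(x_1\cdots x_{i-1})=t_{c_1}\cdots t_{c_{i-1}}$ arise precisely as the coefficients turning an $x$-basis statement into a $g$-basis one. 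Concretely, I would apply the chain rule $\frac{\partial}{\partial x_j}=\sum_i \frac{\partial g_i}{\partial x_j}\,\frac{\partial(\,\cdot\,)}{\partial g_i}$ (appropriately interpreted) to the fact that $g_n\beta=g_n$, or equivalently differentiate the identity $x_1\cdots x_n\,\beta=x_1\cdots x_n$ with respect to $x_j$ and apply $\psi_c$. Since $\frac{\partial(x_1\cdots x_n)}{\partial x_j}=x_1\cdots x_{j-1}$, this gives $\psi_c\!\left(\frac{\partial(x_1\cdots x_n)\beta}{\partial x_j}\right)=t_{c_1}\cdots t_{c_{j-1}}$; expanding the left side with the Leibniz rule over the product $(x_1\beta)\cdots(x_n\beta)$ and using $\psi_c(x_i\beta)=t_{c_i}$ produces $\sum_{i=1}^n t_{c_1}\cdots t_{c_{i-1}}\,\mathcal{B}^c_t(\beta)^i_j=t_{c_1}\cdots t_{c_{j-1}}$, which is assertion~(2).

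\textbf{Main obstacle.} The genuinely routine part is~(1); the care required is all in~(2). The subtlety is bookkeeping the weights correctly when expanding $\frac{\partial}{\partial x_j}\big((x_1\beta)\cdots(x_n\beta)\big)$ via the iterated Leibniz rule: the term coming from differentiating the $i$-th factor $x_i\beta$ is prefixed by $\psi_c\big((x_1\beta)\cdots(x_{i-1}\beta)\big)=t_{c_1}\cdots t_{c_{i-1}}$, and one must check that these prefactors are exactly the claimed monomials and that no contributions are dropped or double-counted. I would therefore carry out the $n=2$ case explicitly as a sanity check before writing the general induction on the number of factors. The braid hypothesis enters only through the two facts $\psi_c(x_i\beta)=t_{c_i}$ and $g_n\beta=g_n$, both already recorded in Subsection~\ref{sub:ColoredGassner}, so no further input about $\beta$ is needed.
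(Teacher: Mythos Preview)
Your proposal is correct and matches the paper's proof essentially line for line: for (1) the paper invokes the fundamental formula of Fox calculus $\sum_j \frac{\partial w}{\partial x_j}(x_j-1)=w-1$ with $w=x_i\beta$ and applies $\psi_c$, and for (2) it differentiates both sides of $(x_1\cdots x_n)\beta=x_1\cdots x_n$ with respect to $x_j$ and expands via the derivation property, exactly as you outline. The only cosmetic difference is that the paper states the fundamental formula for group elements (so $\varepsilon(w)=1$) rather than for arbitrary $w\in\Z[F_n]$, but this is immaterial here.
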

\begin{proof}
In order to prove both of these identities, we recall the so-called``fundamental lemma of Fox calculus"~\cite[Proposition 9.8, part c)]{BurdeZieschang}. Given a word $w$ in the free group $F_n$ on $x_1,\ldots, x_n$, the following identity holds in the group ring $\Z[F_n]$:
\begin{equation}
\label{eq:FundamentalFormula}
\sum_{j=1}^n \frac{\partial w}{\partial x_j}(x_j-1)=w-1.
\end{equation}
The first identity now follows by considering the word $w=x_i \beta$, applying $\psi_c$ to both sides of~(\ref{eq:FundamentalFormula}) and recalling that for a $(c,c)$-braid, both $\psi_c(x_i)$ and $\psi_c(x_i \beta)$ are equal to~$t_{c_i}$. To obtain the second formula, apply the Fox derivative $\frac{\partial}{\partial x_j}$ to both sides of the equality~$(x_1\cdots x_n)\beta=x_1 \cdots x_n$ and use the derivation property repeatedly.
\end{proof}

Taking advantage of our unconventional notation, observe that the $i$-th column of $\mathcal{B}^c_t(\beta)$ can be written as $\mathcal{B}^c_t(\beta)_i$, while the $i$-th line of $\mathcal{B}^c_t(\beta)$ can be written as $\mathcal{B}^c_t(\beta)^i$. In particular, Lemma~\ref{lem:Fox} implies that
$$ \sum_{i=1}^n (t_{c_i}-1)\mathcal{B}^c_t(\beta)_i=(T-1), \ \ \ \ \ \sum_{i=1}^n (t_{c_1}\cdots t_{c_{i-1}})\mathcal{B}^c_t(\beta)^i=v,$$
where $T-1$ denotes the size $n$ column vector whose $i$-th component is $t_{c_i}-1$ and $v$ denotes the size $n$ row vector whose $j$-th component is $t_{c_1}\cdots t_{c_{j-1}}$. 

\begin{example}
\label{ex:RealityCheck}
If $c=(1,\ldots,1)$, the first point of Lemma~\ref{lem:Fox} implies the following known fact: the sum of the coefficients within any line of the Burau matrix is $1$ (i.e. the Burau matrix is a ``right stochastic matrix"). For $\sigma_1^2 \in B_{(1,2)}=P_2$, the Gassner matrix is given by 
$$\mathcal{B}^{(1,2)}_{t_1,t_2}(\sigma_1^2)= \begin{pmatrix} 1-t_1+t_1t_2 & t_1(1-t_1) \\
1-t_2 & t_1
 \end{pmatrix}.$$
Now Lemma~\ref{lem:Fox} states in particular that $(1-t_1)(1-t_1+t_1t_2)+(1-t_2)(t_1(1-t_1)=1-t_1$ and $t_1(1-t_1)+t_1t_1=t_1$ which can indeed be verified.
\end{example}

\subsubsection{Computations with minors}

Given a square matrix $\Psi$ of size $n$, we use $\Psi_{i,j}$ to denote the size $(n-1)$ matrix obtained from $\Psi$ by deleting its $i$-th row and $j$-th column. We also use $\mathcal{B}^c_t(\beta,l,m)$ to denote $\det((\mathcal{B}^c_t(\beta)-I)_{l,m})$ (the notation $c_{l,m}^\beta$ is used in~\cite[Section 2.4]{HeusenerKroll}).

The following lemma is a multivariable generalization of~\cite[Lemma 2.2, part 1)]{HeusenerKroll}.

\begin{lemma}
\label{lem:HeusenerKrollLemme221}
Let $c$ be a $\mu$-coloring. Given an $n$-stranded $(c,c)$-braid $\beta$ and positive integers $1 \leq l,l',m,m' \leq n$, the following equality holds in $\Lambda_\mu$:
\begin{equation}
\label{eq:221}
(t_{c_{m'}}-1)(t_{c_1}\cdots t_{c_{l'-1}})\mathcal{B}^c_t(\beta,l,m)=(-1)^{m+m'+l+l'} (t_{c_m}-1) (t_{c_1}\cdots t_{c_{l-1}})\mathcal{B}^c_t(\beta,l',m').
\end{equation}
\end{lemma}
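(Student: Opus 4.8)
The plan is to recognize \eqref{eq:221} as an identity among the cofactors of the single matrix $M := \mathcal{B}^c_t(\beta) - I_n$, and to exploit the fact that $M$ is singular with explicitly known left and right null vectors. First I would apply Lemma~\ref{lem:Fox} to $M$ rather than to $\mathcal{B}^c_t(\beta)$: subtracting the identity, part (1) gives $\sum_{j} (t_{c_j}-1)M^i_j = 0$ for each $i$, and part (2) gives $\sum_i (t_{c_1}\cdots t_{c_{i-1}})M^i_j = 0$ for each $j$. In other words, the column vector $w$ with entries $w_j = t_{c_j}-1$ satisfies $Mw = 0$, and the row vector $v$ with entries $v_i = t_{c_1}\cdots t_{c_{i-1}}$ satisfies $vM = 0$. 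Since the entries $t_{c_j}-1$ are nonzero in $\Lambda_\mu$, we have $w \neq 0$, so $M$ is singular over the fraction field $\K := \Q(t_1,\ldots,t_\mu)$ and hence $\det M = 0$ in $\Lambda_\mu$.

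Next I would work over $\K$ and analyze the adjugate $\operatorname{adj}(M)$ via the relations $M\operatorname{adj}(M) = \operatorname{adj}(M)M = \det(M)\,I = 0$. Standard linear algebra over a field forces $\operatorname{adj}(M)$ to have rank at most one and to take the form $\operatorname{adj}(M) = \lambda\, w\, v$ for some scalar $\lambda \in \K$ (with $w$ a column and $v$ a row): if $\operatorname{rank} M = n-1$, each column of $\operatorname{adj}(M)$ lies in the one-dimensional right kernel $\langle w\rangle$ and each row in the left kernel $\langle v\rangle$, whereas if $\operatorname{rank} M \leq n-2$ all size-$(n-1)$ minors vanish and $\lambda = 0$ works. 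Writing this out entrywise and recalling that $\operatorname{adj}(M)_{ml} = (-1)^{l+m}\det(M_{l,m}) = (-1)^{l+m}\mathcal{B}^c_t(\beta,l,m)$, I obtain the key formula
\begin{equation*}
\mathcal{B}^c_t(\beta,l,m) = (-1)^{l+m}\lambda\,(t_{c_m}-1)(t_{c_1}\cdots t_{c_{l-1}}),
\end{equation*}
valid for all $l,m$.

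Finally I would substitute this formula into both sides of \eqref{eq:221}. The factor $\lambda$ and the product $(t_{c_m}-1)(t_{c_1}\cdots t_{c_{l-1}})(t_{c_{m'}}-1)(t_{c_1}\cdots t_{c_{l'-1}})$ then appear symmetrically on both sides, while the accumulated sign on the right, $(-1)^{m+m'+l+l'}(-1)^{l'+m'} = (-1)^{l+m}$, matches the sign $(-1)^{l+m}$ coming from the left; hence the two sides agree over $\K$. Since both sides of \eqref{eq:221} already lie in $\Lambda_\mu \subset \K$, the identity holds in $\Lambda_\mu$. The final substitution is routine sign bookkeeping; the one genuine subtlety — and the step I would be most careful about — is that $\Lambda_\mu$ is not a field, so the rank-one description of the adjugate (and the scalar $\lambda$) must be extracted over the fraction field and the resulting polynomial identity transported back to $\Lambda_\mu$, with the degenerate case $\operatorname{rank} M \leq n-2$ absorbed by allowing $\lambda = 0$.
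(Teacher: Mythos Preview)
Your proof is correct, and it takes a genuinely different route from the paper's. The paper argues by direct determinant manipulation: it reduces to the two special cases $l=l'$ and $m=m'$, and for each uses the column (resp.\ row) relation from Lemma~\ref{lem:Fox} to rewrite $(t_{c_i}-1)$ times a column (resp.\ $t_{c_1}\cdots t_{c_{i-1}}$ times a row) of $M$ as minus a combination of the other columns (resp.\ rows), then invokes multilinearity of the determinant and keeps track of the sign from the column swap. Your approach instead recognizes the identity as a structural fact about the adjugate of a singular matrix: since $M$ has the explicit right and left null vectors $w$ and $v$, the adjugate is forced to be a scalar multiple of $wv$, which yields a closed formula for \emph{all} cofactors $\mathcal{B}^c_t(\beta,l,m)$ simultaneously. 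The paper's argument stays entirely in $\Lambda_\mu$ and is more elementary; yours is cleaner and conceptually explains why the identity holds, at the cost of passing to the fraction field $\Q(t_1,\ldots,t_\mu)$ and then observing that the resulting polynomial identity descends back to $\Lambda_\mu$ (which is fine since $\Lambda_\mu$ is a domain).
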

\begin{proof}
To prove the lemma, it suffices to prove~(\ref{eq:221}) when $l=l'$ and when $m=m'$. We therefore start by assuming that $l=l'$ and claim that
$$ (t_{c_{m'}}-1)\mathcal{B}^c_t(\beta,l,m)=(-1)^{m+m'} (t_{c_m}-1)\mathcal{B}^c_t(\beta,l,m').$$
Recall that $T-1$ denotes the size $n$ column vector whose $i$-th component is $t_{c_i}-1$ and assume that~$i$ differs from $m$. Using the first point of Lemma~\ref{lem:Fox}, a short computation shows that 
\begin{equation}
\label{eq:ColumnTrick}
(t_{c_i}-1)(\mathcal{B}^c_t(\beta)-I)_i=\left((T-1)-\sum_{k \neq i}(t_{c_k}-1)\mathcal{B}^c_t(\beta)_k\right)-(t_{c_i}-1)I_i=-\sum_{k\neq i}(t_{c_k}-1)(\mathcal{B}^c_t(\beta)-I)_k.
\end{equation}
We now use this identity to compute the determinant of the matrix $(\mathcal{B}^c_t(\beta)-I)_{l,m}$ obtained by removing the $l$-th row and the $m$-th column from~$\mathcal{B}^c_t(\beta)-I$. Multipliying the $i$-th column of~$\mathcal{B}^c_t(\beta)-I$ by $t_{c_i}-1$, using~(\ref{eq:ColumnTrick}), removing the $m$-th column of~$\mathcal{B}^c_t(\beta)-I$, invoking the multilinearity of the determinant and switching back the $i$-th column to its original place (this produces a sign $(-1)^{i+m-1}$ since we now have one column less), we obtain
$$ (t_{c_i}-1)\mathcal{B}^c_t(\beta,l,m)=(t_{c_m}-1)(-1)^{i+m}\mathcal{B}^c_t(\beta,l,i).$$
The claim now follows by taking $i=m'$. To prove~(\ref{eq:221}) for $m=m'$, one uses the second point of Lemma~\ref{lem:Fox} and follows the exact same steps as above with rows instead of columns. This concludes the proof of the lemma.
\end{proof}

Lemmas~\ref{lem:Fox} and~\ref{lem:HeusenerKrollLemme221} involve the colored Gassner matrices in the basis arising from the choice of generators $x_1,\ldots,x_n$ of the free group $F_n$. In order to work with the reduced colored Gassner matrices, we need the corresponding statements for the basis $g_1,\ldots,g_n$ of $F_n$. 

\begin{remark}
\label{rem:giBasis}
Use $\widetilde{\mathcal{B}}^c_t(\beta)$ to denote the unreduced colored Gassner matrix in the basis arising from the choice of generators $g_1,\ldots,g_n$ of the free group. Just as for the matrix~$\mathcal{B}^c_t(\beta)$, we set $\widetilde{\mathcal{B}}^c_t(\beta,l,m):=\det((\widetilde{\mathcal{B}}^c_t(\beta)-I)_{l,m})$. Using these notations, the following formula holds:
\begin{equation}
\label{eq:MinorsgiBasis}
-(t_{c_1}\cdots t_{c_{n-1}}-1)\widetilde{\mathcal{B}}^c_t(\beta,n,n-1)=(t_{c_1}\cdots t_{c_{n}}-1)\widetilde{\mathcal{B}}^c_t(\beta,n,n).
\end{equation}
The proof of~(\ref{eq:MinorsgiBasis}) is entirely analogous to the proof of Lemma~\ref{lem:HeusenerKrollLemme221}: it suffices to use the equality $\psi_c(g_i)=t_{c_1}\cdots t_{c_i}$ instead of $\psi_c(x_i)=t_{c_i}$. Finally note that~(\ref{eq:MinorsgiBasis}) can be rewritten using the reduced colored Gassner representation. Indeed, using Definition~\ref{def:Reduced}, we have~$\widetilde{\mathcal{B}}^c_t(\beta,n,n)=\det(\overline{\mathcal{B}}^c_t(\beta)-I_{n-1}),$ where $I_{n-1}$ denotes the size $n-1$ identity matrix.
\end{remark}

We now relate  $\det(\overline{\mathcal{B}}^c_t(\beta)-I_{n-1})$ and $\mathcal{B}^c_t(\beta,1,1)$, generalizing~\cite[Lemma 2.2.2]{HeusenerKroll}.

\begin{proposition}
\label{prop:HeusenerKrollLemma222}
Given an $n$-stranded $(c,c)$-braid~$\beta$, the following equation holds:
$$    \frac{t_{c_1}\cdots t_{c_n}-1}{t_{c_1}-1}   \mathcal{B}^c_t(\beta,1,1)=   \det(\overline{\mathcal{B}}^c_t(\beta)-I_{n-1}).  $$ 
\end{proposition}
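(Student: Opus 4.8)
The plan is to relate the two determinants by performing an explicit change of basis between the $x_1,\ldots,x_n$ generators and the $g_1,\ldots,g_n$ generators, and then invoking the minor identities already established. Concretely, I would first record the change-of-basis matrix $P$ (upper-triangular with $1$'s, relating $\widetilde{\mathcal{B}}^c_t(\beta)$ to $\mathcal{B}^c_t(\beta)$ via conjugation, as referenced by the formula~\eqref{eq:ChangeOfBasis} alluded to in Lemma~\ref{lem:FixedPointGassner}). The key observation is that Remark~\ref{rem:giBasis} already gives
\begin{equation*}
\det(\overline{\mathcal{B}}^c_t(\beta)-I_{n-1})=\widetilde{\mathcal{B}}^c_t(\beta,n,n),
\end{equation*}
so the task reduces to expressing $\widetilde{\mathcal{B}}^c_t(\beta,n,n)$ in terms of $\mathcal{B}^c_t(\beta,1,1)$.

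The main technical step is to track what the deletion of the last row and column in the $g$-basis corresponds to in the $x$-basis. First I would expand $\det(\overline{\mathcal{B}}^c_t(\beta)-I_{n-1})$ using~\eqref{eq:MinorsgiBasis}, which relates $\widetilde{\mathcal{B}}^c_t(\beta,n,n)$ to $\widetilde{\mathcal{B}}^c_t(\beta,n,n-1)$ up to the factor $\frac{t_{c_1}\cdots t_{c_n}-1}{t_{c_1}\cdots t_{c_{n-1}}-1}$. Next I would pass back to the $x$-basis: under the triangular change of basis the minor $\widetilde{\mathcal{B}}^c_t(\beta,n,\cdot)$ obtained by removing the last row (which corresponds to $g_n$, the fixed vector) becomes expressible through minors of $\mathcal{B}^c_t(\beta)-I$. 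The cleanest route is to use the symmetry identity of Lemma~\ref{lem:HeusenerKrollLemme221}: applying~\eqref{eq:221} with a suitable choice of indices $(l,l',m,m')$ lets me convert any minor $\mathcal{B}^c_t(\beta,l,m)$ into $\mathcal{B}^c_t(\beta,1,1)$, introducing exactly the ratio of $(t_{c_\bullet}-1)$ and $(t_{c_1}\cdots t_{c_\bullet})$ factors that should combine to give the prefactor $\frac{t_{c_1}\cdots t_{c_n}-1}{t_{c_1}-1}$.

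The hard part will be the bookkeeping of signs and the precise matching between the $g$-basis minors and the $x$-basis minors under the change of basis; in particular one must verify that deleting the row/column indexed by $g_n$ in the $g$-basis is compatible, after conjugation by the triangular matrix $P$, with a single minor of $\mathcal{B}^c_t(\beta)-I$ rather than a linear combination. Because $P$ is unipotent upper-triangular and $g_n$ is the globally fixed generator, I expect the relevant cofactor expansion to collapse to a single term, so that the passage is genuinely multiplicative rather than introducing extra summands; confirming this collapse is the crux. Once it is established, the proof concludes by assembling the three ingredients—\eqref{eq:MinorsgiBasis}, the change-of-basis identification, and the minor-symmetry of Lemma~\ref{lem:HeusenerKrollLemme221}—and checking that all the monomial prefactors telescope to $\frac{t_{c_1}\cdots t_{c_n}-1}{t_{c_1}-1}$, exactly as in the single-variable computation of~\cite[Lemma 2.2.2]{HeusenerKroll}, with the colors carried through via $\psi_c(g_i)=t_{c_1}\cdots t_{c_i}$.
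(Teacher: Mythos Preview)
Your overall plan is the same as the paper's: identify $\det(\overline{\mathcal{B}}^c_t(\beta)-I_{n-1})$ with $\widetilde{\mathcal{B}}^c_t(\beta,n,n)$ via Remark~\ref{rem:giBasis}, pass between the $g$-basis and the $x$-basis using the explicit change-of-basis matrix $P_n$, and close the loop with~\eqref{eq:MinorsgiBasis} and Lemma~\ref{lem:HeusenerKrollLemme221}. (Two minor points: $P_n$ is lower-triangular, not upper, and the paper reduces first via Lemma~\ref{lem:HeusenerKrollLemme221} to the identity $\mathcal{B}^c_t(\beta,n,n)(t_{c_1}\cdots t_{c_n}-1)=t_{c_1}\cdots t_{c_{n-1}}(t_{c_n}-1)\widetilde{\mathcal{B}}^c_t(\beta,n,n)$ rather than at the end.)

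Where your expectation is off is the ``crux'' step. The cofactor expansion does \emph{not} collapse to a single $x$-basis minor. What the paper's direct computation with $P_n$ actually yields is the two-term identity
\[
\widetilde{\mathcal{B}}^c_t(\beta,n,n)=\mathcal{B}^c_t(\beta,n,n)-\widetilde{\mathcal{B}}^c_t(\beta,n,n-1),
\]
which still mixes the two bases. So the passage is not ``genuinely multiplicative'' as you hoped. The proof is rescued precisely because~\eqref{eq:MinorsgiBasis} lets you eliminate $\widetilde{\mathcal{B}}^c_t(\beta,n,n-1)$ in favour of $\widetilde{\mathcal{B}}^c_t(\beta,n,n)$, after which you can solve the resulting linear equation for $\widetilde{\mathcal{B}}^c_t(\beta,n,n)$ in terms of $\mathcal{B}^c_t(\beta,n,n)$ alone; the prefactors then telescope as claimed. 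Thus your ingredients are right, but be prepared for an extra summand in the change-of-basis step rather than a clean collapse.
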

\begin{proof}
It suffices to show that $\mathcal{B}^c_t(\beta,n,n)(t_{c_1}\cdots t_{c_n}-1)=t_{c_1}\cdots t_{c_{n-1}}(t_{c_n}-1)\widetilde{\mathcal{B}}^c_t(\beta,n,n)$: the conclusion will then follow from Remark~\ref{rem:giBasis} and Lemma~\ref{lem:HeusenerKrollLemme221} which imply respectively that~$\widetilde{\mathcal{B}}^c_t(\beta,n,n)=\det(\overline{\mathcal{B}}^c_t(\beta)-I_{n-1})$ and $(t_{c_{n}}-1)(t_{c_1}\cdots t_{c_{n-1}})\mathcal{B}^c_t(\beta,1,1)=(t_{c_1}-1)\mathcal{B}^c_t(\beta,n,n).$ A computation involving Fox calculus shows that the change of basis matrix from $\mathcal{B}^c_t(\beta)$ to~$\widetilde{\mathcal{B}}^c_t(\beta)$ is given by 
\begin{equation}
\label{eq:ChangeOfBasis}
P_n= \begin{pmatrix}
1 & 0 &0 & \cdots & 0 \\ 
1 & t_{c_1} & 0  & \cdots & 0 \\
1  &t_{c_1} & t_{c_1}t_{c_2} &  \ddots & \vdots \\ 
\vdots  & \vdots & \ddots& \ddots &0 \\ 
1 & t_{c_1} & \cdots &  t_{c_1} \cdots  t_{c_{n-2}} & t_{c_1} \cdots  t_{c_{n-2}}t_{c_{n-1}}
\end{pmatrix}.
\end{equation}
Given a matrix $M$, recall that we use $M_{n,n}$ to denote the matrix obtained by deleting the $n$-th row and $n$-th column of $M$. Until the end of this proof, we use $I$ to denote the size $n$ identity matrix. With this notation, observe that $\widetilde{\mathcal{B}}^c_t(\beta,n,n)=\det((P_n \mathcal{B}^c_t(\beta) P_n^{-1}-I)_{n,n})$. A~tedious computation now shows that 
$$\det((P_n \mathcal{B}^c_t(\beta) P_n^{-1}-I)_{n,n})=\det(P_{n-1}(\mathcal{B}_t^c(\beta))_{n,n}P_{n-1}^{-1}-I_{n-1})-\widetilde{\mathcal{B}}^c_t(\beta,n,n-1).$$
 Using the definition of $\widetilde{\mathcal{B}}^c_t(\beta)$ and the fact that the determinant is invariant under conjugation, this can be rewritten as $\widetilde{\mathcal{B}}^c_t(\beta,n,n)=\mathcal{B}^c_t(\beta,n,n)-\widetilde{\mathcal{B}}^c_t(\beta,n,n-1)$.
The conclusion then follows by using~(\ref{eq:MinorsgiBasis}). This concludes the proof of the proposition.
\end{proof}

\subsubsection{Relation to the potential function}

As in Remark~\ref{rem:GassnerConway},~$g \colon \Lambda_\mu \to \Lambda_\mu$ is defined by extending $\Z$-linearly the group endomorphism of $\Z^\mu=\langle t_1,\ldots,t_\mu \rangle$ which sends~$t_i$ to $t_i^2$. The following lemma expresses $\nabla_{\widehat{\beta}}$ using a minor of the \emph{unreduced} colored Gassner matrix.

\begin{lemma}
\label{lem:PotentialC11}
Given a $\mu$-colored $n$-stranded $(c,c)$-braid $\beta$, we have
$$(t_{c_1}^2-1) \nabla_{\widehat{\beta}}(t_1,\ldots,t_\mu)= (-1)^{n+1} \langle \beta \rangle \cdot t_{c_1}\cdots t_{c_n} \cdot g(\mathcal{B}^c_t(\beta,1,1)),$$
where $\langle \beta \rangle$ is the Laurent monomial described in Remark~\ref{rem:GassnerConway}. 
\end{lemma}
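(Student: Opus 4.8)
The plan is to combine the formula~\eqref{eq:PotentialConway} from Remark~\ref{rem:GassnerConway}, which expresses $\nabla_{\widehat{\beta}}$ in terms of the \emph{reduced} colored Gassner matrix via $\det(\overline{\mathcal{B}}^c_t(\beta)-I_{n-1})$, with Proposition~\ref{prop:HeusenerKrollLemma222}, which relates $\det(\overline{\mathcal{B}}^c_t(\beta)-I_{n-1})$ to the \emph{unreduced} minor $\mathcal{B}^c_t(\beta,1,1)$. The bridge between the two is the factor $\frac{t_{c_1}\cdots t_{c_n}-1}{t_{c_1}-1}$ appearing in Proposition~\ref{prop:HeusenerKrollLemma222}, which must be reconciled with the factor $\frac{1}{t_{c_1}\cdots t_{c_n}-t_{c_1}^{-1}\cdots t_{c_n}^{-1}}$ appearing in~\eqref{eq:PotentialConway}.

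Concretely, first I would start from~\eqref{eq:PotentialConway}, writing
$$ \nabla_{\widehat{\beta}}(t_1,\ldots,t_\mu)=(-1)^{n+1} \cdot \frac{1}{t_{c_1}\cdots t_{c_n}-t^{-1}_{c_1}\cdots t^{-1}_{c_n}} \cdot \langle \beta \rangle \cdot g(\det(\overline{\mathcal{B}}^c_t(\beta)-I_{n-1})).$$
Next I would substitute the identity of Proposition~\ref{prop:HeusenerKrollLemma222}, but applied after the operator $g$ (which doubles exponents): since $g$ is a ring homomorphism, one has
$$g(\det(\overline{\mathcal{B}}^c_t(\beta)-I_{n-1}))=g\!\left(\frac{t_{c_1}\cdots t_{c_n}-1}{t_{c_1}-1}\right) \cdot g(\mathcal{B}^c_t(\beta,1,1))=\frac{t_{c_1}^2\cdots t_{c_n}^2-1}{t_{c_1}^2-1} \cdot g(\mathcal{B}^c_t(\beta,1,1)).$$
Then the scalar prefactors must be simplified: I would combine
$$\frac{1}{t_{c_1}\cdots t_{c_n}-t^{-1}_{c_1}\cdots t^{-1}_{c_n}} \cdot \frac{t_{c_1}^2\cdots t_{c_n}^2-1}{t_{c_1}^2-1}.$$
Writing $s:=t_{c_1}\cdots t_{c_n}$, the first denominator is $s-s^{-1}=(s^2-1)/s$, so the first two factors collapse to $\frac{s}{s^2-1} \cdot \frac{s^2-1}{t_{c_1}^2-1}=\frac{s}{t_{c_1}^2-1}=\frac{t_{c_1}\cdots t_{c_n}}{t_{c_1}^2-1}$. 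Multiplying through by $t_{c_1}^2-1$ then yields exactly the claimed
$$(t_{c_1}^2-1)\nabla_{\widehat{\beta}}(t_1,\ldots,t_\mu)=(-1)^{n+1}\langle \beta \rangle \cdot t_{c_1}\cdots t_{c_n} \cdot g(\mathcal{B}^c_t(\beta,1,1)).$$

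I do not expect a genuine obstacle here, as the result is essentially an algebraic repackaging of two already-established facts; the work is purely formal. The one point demanding care is the interaction of the operator $g$ with Proposition~\ref{prop:HeusenerKrollLemma222}: one must apply the identity \emph{before} applying $g$ so that $g$ can be distributed across the quotient as a homomorphism, and then verify that $g$ sends the rational prefactor $\frac{t_{c_1}\cdots t_{c_n}-1}{t_{c_1}-1}$ to $\frac{t_{c_1}^2\cdots t_{c_n}^2-1}{t_{c_1}^2-1}$ rather than to some other expression. Since Proposition~\ref{prop:HeusenerKrollLemma222} holds as an identity in $\Lambda_\mu$ and $g$ is a well-defined ring endomorphism of $\Lambda_\mu$, this step is valid, and the remaining manipulation is the elementary factorization of $s^2-1$ displayed above.
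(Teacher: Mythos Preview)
Your proposal is correct and follows essentially the same approach as the paper's own proof: start from~\eqref{eq:PotentialConway}, apply $g$ to the identity of Proposition~\ref{prop:HeusenerKrollLemma222}, and simplify the resulting rational prefactor. The paper's proof is simply terser about the final algebraic simplification, which you have written out explicitly.
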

\begin{proof}
Using successively Remark~\ref{rem:GassnerConway} and  Proposition~\ref{prop:HeusenerKrollLemma222}, we obtain
\begin{align}
\label{eq:GassnerConway}
\nabla_{\widehat{\beta}}(t_1,\ldots,t_\mu)
&= \frac{(-1)^{n+1} \langle \beta \rangle}{t_{c_1}\cdots t_{c_n}-t_{c_1}^{-1}\cdots t_{c_n}^{-1}} g( \det(\overline{\mathcal{B}}^c_t(\beta)-I_{n-1}))\\
&=  \frac{(-1)^{n+1} \langle \beta \rangle}{t_{c_1}\cdots t_{c_n}-t_{c_1}^{-1}\cdots t_{c_n}^{-1}} \frac{t_{c_1}^2\cdots t_{c_n}^2-1}{t_{c_1}^2-1} g(\mathcal{B}^c_t(\beta,1,1)). \nonumber
\end{align}
This concludes the proof of the lemma.
\end{proof}

We need one last lemma in order to prove Proposition~\ref{prop:HeusenerKroll46}, namely we require a multivariable generalization of~\cite[Lemma 2.2, part 3]{HeusenerKroll}. For that purpose, we write the (unreduced) colored Gassner matrix of $\beta$ as $\left( \begin{smallmatrix} A & B \\ C & D \end{smallmatrix} \right)$, where $D$ is a square matrix of size $n-2$.

\begin{lemma}
\label{lem:HeusenerKrollLemma223}
Let $c$ be a $\mu$-coloring with $c_1=c_2$ and let $\beta$ be an $n$-stranded $(c,c)$-braid. If the generator $\sigma_1 \in B_n$ is viewed as a $(c,c)$-braid, then the following equality holds in $\Lambda_\mu$:
\begin{equation}
\label{eq:223}
\mathcal{B}^c_t(\sigma_1^2\beta,1,1)=t_{c_1}^2\mathcal{B}^c_t(\beta,1,1)+(t_{c_1}-1)\det(D-I_{n-2}).
\end{equation}
\end{lemma}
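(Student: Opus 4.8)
The plan is to exploit the multiplicativity of the colored Gassner matrices together with the multilinearity of the determinant, reducing the identity~\eqref{eq:223} to the minor relation of Lemma~\ref{lem:HeusenerKrollLemme221}. Throughout I write $t:=t_{c_1}=t_{c_2}$ (recall $c_1=c_2$). Because the colored Gassner matrices form a representation of $B_c$ (the Fox calculus chain rule together with the $\beta$-invariance of $\psi_c$ on $(c,c)$-braids noted in Subsection~\ref{sub:ColoredGassner}), I would first record $\mathcal{B}^c_t(\sigma_1^2\beta)=\mathcal{B}^c_t(\sigma_1^2)\,\mathcal{B}^c_t(\beta)$, and compute from~\eqref{eq:GassnerFoxFurther} that $\mathcal{B}^c_t(\sigma_1^2)=S\oplus I_{n-2}$ with $S=\left(\begin{smallmatrix} 1-t+t^2 & t-t^2 \\ 1-t & t\end{smallmatrix}\right)$. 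Writing $\mathcal{B}^c_t(\beta)=\left(\begin{smallmatrix} A & B \\ C & D\end{smallmatrix}\right)$ as in the statement and letting $b_1,b_2$ denote the two rows of $B$, this yields $\mathcal{B}^c_t(\sigma_1^2\beta)=\left(\begin{smallmatrix} SA & SB \\ C & D\end{smallmatrix}\right)$.

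The key structural observation is that the minor $(\mathcal{B}^c_t(\sigma_1^2\beta)-I)_{1,1}$ and the minor $(\mathcal{B}^c_t(\beta)-I)_{1,1}$ share their bottom $n-2$ rows, since left multiplication by $S\oplus I_{n-2}$ leaves the blocks $C$ and $D$ untouched; only the top row differs. Denote the common lower block by $L$ (its rows are the second column of $C$ followed by $D-I_{n-2}$). The top row of $(\mathcal{B}^c_t(\sigma_1^2\beta)-I)_{1,1}$ comes from the second row of $(SA\mid SB)$, which equals $(1-t)$ times the first row plus $t$ times the second row of $(A\mid B)$. After deleting the first column and subtracting $1$ in the leading entry, I expect this top row to decompose as $(1-t)[A_{12},b_1]+t[A_{22}-1,b_2]+(t-1)[1,0,\dots,0]$. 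Multilinearity of the determinant in the first row then gives
\begin{equation*}
\mathcal{B}^c_t(\sigma_1^2\beta,1,1)=(1-t)\,\mathcal{B}^c_t(\beta,2,1)+t\,\mathcal{B}^c_t(\beta,1,1)+(t-1)\det(D-I_{n-2}),
\end{equation*}
where the $[A_{12},b_1]$-determinant is exactly the minor $\mathcal{B}^c_t(\beta,2,1)$ (no extra sign appears, since this is the minor, with rows kept in their natural order $1,3,\dots,n$), the $[A_{22}-1,b_2]$-determinant is $\mathcal{B}^c_t(\beta,1,1)$, and the last determinant collapses to $\det(D-I_{n-2})$ upon expanding along its first row $[1,0,\dots,0]$.

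It remains to eliminate $\mathcal{B}^c_t(\beta,2,1)$. Here I would apply Lemma~\ref{lem:HeusenerKrollLemme221} with $(l,m)=(1,1)$ and $(l',m')=(2,1)$: the factors $(t_{c_{m'}}-1)=(t_{c_m}-1)=t-1$ match, the product $t_{c_1}\cdots t_{c_{l'-1}}$ equals $t$ while $t_{c_1}\cdots t_{c_{l-1}}$ is the empty product, and the sign is $(-1)^{1+1+1+2}=-1$; after cancelling the nonzero factor $t-1$ this gives $\mathcal{B}^c_t(\beta,2,1)=-t\,\mathcal{B}^c_t(\beta,1,1)$. Substituting into the displayed identity, the two $\mathcal{B}^c_t(\beta,1,1)$-terms combine as $(-t+t^2)+t=t^2$, leaving $t^2\,\mathcal{B}^c_t(\beta,1,1)+(t-1)\det(D-I_{n-2})$, which is exactly~\eqref{eq:223}.

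I expect the main obstacle to be the sign and coefficient bookkeeping in the last two steps: confirming the precise top-row decomposition, checking that the $[A_{12},b_1]$-determinant is the minor $\mathcal{B}^c_t(\beta,2,1)$ with no spurious sign from deleting the second rather than the first row, and verifying that Lemma~\ref{lem:HeusenerKrollLemme221} contributes exactly the sign that upgrades the naive coefficient $t$ of $\mathcal{B}^c_t(\beta,1,1)$ to $t^2$. Everything else is routine block linear algebra.
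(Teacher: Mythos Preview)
Your proof is correct and follows essentially the same approach as the paper's. Both arguments compute $\mathcal{B}^c_t(\sigma_1^2)$ explicitly, exploit that left multiplication by $S\oplus I_{n-2}$ leaves the bottom $n-2$ rows of the minor untouched, and reduce to the relation $\mathcal{B}^c_t(\beta,2,1)=-t_{c_1}\mathcal{B}^c_t(\beta,1,1)$ from Lemma~\ref{lem:HeusenerKrollLemme221}; your use of multilinearity in the top row is a slightly cleaner packaging of what the paper does by expanding both sides along the first row and subtracting.
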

\begin{proof}
First, a short computation shows that $\mathcal{B}^c_t(\sigma_1^2)=\left( \begin{smallmatrix} 1-t_{c_1}+t_{c_1}^2 & t_{c_1}(1-t_{c_1}) \\ 1-t_{c_1} & t_{c_1} \end{smallmatrix} \right) \oplus I_{n-2}$, see also Example~\ref{ex:RealityCheck}. Next, recalling that we decomposed the colored Gassner matrix of $\beta$ as $\left( \begin{smallmatrix} A & B \\ C & D \end{smallmatrix} \right) $, we write the matrix~$A$ as $\left( \begin{smallmatrix} a_{11} & a_{12} \\ a_{21} & a_{22} \end{smallmatrix} \right)$, the matrix $B$ as~$\left( \begin{smallmatrix} b_1 \\ b_2  \end{smallmatrix} \right)$ where each $b_i$ is a size $n-2$ row vector and the matrix $C$ as $(c_1,c_2)$, where each $c_i$ is a size $n-2$ column vector. As~(\ref{eq:223}) does not involve the first lines and columns of the aforementioned matrices, we are reduced to proving
\begin{equation}
\label{eq:Goal}
\det \begin{pmatrix} (1-t_{c_1})a_{12}+t_{c_1} a_{22}-1 & (1-t_{c_1})b_1+t_{c_1}b_2 \\ c_2 & D-I  \end{pmatrix}
= t_{c_1}^2 \det \begin{pmatrix}
a_{22}-1 & b_2 \\ c_2 & D-I
\end{pmatrix}+ (t_{c_1}-1)\det(D-I),
\end{equation}
where we use $I$ as a shorthand for the identity matrix $I_{n-2}$. Expanding the left hand side of~(\ref{eq:Goal}) along the first row, we obtain
\begin{equation}
\label{eq:LHS}
((1-t_{c_1})a_{12}+t_{c_1}a_{22}-1)\det(D-I)
+\sum_{j=2}^{n-1} (-1)^{1+j}((1-t_{c_1})b_1^j+t_{c_1}b_2^j)\det(L_j),
\end{equation}
where, for $j$ greater than one, $L_j$ denotes the size $n-2$ square matrix obtained from $\left( \begin{smallmatrix} c_2 & D-I \end{smallmatrix} \right)$ by removing the $j$-th column. Keeping these notations in mind and expanding the determinant in the right hand side of~(\ref{eq:Goal}) along its first line, we obtain
\begin{equation}
\label{eq:RHS}
t_{c_1}^2 \left( (a_{22}-1)\det(D-I)+\sum_{j=2}^{n-1} (-1)^{1+j}b_2^j \det(L_j) \right) +(t_{c_1}-1) \det(D-I).
\end{equation}
Substracting~(\ref{eq:RHS}) from~(\ref{eq:LHS}) and simplifying the extraneous $t_{c_1}-1$ factors, we see that~(\ref{eq:Goal}) in fact reduces to proving the equation $-\mathcal{B}^c_t(\beta,2,1)-t_{c_1}\mathcal{B}^c_t(\beta,1,1)=0$. Since the latter equation holds thanks to Lemma~\ref{lem:HeusenerKrollLemme221}, the proof is concluded.
\end{proof}

\subsubsection{Conclusion of the proof}

\begin{proof}[Proof of Propostion~\ref{prop:TechnicalLemma}]
Let $\omega \in \T^\mu$ be such that~$\nabla_{\widehat{\beta}}(\omega)$ is non-zero. Our goal is to show that~$\operatorname{det}(D(\omega)-I_{n-2})$ is non-zero. Use $\mathcal{B}^c_\omega(\beta)$ to denote the unreduced colored Gassner matrix of $\beta$ evaluated at $\omega$. Assume by way of contradiction that $\det(D(\omega)-I_{n-2})$ vanishes. Using Lemma~\ref{lem:HeusenerKrollLemma223}, this implies that $\mathcal{B}^c_\omega(\sigma_1^2\beta,1,1)=\omega_{c_1}^2 \mathcal{B}_\omega^c(\beta,1,1)$. Combining this equality with Lemma~\ref{lem:PotentialC11} and the fact that $\langle \sigma_1^2 \beta \rangle =t_{c_1}^{-2} \langle \beta \rangle$,
 we get 
\begin{align}
\label{eq:SoonFinished}
(\omega_{c_1}^2-1)
 \nabla_{\widehat{\sigma_1^2\beta}}(\omega)
&= (-1)^{n+1} \langle \sigma_1^2\beta \rangle \cdot \omega_{c_1}\cdots \omega_{c_n} \cdot g(\mathcal{B}^c_\omega(\sigma_1^2\beta,1,1))   \nonumber  \\ 
&=\omega_{c_1}^2(\omega_{c_1}^2-1)
 \nabla_{\widehat{\beta}}(\omega).
\end{align}
Note that we slightly abused notations by thinking of $g$ as being defined on $\C$ and noting that $g(\mathcal{B}^c_\omega(\sigma_1^2\beta,1,1))=\omega_{c_1}^4 g(\mathcal{B}_\omega^c(\beta,1,1))$. Regardless of this fact, simplifying the extraneous terms, we obtain the equality $\nabla_{\widehat{\sigma_1^2\beta}}(\omega)=\omega_{c_1}^2\nabla_{\widehat{\beta}}(\omega)$. We let the reader verify that this conclusion also holds if $\omega_{c_1}^2\cdots \omega_{c_n}^2=1$.
 Since we assumed that $\nabla_{\widehat{\beta}}(\omega)\neq~0$, we deduce that~$\nabla_{\widehat{\sigma_1^2\beta}}(\omega) \neq 0$. As Remark~\ref{rem:PotentialEvaluations} implies that the quotient $\nabla_{\widehat{\sigma_1^2\beta}}(\omega)/\nabla_{\widehat{\beta}}(\omega)$ is real, we obtain a contradiction when $\omega_{c_1}^2$ is different from~$1$. This concludes the proof of Proposition~\ref{prop:TechnicalLemma}.
\end{proof}

Note that in their equivalent of~(\ref{eq:SoonFinished}), Heusener and Kroll work with the Conway-normalized Alexander polynomial which they denote $\Delta_K(t)$ (recall Remark~\ref{rem:TerminologyPotential}). This explains why they obtain the equality $\Delta_{k'}(\omega)=\omega \Delta_k(\omega)$~\cite[last equation of p.494]{HeusenerKroll}, while we have a $\omega_{c_1}^2$ factor.

\section{The multivariable Casson-Lin invariant and crossing changes}
\label{sec:CrossingChange}

The goal of this section is to understand the behavior of the multivariable Casson-Lin invariant under a crossing change within a sublink. In Subsection~\ref{sub:ReductionPillowcase}, we reduce this analysis to a computation in a space $\widehat{H}_2^{\alpha_j}$, in Subsection~\ref{sub:PillowCaseComputations}, we perform calculations in $\widehat{H}_2^{\alpha_j}$ which are then reformulated in Subsection~\ref{sub:CrossingChangeCassonLin} in terms of the multivariable potential function.

\subsection{Reduction to a ``pillowcase-like" space.}
\label{sub:ReductionPillowcase}

Let $c$ be a $\mu$-coloring such that $c_1=c_2=j$. Let $\beta$ be an $n$-stranded $(c,c)$-braid and view the generator $\sigma_1 \in B_n$ as a $(c,c)$-braid. Let $\alpha$ be an element of $(0,\pi)^\mu$.
$$S_j(\alpha) = \lbrace (e^{\varepsilon_1 2i\alpha_1}, \ldots, e^{\varepsilon_\mu 2i\alpha_\mu}) \in S(\alpha) \ | \ \varepsilon_j=1 \rbrace.$$ 
This set contains $2^{\mu-1}$ elements and once again its elements are written as $\omega_\varepsilon$ with $\varepsilon$ in~$\lbrace \pm 1\rbrace^{\mu-1}$.
Although this fact is not needed in the sequel, observe that $S_j(\alpha)$ is in bijection with the set of conjugacy classes of abelian representations of $\pi_1(M_L)$ where the meridional traces of the sublink~$L_k$ are fixed to~$2 \cos (\alpha_{k})$ for $k=1,\ldots, \mu$. 
To see this, first simultaneously diagonalize these meridional matrices, yielding $\bsm e^{\varepsilon_k2i\alpha_k}&0\\0&e^{-\varepsilon_k2i\alpha_k} \esm$, and then use one extra conjugation to fix $\varepsilon_j=1$.

Assume that $\Delta_{\widehat{\beta}}(\omega_\varepsilon), \Delta_{\widehat{\sigma_1^2 \beta}}(\omega_\varepsilon) \neq 0$ for all $\omega_\varepsilon \in S_j(\alpha)$.
In order to understand the effect of a single crossing change within a sublink on the multivariable Casson-Lin invariant~$h_L$, we will study
\begin{equation}
\label{eq:CrossingChangeInvariant}
h^{c}_{\sigma_1^2\beta}(\alpha)-h^{c}_\beta(\alpha).
\end{equation}
Indeed the links $L:=\widehat{\beta}$ and $\widehat{\sigma_1^2\beta}$ differ by a single crossing change within the sublink $L_j$ and any such (negative to positive) crossing change within a colored link can be realized in this way, see the proof of Proposition~\ref{prop:CassonLinCrossingChange} below for further details. The first step in understanding~(\ref{eq:CrossingChangeInvariant}) is to consider the following set:
$$ V_n^{\alpha,c} =\lbrace (A_1,\ldots,A_n,B_1,\ldots,B_n) \in H_n^{\alpha,c} \ | \ A_i=B_i  \text{ for } i=3,\ldots,n \rbrace. $$
Use $c'$ to denote $(c_3,\ldots,c_n)$ so that $c=(c_1,c_2,c')$. 
Observe that $V_n^{\alpha,c}$ is homeomorphic to~$H_2^{\alpha_j} \times \Lambda_{n-2}^{\alpha,c'}$ and set $\widehat{V}_n^{\alpha,c}:=(V_n^{\alpha,c} \setminus S_n^{\alpha,c}) /\SO(3)$. Using Lemma~\ref{lem:DimensionIrred}, we deduce that this latter space is a smooth submanifold of $\widehat{H}_n^{\alpha,c}$ whose dimension is $2n-2$.
We then consider the projection $p \colon V_n^{\alpha,c} \to H_2^{\alpha_j}$ given by the following map:$$ p(X_1,X_2,X_3,\ldots,X_n,Y_1,Y_2,Y_3,\ldots,Y_n)=(X_1,X_2,Y_1,Y_2).$$ 
In order to obtain an induced map $\widehat{p}$ on (a subset of) $\widehat{V}_n^{\alpha,c}$, we introduce some further notations.
Namely,  we consider the subset $W_n^{\alpha,c} =p^{-1}(S_2^{\alpha_j})$ of $V_n^{\alpha, c}$ that projects onto the abelian representations in $H_2^{\alpha_j}$. 
We additionally set $\widehat{W}_n^{\alpha,c} := (W_n^{\alpha,c}\setminus S_n^{\alpha,c})/\SO(3)$: this way $p$ induces a well defined map 
$$\widehat{p} \colon \widehat{V}_n^{\alpha,c} \setminus \widehat{W}_n^{\alpha, c} \to \widehat{H}_2^{\alpha_j}.$$
 Arguing as in~\cite{Lin}, and perturbing $\widehat{\Gamma}^\alpha_\beta$ if necessary, we can assume that $\widehat{V}_n^{\alpha,c}$ and $\widehat{\Gamma}_\beta^\alpha$ intersect transversally in a properly embedded one-dimensional submanifold of $\widehat{H}_n^{\alpha,c}$.
 One can then further assume that $\widehat{\Gamma}_\beta^\alpha \cap \widehat{W}_n^{\alpha,c} = \emptyset$. 
 Now~(\ref{eq:CrossingChangeInvariant}) can be computed by considering curves inside the 2-dimensional space~$\widehat{H}_2^{\alpha_j}$. More precisely, using~$\langle-,-\rangle$ to denote the algebraic intersection number, the same arguments as in~\cite[Lemma 2.3]{Lin} and~\cite[Equation (4)]{HeusenerKroll}) show that
\begin{align}
\label{eq:IntersectionDifferenceCocycle}
h^{c}_{\sigma_1^2\beta}(\alpha)-h^{c}_\beta(\alpha)
&=\langle \widehat{\Gamma}_{\sigma_1^2}^{\alpha_j}-\widehat{\Lambda}_2^{\alpha_j},\widehat{p}(\widehat{V}_n^{\alpha,c} \cap \widehat{\Gamma}_\beta^\alpha) \rangle_{\widehat{H}_2^{\alpha_j}}.
\end{align}
As we will see in Proposition~\ref{prop:Curve} below, $\widehat{p}(\widehat{V}_n^{\alpha,c} \cap \widehat{\Gamma}_\beta^\alpha) $ consists of \emph{several} arcs, each approaching the same pair of punctures of $\widehat{H}_2^{\alpha_j}$.
Each of these arcs will contribute, or not, to the intersection number~(\ref{eq:IntersectionDifferenceCocycle}) in a way that we make precise in Proposition~\ref{prop:HeusenerKroll46} below. 
This contrasts with the situation described in~\cite{Lin, HeusenerKroll}, where $\widehat{p}(\widehat{V}_n^{\alpha,c} \cap \widehat{\Gamma}_\beta^\alpha) $ consists of a \emph{single} arc.

Note that we are adopting the following convention: we are writing $\widehat{\Gamma}_{\sigma_1^2}^{\alpha_j}, \widehat{\Lambda}_2^{\alpha_j}$ and $\widehat{H}_2^{\alpha_j}$ instead of $\widehat{\Gamma}_{\sigma_1^2}^{(\alpha_j,\alpha_j)}, \widehat{\Lambda}_2^{(\alpha_j,\alpha_j),(c_1,c_2)}$ and $\widehat{H}_2^{(\alpha_j,\alpha_j),(c_1,c_2)}$ which would be more coherent with the previous notation. Summarizing,~(\ref{eq:IntersectionDifferenceCocycle}) shows that the difference of the multivariable Casson-Lin invariants (which are defined via algebraic intersections in $\widehat{H}_n^{\alpha,c}$) can be understood in the more manageable space $\widehat{H}_2^{\alpha_j}$ by studing intersections with the \emph{difference cycle} $\widehat{\Gamma}_{\sigma_1^2}^{\alpha_j}-\widehat{\Lambda}_2^{\alpha_j}$. 

\subsection{Computations in $\widehat{H}_2^{\alpha_j}$}
\label{sub:PillowCaseComputations}

The goal of this subsection is to understand whether the projection $\widehat{p}(\widehat{V}_n^{\alpha,c} \cap \widehat{\Gamma}_\beta^\alpha)$ intersects the difference cycle $\widehat{\Gamma}_{\sigma_1^2}^{\alpha_j}-\widehat{\Lambda}_2^{\alpha_j}$: using~(\ref{eq:IntersectionDifferenceCocycle}), this will provide a formula for the difference $h^{c}_{\sigma_1^2\beta}(\alpha)-h^{c}_\beta(\alpha)$.
\medbreak

We first recall the parametrization of $\widehat{H}_2^{\alpha_j}= \lbrace (X_1,X_2,Y_1,Y_2) \in \SU(2)^4 \ | \ \Tr(X_i) = \Tr(Y_i) = 2\cos (\al_j), X_1X_2=Y_1Y_2 \rbrace$ which was obtained by Lin for $\alpha_j=\pi/2$~\cite[Lemma 2.1]{Lin} and by Heusener-Kroll for $\alpha_j \neq \pi/2$~\cite[Lemma 4.1]{HeusenerKroll}. Although the proofs may be found in the aforementioned references, we provide an outline of the arguments in order to introduce some notation which we shall use throughout the section.

\begin{lemma}
\label{lem:HKLemma41}
 Given $\alpha_j \in (0,\pi)$, the space $\widehat{H}_2^{\alpha_j}$ is homeomorphic to
 \begin{enumerate}
 \item a $2$-sphere with four points deleted if $\alpha_j=\pi/2$,
 \item a $2$-sphere with three points deleted if $\alpha_j \neq \pi/2$.
 \end{enumerate}
\end{lemma}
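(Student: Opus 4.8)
The plan is to follow Lin's computation for $\alpha_j=\pi/2$ \cite{Lin} and Heusener--Kroll's for $\alpha_j\neq\pi/2$ \cite{HeusenerKroll}, organizing the space by the common product $P:=X_1X_2=Y_1Y_2\in\SU(2)$ and its trace. Using the quaternionic description of Remark~\ref{rem:Quaternions}, I would write each factor as $\cos(\alpha_j)+\sin(\alpha_j)\,Q$ for a pure unit quaternion $Q$, so that a short computation gives $\Tr(X_1X_2)=2(\cos^2(\alpha_j)-\sin^2(\alpha_j)\cos\theta)$, where $\theta$ is the angle between the pure parts of $X_1$ and $X_2$. Hence $t:=\Tr(P)$ ranges exactly over the interval $[2\cos(2\alpha_j),2]$, with the value $t=2$ (i.e. $P=I$) attained when $X_2=X_1^{-1}$ and the minimal value $t=2\cos(2\alpha_j)$ attained when $X_1=X_2$.

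Next I would analyze the generic stratum $P\neq\pm I$. Here I would use the conjugation action to diagonalize $P=\mathrm{diag}(e^{i\phi},e^{-i\phi})$, which cuts the gauge group down to the residual maximal torus $S^1$. Writing $X_1=\left(\begin{smallmatrix} a & b \\ -\bar b & \bar a\end{smallmatrix}\right)$, the two trace conditions $\Tr(X_1)=\Tr(X_1^{-1}P)=2\cos(\alpha_j)$ pin down the real and imaginary parts of $a$ (hence $|b|$) in terms of $\phi$, leaving only the phase of $b$ free; the same holds for $Y_1$. Since the residual torus rotates the phases of $b$ for the $X$-pair and the $Y$-pair simultaneously, the single gauge invariant is their difference, a point of $S^1$. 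This exhibits the irreducible part of $\lbrace P\neq\pm I\rbrace$ as a cylinder fibered by these relative-phase circles over the open trace interval.

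It then remains to understand the two ends and the abelian locus, which I expect to be the crux. At $t=2$ one has $P=I$, the gauge group jumps back to all of $\SU(2)$, and the configurations reduce to pairs $(X_1,Y_1)$ of points of the sphere $\mathbb{S}_{\alpha_j}$ (with $X_2=X_1^{-1}$, $Y_2=Y_1^{-1}$); modulo conjugation these are classified by the angle between their axes, an interval onto which the boundary relative-phase circle folds two-to-one, the two fold points being abelian. The minimal-trace end is where the dichotomy appears: if $\alpha_j\neq\pi/2$ then $2\cos(2\alpha_j)>-2$, the extremal $P$ is non-central, and $X_1=X_2$ together with $X_1^2=Y_1^2=P$ forces $X_1=Y_1$, so the whole end collapses to a single abelian point (a cone point); whereas if $\alpha_j=\pi/2$ then the extremal value is $t=-2$, so $P=-I$ is central, the condition $X_1X_2=-I$ only forces $X_2=X_1$ while $X_1,Y_1$ remain free on $\mathbb{S}_{\pi/2}$, and this end is a second folding interval carrying two abelian fold points. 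Capping the cylinder by these ends yields a $2$-sphere, and the deleted (abelian) points number $2+1=3$ when $\alpha_j\neq\pi/2$ and $2+2=4$ when $\alpha_j=\pi/2$. As an independent check of the count, I would directly enumerate conjugacy classes of abelian representations in $H_2^{\alpha_j}$: writing each diagonal factor as $\mathrm{diag}(e^{\pm i\alpha_j},e^{\mp i\alpha_j})$, the constraint $X_1X_2=Y_1Y_2$ together with the Weyl identification gives exactly three classes when $e^{2i\alpha_j}\neq e^{-2i\alpha_j}$ and four when these coincide, i.e. precisely when $\alpha_j=\pi/2$.

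The main obstacle is the end analysis: verifying that the relative-phase circle really folds two-to-one onto an interval at a central value of $P$ (the pillowcase-corner phenomenon) and that the resulting caps glue to a genuine smooth $2$-sphere, and separately confirming that each abelian representation I delete is actually a limit of irreducible ones, so that it is a puncture rather than a removed regular point. Both of these are exactly the points treated carefully in \cite[Lemma 2.1]{Lin} and \cite[Lemma 4.1]{HeusenerKroll}; the only genuinely new feature in the colored setting is that all four factors carry the single color $\alpha_j$, so the argument there transfers verbatim.
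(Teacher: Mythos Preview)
Your proposal is correct and follows essentially the same approach as the paper: both outline the arguments of Lin and Heusener--Kroll by exhibiting $\widehat{H}_2^{\alpha_j}$ as an interval worth of circles with degenerations at the endpoints, the dichotomy between three and four punctures arising from whether the minimal-trace value of $P=X_1X_2$ is central or not. The only cosmetic difference is the gauge-fixing: the paper normalizes $X_2$ to be diagonal and uses $(\theta_1,\theta_2)$ where $\theta_1$ positions $X_1$ and $\theta_2$ is the angle from $X_1$ to $Y_1$ on the intersection circle $\mathbb{S}_{\alpha_j}(1)\cap\mathbb{S}_{\alpha_j}(P)$, whereas you diagonalize $P$ and use $(\Tr P,\text{relative phase})$; these coordinates are monotonically related and the end analyses coincide.
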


\begin{proof}
For $X,Y \in \SU(2)$, consider the $\operatorname{SU}(2)$-invariant distance on $\SU(2)$ given by $d(X,Y) := \arccos \left( \frac {\Tr(X^{-1}Y)}{2} \right)$. Notice that this distance realizes the distance induced by the standard spherical metric on $\mS^3$.
Let $(X_1,X_2,Y_1,Y_2)$ lie in $\widehat{H}_2^{\alpha_j}$. Up to conjugacy, one can assume that $$X_1 = \bm \cos (\al_j)+ i \sin (\al_j) \cos (\theta_1) & \sin (\al_j) \sin (\theta_1)\\ -\sin (\al_j) \sin (\theta_1) & \cos (\al_j)- i \sin (\al_j) \cos (\theta_1) \ema, \ \ \ X_2 = \bm e^{i\al_j} & 0 \\ 0 & e^{-i\al_j} \ema$$
for some $\theta_1 \in [0,\pi]$. As the distance $d$ is invariant, the matrices $X_1$ and $Y_1$ lie on a (possibly degenerate) circle given by the intersection of the sphere $\mS_{\al_j}(1)= \lbrace X \in \SU(2) \ | \ d(1,X)=\al_j \rbrace$ with the sphere $\mS_{\al_j}(X_1X_2) = \lbrace X \in \SU(2) \ | \ d(X,X_1X_2) = \al_j \rbrace$, see Figure \ref{Circle}. We denote by~$\theta_2 \in [0, 2\pi]$ the oriented angle between $X_1$ and $Y_1$ on this circle.
\begin{figure}[!htb]
\begin{center}
\def\svgwidth{0.5\columnwidth}
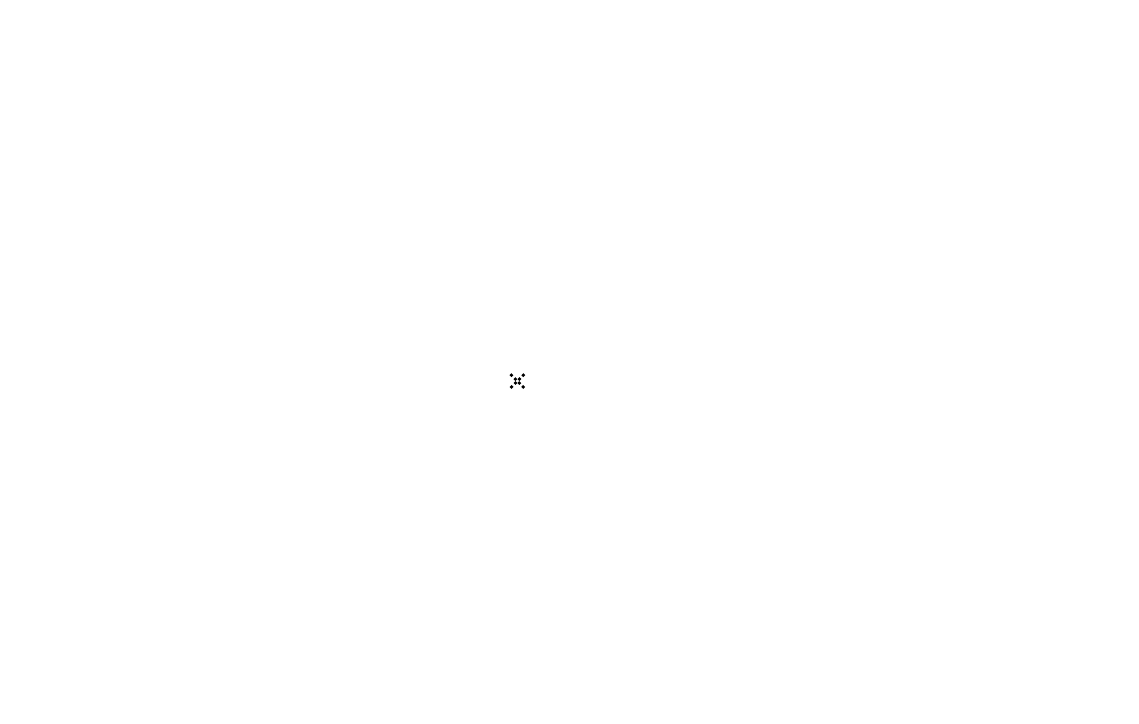
\caption{\label{Circle} The points $X_1$ and $Y_1$ on the red circle $\mS_{\al_j}(1)\cap \mS_{\al_j}(X_1X_2)$.
The angle $\theta_2$ is given by the euclidean angle between $X_1$ and $Y_1$ on this circle.}
\end{center}
\end{figure}
Two cases must be treated according to whether $\alpha_j=\pi/2$ or $\alpha_j \neq \pi /2$. These cases are respectively discussed in~\cite{Lin} and~\cite{HeusenerKroll}, but here is short outline.
\benu
\item \label{pi/2}
First suppose that $\al_j=\frac \pi 2$. In this case, the space $\widehat{H}^{\al_j}_2$ is parametrized by the two coordinates $\theta_1 \in [0,\pi]$ and $\theta_2 \in [0, 2\pi]$, with the identifications $(0, \theta_2) \sim (0, 2\pi-\theta_2)$, $(\pi,\theta_2) \sim (\pi,2\pi -\theta_2)$ and $(\theta_1, 0) \sim (\theta_1, 2\pi)$~\cite[Lemma 2.1]{Lin}. Let us briefly justify the appearance of these identifications.

When $\theta_1 = 0$, one has $X_1=X_2 = \bsm i&0\\0&-i \esm$ and therefore $X_1X_2=-1$. As a consequence, using the definition of $d$ and the fact that $\alpha_j=\pi/2$, the spheres $\mS_{\frac \pi 2}(1)$ and $\mS_{\frac \pi 2}(X_1X_2)$ coincide. Since $X_1=X_2$ is diagonal, after conjugating by a diagonal matrix, one can write $Y_1(\theta_2) = \bsm i\cos(\theta_2) & \sin (\theta_2)\\-\sin (\theta_2) & -i\cos(\theta_2) \esm$. We then notice that $Y_1(2\pi-\theta_2) = \bsm i&0\\0&-i\esm Y_1(\theta_2) \bsm i&0\\0&-i\esm^{-1}$, whence the announced identification.

If $\theta_1 = \pi$, then $X_1X_2 = 1$ and the same argument holds. Finally when $\theta_2=0$ and $\theta_2=2\pi$, we see that $Y_1=X_1$ which also leads to the claimed identifications. To conclude the proof of the first assertion, note that removing the abelian representations corresponds to removing the four points $A =(0,0), A'=(0,\pi), B=(\pi, 0), B'=(\pi, \pi)$.
\item
Next, assume that $\al_j \neq \frac \pi 2$. In this case, the parametrization is given by $\theta_1 \in [0,\pi]$ and $ \theta_2 \in [0, 2\pi]$ with identifications $(0, \theta_2) \sim (0,0), (\theta_1,0)\sim ( \theta_1,2\pi)$ and $(\pi, \theta_2)\sim (\pi, 2\pi-\theta_2)$~\cite[Lemma 4.1]{HeusenerKroll}. We  once again briefly justify the appearance of these identifications which are illustrated in Figure~\ref{pillow}.

When $\theta_1=0$, we have $X_1=X_2$ and the spheres $\mS_{\al_j}(1)$ and $\mS_{\al_j}(X_1X_2)$ are tangent, with intersection point $X_1=X_2=Y_1=Y_2$ (i.e. the red circle is ``degenerate": it is a unique point). This proves the identification $(0,0) = (0,\theta_2)$. The remaining identifications follow from the same argument as in the $\alpha_j=\pi/2$ case. Finally, removing the abelian representations corresponds to removing the three points $A=(0,0), B=(\pi,0), B' = (\pi,\pi)$.
\eenu
\begin{figure}[!htb]
\begin{center}
\def\svgwidth{0.6\columnwidth}
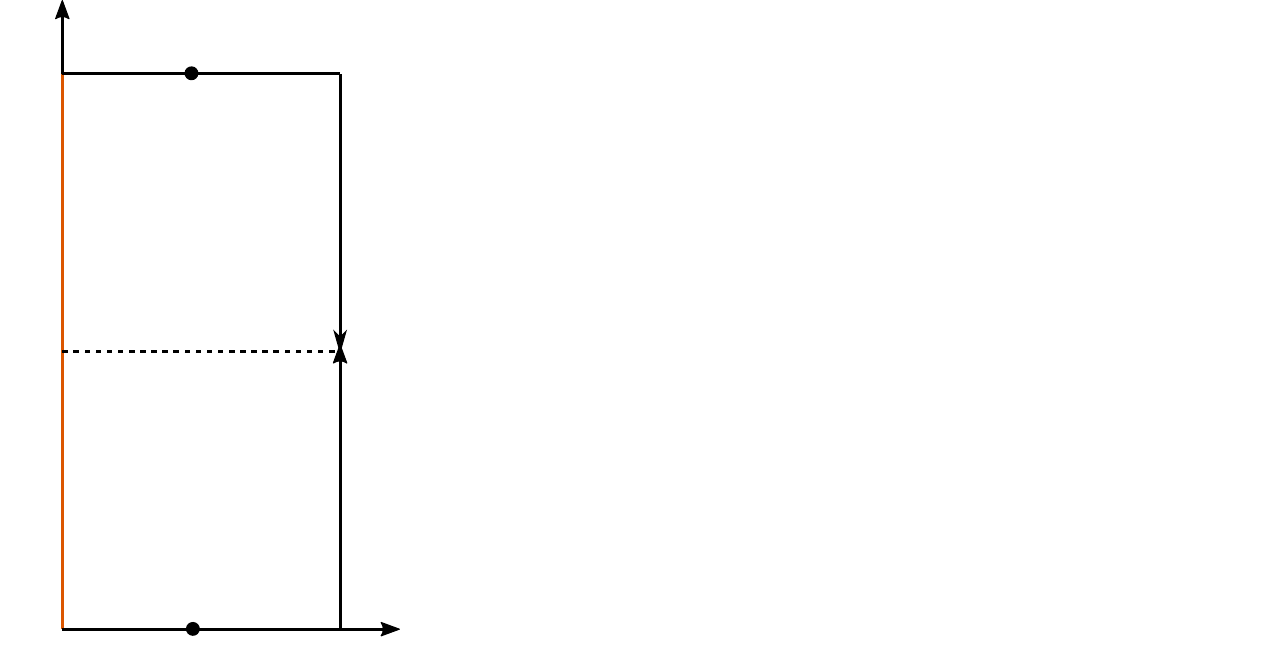
\caption{\label{pillow} The space $\widehat{H}^2_{\al_j}$, for $\al_j \neq \frac \pi 2$. 
On the left hand side: the left vertical edge (in red) is collapsed onto the point $A$, both the top horizontal edge and the bottom horizontal edge (marked with a dot) are identified, as are the right vertical edges above and below the point $B$, with orientations described by arrows.
On the right hand side: the result of the aforementioned identifications; gluing the two boundary segments joining B' to B produces the desired sphere.}
\end{center}
\end{figure}
This concludes our outline of the description of $\widehat{H}_2^{\alpha_j}$ and therefore the proof of the lemma.
\end{proof}

\begin{remark}
Since we aim to consider the algebraic intersection of $\widehat{\Gamma}_\beta^\al$ with the difference cycle $\widehat{\Gamma}_{\sigma_1^{-2}}^\al -\widehat{\Lambda}_n^{\al,c}$, it is worth mentioning that we lose nothing by working in $\widehat{V}_n^{\al,c}$, which is a strict subset of $(V_n^{\al,c} \setminus S_n^{\al,c} ) / \SO(3)$, see \cite[Lemma 5.2]{HarperSaveliev}. 
\end{remark}

Working in the space $\widehat{H}_2^{\alpha_j}$, we will now observe that near the puncture $A=(\textbf{e}^{i\alpha_j},\textbf{e}^{i\alpha_j},\textbf{e}^{i\alpha_j},\textbf{e}^{i\alpha_j})$ (which was also described in Lemma~\ref{lem:HKLemma41}), the projection $\widehat{p}(\widehat{V}_n^{\alpha,c} \cap \widehat{\Gamma}_\beta^\alpha)$ is a family of $2^{\mu-1}$ curves indexed by the elements of the set $S_j(\alpha)$.
\begin{proposition}
\label{prop:Curve}
Let $c$ be a $\mu$-coloring such that $c_1=c_2=j$ and let $\alpha \in (0,\pi)^\mu$. 
If $\omega:=\omega_\varepsilon \in S_j(\alpha)$ is
such that $\omega_j^2 \neq 1$ and $\beta$ is a $(c,c)$-braid such that~$\nabla_{\widehat{\beta}}(\omega) \neq 0,$ then, in a neighborhood of $A$ in $\widehat{H}_2^{\alpha_j}$, the projection $\widehat{p}(\widehat{\Gamma}_\beta^\alpha \cap \widehat{V}_n^{\alpha,c})$ is a family of $2^{\mu-1}$ curves.
\end{proposition}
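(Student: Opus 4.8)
The plan is to localize the analysis near each abelian representation limiting to the puncture $A$, and to show that every such representation contributes exactly one arc to $\widehat{p}(\widehat{\Gamma}_\beta^\alpha \cap \widehat{V}_n^{\alpha,c})$. First I would identify these abelian representations. After the perturbation arranged in Subsection~\ref{sub:ReductionPillowcase} one has $\widehat{\Gamma}_\beta^\alpha \cap \widehat{W}_n^{\alpha,c} = \emptyset$, so $\widehat{p}$ is indeed defined on the intersection. A point of $\widehat{\Gamma}_\beta^\alpha \cap \widehat{V}_n^{\alpha,c}$ whose $\widehat{p}$-image is close to $A = (\mathbf{e}^{i\alpha_j},\mathbf{e}^{i\alpha_j},\mathbf{e}^{i\alpha_j},\mathbf{e}^{i\alpha_j})$ is a representation $(A_1,\ldots,A_n)$ whose first two (color-$j$) strands are close to $\mathbf{e}^{i\alpha_j}$, and by Corollary~\ref{cor:AlexReducibleSU2} together with the argument of Proposition~\ref{prop:Compact} such a point can only tend to $A$ by limiting to an abelian representation. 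As observed just before the statement, the conjugacy classes of abelian representations with the prescribed meridional traces and with the sign $\varepsilon_j = 1$ forced by proximity to $A$ are exactly those indexed by $\omega = \omega_\varepsilon \in S_j(\alpha)$; there are $2^{\mu-1}$ of them.

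Next I would carry out the deformation analysis at a fixed $\rho_\omega$ with $\omega = \omega_\varepsilon \in S_j(\alpha)$. By Proposition~\ref{prop:Long} and Remark~\ref{rem:MultiplicationByJ}, the differential at the basepoint $\mathbf{a}_\varepsilon$ of the homeomorphism $\beta \colon R_n^{\alpha,c} \to R_n^{\alpha,c}$, restricted to the complex summand $\C^n \cong T_{\mathbf{a}_\varepsilon} R_n^{\alpha,c}$, is the colored Gassner matrix $\mathcal{B}_\omega^c(\beta)$. Writing $\mathcal{B}_\omega^c(\beta) = \bsm A(\omega) & B(\omega) \\ C(\omega) & D(\omega) \esm$ with $D(\omega)$ of size $n-2$, a tangent vector $v = (v',v'') \in \C^2 \oplus \C^{n-2}$ lies in the linearization of $\widehat{\Gamma}_\beta^\alpha \cap \widehat{V}_n^{\alpha,c}$ exactly when the braid fixes the last $n-2$ strands infinitesimally, i.e. $C(\omega)v' + (D(\omega)-I_{n-2})v'' = 0$. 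Because $c_1 = c_2 = j$, the hypotheses $\omega_j^2 \neq 1$ (equivalently $\alpha_j \neq \pi/2$) and $\nabla_{\widehat{\beta}}(\omega) \neq 0$ are precisely those of Proposition~\ref{prop:TechnicalLemma}, which yields $\det(D(\omega) - I_{n-2}) \neq 0$; hence $v''$ is uniquely determined by $v'$. This non-degeneracy is the key input: following the argument of~\cite[Lemma 4.4]{HeusenerKroll}, it permits solving for the strands $A_3,\ldots,A_n$ as functions of the two-strand data $(A_1,A_2)$ on a punctured neighborhood of $\rho_\omega$, thereby reducing the problem to the two-strand space $\widehat{H}_2^{\alpha_j}$ and realizing $\widehat{p}$ there as a diffeomorphism onto its image.

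Having reduced to the two-strand space, I would invoke the description of $\widehat{H}_2^{\alpha_j}$ near $A$ from Lemma~\ref{lem:HKLemma41} in the case $\alpha_j \neq \pi/2$: exactly as in~\cite[Lemma 2.3]{Lin} and~\cite[Section 4]{HeusenerKroll}, the one-dimensional arc through $\rho_\omega$ projects to a single smooth curve limiting to the puncture $A$. Assembling the contributions of the $2^{\mu-1}$ representations $\rho_\omega$, $\omega \in S_j(\alpha)$, then produces a family of $2^{\mu-1}$ curves approaching $A$, which is the assertion.

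The hard part will be the middle step. One must set up the deformation argument in the $\SO(3)$-quotient near an abelian representation that has itself been removed as a puncture, so that a naive implicit function theorem does not apply directly; instead one works on a punctured neighborhood and keeps careful track of the splitting of the tangent space into its permutation and Gassner parts (Proposition~\ref{prop:Long}). The crucial point is that the non-degeneracy $\det(D(\omega)-I_{n-2}) \neq 0$ supplied by Proposition~\ref{prop:TechnicalLemma} guarantees exactly one arc per abelian representation --- neither several nor none --- which is what pins the count at $2^{\mu-1}$. Once this reduction to the two-strand case is in place, the remaining local analysis at $A$ is identical to the single-variable arguments of Lin and Heusener-Kroll.
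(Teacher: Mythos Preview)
Your proposal is correct and follows essentially the same approach as the paper: you identify the $2^{\mu-1}$ abelian representations $\mathbf{a}_\varepsilon$ lying over $A$, linearize $\beta$ there via Proposition~\ref{prop:Long} and Remark~\ref{rem:MultiplicationByJ} to obtain the colored Gassner matrix, and then invoke Proposition~\ref{prop:TechnicalLemma} to conclude that $\det(D(\omega)-I_{n-2})\neq 0$, so that $v''$ is determined by $v'$ and the intersection is locally a $4$-manifold (hence a curve after dividing by $\SO(3)$). The paper packages the linear-algebra step slightly differently---it phrases it as transversality of $\Gamma_\beta^\alpha$ with an auxiliary submanifold ${\Lambda_n'}^{\alpha,c}$ and computes $\dim(T_{(\mathbf{a},\mathbf{a})}{\Lambda_n'}^{\alpha,c}\cap T_{(\mathbf{a},\mathbf{a})}\Gamma_\beta^\alpha)=4$ directly---and obtains the exact count $2^{\mu-1}$ from $\widehat{\Gamma}_\beta^\alpha\cap\widehat{W}_n^{\alpha,c}=\emptyset$ together with $p^{-1}(A)\subset W_n^{\alpha,c}$, but the substance is the same.
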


\begin{proof}
Given $\theta$ in $(0,\pi)$, we use $\mathbf{e}^{i\theta}$ to denote the matrix~$\left( \begin{smallmatrix} e^{i\theta} &0 \\ 0 & e^{-i\theta} \end{smallmatrix} \right)$. 
For each $\varepsilon \in \lbrace \pm 1 \rbrace^{\mu-1}$, observe that $\textbf{a}:=\textbf{a}_{\varepsilon} = (\mathbf{e}^{i\alpha_j},\mathbf{e}^{i\alpha_j},\mathbf{e}^{\varepsilon_{c_3}i\alpha_{c_3}},\ldots,\mathbf{e}^{\varepsilon_{c_n}i\alpha_{c_n}})$ is
 an element in the subset $S_n^{\alpha,c}$ of abelian representations. Next, we consider the following subspace of $R_n^{\alpha,c} \times R_n^{\alpha,c}$:
$$ {\Lambda_n'}^{\alpha,c}=\lbrace (X_1,X_2,X_3,\ldots,X_n,Y_1,Y_2,X_3,\ldots,X_n) \in R_n^{\alpha,c} \times R_n^{\alpha,c} \rbrace.  $$
Since ${\Lambda_n'}^{\alpha,c}$ is $(2n+4)$-dimensional, $\Gamma_\beta^\alpha$ is $2n$-dimensional and $R_n^{\alpha,c} \times R_n^{\alpha,c}$ is $4n$-dimensional, we deduce that the dimension of the vector space $T_{(\mathbf{a},\mathbf{a})} {\Lambda_n'}^{\alpha,c} \cap T_{(\mathbf{a},\mathbf{a})} \Gamma_\beta^\alpha$ is at least $4$.
\begin{claim}
The dimension of $T_{(\mathbf{a},\mathbf{a})} {\Lambda_n'}^{\alpha,c} \cap T_{(\mathbf{a},\mathbf{a})} \Gamma_\beta^\alpha$ is equal to $4$. In particular, the manifolds $\Lambda_n'$ and~$\Gamma_\beta^\alpha$ intersect transversally at $(\mathbf{a},\mathbf{a})$.
\end{claim}
\begin{proof}
Using Remark~\ref{rem:MultiplicationByJ}, the tangent map of $\beta|_{R_n^{\alpha,c}}$ at $\mathbf{a}$ can be canonically identified with the unreduced colored Gassner matrix $\mathcal{B}^c_\omega(\beta)=\left( \begin{smallmatrix} A(\omega) & B(\omega) \\ C(\omega) &  D(\omega) \end{smallmatrix} \right)$. Since the tangent space to a graph is the graph of the corresponding derivative, the space $T_{(\mathbf{a},\mathbf{a})} {\Lambda_n'}^{\alpha,c} \cap T_{(\mathbf{a},\mathbf{a})} \Gamma_\beta^\alpha$ is isomorphic to the space~$X$ of $n$-tuples $v=(v_1,\ldots,v_n)$ which satisfy
\begin{equation}
\label{eq:TangentSpaceIntersection}
\begin{pmatrix} A(\omega) & B(\omega) \\ C(\omega) &  D(\omega)\end{pmatrix} \begin{pmatrix} v_1\\ v_2 \\v_3\\ \vdots \\ v_n \end{pmatrix}=\begin{pmatrix} * \\ *  \\v_3\\ \vdots \\ v_n \end{pmatrix}.
\end{equation}
The claim is therefore equivalent to the assertion that the real dimension of $X$ is~$2$. Since we assumed that $\nabla_{\widehat{\beta}}(\omega) \neq 0$ and $\omega_j^2 \neq 1$, Proposition~\ref{prop:TechnicalLemma} ensures that $\det(I_{n-2}-D(\omega))\neq~0$. Thus we deduce from~(\ref{eq:TangentSpaceIntersection}) that the last $n-3$ components of $v$ are equal to $(I_{n-2}-D(\omega))^{-1}C(\omega)\left( \begin{smallmatrix} v_1 \\ v_2 \end{smallmatrix} \right)$, finishing the proof of the first assertion; the second assertion follows immediately by recalling the respective dimensions of ${\Lambda_n'}^{\alpha,c}$ and $\Gamma_\beta^\alpha$. This concludes the proof of the claim.
\end{proof}

The claim implies that in a neighborhood of each of the $2^{\mu-1}$ points $(\mathbf{a}_\varepsilon,\mathbf{a}_\varepsilon)$, the space~$ {\Lambda_n'}^{\alpha,c} \cap~\Gamma_\beta^\alpha$ is a manifold of dimension $4$. 
Since $V_n^{\alpha,c} \cap \Gamma_\beta^{\alpha}$ is equal to ${\Lambda_n'}^{\alpha,c} \cap \Gamma_\beta^\alpha$, the same conclusion holds for this former space. 
Since each of the $(\mathbf{a}_\varepsilon,\mathbf{a}_\varepsilon)$ projects to $A$, after quotienting by $\SO(3)$ (and perturbing if necessary), we deduce that the projection $\widehat{p}(\widehat{V}_n^{\alpha,c} \cap \widehat{\Gamma}_\beta^\alpha )$ consists of at least~$2^{\mu-1}$ curves near $A$.

It remains to show that $\widehat{p}(\widehat{V}_n^{\alpha,c} \cap \widehat{\Gamma}_\beta^\alpha )$ consists precisely of $2^{\mu-1}$ curves (and not more). Reformulating, we assert that the arcs of $\widehat{p}(\widehat{V}_n^{\alpha,c} \cap \widehat{\Gamma}_\beta^\alpha )$ that approach $A$ are precisely parametrized by the $(\mathbf{a}_\varepsilon,\mathbf{a}_\varepsilon)$.
To see this, we must understand how the fiber above $A$ interacts with $\widehat{V}_n^{\alpha,c} \cap \widehat{\Gamma}_\beta^\alpha $.
 Observe that $(\textbf{a}_\varepsilon,\textbf{a}_\varepsilon) \in p^{-1}(A)$ (i.e. the fiber contains all the $(\mathbf{a}_\varepsilon,\mathbf{a}_\varepsilon)$) and $p^{-1}(A) \subset W_n^{\alpha,c}$: the former is clear while for the latter we use that $W_n^{\alpha,c}=p^{-1}(S_2^{\alpha_j})=p^{-1}(\lbrace A,B,B' \rbrace).$
Since~$\widehat{\Gamma}_\beta^{\alpha,c} \cap \widehat{W}_n^{\alpha,c}= \emptyset$, the assertion follows readily.
This concludes the proof of the proposition.

\end{proof}
From now on, we use $\mathcal{C}_{\varepsilon}$ to denote the arc of $\widehat{p}(\widehat{V}_n^{\alpha,c} \cap \widehat{\Gamma}_\beta^\alpha)$ corresponding to $\omega_\varepsilon \in S_j(\alpha)$, as described Proposition~\ref{prop:Curve}.
Perturbing if necessary, we can arrange that these $2^{\mu-1}$ arcs  intersect transversally.
 In particular, (\ref{eq:IntersectionDifferenceCocycle}) turns into
\begin{equation}
\label{eq:Intersection}
h^{c}_{\sigma_1^2\beta}(\alpha)-h^{c}_\beta(\alpha)
=\langle \widehat{\Gamma}_{\sigma_1^2}^{\alpha_j}-\widehat{\Lambda}_2^{\alpha_j},\widehat{p}(\widehat{V}_n^{\alpha,c} \cap \widehat{\Gamma}_\beta^\alpha) \rangle_{\widehat{H}_2^{\alpha_j}}
 = \sum\limits_{\varepsilon \in \lbrace \pm 1 \rbrace^{\mu-1}} \langle \widehat{\Gamma}_{\sigma_1^2}^{\alpha_j}-\widehat{\Lambda}_2^{\alpha_j},\mathcal{C}_\varepsilon \rangle_{\widehat{H}_2^{\alpha_j}}.
\end{equation}

Observe that $\mathcal{C}_{\varepsilon}$ lifts to a curve in $\widehat{V}_n^{\alpha,c} \cap \widehat{\Gamma}_\beta^\alpha$ approaching $\mathbf{a}_\varepsilon$.
Proposition~\ref{prop:Curve} shows that the question of whether $\mathcal{C}_\varepsilon$ intersects the difference cycle strongly depends on the position of this curve near $A$.

\begin{remark}
\label{rem:Graph}
As $\Gamma_\beta^{\alpha}$ is the graph of a function,
each component $\mathcal{C}_\varepsilon$ of the projection~$\widehat{p}(\widehat{V}_n^{\alpha,c} \cap~\widehat{\Gamma}_\beta^\alpha)$ is the graph of a function in $\widehat{H}_2^{\alpha_j}$, of the form $\theta_2=g_\varepsilon(\theta_1)$; recall the left hand side of Figure~\ref{pillow}.
\end{remark} 
 
Next, we let $\gamma \colon (-\delta, \delta) \to \widehat{H}_2^{\al_j}$ be (a parameterization of) a curve such that $\gamma(t)$ approaches~$A$ as~$t$ goes to $0$. Slightly abusing notations, we sometimes write $\gamma(0)=A$. The example to keep in mind is (a perturbation of) $\mathcal{C}_\varepsilon$. Recalling the definition and parametrization of $\widehat{H}_2^{\alpha_j}$, we write 
\begin{equation}
\label{eq:GammaCoordinates}
\gamma(t) = (X_1(t), X_2(t), Y_1(t), Y_2(t)).
\end{equation}
and follow~\cite{HeusenerKroll} by introducing the \emph{velocity} $\theta_1^0 = \frac d {dt} \theta_1 \vert_{t=0}$ and the \emph{angle} $\theta_2^0= \frac d {dt} \theta_2 \vert_{t=0}$ of such a curve $\gamma$. Still following~\cite{HeusenerKroll}, we define 
$$s(\theta_2^0) :=\frac{\cos(\al_j + \frac{\theta_2^0}{2})}{\cos(\al_j)} e^{\frac{i\theta_2^0}{2}}$$
 and observe that $2\arg s(\theta_2^0) = \theta_2^0$. The following remark is used implicitly in~\cite{HeusenerKroll}.

\begin{remark}\label{rem:derivative}
If the curve $\gamma$ is non constant, then we can choose $\theta_1(t)$ such that $\theta_1^0 \neq 0$. 
Assume by way of contradiction that $\theta_1^0=0$. Since $\gamma(0)=A$, this implies that $\gamma$ is tangent to the vertical axis $\lbrace \theta_1=0 \rbrace$ (recall Figure~\ref{pillow}). As this whole axis is collapsed to the point~$A$, the curve $\gamma$ must be constant, a contradiction.
Note also that when $\gamma=\mathcal{C}_\varepsilon$, this is a consequence of Remark~\ref{rem:Graph}: since $\mathcal{C}_\varepsilon$ is the graph of a function (in the $(\theta_1,\theta_2)$ coordinates), the derivative~$\theta_1'(t)$ cannot vanish.
\end{remark}

From now on, we consider the $2^{\mu-1}$ paths $\gamma_{\beta,\varepsilon}$ given by (a perturbation of) $\mathcal{C}_\varepsilon$ where~$\varepsilon$ lies in~$\lbrace \pm 1 \rbrace^{\mu-1}$; we then write $\theta_{1,\varepsilon}^0,\theta_{2,\varepsilon}^0$ for the corresponding velocity and angle, although at times, we will drop the $\varepsilon$ from the notation.
Using Remark~\ref{rem:derivative}, we suppose that $\theta_{1,\varepsilon}^0 = \frac 1 {\sin(\al_j)}$. 
As in the proof of Proposition~\ref{prop:Curve}, we write the unreduced colored Gassner matrix evaluated at~$\omega$ as $\mathcal{B}^c_\omega(\beta)=\left( \begin{smallmatrix} A(\omega) & B(\omega) \\ C(\omega) &  D(\omega) \end{smallmatrix} \right)$. The following lemma relates the angle $\theta_{2,\varepsilon}^0$ to this matrix.

\begin{lemma}
\label{claim:SlopeBurau}
Let $c$ be a $\mu$-coloring, let $\alpha$ be an element of $(0,\pi)^\mu$ and let $\omega:=\omega_\varepsilon \in S_j(\alpha)$ be such that $\omega_j^2 \neq 1$. Let $\beta$ be a $(c,c)$-braid which satisfies $\nabla_{\widehat{\beta}}(\omega) \neq 0$ and additionally set $v:=v_{\varepsilon}=(1-D(\omega))^{-1}C(\omega) \left( \begin{smallmatrix} 1 \\ 0 \end{smallmatrix} \right)$.
Then $s_\beta:=s_{\beta,\varepsilon} = s(\theta_{2,\varepsilon}^0)$ satisfies
$$ \mathcal{B}^c_\omega(\beta) \begin{pmatrix} 1\\0 \\ v  \end{pmatrix}=\begin{pmatrix} s_\beta \\ \overline{\omega}_j(1-s_\beta) \\ v \end{pmatrix}. $$
\end{lemma}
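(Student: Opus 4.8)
The plan is to identify the tangent vector to (the lift of) the arc $\mathcal{C}_\varepsilon$ at the point $\mathbf{a}_\varepsilon$, feed it through the differential of $\beta$, and then read off the two top entries via the $-\mathbf{j}$ identification of Remark~\ref{rem:MultiplicationByJ}. First I would pin down the $X$-coordinates of this tangent vector. Recall from the proof of Proposition~\ref{prop:Curve} that the lift of $\mathcal{C}_\varepsilon$ is a curve in $\widehat{V}_n^{\alpha,c}\cap\widehat{\Gamma}_\beta^\alpha$ through $\mathbf{a}_\varepsilon$, and that under the $-\mathbf{j}$ identification its tangent corresponds to an $n$-tuple $v=(v_1,\dots,v_n)\in\C^n$ solving~\eqref{eq:TangentSpaceIntersection}; in particular $(v_3,\dots,v_n)=(I_{n-2}-D(\omega))^{-1}C(\omega)\bsm v_1\\v_2\esm$, which is legitimate precisely because $\det(I_{n-2}-D(\omega))\neq 0$ by Proposition~\ref{prop:TechnicalLemma}. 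To compute $v_1,v_2$ I differentiate the explicit matrices $X_1(\theta_1)$ and $X_2=\mathbf{e}^{i\alpha_j}$ of Lemma~\ref{lem:HKLemma41} along the curve: since $X_2$ is constant we get $v_2=0$, while differentiating $X_1$ and using the normalization $\theta_{1,\varepsilon}^0=1/\sin(\alpha_j)$ yields $\dot X_1(0)=\bsm 0&1\\-1&0\esm$, whose $-\mathbf{j}$-image is $1$, so $v_1=1$. Hence the $X$-tangent is $\bsm 1\\0\\v\esm$ with $v=v_\varepsilon$, exactly as in the statement.

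By Proposition~\ref{prop:Long} and Remark~\ref{rem:MultiplicationByJ}, the differential of $\beta$ at $\mathbf{a}_\varepsilon$ is the matrix $\mathcal{B}^c_\omega(\beta)$ acting on this copy of $\C^n$; since $\Gamma_\beta^\alpha$ is the graph of $\beta$, the vector $\mathcal{B}^c_\omega(\beta)\bsm 1\\0\\v\esm$ is precisely the $-\mathbf{j}$-image of the $Y$-tangent $(\dot Y_1(0),\dots,\dot Y_n(0))$. Its lower block is $C(\omega)\bsm 1\\0\esm+D(\omega)v=v$ by the very definition of $v$, matching the lower block of the claimed right-hand side. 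For the upper block, write its two entries as $p$ and $q$, i.e. the $-\mathbf{j}$-images of $\dot Y_1(0)$ and $\dot Y_2(0)$ (which are off-diagonal since $Y_1,Y_2$ stay in $\mathbb{S}_{\alpha_j}$, so $\dot Y_i(0)$ is tangent to $\mathbb{S}_{\alpha_j}$ at $\mathbf{e}^{i\alpha_j}$). Differentiating the constraint $Y_1Y_2=X_1X_2$ at $t=0$ — where all four matrices equal $\mathbf{e}^{i\alpha_j}$ and $\dot X_2=0$ — and comparing off-diagonal entries gives the linear relation $p+\omega_j q=1$, i.e. $q=\overline{\omega}_j(1-p)$ (using $\overline{\omega}_j=\omega_j^{-1}$ and $\omega_j=e^{2i\alpha_j}$). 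Thus the claim is reduced to the single identity $p=s_\beta=s(\theta_{2,\varepsilon}^0)$.

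The remaining, and main, step is to show $p=s(\theta_{2,\varepsilon}^0)$. This is a computation carried out entirely inside $\widehat{H}_2^{\alpha_j}$ and involves neither the braid $\beta$ nor the colors beyond $\alpha_j$: one parametrizes $Y_1=Y_1(\theta_1,\theta_2)$ on the circle $\mathbb{S}_{\alpha_j}(1)\cap\mathbb{S}_{\alpha_j}(X_1X_2)$ as in Lemma~\ref{lem:HKLemma41}, computes $\dot Y_1(0)=\theta_1^0\,\partial_{\theta_1}Y_1+\theta_2^0\,\partial_{\theta_2}Y_1$, and takes its $-\mathbf{j}$-image. The delicate point — and the main obstacle — is that at $\mathbf{a}_\varepsilon$ the coordinate $\theta_2$ degenerates (the whole axis $\lbrace\theta_1=0\rbrace$ collapses to $A$ when $\alpha_j\neq\pi/2$), so the derivative must be taken along the approaching curve rather than at the puncture itself, with both partials contributing. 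This is exactly the single-variable computation of Heusener--Kroll~\cite{HeusenerKroll}, which with the normalization $\theta_1^0=1/\sin(\alpha_j)$ produces $p=\frac{\cos(\alpha_j+\theta_2^0/2)}{\cos(\alpha_j)}e^{i\theta_2^0/2}=s(\theta_2^0)$.

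Since $\widehat{H}_2^{\alpha_j}$ only records the color $j$, the argument of~\cite{HeusenerKroll} applies verbatim, and combining $p=s_\beta$ with the relation $q=\overline{\omega}_j(1-s_\beta)$ from the second paragraph yields the desired equality $\mathcal{B}^c_\omega(\beta)\bsm 1\\0\\v\esm=\bsm s_\beta\\\overline{\omega}_j(1-s_\beta)\\v\esm$, completing the proof.
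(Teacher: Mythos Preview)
Your proof is correct and follows essentially the same route as the paper's. Both arguments compute the tangent vector $\gamma_\beta'(0)$ coordinate by coordinate using the normalization $\theta_1^0=1/\sin(\alpha_j)$, invoke the identification of Remark~\ref{rem:MultiplicationByJ} to pass to $\C^n$, and defer the computation of the $Y_1$-entry (your $p=s_\beta$) to the single-variable calculation of Heusener--Kroll~\cite[page~492, Equation~(5)]{HeusenerKroll}. The only cosmetic difference is that the paper obtains the $Y_2$-entry by differentiating the explicit formula $Y_2(t)=Y_1(t)^{-1}X_1(t)X_2(t)$, whereas you differentiate the constraint $Y_1Y_2=X_1X_2$ to get the relation $p+\omega_j q=1$; these are of course equivalent.
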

\begin{proof}
Write $\omega \in S_j(\alpha)$ as $\omega=(e^{\varepsilon_1 2i\alpha_1},\ldots, e^{2i\alpha_j}, \ldots, e^{\varepsilon_\mu 2i\alpha_\mu})$.
For $\theta \in (0,\pi)$, we write $\mathbf{e}^{i\theta}:=\bsm e^{i\theta}&0\\0 &e^{-i\theta} \esm$ and~$\textbf{a}:=\textbf{a}_{\varepsilon}= \left(  \mathbf{e}^{i\alpha_{j}}, \mathbf{e}^{i\alpha_{j}}, \mathbf{e}^{\varepsilon_{c_3}i \alpha_{c_3}},\ldots,\mathbf{e}^{\varepsilon_{c_n}i \alpha_{c_n}} \right)\in \SU(2)^n$.
Remark~\ref{rem:MultiplicationByJ} shows that the derivative~$T_\textbf{a}\beta$ of the map $\beta : R_n^{\al,c} \to R_n^{\al,c}$ at $\textbf{a}$ is given by the action of the colored Gassner matrix $\mathcal{B}^c_\omega(\beta)$ on~$\mC^n$ (after multiplication by $-\mathbf{j}$). Using the definition of~$v$, the proposition will follow once we show that
\begin{equation}
\label{eq:GassnersbetaGoal}
\mathcal{B}_\omega^c(\beta)|_{\C^2}  \begin{pmatrix} 1 \\ 0 \end{pmatrix} = \begin{pmatrix} s_\beta \\ \overline{\omega}_j(1-s_\beta) \end{pmatrix}.
\end{equation}
Writing the curve $\gamma_\beta(t):=\gamma_{\beta,\varepsilon}(t)$ as $(X_1(t),X_2(t),Y_1(t),Y_2(t))$, we first compute $\gamma'_\beta(0)$. Recalling the parametrization of $\widehat{H}_2^{\alpha_j}$ which was described in Lemma~\ref{lem:HKLemma41}, we see that $X_2'(0)=~0$.
Additionally using that $\theta_1(0)=0$ and that our parametrization satisfies $\theta_1^0:=\theta_{1,\varepsilon}^0=1/\operatorname{sin}(\alpha_j)$, we also get $X_1'(0) = \bsm 0 & \sin (\al_j) \cos(\theta_1(0)) \theta_1^0\\ -\sin (\al_j) \cos(\theta_1(0)) \theta_1^0&0 \esm = \bsm 0&1\\-1&0\esm$. Next, using~\cite[page~492, Equation~(5)]{HeusenerKroll} a computation shows that $Y_1'(0)=\left( \begin{smallmatrix} 0 & \theta_1^0 \sin(\alpha_j) s_\beta  \\ -\theta_1^0 \sin(\alpha_j) s_\beta & 0  \end{smallmatrix} \right)= \left( \begin{smallmatrix} 0 & s_\beta \\ -s_\beta & 0  \end{smallmatrix} \right)$.  Finally, since  $Y_2(t)=Y_1(t)^{-1}X_1(t)X_2(t)$, we also deduce that $Y_2'(0)$ is given by the matrix $\bsm 0& e^{-2i\al_j}(1-s_\beta) \\-e^{-2i\al_j}(1-s_\beta) & 0 \esm$. Summarizing and recalling Remark~\ref{rem:MultiplicationByJ}, we obtain
$$\gamma'_\beta(0) (-\textbf{j}) = (1,0,s_\beta, \overline{\omega}_j(1-s_\beta)).$$
Since $\gamma_\beta$ is a parametrization of the curve $\mathcal{C}_\varepsilon$ and since the tangent space of a graph can be described using the graph of the derivative, we deduce that~(\ref{eq:GassnersbetaGoal}) holds, concluding the proof of the proposition.
\end{proof}

Using Lemma~\ref{claim:SlopeBurau}, we make a first observation on the angles $\theta_{2,\varepsilon}^0$ near $A$. We stress the fact that each $s_{\beta,\varepsilon}$ depends on the corresponding $\omega:=\omega_\varepsilon \in S_j(\alpha)$.

\begin{proposition}
\label{prop:Lin24HeusenerKroll45}
The angle $\theta_{2,\varepsilon}^0$ of each $\mathcal{C}_\varepsilon$ is not equal to $0$ or $-4\alpha_j$.
\end{proposition}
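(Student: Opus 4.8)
The plan is to reduce the statement to two explicit eigenvector computations for the colored Gassner matrix. Writing $\omega:=\omega_\varepsilon$, so that $\omega_j=e^{2i\alpha_j}$ and $\overline{\omega}_j=e^{-2i\alpha_j}$, I first evaluate the function $s$: one finds $s(0)=1$ and $s(-4\alpha_j)=\frac{\cos(-\alpha_j)}{\cos(\alpha_j)}e^{-2i\alpha_j}=\overline{\omega}_j$. Since $s_\beta=s(\theta_{2,\varepsilon}^0)$ by Lemma~\ref{claim:SlopeBurau}, it suffices to establish the two inequalities $s_\beta\neq 1$ and $s_\beta\neq\overline{\omega}_j$, which respectively rule out $\theta_{2,\varepsilon}^0=0$ and $\theta_{2,\varepsilon}^0=-4\alpha_j$.

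The common ingredient is a description of the eigenvalue-$1$ eigenspace of $\mathcal{B}^c_\omega(\gamma)$ for $\gamma\in\{\beta,\sigma_1^2\beta\}$. By the first part of Lemma~\ref{lem:Fox}, the column vector $\mathbf{w}:=(\omega_{c_1}-1,\dots,\omega_{c_n}-1)^T$ is fixed by $\mathcal{B}^c_\omega(\gamma)$. Under the standing assumption $\Delta_{\widehat{\gamma}}(\omega)\neq 0$ of Section~\ref{sub:ReductionPillowcase} together with $\omega_{c_1}\cdots\omega_{c_n}\neq 1$, the computation in the proof of Lemma~\ref{lem:FixedPointGassner} shows that $\det(\overline{\mathcal{B}}^c_\omega(\gamma)-I_{n-1})\neq 0$; passing to the $g_i$-basis via~\eqref{eq:ChangeOfBasis}, this forces $\mathcal{B}^c_\omega(\gamma)-I_n$ to have rank $n-1$, so its kernel is exactly the line $\langle\mathbf{w}\rangle$. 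The decisive feature is that, because $c_1=c_2=j$, the first two entries of $\mathbf{w}$ are both equal to $\omega_j-1\neq 0$ (recall $\omega_j^2\neq 1$).

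For $s_\beta=1$, Lemma~\ref{claim:SlopeBurau} reads $\mathcal{B}^c_\omega(\beta)(1,0,v)^T=(1,0,v)^T$, so $(1,0,v)^T\in\langle\mathbf{w}\rangle$. Matching first entries forces the scalar $\mu$ to satisfy $\mu(\omega_j-1)=1$, whereas the second entry then reads $0=\mu(\omega_j-1)=1$, a contradiction; hence $s_\beta\neq 1$. The case $s_\beta=\overline{\omega}_j$ is where I expect the real work to lie, and the crux is the following observation: the $2\times2$ block $M=\bsm 1-\omega_j+\omega_j^2 & \omega_j(1-\omega_j)\\ 1-\omega_j & \omega_j\esm$ of $\mathcal{B}^c_\omega(\sigma_1^2)$ (recall the proof of Lemma~\ref{lem:HeusenerKrollLemma223}) sends $(\overline{\omega}_j,\overline{\omega}_j(1-\overline{\omega}_j))^T$ to $(1,0)^T$, as a short computation using $\omega_j\overline{\omega}_j=1$ shows. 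Since $\mathcal{B}^c_\omega(\sigma_1^2)=M\oplus I_{n-2}$ fixes the tail $v$, and since Lemma~\ref{claim:SlopeBurau} with $s_\beta=\overline{\omega}_j$ gives $\mathcal{B}^c_\omega(\beta)(1,0,v)^T=(\overline{\omega}_j,\overline{\omega}_j(1-\overline{\omega}_j),v)^T$, the composition rule $\mathcal{B}^c_\omega(\sigma_1^2\beta)=\mathcal{B}^c_\omega(\sigma_1^2)\mathcal{B}^c_\omega(\beta)$ (verified in the proof of Lemma~\ref{lem:HeusenerKrollLemma223}) yields $\mathcal{B}^c_\omega(\sigma_1^2\beta)(1,0,v)^T=(1,0,v)^T$. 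Applying the eigenspace description to $\gamma=\sigma_1^2\beta$ then produces the same contradiction, so $s_\beta\neq\overline{\omega}_j$.

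The main conceptual obstacle is precisely this recognition that the forbidden angle $-4\alpha_j$ corresponds, via $s$, to the value $\overline{\omega}_j$ that turns the tangency condition for $\beta$ into a genuine fixed-point condition for $\sigma_1^2\beta$; this is what lets the single eigenspace lemma handle both punctures at once. A technical point to double-check is the hypothesis $\omega_{c_1}\cdots\omega_{c_n}\neq 1$ required to invoke the one-dimensionality in Lemma~\ref{lem:FixedPointGassner}, and the bookkeeping distinguishing the left fixed vector $\widetilde{g}_n$ of that lemma from the right eigenvector $\mathbf{w}$ used throughout the argument here.
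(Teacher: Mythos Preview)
Your proof is correct and follows essentially the same strategy as the paper's: both cases reduce to the assertion that $(1,0,v)^T$ cannot be a fixed column vector of the colored Gassner matrix, and the $-4\alpha_j$ case is reduced to the $0$ case by passing from $\beta$ to $\sigma_1^2\beta$ via the $2\times2$ computation you wrote out (this is exactly the content of the paper's equation~\eqref{eq:UsedInOtherProofDontDelete}).

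The one genuine refinement worth noting is your handling of the fixed-vector argument. The paper cites Lemma~\ref{lem:FixedPointGassner}, whose statement concerns \emph{row} vectors, to conclude that a fixed \emph{column} vector cannot have vanishing second entry; this is correct but requires the reader to unwind a left/right eigenvector passage. You instead identify the spanning column eigenvector directly as $\mathbf{w}=(\omega_{c_1}-1,\dots,\omega_{c_n}-1)^T$ via Lemma~\ref{lem:Fox}, which matches the column-vector setup of Lemma~\ref{claim:SlopeBurau} and makes the contradiction (second entry $\omega_j-1\neq 0$ versus second entry $0$) transparent. Your flagged technical point about $\omega_{c_1}\cdots\omega_{c_n}\neq 1$ is legitimate: it is needed to guarantee one-dimensionality of the fixed space (via $\det(\overline{\mathcal{B}}^c_\omega(\beta)-I_{n-1})\neq 0$), and indeed the paper itself imposes this hypothesis in the downstream Propositions~\ref{prop:SlopeAlexander} and~\ref{prop:CassonLinCrossingChange} where the present result is ultimately used.
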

\begin{proof}
We argue by means of contradiction. First assume that  $\theta_{2,\varepsilon}^0$ is equal to $0$. The definition of $s_{\beta,\varepsilon}$ implies that $s_{\beta,\varepsilon}=1$. Using Lemma~\ref{claim:SlopeBurau} (and its notations), this implies that the vector $w:=(1 \ 0 \ v_3 \  \ldots \ v_n)^t$ is fixed by the colored Gassner matrix. This is a contradiction since Lemma~\ref{lem:FixedPointGassner} shows that fixed vectors of the colored Gassner matrix do not contain a zero in their second coordinate.

Next, assume that $\theta_{2,\varepsilon}^0=-4\alpha_j$. In this case, $s_{\beta,\varepsilon}$ is equal to $\overline{\omega}_j=e^{-2i\alpha_j}$ (recall that $\omega=\omega_\varepsilon$ lies in $\mathcal{S}_j(\alpha)$). 
Applying Lemma~\ref{claim:SlopeBurau} to~$ \sigma_1^2\beta$, we obtain $\mathcal{B}^c_\omega(\sigma_1^2\beta)w=(s_{\sigma_1^2\beta,\varepsilon} \ \  \overline{\omega}_j(1-s_{\sigma_1^2\beta,\varepsilon}) \ \  v)^t$, where $v:=(v_3, \ \ldots \ v_n)$ is as in Lemma~\ref{claim:SlopeBurau}. Using the multiplicativity of the colored Gassner matrix, applying Lemma~\ref{claim:SlopeBurau} to~$\beta$ and recalling the Gassner matrix for $\sigma_1^2$ which was described in~Example~\ref{ex:RealityCheck}, we get
\begin{equation}
\label{eq:UsedInOtherProofDontDelete}
\begin{pmatrix} 1+\omega_j^2-\omega_j & -\omega_j^2+\omega_j \\ 1-\omega_j & \omega_j \end{pmatrix} \begin{pmatrix} s_{\beta,\varepsilon} \\ \overline{\omega}_j(1-s_{\beta,\varepsilon}) \end{pmatrix}= \begin{pmatrix} s_{\sigma_1^2\beta,\varepsilon} \\ \overline{\omega}_j(1-s_{\sigma_1^2\beta,\varepsilon}) \end{pmatrix}. 
\end{equation}
Since $s_{\beta,\varepsilon}=\overline{\omega}_j$, the left hand side of~(\ref{eq:UsedInOtherProofDontDelete}) is equal to $\left( \begin{smallmatrix} 1 \\ 0 \end{smallmatrix} \right)$. As a consequence, we obtain $s_{\sigma_1^2\beta,\varepsilon}=1$, contradicting the first paragraph of the proof. This concludes the proof of the proposition.
\end{proof}

We now build on~\cite[Lemma 4.6]{HeusenerKroll} in order to understand how the various $s_{\beta,\varepsilon}$ control the behavior of the multivariable Casson-Lin invariant under a crossing change within a given sublink.
 Since the argument is nearly the same as in~\cite{HeusenerKroll}, we only indicate the necessary modifications (we exceptionally chose to use the notation ${\omega_\varepsilon}_j$ to refer to the $j$-th component of $\omega_\varepsilon$; even though ${\omega_\varepsilon}_j=e^{2i\alpha_j}$ for each $\varepsilon \in \lbrace \pm \rbrace^{\mu-1}$).
\begin{proposition}
\label{prop:HeusenerKroll46}
Let $c$ be a $\mu$-coloring for which $c_1=c_2=j$, let $\alpha \in (0,\pi)^\mu$, and let $\beta$ be a $(c,c)$-braid whose induced permutation $\overline{\beta}$ satisfies $\overline{\beta}(1) \neq 1$ and~$\overline{\beta}(2) \neq 2$. 
If all $\omega_\varepsilon \in S_j(\alpha)$ satisfy ${\omega^2_\varepsilon}_j \neq 1,\nabla_{\widehat{\beta}}(\omega_\varepsilon) \neq~0$ and $\nabla_{\widehat{\sigma_1^2\beta}}(\omega_\varepsilon) \neq 0$, then the following equality holds:
$$ h^{c}_{\sigma_1^2\beta}(\alpha)-h^{c}_\beta(\alpha)= \# \left\lbrace \omega_\varepsilon \in S_j(\alpha) \ \vert \ \frac{{\omega_\varepsilon}_j s_{\beta,\varepsilon}-1}{s_{\beta,\varepsilon}-1}<0 \right\rbrace. $$
\end{proposition}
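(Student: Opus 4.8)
The plan is to evaluate the right-hand side of~\eqref{eq:Intersection} one arc at a time, showing that the arc $\mathcal{C}_\varepsilon$ contributes $1$ to the intersection number $\langle \widehat{\Gamma}_{\sigma_1^2}^{\alpha_j}-\widehat{\Lambda}_2^{\alpha_j},\mathcal{C}_\varepsilon \rangle_{\widehat{H}_2^{\alpha_j}}$ exactly when $\frac{{\omega_\varepsilon}_j s_{\beta,\varepsilon}-1}{s_{\beta,\varepsilon}-1}<0$, and $0$ otherwise; summing over the $2^{\mu-1}$ elements $\omega_\varepsilon \in S_j(\alpha)$ then yields the asserted count. By~\eqref{eq:Intersection} the entire computation already takes place in the $2$-dimensional space $\widehat{H}_2^{\alpha_j}$ of Lemma~\ref{lem:HKLemma41}, and since each $\mathcal{C}_\varepsilon$ limits to the single puncture $A=(\mathbf{e}^{i\alpha_j},\mathbf{e}^{i\alpha_j},\mathbf{e}^{i\alpha_j},\mathbf{e}^{i\alpha_j})$, the whole argument is local near $A$ and follows~\cite[Lemma 4.6]{HeusenerKroll} applied separately to each arc.

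First I would record the local data at $A$ in the $(\theta_1,\theta_2)$ coordinates of Lemma~\ref{lem:HKLemma41}. The diagonal $\widehat{\Lambda}_2^{\alpha_j}$, where $Y_1=X_1$, is the direction $\theta_2=0$, corresponding to $s=1$; the second distinguished direction is $\theta_2^0=-4\alpha_j$ (at which $\sigma_1^2$ carries the tangent direction onto the diagonal, as in the proof of Proposition~\ref{prop:Lin24HeusenerKroll45}), corresponding to $s=\overline{\omega}_j=e^{-2i\alpha_j}$. By Lemma~\ref{claim:SlopeBurau}, the arc $\mathcal{C}_\varepsilon$ leaves $A$ with angle $\theta_{2,\varepsilon}^0$ satisfying $s_{\beta,\varepsilon}=s(\theta_{2,\varepsilon}^0)$, while Proposition~\ref{prop:Lin24HeusenerKroll45} guarantees $\theta_{2,\varepsilon}^0\notin\lbrace 0,-4\alpha_j\rbrace$, so that $s_{\beta,\varepsilon}\neq 1$ and $s_{\beta,\varepsilon}\neq\overline{\omega}_j$ and $\mathcal{C}_\varepsilon$ approaches the difference cycle transversally. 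The hypotheses $\overline{\beta}(1)\neq 1$ and $\overline{\beta}(2)\neq 2$, together with Proposition~\ref{prop:Curve} and Remark~\ref{rem:derivative}, ensure that each $\mathcal{C}_\varepsilon$ is a genuine non-degenerate arc reaching $A$, exactly as required by~\cite{HeusenerKroll}.

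The heart of the matter is then the sign computation of~\cite[Lemma 4.6]{HeusenerKroll}: an oriented count near $A$ shows that $\mathcal{C}_\varepsilon$ meets the difference cycle $\widehat{\Gamma}_{\sigma_1^2}^{\alpha_j}-\widehat{\Lambda}_2^{\alpha_j}$ with total sign $1$ or $0$ according to the sign of a real quantity attached to its limiting angle. To make this explicit I would use the identity
\begin{equation*}
\frac{\omega_j\, s(\theta)-1}{s(\theta)-1}=\frac{\sin\!\left(2\alpha_j+\tfrac{\theta}{2}\right)}{\sin\!\left(\tfrac{\theta}{2}\right)}, \qquad \omega_j=e^{2i\alpha_j},
\end{equation*}
valid for $s(\theta)=\frac{\cos(\alpha_j+\theta/2)}{\cos\alpha_j}\,e^{i\theta/2}$, which follows from the elementary computations $s(\theta)-1=\frac{e^{i\alpha_j}}{2\cos\alpha_j}(e^{i\theta}-1)$ and $\omega_j s(\theta)-1=\frac{e^{-i\alpha_j}}{2\cos\alpha_j}(e^{i(4\alpha_j+\theta)}-1)$. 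This shows at once that the quotient $\frac{\omega_j s_{\beta,\varepsilon}-1}{s_{\beta,\varepsilon}-1}$ is real, that its pole and its zero occur precisely at the excluded angles $\theta=0$ and $\theta=-4\alpha_j$ of Proposition~\ref{prop:Lin24HeusenerKroll45}, and hence that its sign is constant on each interval of directions bounded by consecutive excluded angles. Matching this sign with the oriented count of~\cite{HeusenerKroll} identifies the contributing arcs with those for which $\frac{{\omega_\varepsilon}_j s_{\beta,\varepsilon}-1}{s_{\beta,\varepsilon}-1}<0$; summing over $\varepsilon \in \lbrace \pm 1 \rbrace^{\mu-1}$ gives the formula. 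The colored nature of the problem enters only through Lemma~\ref{claim:SlopeBurau}, which replaces the reduced Burau matrix of~\cite{HeusenerKroll} by the colored Gassner matrix $\mathcal{B}^c_{\omega_\varepsilon}(\beta)$, and through the bookkeeping over the set $S_j(\alpha)$.

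I expect the principal difficulty to be the orientation analysis underlying the $+1$: one must verify, with the product orientations fixed in Section~\ref{sec:MultivariableCassonLin} and the parametrization of $\widehat{H}_2^{\alpha_j}$, that every arc satisfying the sign condition contributes $+1$ (so that the right-hand side is an honest cardinality rather than a signed sum), and that all intersections of $\mathcal{C}_\varepsilon$ with the difference cycle are confined to a neighbourhood of $A$. Both points are established in the one-variable setting of~\cite{HeusenerKroll}; since the colored Gassner matrix and the potential function obey the same reality constraint (Remark~\ref{rem:PotentialEvaluations}) exploited there, the verification carries over to each $\varepsilon$ with only notational changes.
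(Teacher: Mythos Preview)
Your proposal misidentifies both the nature of the argument and the role of the permutation hypotheses. The computation is \emph{not} local near $A$: the $\mathcal{C}_\varepsilon$ are open arcs in the punctured sphere $\widehat{H}_2^{\alpha_j}$, and their algebraic intersection with the difference cycle is not determined by their tangent direction at $A$ alone but by which puncture the other end approaches. The paper's key step is a \textbf{Claim} (using the hypotheses $\overline{\beta}(1)\neq 1$ and $\overline{\beta}(2)\neq 2$) that a neighborhood of $B'$ is disjoint from $\widehat{p}(\widehat{V}_n^{\alpha,c}\cap\widehat{\Gamma}_\beta^\alpha)$; this forces every $\mathcal{C}_\varepsilon$ to run from $A$ to $B$. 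The argument is then global and topological: the union $\widehat{\Lambda}_2^{\alpha_j}\cup\widehat{\Gamma}_{\sigma_1^{-2}}^{\alpha_j}$ is a circle separating $\widehat{H}_2^{\alpha_j}$ into two components (Figure~\ref{disconnected}), with $B$ lying in one of them; an arc from $A$ to $B$ contributes $+1$ precisely when its initial direction at $A$ lands it in the component not containing $B$, i.e.\ when $\theta_{2,\varepsilon}^0$ lies between $0$ and $-4\alpha_j$.

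You misuse the permutation hypotheses. They do not guarantee that $\mathcal{C}_\varepsilon$ is a ``non-degenerate arc reaching $A$'' (that comes from Proposition~\ref{prop:Curve} and Remark~\ref{rem:Graph}); rather, they rule out the second endpoint being $B'$: a reducible point projecting to $B'$ has the form $(\mathbf{e}^{i\alpha_j},\mathbf{e}^{-i\alpha_j},A_3,\ldots,A_n)$, and if it were fixed by $\beta$ one would have $\overline{\beta}(1)=1$ or $\overline{\beta}(2)=2$. Your expectation that ``all intersections are confined to a neighbourhood of $A$'' is neither true nor needed: intersections away from $A$ may occur, but the separation argument makes the \emph{algebraic} count depend only on the endpoints and the initial direction. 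Your sine identity is a correct (and clean) way to translate between the angle condition and the sign of $\frac{\omega_j s_{\beta,\varepsilon}-1}{s_{\beta,\varepsilon}-1}$, matching what the paper attributes to~\cite{HeusenerKroll}, but without the $B'$-claim and the separation argument you have no mechanism converting that sign into an intersection number.
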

\begin{proof}
Recall from~(\ref{eq:Intersection}) that $h^{c}_{\sigma_1^2\beta}(\alpha)-h^{c}_\beta(\alpha)$ can be understood by studying the intersection of each $\mathcal{C}_\varepsilon$ with the difference cycle $ \widehat{\Gamma}_{\sigma_1^2}^{\alpha_j}-\widehat{\Lambda}_2^{\alpha_j}$ inside ${\widehat{H}_2^{\alpha_j}}$. We also know from Proposition~\ref{prop:Curve} that each $\mathcal{C}_\varepsilon$ approaches $A$.
Since each $\mathcal{C}_\varepsilon$ is the graph of a function (recall Remark~\ref{rem:Graph}), the~$\mathcal{C}_\varepsilon$ cannot be loops at $A$.
the conclusion now depends on the behavior near $B$~and~$B'$.
\begin{claim}
There is a neighborhood of $B'$ in $\widehat{H}_2^{\alpha_j}$ which is disjoint from $\widehat{p}(\widehat{V}_n^{\alpha,c} \cap \widehat{\Gamma}_\beta^\alpha)$.
\end{claim}
\begin{proof}
Suppose this not to be the case and recall that $B' \in \widehat{H}_2^{\alpha_j}$ is the point $(\mathbf{e}^{i\alpha_j},\mathbf{e}^{-i\alpha_j}, \mathbf{e}^{i\alpha_j},\mathbf{e}^{-i\alpha_j})$ and that $\widehat{p}$ is induced by $(A_1,A_2,\ldots,A_n,B_1,B_2,\ldots B_n) \mapsto (A_1,A_2,B_1,B_2)$. Observe that $B'=\widehat{p}(\mathbf{A},\mathbf{A})$, where $\mathbf{A}=(\mathbf{e}^{i\alpha_j},\mathbf{e}^{-i\alpha_j},A_3,\ldots,A_n).$ Using this notation, we deduce that there is a point in $\widehat{V}_n^{\alpha,c} \cap \widehat{\Gamma}_\beta^\alpha $ which is represented by the pair $(\mathbf{A},\mathbf{A})$.  There are now two cases both of which lead to contradictions. If $(\mathbf{A},\mathbf{A})$ represents an irreducible point, then we obtain the same contradiction as in~\cite[page 491]{HeusenerKroll}. On the other hand, if $(\mathbf{A},\mathbf{A})$ represents a reducible point, then the same argument as~\cite[page 496]{HeusenerKroll} shows that $\mathbf{A}$ is a fixed point of~$\beta|_{R_n^{\alpha,c}}$. Looking back to the definition of $\mathbf{A}$, this contradicts our assumption that $\overline{\beta}(1) \neq 1$ and $\overline{\beta}(2) \neq 2$, concluding the proof of the claim. 
\end{proof}
\begin{center}
\begin{figure}[!htb]
\def\svgwidth{1\columnwidth}
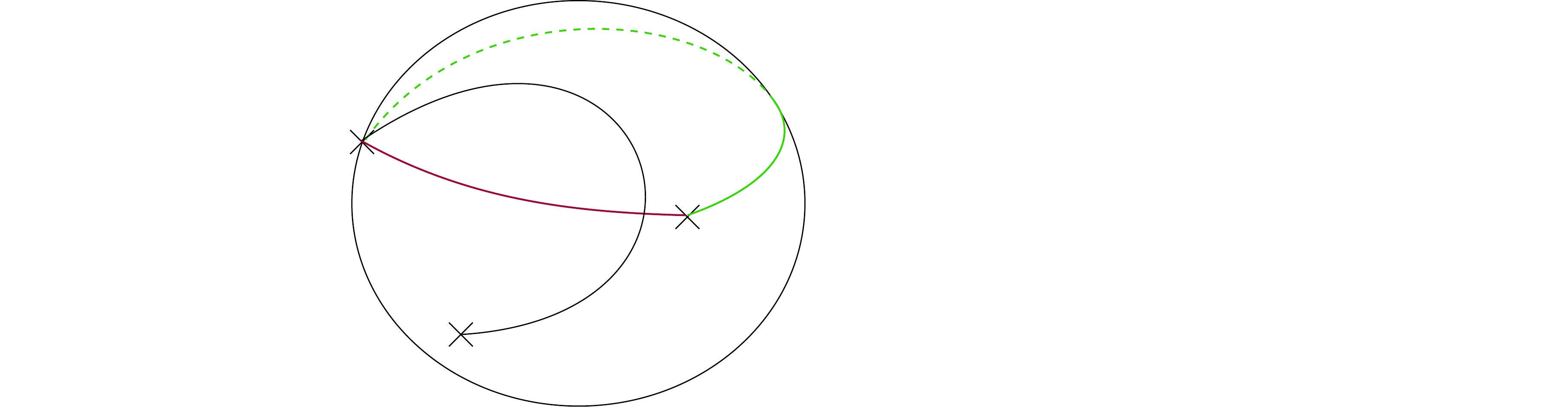
\caption{\label{disconnected} The union of the curves $\widehat{\Lambda}_2^{\al_j}$ (in red) and $\widehat{\Gamma}^{\alpha_j}_{\sigma_1^{-2}}$ (in green) form a circle that disconnects the punctured sphere $\widehat{H}_2^{\al_j}$.}
\end{figure}
\end{center}

Using the claim, we know that $\widehat{p}(\widehat{V}_n^{\alpha,c} \cap \widehat{\Gamma}_\beta^\alpha)$, and hence each curve $\mathcal{C}_\varepsilon$, must approach the points~$A$ and~$B$. The intersection properties of each of those curves now depend on the angle~$\theta_{2,\varepsilon}^0$. If $\theta_{2,\varepsilon}^0$ lies between~$0$ and $-4\al_j$, then the curve $\mathcal{C}_\varepsilon$ starts off in the connected component of~$\widehat{H}_2^{\al_j}\setminus (\widehat{\Lambda}_2^{\al_j} \cup~\widehat{\Gamma}_{\sigma_1^{-2}}^{\al_j})$ which does not contain~$B$. Since this curve eventually reaches $B$, it must intersect algebraically once positively the circle $ \widehat{\Lambda}_2^{\al_j} \cup \widehat{\Gamma}_{\sigma_1^{-2}}^{\al_j} $, such a situation is depicted in Figure \ref{disconnected}.

Similarly, if $\theta_{2,\varepsilon}^0$ is not between $0$ and $-4\alpha_j$, then the algebraic intersection of $\mathcal{C}_\varepsilon$ with the difference cycle will be zero. Heusener-Kroll~\cite[Section 4]{HeusenerKroll} now prove that these two situations correspond respectively to the cases $ \frac{{\omega_{\varepsilon}}_j s_{\beta,\varepsilon}-1}{s_{\beta,\varepsilon}-1}<0$ and $ \frac{{\omega_{\varepsilon}}_js_{\beta,\varepsilon}-1}{s_{\beta,\varepsilon}-1}>0$.
  The proposition now follows from~\eqref{eq:Intersection}.
\end{proof}

\subsection{The behavior under crossing changes and the Alexander polynomial}
\label{sub:CrossingChangeCassonLin}

In this subsection, we express the behavior of the multivariable Casson-Lin invariant under crossing changes in terms of the multivariable potential function.
\medbreak

Following closely the proof of~\cite[Lemma 2.7]{Lin}, the next result generalizes~\cite[Lemma 4.7]{HeusenerKroll}. In this latter reference, the authors use the Conway-normalized Alexander polynomial instead of the potential function: this explains the slight difference in their formula, see Remark~\ref{rem:TerminologyPotential}. 

\begin{proposition}
\label{prop:SlopeAlexander}
Let $c$ be a $\mu$-coloring for which $c_1=c_2=j$, let $\alpha \in (0,\pi)^\mu$, and let $\omega:=\omega_\varepsilon \in S_j(\alpha)$ be such that $\omega_j^2 \neq 1$ and $\omega_{c_1}\cdots \omega_{c_n} \neq 1$.
If $\beta$ is a $(c,c)$-braid such that $\nabla_{\widehat{\beta}}(\omega) \neq~0$ and $\nabla_{\widehat{\sigma_1^2\beta}}(\omega) \neq~0$, and if we write $s_\beta:=s_{\beta,\varepsilon}$ then we have
$$ \frac{\nabla_{\widehat{\beta}}(\omega)}{\nabla_{\widehat{\sigma_1^2\beta}}(\omega)}= \frac{s_{\beta}-1}{\omega_j^2 s_{\beta}-1}. $$
\end{proposition}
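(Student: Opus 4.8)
The plan is to refine the computation carried out in the proof of Proposition~\ref{prop:TechnicalLemma}: there the quotient $\nabla_{\widehat{\sigma_1^2\beta}}(\omega)/\nabla_{\widehat\beta}(\omega)$ was evaluated only under the degenerate hypothesis $\det(D(\omega)-I_{n-2})=0$, and here the task is to produce the exact correction term, which will turn out to be governed by the slope $s_\beta$. First I would apply Lemma~\ref{lem:PotentialC11} to both $\beta$ and $\sigma_1^2\beta$. Since these braids have the same number of strands, the same coloring (as $c_1=c_2$), and the same $t_{c_1}\cdots t_{c_n}$, all prefactors in Lemma~\ref{lem:PotentialC11} cancel in the quotient except the writhe monomials, and the relation $\langle\sigma_1^2\beta\rangle=t_{c_1}^{-2}\langle\beta\rangle$ from Remark~\ref{rem:GassnerConway} (with $c_1=j$) gives
\begin{equation*}
\frac{\nabla_{\widehat\beta}(\omega)}{\nabla_{\widehat{\sigma_1^2\beta}}(\omega)}=\omega_j^2\,\frac{g(\mathcal{B}^c_\omega(\beta,1,1))}{g(\mathcal{B}^c_\omega(\sigma_1^2\beta,1,1))}.
\end{equation*}
The problem is thereby reduced to the quotient of the two unreduced $(1,1)$-minors, for which Lemma~\ref{lem:HeusenerKrollLemma223} supplies the exact recursion $\mathcal{B}^c_t(\sigma_1^2\beta,1,1)=t_{c_1}^2\mathcal{B}^c_t(\beta,1,1)+(t_{c_1}-1)\det(D-I_{n-2})$, isolating the correction term $\det(D-I_{n-2})$.

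The heart of the argument is to rewrite this correction term using the slope. Here I would exploit the eigenvector equation of Lemma~\ref{claim:SlopeBurau}, namely $\mathcal{B}^c_\omega(\beta)(1,0,v)^{t}=(s_\beta,\overline\omega_j(1-s_\beta),v)^{t}$ with $v=(I_{n-2}-D(\omega))^{-1}C(\omega)(1,0)^{t}$. Subtracting $(1,0,v)^{t}$ and reading off the first coordinate exhibits $s_\beta-1$ as the Schur complement of $D(\omega)-I_{n-2}$ in the matrix obtained from $\mathcal{B}^c_\omega(\beta)-I$ by deleting its second row and second column; by the Schur determinant formula this equals $\mathcal{B}^c_\omega(\beta,2,2)/\det(D(\omega)-I_{n-2})$. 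The hypotheses $\nabla_{\widehat\beta}(\omega)\neq 0$ and $\omega_j^2\neq 1$ guarantee $\det(D(\omega)-I_{n-2})\neq 0$ via Proposition~\ref{prop:TechnicalLemma}, so this quotient is legitimate. Since $c_1=c_2=j$, the minor symmetry of Lemma~\ref{lem:HeusenerKrollLemme221} specializes to $\mathcal{B}^c_\omega(\beta,2,2)=\omega_j\,\mathcal{B}^c_\omega(\beta,1,1)$, whence
\begin{equation*}
s_\beta-1=\frac{\omega_j\,\mathcal{B}^c_\omega(\beta,1,1)}{\det(D(\omega)-I_{n-2})}.
\end{equation*}

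Finally I would substitute this expression for $\det(D(\omega)-I_{n-2})$ into the recursion of Lemma~\ref{lem:HeusenerKrollLemma223}; the simplification $\omega_j^2(s_\beta-1)+\omega_j(\omega_j-1)=\omega_j(\omega_j s_\beta-1)$ collapses the minor quotient, and feeding the result back into the displayed formula for the potential quotient yields the claimed identity. The step demanding the most care — and the one I expect to be the main obstacle — is the faithful bookkeeping of the doubling map $g\colon t_i\mapsto t_i^2$: because $g$ is applied to the minors before evaluation at $\omega$, the powers of $\omega_j$ carried by Lemma~\ref{lem:HeusenerKrollLemma223} and by the slope relation must be propagated through $g$, and it is precisely this doubling that produces the exponent $2$ in the factor $\omega_j^2 s_\beta-1$ rather than a naive $\omega_j$. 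As a consistency check, letting $\det(D(\omega)-I_{n-2})\to 0$ (equivalently $s_\beta\to\infty$) recovers $\nabla_{\widehat{\sigma_1^2\beta}}(\omega)=\omega_j^2\nabla_{\widehat\beta}(\omega)$, matching~\eqref{eq:SoonFinished}, while the slope recursion $s_{\sigma_1^2\beta}=\omega_j^2 s_\beta+1-\omega_j$ read off from~\eqref{eq:UsedInOtherProofDontDelete} furnishes a further cross-check. Throughout, the computation runs parallel to~\cite[Lemma 2.7]{Lin} and~\cite[Lemma 4.7]{HeusenerKroll}, the only genuinely new input being the multivariable minor relations of Lemma~\ref{lem:HeusenerKrollLemme221} and Proposition~\ref{prop:HeusenerKrollLemma222}.
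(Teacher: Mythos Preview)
Your approach is correct and takes a genuinely different route from the paper's.

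The paper follows Lin's original strategy: it introduces an auxiliary parameter via $T=\operatorname{diag}(1,t)$ and computes $\det\bigl(\operatorname{diag}(T,I_{n-2})-\mathcal{B}^c_\omega(\xi)\bigr)$ in two ways (once by the block Schur formula, once by linearity in the second column). Equating the two expressions and setting $t=1$ yields the key identity
\[
(1-s_\xi)\det(I_{n-2}-D(\omega))=\frac{\omega_{c_1}(\omega_{c_1}-1)}{\omega_{c_1}\cdots\omega_{c_n}-1}\det(\overline{\mathcal{B}}^c_\omega(\xi)-I_{n-1}),
\]
valid for both $\xi=\beta$ and $\xi=\sigma_1^2\beta$. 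Taking the quotient, using the slope recursion $s_{\sigma_1^2\beta}-1=\omega_j(\omega_j s_\beta-1)$ from~\eqref{eq:UsedInOtherProofDontDelete}, and converting the reduced Gassner ratio to a potential ratio via Remark~\ref{rem:GassnerConway} finishes the argument.

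You bypass the $T$-matrix trick entirely. Instead you feed the minor recursion of Lemma~\ref{lem:HeusenerKrollLemma223} (which the paper proved but uses only inside the proof of Proposition~\ref{prop:TechnicalLemma}) directly into Lemma~\ref{lem:PotentialC11}, and you extract the correction term $\det(D(\omega)-I_{n-2})$ by a one-dimensional Schur complement: the eigenvector equation of Lemma~\ref{claim:SlopeBurau} indeed gives $\mathcal{B}^c_\omega(\beta,2,2)=(s_\beta-1)\det(D(\omega)-I_{n-2})$, and then Lemma~\ref{lem:HeusenerKrollLemme221} with $c_1=c_2$ converts this to $s_\beta-1=\omega_j\,\mathcal{B}^c_\omega(\beta,1,1)/\det(D(\omega)-I_{n-2})$. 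Substituting back collapses the quotient exactly as you describe.

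Your route is more economical: it stays entirely with the \emph{unreduced} $(1,1)$- and $(2,2)$-minors and never passes through the reduced Gassner determinant or the auxiliary variable $t$. The paper's route, on the other hand, makes the connection to $\det(\overline{\mathcal{B}}^c_\omega(\xi)-I_{n-1})$ explicit, which has the advantage of linking directly to the formula~\eqref{eq:PotentialConway} defining the potential. Both arguments share the same delicate point you flag about the doubling map $g$; the paper handles it at the very last step in the same somewhat implicit way.
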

\begin{proof}
Let $\xi$ be an $n$-stranded $(c,c)$-braid such that $\nabla_{\widehat{\xi}}(\omega) \neq~0$. As in~Subsection~\ref{sub:Technical}, we use~$\mathcal{B}^c_\omega(\xi)$ to denote the colored Gassner matrix of $\xi$ evaluated at $\omega$. Setting $v:=(1,0,v_3,\ldots,v_n)^t$ and $x_\xi:=(s_\xi, \overline{\omega}_j(1-s_\xi),v_3,\ldots,v_n)^t$, Lemma~\ref{claim:SlopeBurau} implies that~$\mathcal{B}^c_\omega(\xi) v=x_\xi$. Writing $\mathcal{B}^c_\omega(\xi) $ as~$\left( \begin{smallmatrix} A(\omega) & B(\omega)  \\ C(\omega)  & D(\omega)  \end{smallmatrix} \right)$, we know from Proposition~\ref{prop:TechnicalLemma} that $I_{n-2}-D(\omega) $ is invertible. Using this fact to isolate the last $n-3$ vectors in the equation $\mathcal{B}^c_\omega(\xi) v=x_\xi$, we deduce that
$$ \begin{pmatrix}
s_\xi \\ \overline{\omega}_j(1-s_\xi)
\end{pmatrix}=(A(\omega) +B(\omega) (I_{n-2}-D(\omega) )^{-1}C(\omega) ) \begin{pmatrix}
1 \\ 0
\end{pmatrix}. $$
We can therefore write $A(\omega)+B(\omega)(I_{n-2}-D(\omega))^{-1}C(\omega)$ as $\left( \begin{smallmatrix} s_\xi & a \\ \overline{\omega}_j (1-s_\xi) & b   \end{smallmatrix} \right)$ for some~$a$ and $b$.
Next, we set $T=\left( \begin{smallmatrix} 1 & 0 \\ 0 & t \end{smallmatrix} \right)$, where $t$ is some indeterminate. Since $I_{n-2}-D(\omega)$ is invertible, a short computation using the formula $\det \bsm W&X\\ Y&Z \esm=\det(W-XZ^{-1}Y)\det(Z)$  for the determinant of a $(2 \times 2)$ block matrix $\bsm W&X\\ Y&Z \esm$ where $Z$ is invertible shows that
\begin{equation}
\label{eq:FirstStepPsi-T}
\det \left( \begin{pmatrix}T & 0 \\ 0 & I_{n-2} \end{pmatrix} -\mathcal{B}^c_\omega(\xi) \right)=\det \begin{pmatrix}
1-s_\xi & -a \\ \overline{\omega}_j(s_\xi-1) & t-b \end{pmatrix}  \det (I_{n-2}-D(\omega)). 
\end{equation}
We now compute the left hand side of~(\ref{eq:FirstStepPsi-T}). Let $M_1,\ldots,M_n$ be the columns of $M:=I_n-\mathcal{B}^c_\omega(\xi)$ and let $E_2$ be the column vector whose only non-zero entry is in the second position and is equal to $1$. Using these notations, the second column of  $\left( \begin{smallmatrix}T & 0 \\ 0 & I_{n-2} \end{smallmatrix} \right)-\mathcal{B}^c_\omega(\xi)$ is equal to $(t-1)E_2+M_2$.
Using the linearity of the determinant in its second column, we get
\begin{equation}
\label{eq:LinearityDeterminant}
 \det \left( \begin{pmatrix}T & 0 \\ 0 & I_{n-2} \end{pmatrix} -\mathcal{B}^c_\omega(\xi) \right)=\det(M)+(t-1)\det(M_1, E_2, M_3, \ldots, M_n).
\end{equation}
The first summand vanishes: the matrix $M=\mathcal{B}^c_\omega(\xi)-I_n$ has a nontrivial kernel since the colored Gassner matrix has fixed vectors. Recall from Section~\ref{sec:ColoredGassner} that we use $\mathcal{B}^c_\omega(\xi,l,m)$ to denote the determinant of the size $(n-1)$ matrix obtained by deleting the $l$-th row and $m$-th column of $M$. Expanding the second summand in~(\ref{eq:LinearityDeterminant}) along the second column, we obtain
$$\det \left(  \begin{pmatrix}T & 0 \\ 0 & I_{n-2} \end{pmatrix}-\mathcal{B}^c_\omega(\xi) \right)=(t-1)\mathcal{B}^c_\omega(\xi,2,2). $$
On the other hand, Lemma~\ref{lem:HeusenerKrollLemme221} gives $\omega_{c_1}(\omega_{c_2}-1)\mathcal{B}^c_\omega(\xi,1,1)=(\omega_{c_1}-1)\mathcal{B}^c_\omega(\xi,2,2)$, while Lemma~\ref{prop:HeusenerKrollLemma222} ensures that $\frac{\omega_{c_1}\cdots \omega_{c_n}-1}{\omega_{c_1}-1} \mathcal{B}^c_\omega(\xi,1,1)=   \det(\overline{\mathcal{B}}^c_\omega(\xi)-I_{n-1}).$ 
Using~(\ref{eq:FirstStepPsi-T}), and recalling that we assume $c_1=c_2$, we obtain
\begin{equation}
\label{eq:LinMistake?}
\det \begin{pmatrix}
1-s_\xi & -a \\ \overline{\omega}_j(s_\xi-1) & t-b \end{pmatrix}  \det (I_{n-2}-D(\omega))=\frac{\omega_{c_1}(\omega_{c_1}-1)}{\omega_{c_1}\cdots \omega_{c_n}-1} (t-1) \det(\overline{\mathcal{B}}^c_\omega(\xi)-I_{n-1}).
\end{equation}
We now set $t=1$ in~(\ref{eq:LinMistake?}) so that its right hand side vanishes. Since $\det(I_{n-2}-D(\omega))$ is non-zero, the leftmost determinant must vanish. But as $s_\xi$ cannot be equal to $1$ (recall the proof of Proposition~\ref{prop:Lin24HeusenerKroll45}),
we deduce that $a=\omega_j(1-b)$. A straightforward computation now shows that the leftmost determinant of~(\ref{eq:LinMistake?}) is equal to $(t-1)(1-s_\xi)$. Simplifying the $(t-1)$ terms, we have therefore obtained
\begin{equation}
\label{eq:LinMistake2?}
(1-s_\xi) \det (I_{n-2}-D(\omega))=\frac{\omega_{c_1}(\omega_{c_1}-1)}{\omega_{c_1}\cdots \omega_{c_n}-1} \det(\overline{\mathcal{B}}^c_\omega(\xi)-I_{n-1}).
\end{equation}
In order to apply~(\ref{eq:LinMistake2?}) to $\beta$ and $\sigma_1^2 \beta$, notice that the ``D blocks" of the colored Gassner matrices of $\beta$ and $\sigma_1^2\beta$ are equal: multiplying by $\mathcal{B}^c_\omega(\sigma_1^2)$ only affects the $A$ and~$B$ submatrices. Consequently, substituting $\xi$ with $\beta$ and $\sigma_1^2\beta$ in~(\ref{eq:LinMistake2?}) and taking quotients, we obtain
\begin{equation}
\label{eq:GassnerInsteadOfPotential}
\frac{1-s_\beta}{1-s_{\sigma_1^2\beta}}=
 \frac{\det(\overline{\mathcal{B}}^c_\omega(\beta)-I_{n-1})}{\det(\overline{\mathcal{B}}^c_\omega(\sigma_1^2\beta)-I_{n-1})}.
\end{equation}
A short computation using~(\ref{eq:UsedInOtherProofDontDelete}) shows that $s_{\sigma_1^2\beta}-1=\omega_j(\omega_js_\beta-1)$. To conclude the proof of the proposition, it thus only remains to express~(\ref{eq:GassnerInsteadOfPotential}) using the potential function instead of the reduced colored Gassner matrices. Since $\langle \sigma_1^2 \beta \rangle=t_{c_1}^{-2}\langle \beta \rangle$, Remark~\ref{rem:GassnerConway} and~(\ref{eq:GassnerInsteadOfPotential}) yield 
$$ \frac{\nabla_{\widehat{\beta}}(\omega)}{\nabla_{\widehat{\sigma_1^2\beta}}(\omega)}
=\omega_j^2\frac{\det(\overline{\mathcal{B}}^c_{\omega^2}(\beta)-I_{n-1})}{\det(\overline{\mathcal{B}}^c_{\omega^2}(\sigma_1^2\beta)-I_{n-1})}=\omega_j^2\frac{1-s_\beta}{\omega_j^2(\omega_j^2s_\beta-1)}.$$ 
Simplifying the $\omega_j^2$ terms concludes the proof of the proposition.
\end{proof}

We can now express the effect of an intra-component crossing change on the multivariable Casson-Lin invariant~$h_L$ in terms of the multivariable potential function. 

\begin{proposition}
\label{prop:CassonLinCrossingChange}
Let $L$ be a $\mu$-colored link and assume that $L_+$ is obtained from $L$ by changing a negative crossing within a connected component of $L_j \subset L$. 
Let $\alpha \in (0,\pi)^\mu$ be such that all~$\omega \in S_j(\alpha)$ satisfy $\omega_j^2 \neq 1$ and $\omega_{c_1}\cdots \omega_{c_n} \neq 1$. 
If $\nabla_{L}(\omega^{1/2}) \neq~0$ and~$\nabla_{L_+}(\omega^{1/2}) \neq~0$ for all $\omega \in S_j(\alpha)$, then the multivariable Casson-Lin invariants of $L$ and~$L_+$ satisfy
\begin{equation}
\label{eq:CrossingChangeh}
 h_{L_+}(\alpha)-h_{L}(\alpha)= \# \left\lbrace \omega \in S_j(\alpha) \vert \ \frac{\nabla_{L_+}(\omega^{1/2})}{\nabla_{L}(\omega^{1/2})}<0 \right\rbrace.
\end{equation}
\end{proposition}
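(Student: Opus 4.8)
The plan is to realize the crossing change by the colored braid move $\beta \mapsto \sigma_1^2\beta$ and then to combine Proposition~\ref{prop:HeusenerKroll46} (which counts the change of $h$) with Proposition~\ref{prop:SlopeAlexander} (which converts the relevant slopes into potential functions).

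First I would set up the braid. Using the colored Alexander theorem~\cite[Theorem 3.3]{Murakami} together with a conjugation (Markov move~(1), whose invariance is Proposition~\ref{prop:InvarianceMarkov1}), I would present $L$ as the closure $\widehat{\beta}$ of a $\mu$-colored $(c,c)$-braid $\beta$ in such a way that the distinguished negative crossing is the letter $\sigma_1^{-1}$ acting on the first two strands. These two strands are then necessarily of color $j$ and lie on the same connected component of $L_j$. Replacing that negative crossing by a positive one amounts to inserting a positive full twist, i.e. $L_+ = \widehat{\sigma_1^2\beta}$; this shows in passing that \emph{every} intra-component negative-to-positive crossing change arises in this way. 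Because the two strands lie on a common component, that component is a cycle of the induced permutation $\overline{\beta}$ of length at least two containing both $1$ and $2$, whence $\overline{\beta}(1)\neq 1$ and $\overline{\beta}(2)\neq 2$; moreover $\overline{\sigma_1^2\beta}=\overline{\beta}$ since $\sigma_1^2$ is pure. These are exactly the combinatorial hypotheses of Proposition~\ref{prop:HeusenerKroll46}.

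Next I would check that the remaining hypotheses of Proposition~\ref{prop:HeusenerKroll46} follow from those assumed here. For $\omega\in S_j(\alpha)$ one has $\omega_j=e^{2i\alpha_j}$, so $\omega_j^2\neq 1$ is assumed, as is $\omega_{c_1}\cdots\omega_{c_n}\neq 1$. The nonvanishing assumptions are to be read through the normalization $\nabla_L(\omega^{1/2})\stackrel{.}{=}\Delta_L\big((\omega^{1/2})^2\big)=\Delta_L(\omega)$ of~\eqref{eq:ConwayVSAlexander}: the hypotheses $\nabla_L(\omega^{1/2})\neq 0$ and $\nabla_{L_+}(\omega^{1/2})\neq 0$ are precisely the conditions under which $h_L(\alpha)$ and $h_{L_+}(\alpha)$ are defined (Corollary~\ref{cor:AlexReducibleSU2}, Proposition~\ref{prop:Compact}) and under which the slopes $s_{\beta,\varepsilon}$ of Lemma~\ref{claim:SlopeBurau} exist. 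With these verified, Proposition~\ref{prop:HeusenerKroll46} gives
\[ h_{L_+}(\alpha)-h_{L}(\alpha)= \#\left\lbrace \omega\in S_j(\alpha)\ \Big\vert\ \frac{\omega_j\,s_{\beta,\varepsilon}-1}{s_{\beta,\varepsilon}-1}<0\right\rbrace. \]
It then remains to identify, for each $\omega=\omega_\varepsilon\in S_j(\alpha)$, the real number $\tfrac{\omega_j s_{\beta,\varepsilon}-1}{s_{\beta,\varepsilon}-1}$ with the potential ratio $\tfrac{\nabla_{L_+}(\omega^{1/2})}{\nabla_L(\omega^{1/2})}$. This is the content of Proposition~\ref{prop:SlopeAlexander}, read with the potential evaluated at $\omega^{1/2}$: since the operator $g$ of Remark~\ref{rem:GassnerConway} squares the variables, evaluating $\nabla_{\widehat{\beta}}$ at $\omega^{1/2}$ produces the reduced Gassner determinant $\det(\overline{\mathcal{B}}^c_\omega(\beta)-I_{n-1})$ at $\omega$, the same evaluation used to define $s_{\beta,\varepsilon}$. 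Concretely, combining the identity~\eqref{eq:GassnerInsteadOfPotential} relating these determinants to the slopes, the relation $s_{\sigma_1^2\beta,\varepsilon}-1=\omega_j(\omega_j s_{\beta,\varepsilon}-1)$ read off from~\eqref{eq:UsedInOtherProofDontDelete}, and the monomial computation $\langle\sigma_1^2\beta\rangle=t_{c_1}^{-2}\langle\beta\rangle$ (which contributes a factor $\omega_j$ when both potentials are evaluated at $\omega^{1/2}$), one obtains
\[ \frac{\nabla_{L_+}(\omega^{1/2})}{\nabla_{L}(\omega^{1/2})}=\frac{\omega_j\,s_{\beta,\varepsilon}-1}{s_{\beta,\varepsilon}-1}. \]
By Remark~\ref{rem:PotentialEvaluations} the left-hand side is real (the two links differ by a crossing change within a sublink), so the inequality ``$<0$'' is meaningful; substituting this identity into the count above yields~\eqref{eq:CrossingChangeh}.

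The genuinely delicate points are organizational rather than computational, the analytic heart of the matter residing already in Propositions~\ref{prop:HeusenerKroll46} and~\ref{prop:SlopeAlexander}. The first is the braid realization: one must ensure that the distinguished crossing can be conjugated to a $\sigma_1^{\pm 1}$ on the first two strands without disturbing the colors or the component structure, so that the permutation avoids fixing $1$ or $2$. The second, and subtler, is the consistent bookkeeping of the square-root normalization: the slopes $s_{\beta,\varepsilon}$, the colored Gassner matrices and the compactness input all live at $\omega$, whereas the potential function is evaluated at $\omega^{1/2}$, and it is precisely the relation $\nabla_L(\omega^{1/2})\stackrel{.}{=}\Delta_L(\omega)$ that both aligns the hypotheses of the present statement with those of the two cited propositions and turns the factor $\omega_j^2$ of Proposition~\ref{prop:SlopeAlexander} into the factor $\omega_j$ appearing in the count of Proposition~\ref{prop:HeusenerKroll46}.
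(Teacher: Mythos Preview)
Your proposal is correct and follows essentially the same route as the paper: realize the crossing change as $\widehat{\beta}\leadsto\widehat{\sigma_1^2\beta}$ with $c_1=c_2=j$, invoke Proposition~\ref{prop:HeusenerKroll46} for the count in terms of the slopes $s_{\beta,\varepsilon}$, and then convert via Proposition~\ref{prop:SlopeAlexander} to potentials. Your treatment of the permutation hypothesis (strands $1$ and $2$ lie in a common cycle of $\overline{\beta}$, hence neither is fixed) and of the $\omega$ versus $\omega^{1/2}$ bookkeeping is in fact more explicit than the paper's own one-line ``Applying Proposition~\ref{prop:SlopeAlexander}''; the only step you leave implicit that the paper spells out is that the nonvanishing of $\nabla_L(\omega^{1/2})$ for $\omega\in S_j(\alpha)$ propagates to all of $S(\alpha)$ by the symmetry $\Delta_L(\omega)\stackrel{.}{=}\Delta_L(\overline{\omega})$, which is what guarantees $h_L(\alpha)$ and $h_{L_+}(\alpha)$ are actually defined.
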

\begin{proof}
Since $\nabla_L(\omega^{1/2}) \neq 0$ and $\nabla_{L_+}(\omega^{1/2}) \neq 0$ for all $\omega \in S_j(\alpha)$, we deduce from~(\ref{eq:ConwayVSAlexander}) that $\Delta_L(\omega) \neq 0$ and $\Delta_{L_+}(\omega) \neq 0$ for all $\omega \in S_j(\alpha)$.
Using the symmetry of the Alexander polynomial, the same assertion holds for all $\omega \in S(\alpha)$.
Therefore the multivariable Casson-Lin invariants $h_L(\alpha)$ and $h_{L_+}(\alpha)$ are defined. 
Assume that the crossing change occurs within a $j$-colored knot $K \subset L$.
Arguing as in~\cite[Remark 2.1]{ConwayEstier}, we can then assume that $L=\widehat{\beta}$ and~$L_+=\widehat{\sigma_1^2 \beta}$, where $\beta$ and $\sigma_1^2$ are $\mu$-colored $(c,c)$-braids with $c_1=c_2=j$, see Figure~\ref{CrossingChange}.
\begin{figure}[!htb]
\centering
\includegraphics[scale=0.9]{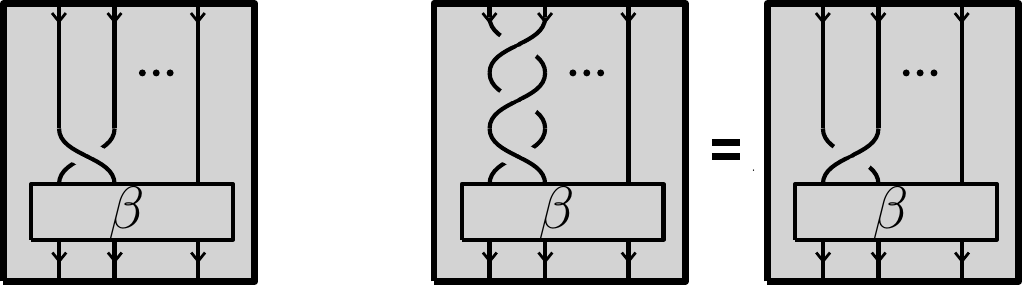}
\centering
\caption{On the left hand side, the braid $\beta$; on the right hand side, the braid $\sigma_1^2\beta$.}
\label{CrossingChange}
\end{figure}

Since the crossing change takes place within $K$, the permutation $\overline{\beta}$ induced by $\beta$ cannot satisfy $\overline{\beta}(1)=1$ and $\overline{\beta}(2)=2$.
We can therefore apply Proposition~\ref{prop:HeusenerKroll46} to deduce that
 $$ h^{c}_{\sigma_1^2\beta}(\alpha)-h^{c}_\beta(\alpha)= \# \left\lbrace \omega \in S_j(\alpha) \ \vert \ \frac{{\omega}_j s_{\beta,\varepsilon}-1}{s_{\beta,\varepsilon}-1}<0 \right\rbrace. $$
 Applying Proposition~\ref{prop:SlopeAlexander}, this equation can be rewritten as in~\eqref{eq:CrossingChangeh}. 
This concludes the proof of the proposition. 
\end{proof}
In~\cite{HeusenerKroll, Lin}, the condition in~(\ref{eq:CrossingChangeh}) is expressed as a product of polynomials instead of a quotient. Since these authors work with knots, the Conway-normalized Alexander polynomial evaluated at $\omega \in S^1$ is real (recall Remark~\ref{rem:TerminologyPotential}) and so the two formulations are in fact equivalent. The next remark describes the situation in the multivariable case.

\begin{remark}
\label{rem:QuotientProduct}
If $L$ and $L_+$ are $n$-component $\mu$-colored links as in Proposition~\ref{prop:CassonLinCrossingChange} and~$\omega \in~\T^\mu$, then the sign of $\frac{\nabla_{L_+}(\omega^{1/2})}{\nabla_{L}(\omega^{1/2})}$ is equal to the sign of $\nabla_{L_+}(\omega^{1/2})\nabla_{L}(\omega^{1/2})$ up to 
$(-1)^n$. Indeed, the quotient and the product agree up to multiplication by $\nabla_{L}(\omega^{1/2})^2$, and recalling Remark~\ref{rem:PotentialEvaluations}, this latter quantity equals $(-1)^n \nabla_{L}(\omega^{1/2})\nabla_{L}(\omega^{-1/2})=(-1)^n |\nabla_{L}(\omega^{1/2})|^2$.
\end{remark}

\section{The relation to the multivariable signature}
\label{sec:Signature}

In this section, we prove the main results of this paper. In more details, Subsection~\ref{sub:MultivariableSignature} gathers some facts about the multivariable signature, Subsection~\ref{sub:ProofSignatureCassonLin} proves Theorem~\ref{thm:Intro}, Subsection~\ref{sub:LocallyConstant} shows that $h_L$ is locally constant and Subsection~\ref{sub:Deformation} proves Theorem~\ref{thm:DeformationIntro}.

\subsection{The multivariable signature}
\label{sub:MultivariableSignature}

In this subsection, we briefly recall the definition of the multivariable signature, the main references being~\cite{Cooper} and \cite{CimasoniFlorens}.
\medbreak
A \emph{$C$-complex} for a~$\mu$-colored link~$L=L_1\cup\dots\cup L_\mu$ is a union~$S=S_1\cup\dots\cup S_\mu$ of surfaces in~$S^3$ which is connected, and such that:
\begin{enumerate}
\item for all~$i$,~$S_i$ is a Seifert surface for the sublink~$L_i$,
\item for all~$i\neq j$,~$S_i\cap S_j$ is either empty or a union of clasps,
\item for all~$i,j,k$ pairwise distinct,~$S_i\cap S_j\cap S_k$ is empty.
\end{enumerate}
The existence of a~$C$-complex for arbitrary colored links was established in~\cite[Lemma 1]{CimasoniPotential}. Given a sequence~$\varepsilon=(\varepsilon_1,\dots,\varepsilon_\mu)$ of signs~$\pm 1$, let~$i^\varepsilon\colon H_1(S)\to H_1(S^3\setminus S)$ be defined as follows. Any homology class in~$H_1(S)$ can be represented by an oriented cycle~$x$ which behaves as
illustrated in~\cite[Figure 2]{CimasoniFlorens} whenever crossing a clasp. Define~$i^\varepsilon([x])$ as
the class of the~$1$-cycle obtained by pushing~$x$ in the~$\varepsilon_i$-normal direction off~$S_i$ for~$i=1,\dots,\mu$. Next, consider the bilinear form
$$ \alpha^\varepsilon\colon H_1(S)\times H_1(S)\to\Z,\quad(x,y)\mapsto\ell k(i^\varepsilon(x),y)\,, $$
where~$\ell k$ denotes the linking number.
Fix a basis of~$H_1(S)$ and denote by~$A^\varepsilon$ the matrix of~$\alpha^\varepsilon$. Note that for all~$\varepsilon$, these \emph{generalized Seifert matrices} satisfy~$A^{-\varepsilon}=(A^\varepsilon)^T$. Using this fact, one easily checks that for any~$\mathbf{\omega}=(\omega_1,\dots,\omega_\mu)$ in the~$\mu$-dimensional 
torus~$\mathbb{T}^\mu$, the matrix
$$ H(\omega)=\sum_\varepsilon\prod_{i=1}^\mu(1-\overline{\omega}_i^{\varepsilon_i})\,A^\varepsilon $$
is Hermitian. Since this matrix vanishes when one of the coordinates of~$\omega$ is equal to~$1$, we restrict ourselves to the subset~$\mathbb{T}_*^\mu=(S^1\setminus\{1\})^\mu$
of~$\mathbb{T}^\mu$.

\begin{definition}
\label{def:MultivariableSignature}
The {\em multivariable signature and nullity} of a~$\mu$-colored link~$L$ are the maps $ \sigma_L,\eta_L \colon\mathbb{T}_*^\mu\to\Z \,,$ where~$\sigma_L(\omega)$ is the signature of $H(\omega)$ and~$\eta_L(\omega)$ its nullity.
\end{definition}

The multivariable signature and nullity are independent of the choice of the~$C$-complex~\cite{CimasoniFlorens}. Note furthermore that when $\mu=1$, a C-complex is nothing but a Seifert surface and~$\sigma_L$ recovers the Levine-Tristram signature of the oriented link.

\subsection{The multivariable signature and the multivariable Casson-Lin invariant}
\label{sub:ProofSignatureCassonLin}

The goal of this subsection is to relate the multivariable Casson-Lin invariant $h_L$ to $\sigma_L$ when $L$ is a 2-component ordered link with linking number 1, proving Theorem~\ref{thm:Intro} from the introduction.
\medbreak
The following lemma describes the parity of the multivariable signature and its behavior under crossing changes within a sublink. 

\begin{lemma}
\label{lem:SignatureLemma}
The multivariable signature satisfies the following properties:
\begin{enumerate}
\item If a $\mu$-colored link $L$ has $\nu$ components and $\omega \in \T_*^\mu$ is not a root of $\Delta_L$, then 
$$ \sigma_L(\omega) \equiv  \nu+\sum_{k<j} \ell k(L_k,L_j)-\operatorname{sign}(i^\nu \nabla_L(\omega^{1/2}))  \ \ \ \operatorname{ mod } 4.$$
In particular, if $L$ is a 2-component ordered link with linking number $1$ and $\omega \in \T_*^2$ is not a root of~$\Delta_L$, then $\sigma_L(\omega)$ is even and 
$$
\sigma_L(\omega) \equiv 
 \begin{cases}
0 \  \operatorname{ mod } 4 & \text{if } \nabla_L(\omega^{1/2})>0, \\
2 \  \operatorname{ mod } 4 & \text{if } \nabla_L(\omega^{1/2})<0.
\end{cases}$$
\item Assume that $L_+$ is obtained from $L$ by changing a unique negative crossing within a given sublink. If $\omega \in \T_*^\mu$ is neither a root of $\Delta_{L_+}$ nor a root of $\Delta_L$, then
$$ \sigma_{L_+}(\omega)-\sigma_L(\omega) \in \lbrace 0,-2 \rbrace. $$
\end{enumerate}
\end{lemma}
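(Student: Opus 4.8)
For the first statement, the plan is to reduce everything to a piece of Hermitian linear algebra. Fix a connected $C$-complex $S=S_1\cup\dots\cup S_\mu$ for $L$, let $A^\varepsilon$ be the associated generalized Seifert matrices and let $H(\omega)$ be the Hermitian matrix of Definition~\ref{def:MultivariableSignature}; write $m=\dim H_1(S)$ for its size. I would first record the elementary fact that for a \emph{nonsingular} Hermitian matrix of size $m$ the number $q$ of negative eigenvalues satisfies $\operatorname{sign}(\det)=(-1)^q$ and $q=(m-\operatorname{sign})/2$, so that
\[
\operatorname{sign}(H(\omega))\equiv m-1+\operatorname{sign}(\det H(\omega))\pmod{4}.
\]
To apply this I would invoke the nullity criterion of Cimasoni--Florens~\cite{CimasoniFlorens}: since $\omega$ is not a root of $\Delta_L$, the twisted homology $H_1(M_L;\C_\omega)$ vanishes (Lemma~\ref{lem:AlexanderVanish}), so $\eta_L(\omega)=0$ and $H(\omega)$ is nonsingular. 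It then remains to identify $\operatorname{sign}(\det H(\omega))$, and this is where the potential function enters.

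Writing $\omega_i=e^{i\phi_i}$ and expanding $1-\overline\omega_i^{\varepsilon_i}=\varepsilon_i\,\omega_i^{-\varepsilon_i/2}\,(2i\sin(\phi_i/2))$ gives
\[
H(\omega)=\Big(2^\mu i^\mu\textstyle\prod_i\sin(\phi_i/2)\Big)\sum_\varepsilon\prod_i\big(\varepsilon_i\omega_i^{-\varepsilon_i/2}\big)A^\varepsilon ,
\]
and the rightmost matrix is, up to the normalization conventions of~\cite{CimasoniPotential}, Cimasoni's potential matrix evaluated at $t_i=\omega_i^{1/2}$. Taking determinants therefore writes $\det H(\omega)$ as a manifestly positive real factor (the $\sin(\phi_i/2)$ are positive on $\T_*^\mu$) times the phase $i^{\mu m}$ times $\nabla_L(\omega^{1/2})$; since $i^\nu\nabla_L(\omega^{1/2})\in\R$ by Remark~\ref{rem:PotentialEvaluations}, one can read off $\operatorname{sign}(\det H(\omega))$ in terms of $\operatorname{sign}(i^\nu\nabla_L(\omega^{1/2}))$. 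The last ingredient is a congruence for $m=b_1(S)$: from $\chi(S)=\sum_i\chi(S_i)-\sum_{i<j}c_{ij}$, where $c_{ij}$ is the number of clasps between $S_i$ and $S_j$, together with $c_{ij}\equiv \ell k(L_i,L_j)\pmod{2}$, one controls $m$ modulo $4$ and checks that the genus-dependent parts cancel against the phase $i^{\mu m}$. Substituting all of this into the displayed sign relation yields the first congruence. The ``in particular'' statement is then pure substitution: for $\nu=2$ and $\ell k(K_1,K_2)=1$ one has $i^\nu=-1$ and $\nabla_L(\omega^{1/2})\in\R$, so the congruence reads $\sigma_L(\omega)\equiv 3+\operatorname{sign}(\nabla_L(\omega^{1/2}))\pmod{4}$, which equals $0$ if $\nabla_L(\omega^{1/2})>0$ and $2$ if $\nabla_L(\omega^{1/2})<0$; in either case $\sigma_L(\omega)$ is even.

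For the second statement I would realize the crossing change geometrically. Since it takes place within the single sublink $L_j$, I can choose $C$-complexes $S$ and $S_+$ for $L$ and $L_+$ that agree outside a ball and differ there by a full twist of a band $b\subset S_j$ carrying a simple closed curve $x_b$ disjoint from all clasps. For such a curve $\ell k(x_b^+,x_b)=\ell k(x_b^-,x_b)$, so the twist alters \emph{every} generalized Seifert matrix by the same $\pm1$ in the single slot $(b,b)$, and a local model computation (matching the classical knot case) shows that a negative-to-positive change gives $A_+^\varepsilon=A^\varepsilon-E_{bb}$ for all $\varepsilon$, where $E_{bb}$ is the matrix with a single $1$ in position $(b,b)$. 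Because $\sum_\varepsilon\prod_i(1-\overline\omega_i^{\varepsilon_i})=\prod_i|1-\omega_i|^2>0$ on $\T_*^\mu$, this yields
\[
H_+(\omega)=H(\omega)-\Big(\textstyle\prod_{i=1}^\mu|1-\omega_i|^2\Big)E_{bb},
\]
a rank-one Hermitian perturbation by a positive multiple of a positive semidefinite matrix. As both $H(\omega)$ and $H_+(\omega)$ are nonsingular by hypothesis, eigenvalue interlacing forces the number of positive eigenvalues to drop by $0$ or $1$ (and nullity to stay $0$), whence $\sigma_{L_+}(\omega)-\sigma_L(\omega)\in\{0,-2\}$.

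The main obstacle is the sign and parity bookkeeping in part~(1): pinning down Cimasoni's determinant normalization, tracking the phase $i^{\mu m}$ against the size $m=b_1(S)$, and verifying that the genus contributions cancel so that the congruence depends only on $\nu$, the pairwise linking numbers, and $\operatorname{sign}(i^\nu\nabla_L(\omega^{1/2}))$. In part~(2) the only comparable subtlety is fixing the sign of the twist (so that the perturbation is subtracted rather than added), which I would settle by a single explicit model computation rather than a general argument.
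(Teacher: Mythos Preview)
Your argument for part~(2) is essentially identical to the paper's: choose $C$-complexes differing only near the crossing, observe that every $A^\varepsilon$ changes by the same $-E_{bb}$, sum to get a rank-one negative-semidefinite perturbation of $H(\omega)$, and use nonsingularity to force the signature difference into $\{0,-2\}$. (In fact your coefficient $\prod_i|1-\omega_i|^2=\prod_i(2-\omega_i-\overline{\omega_i})$ is the correct one; the paper's displayed $\sum_i(2-\omega_i-\overline{\omega_i})$ is a typo for the product.)

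For part~(1) you take a genuinely different route from the paper. The paper simply cites~\cite[Lemma~5.7]{CimasoniFlorens} as a black box (after noting that $\Delta_L(\omega)\neq 0$ is equivalent to the nullity hypothesis $\eta_L(\omega)=0$ used there) and then plugs in $\nu=2$, $\ell k=1$. You instead sketch a direct derivation: the Hermitian-matrix congruence $\sigma\equiv m-1+\operatorname{sign}(\det)\pmod 4$, the factorization expressing $\det H(\omega)$ via Cimasoni's potential matrix and hence $\nabla_L(\omega^{1/2})$, and an Euler-characteristic computation for $m=b_1(S)$ modulo~$4$. This is essentially how one proves the Cimasoni--Florens lemma, so your outline is sound in principle; but as you yourself note, the phase and parity bookkeeping (tracking $i^{\mu m}$ against the genus terms so that only $\nu$ and the linking numbers survive) is where all the content lies, and you have not carried it out. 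Since the paper is happy to import the result from~\cite{CimasoniFlorens}, your self-contained approach buys independence from that reference at the cost of a nontrivial computation you have only gestured at; if you want to go this route you should either complete the bookkeeping or point to the exact place in~\cite{CimasoniPotential,CimasoniFlorens} where the determinant--potential identity with its precise normalization is established.
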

\begin{proof}
The first statement is contained in~\cite[Lemma 5.7]{CimasoniFlorens} and directly implies the claim about 2-component links with linking number $1$ (here $\nabla_L(\omega)$ is real since~$L$ has 2 components, see Remark~\ref{rem:PotentialEvaluations}).
Here, note that we can apply~\cite[Lemma 5.7]{CimasoniFlorens} since we assumed that $\Delta_L(\omega) \neq 0$: this hypothesis is equivalent to the assumption $\eta_L(\omega)=0$ made in~\cite[Lemma 5.7]{CimasoniFlorens}.
 We now prove the second statement. Pick C-complexes $S_+$ and $S$ for $L_+$ and~$L$ which only differ at the crossing under consideration. Since the crossing change occurs within a sublink, there are bases for $H_1(S_+)$ and $H_1(S)$ such that the resulting generalized Seifert matrix $A_+^\varepsilon$ only differs from $A^\varepsilon$ at one diagonal entry which is reduced by $1$. As a consequence, the Hermitian matrix~$H_+(\omega)$ is the same as $H(\omega)$ except for one diagonal entry which is reduced by the positive real number $\sum_{i=1}^\mu (2-\omega_i-\overline{\omega_i})$. Since only one eigenvalue can change and since we assumed both Alexander polynomials to be non-zero (i.e. there are no zero eigenvalues in~$H_+(\omega)$ and~$H(\omega)$), the result follows.
\end{proof}

Reformulating Lemma~\ref{lem:SignatureLemma}, we immediately obtain the following result.
\begin{lemma}
\label{lem:SignatureCrossingChange}
Let $L$ be a 2-component ordered link with linking number $1$ and assume that~$L_+$ is obtained from $L$ by a unique crossing change within a component of $L$. If $\omega \in \mathbb{T}_*^2$ is such that $\nabla_L(\omega^{1/2}) \neq 0$ and $\nabla_{L_+}(\omega^{1/2}) \neq 0$, then
$$ \sigma_{L_+}(\omega)-\sigma_L(\omega)=
\begin{cases}
0              & \mbox{if }  \nabla_{L_+}(\omega^{1/2})\nabla_L(\omega^{1/2}) >0, \\
-2              & \mbox{if }  \nabla_{L_+}(\omega^{1/2})\nabla_L(\omega^{1/2}) <0.
 \end{cases} 
$$
\end{lemma}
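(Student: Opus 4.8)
The plan is to deduce this directly from the two parts of Lemma~\ref{lem:SignatureLemma}, which is precisely why the statement is labelled a reformulation. First I would observe that a crossing change within a single component of $L$ alters neither the number of components nor the linking number $\ell k(K_1,K_2)=1$, so both $L$ and $L_+$ are $2$-component ordered links with linking number $1$. Moreover, by~\eqref{eq:ConwayVSAlexander} the hypotheses $\nabla_L(\omega^{1/2})\neq 0$ and $\nabla_{L_+}(\omega^{1/2})\neq 0$ translate into $\Delta_L(\omega)\neq 0$ and $\Delta_{L_+}(\omega)\neq 0$, so that $\omega$ is a root of neither Alexander polynomial. This is exactly what is needed for the first part of Lemma~\ref{lem:SignatureLemma} to apply to each of $L$ and $L_+$, and for the second part to apply to the pair $(L,L_+)$ (here $L_+$ is the result of changing a negative crossing, as the notation indicates).

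Next I would record the three inputs. The first part of Lemma~\ref{lem:SignatureLemma} gives $\sigma_L(\omega)\equiv 0\pmod 4$ when $\nabla_L(\omega^{1/2})>0$ and $\sigma_L(\omega)\equiv 2\pmod 4$ when $\nabla_L(\omega^{1/2})<0$, together with the identical statement for $L_+$; in particular both signatures are even. The second part gives $\sigma_{L_+}(\omega)-\sigma_L(\omega)\in\lbrace 0,-2\rbrace$.

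Finally I would combine these purely arithmetically. Suppose $\nabla_{L_+}(\omega^{1/2})\nabla_L(\omega^{1/2})>0$, so that the two potentials share a sign; then $\sigma_L(\omega)$ and $\sigma_{L_+}(\omega)$ lie in the same residue class modulo $4$. Since $-2\not\equiv 0\pmod 4$, the only value in $\lbrace 0,-2\rbrace$ compatible with $\sigma_{L_+}(\omega)\equiv\sigma_L(\omega)\pmod 4$ is $0$, giving $\sigma_{L_+}(\omega)-\sigma_L(\omega)=0$. If instead the product is negative, the potentials have opposite signs, so $\sigma_L(\omega)$ and $\sigma_{L_+}(\omega)$ lie in different residue classes modulo $4$ (one is $\equiv 0$, the other $\equiv 2$); this rules out the value $0$ and forces $\sigma_{L_+}(\omega)-\sigma_L(\omega)=-2$. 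As the statement is a purely arithmetic consequence of Lemma~\ref{lem:SignatureLemma}, there is no genuine obstacle; the only point requiring a moment's care is verifying that the hypotheses of both parts of that lemma are satisfied, which is the content of the first paragraph.
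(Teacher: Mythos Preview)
Your proof is correct and matches the paper's approach exactly: the paper simply states that Lemma~\ref{lem:SignatureCrossingChange} is obtained by ``reformulating Lemma~\ref{lem:SignatureLemma}'' without giving further details, and your argument spells out precisely this reformulation by combining the mod~$4$ information from part~(1) with the constraint $\sigma_{L_+}(\omega)-\sigma_L(\omega)\in\{0,-2\}$ from part~(2).
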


For 2-component links with linking number 1, we can now relate the multivariable Casson-Lin invariant to the multivariable signature, proving Theorem~\ref{thm:Intro} from~the~introduction.

\begin{theorem}
\label{thm:CassonLinSignature}
Let $L=K_1 \cup K_2$ be a $2$-component ordered link with $\ell k(K_1,K_2)=~1$, let~$(\alpha_1, \alpha_2) \in (0, \pi)^2$, and set $(\omega_1,\omega_2)=(e^{2i\alpha_1},e^{2i\alpha_2})$. 
If the multivariable Alexander polynomial satisfies $\Delta_L(\omega_1^{\varepsilon_1}, \omega_2^{\varepsilon_2}) \neq 0$ for all~$(\varepsilon_1,\varepsilon_2) \in \lbrace \pm 1 \rbrace^2$, then the following equality holds:
\begin{equation}
\label{eq:WantedTheoremProof} h_L(\alpha_1,\alpha_2)=\frac{-1}{2}\left(\sigma_L(\omega_1,\omega_2)+\sigma_L(\omega_1,\omega_2^{-1}) \right).
\end{equation}
\end{theorem}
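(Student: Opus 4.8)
The plan is to show that the ``defect'' $h_L(\alpha_1,\alpha_2)+\tfrac12\big(\sigma_L(\omega_1,\omega_2)+\sigma_L(\omega_1,\omega_2^{-1})\big)$ is unchanged under crossing changes within a single component, and then to evaluate it on a base case reachable from $L$ by such moves. Throughout I use the symmetric reformulation of Remark~\ref{rem:FormulaIsSymmetric}, writing $\Sigma_L(\alpha):=-\tfrac14\sum_{\omega\in S(\alpha)}\sigma_L(\omega)$, so that the theorem reads $h_L=\Sigma_L$. The base case is the positive Hopf link $J$: since $\pi_1(S^3\setminus J)$ is abelian we have $h_J\equiv 0$ (recall Remark~\ref{rem:1Colored}), and since $J$ bounds a $C$-complex consisting of two disks meeting in a single clasp (whose first homology vanishes) its multivariable signature is identically zero, whence $\Sigma_J\equiv 0$. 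Thus $h_J=\Sigma_J$.

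The heart of the argument is to compare the variation of the two sides under an intra-component crossing change $L\rightsquigarrow L_+$ within the sublink $L_j$. On the $h$ side, Proposition~\ref{prop:CassonLinCrossingChange} gives $h_{L_+}(\alpha)-h_L(\alpha)=\#\{\omega\in S_j(\alpha):\nabla_{L_+}(\omega^{1/2})/\nabla_L(\omega^{1/2})<0\}$. On the $\sigma$ side, Lemma~\ref{lem:SignatureCrossingChange}, together with the fact that for the $2$-component links at hand $\nabla$ is real so that the sign of the product $\nabla_{L_+}\nabla_L$ agrees with that of the quotient (cf. Remark~\ref{rem:QuotientProduct}), yields $\Sigma_{L_+}-\Sigma_L=\tfrac12\#\{\omega\in S(\alpha):\nabla_{L_+}(\omega^{1/2})/\nabla_L(\omega^{1/2})<0\}$. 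These two counts are reconciled by the $(-1)^n$-symmetry of the potential function (Remark~\ref{rem:PotentialEvaluations}): for $n=2$ one has $\nabla_L((\omega^{-1})^{1/2})=\nabla_L(\omega^{1/2})$, so the sign of the quotient is invariant under inverting all coordinates of $\omega$. Since $S(\alpha)=S_j(\alpha)\sqcup S_j(\alpha)^{-1}$ is the disjoint union of $S_j(\alpha)$ with its coordinatewise inverse, the count over $S(\alpha)$ is exactly twice the count over $S_j(\alpha)$, and therefore $\Sigma_{L_+}-\Sigma_L=h_{L_+}-h_L$. Hence $h-\Sigma$ is invariant under crossing changes within a component.

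It then remains to connect $L$ to $J$ by such moves. By Milnor's classification of link homotopy, any $2$-component link with linking number $1$ is link homotopic to the Hopf link, and link homotopy is generated by ambient isotopy together with crossing changes within a single component. As $h_L$ and $\sigma_L$ are isotopy invariants, combining the preceding paragraph with the base case gives $h_L=\Sigma_L$, which is the asserted equality.

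The main obstacle is that Proposition~\ref{prop:CassonLinCrossingChange}, as well as the very definition of $h$, requires $\nabla$ to be nonvanishing at the relevant roots of unity, and this can fail at the intermediate links of the homotopy (indeed $h_{L^{(i)}}$ may not even be defined). I would resolve this by a density and local-constancy argument. First, every $2$-component link with linking number $1$ has $\Delta\not\equiv 0$: the Torres formula gives $\Delta(t_1,1)\doteq\Delta_{K_1}(t_1)\neq 0$ when $\ell k=1$, and this persists along the homotopy since linking numbers are preserved. Consequently, for each of the finitely many links in the homotopy sequence the locus in $(0,\pi)^2$ where some $\nabla(\omega_\varepsilon^{1/2})$ vanishes --- together with the loci $\{\omega_j^2=1\}$ and $\{\prod_i\omega_{c_i}=1\}$ appearing in Proposition~\ref{prop:CassonLinCrossingChange} --- is a proper real-analytic subset, hence nowhere dense, so their complement is dense and the crossing-change induction establishes $h_L=\Sigma_L$ there. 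Finally, for an arbitrary $(\alpha_1,\alpha_2)$ satisfying the hypothesis $\Delta_L(\omega_1^{\varepsilon_1},\omega_2^{\varepsilon_2})\neq 0$, both $h_L$ (by Proposition~\ref{prop:locallyconst}) and $\Sigma_L$ (the multivariable signature being locally constant off $V(\Delta_L)$) are locally constant on a neighborhood of $(\alpha_1,\alpha_2)$; since the generic points where the identity is already proved are dense in that neighborhood, the equality extends to $(\alpha_1,\alpha_2)$, including excluded values such as $\alpha_j=\pi/2$.
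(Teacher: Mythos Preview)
Your proof is correct and follows essentially the same route as the paper: both reduce to the Hopf link via Milnor's link-homotopy classification, compare the crossing-change variations of $h_L$ (Proposition~\ref{prop:CassonLinCrossingChange}) and of the signature (Lemma~\ref{lem:SignatureCrossingChange}), and then pass from a dense set of good $\alpha$ to all admissible $\alpha$ via local constancy (Proposition~\ref{prop:locallyconst}). The only cosmetic differences are that you work with the symmetrized $\Sigma_L=-\tfrac14\sum_{\omega\in S(\alpha)}\sigma_L(\omega)$ and invoke the $\nabla$-symmetry to double the $S_j$-count, whereas the paper stays with the two-term sum $\sigma_L(\omega_1,\omega_2)+\sigma_L(\omega_1,\omega_2^{-1})$ directly; and that you obtain the dense good set by excising finitely many nowhere-dense real-analytic loci, whereas the paper picks $\alpha$ with transcendental $\arg(\omega_j)$ to kill all potential-function zeros at once.
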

\begin{proof}
Recall that we write $S(\alpha)=\lbrace (\omega_1^{\varepsilon_1},\omega_2^{\varepsilon_2}) \ | \ (\varepsilon_1,\varepsilon_2) \in \lbrace \pm 1 \rbrace^2 \rbrace$.
We first prove the theorem when $\alpha \in (0,\pi)^\mu$
is such that all $\omega \in S(\alpha)$ satisfy the following conditions: $\operatorname{arg}(\omega_{j})$ is transcendental, $\omega_ 1\omega_2 \neq 1$ and~$\omega_j^2 \neq 1$ for $j=1,2$.
 Since $L$ has $2$ components and linking number $1$, the Torres formula (which reads $\Delta_{K_1 \cup K_2}(t_1,1)\stackrel{.}{=}(t_1^{\ell_{12}}-1)/(t_1-1)\Delta_{K_1}(t_1)$, where $\ell_{12}=\ell k(K_1,K_2)$) shows that $|\Delta_L(1,1)|=1$. Thus $\Delta_L$ is not identically zero and therefore the multivariable Casson-Lin invariant $h_L(\alpha)$ is well defined whenever $\Delta_L(\omega) \neq 0$ for all $\omega \in S(\alpha)$. Since the fundamental group of the complement of the Hopf link $J$ is abelian,~$h_J$ vanishes identically. The same conclusion holds for the multivariable signature $\sigma_J$, as $J$ admits a contractible C-complex.

Since $\operatorname{arg}(\omega_j)$ is transcendental for $j=1,2$, it follows that $\nabla_L(\omega^{1/2}) \neq 0$ for all $L$ as in the statement of the theorem. 
The equality~\eqref{eq:WantedTheoremProof} is obtained by induction: both sides of this equation vanish on the (positive) Hopf link, and the next paragraph will show that they behave identically under crossing changes within components.
Notice that since
the links have linking number one, the Torres formula guarantees that such crossing changes do not make the Alexander polynomial vanish (consequently if~$h_L$ is defined for $L$, then it remains defined after performing such a crossing change).

To prove the induction step, we assume that~$L_+$ differs from $L=K_1 \cup K_2$ by a unique negative crossing within a component. 
Assume that this component is~$K_1$ (the reasoning is similar for~$K_2$).
Since we are working with 2-component links, Proposition~\ref{prop:CassonLinCrossingChange} and Remark~\ref{rem:QuotientProduct} imply that 
\begin{equation}
\label{eq:CrossingChangeProof}
 h_{L_+}(\alpha)-h_{L}(\alpha)= \# \left\lbrace \omega \in S_1(\alpha_1,\alpha_2) \vert \ \nabla_{L_+}(\omega^{1/2})\nabla_{L}(\omega^{1/2})<0 \right\rbrace.
\end{equation}
By definition, $S_1(\alpha_1,\alpha_2)$ contains two elements: $(\omega_1,\omega_2)$ and $(\omega_1,\omega_2^{-1})$. 
Let $\delta_{(\omega_1,\omega_2)}$ be $1$ or $0$ according to whether or not $\nabla_{L_+}(\omega^{1/2})\nabla_{L}(\omega^{1/2})<0$.
Using~\eqref{eq:CrossingChangeProof} and Lemma~\ref{lem:SignatureCrossingChange}, we deduce that 
\begin{align*}
 h_{L_+}(\alpha)-h_{L}(\alpha) 
 &= \delta_{(\omega_1,\omega_2)}+\delta_{(\omega_1,\omega_2^{-1})} \\
 &=\frac{-1}{2}\left(\sigma_{L_+}(\omega_1,\omega_2)-\sigma_{L}(\omega_1,\omega_2) \right)+\frac{-1}{2} \left( \sigma_{L_+}(\omega_1,\omega_2^{-1})-(\sigma_{L}(\omega_1,\omega_2^{-1}) \right)  \\
&=\frac{-1}{2}\left(\sigma_{L_+}(\omega_1,\omega_2)+\sigma_{L_+}(\omega_1,\omega_2^{-1})-(\sigma_L(\omega_1,\omega_2)+\sigma_L(\omega_1,\omega_2^{-1}))\right).
\end{align*}
This concludes the proof of the theorem for the
$\alpha \in (0,\pi)^2$
which were described above, since the linking number is a complete link homotopy invariant for 2-component links~\cite{Milnor}.

We conclude. View the right hand side of~\eqref{eq:WantedTheoremProof} as a function on $(0,\pi)^2$.
The result will follow if we prove that both sides of~\eqref{eq:WantedTheoremProof} are locally constant on $\lbrace \alpha \in (0,\pi)^2 \vert \  \Delta_L(\omega_1^{\pm1}, \omega_2^{\pm1}) \neq 0 \rbrace$: for the multivariable signature, this follows from~\cite[Corollary 4.2]{CimasoniFlorens}, while for $h_L$, the result is proved in Proposition~\ref{prop:locallyconst} below. This concludes the proof of the theorem.
\end{proof}

The sign appearing in Theorem~\ref{thm:CassonLinSignature} depends on some conventions which we briefly discuss.
\begin{remark}
\label{rem:CrossingChangeSignature}
 Given a knot $K$ obtained as the closure of a braid $\beta$, Lin writes $K_+=\widehat{\sigma_1^2\beta}$, while Heusener and Kroll write $K_-=\widehat{\sigma_1^2\beta}$. As a consequence, while these authors agree on the sign of $h_{\widehat{\sigma_1^2\beta}}(\alpha)-h_{\widehat{\beta}}(\alpha)$, comparing~\cite[Theorem 2.9]{Lin} with~\cite[Proposition 4.8]{HeusenerKroll} shows that the meaning of this sign differs: it depends on the conventions adopted for the generators of the braid group. We follow Lin's conventions (recall Figures~\ref{fig:CompositionThesisColor} and~\ref{CrossingChange}). On the other hand, assuming that~$K_+$ is obtained from~$K_-$ by changing a single negative crossing, Lin states that $0 \leq  \sigma_{K_+}(\omega)-\sigma_{K_-}(\omega) \leq~2$~\cite[page 356]{Lin}, while Heusener-Kroll state that $0 \leq  \sigma_{K_-}(\omega)-\sigma_{K_+}(\omega) \leq~2$~\cite[page 497]{HeusenerKroll}. With our notations, the proof of Lemma~\ref{lem:SignatureLemma} (as well as~\cite[proof of Lemma 2.1]{NagelOwens} and~\cite[proof of Lemma 2.2]{Garoufalidis}) yields the latter result. Summarizing, the sign differences in~\cite{Lin} and~\cite{HeusenerKroll} cancel out (explaining why these authors obtain ``$h_K=\sigma_K/2$") while our conventions account for the minus sign in Theorem~\ref{thm:CassonLinSignature}.
\end{remark}

\subsection{The multivariable Casson-Lin invariant is locally constant}
\label{sub:LocallyConstant}
Recall from Remark~\ref{rem:HypothesisDefinition} that $h_L$ is defined on the set
$$D_L =\lbrace \alpha \in (0,\pi)^2 \ | \ \rho_\omega \text{ is not a limit of irreducible representations for all } \omega \in S(\alpha) \rbrace.$$
 Since the inclusion $\lbrace \alpha \in (0,\pi)^2 \vert \  \Delta_L(\omega_1^{\pm1}, \omega_2^{\pm 1}) \neq 0 \rbrace \subset D_L$ was also observed in Remark~\ref{rem:HypothesisDefinition}, the following proposition concludes the proof of Theorem~\ref{thm:CassonLinSignature}.
\begin{proposition}
\label{prop:locallyconst}
Given a $\mu$-colored link $L$, the multivariable Casson-Lin invariant is locally constant on $D_L$. Namely, if~$\alpha^0$ and~$\alpha^1$ lie in the same connected component of $D_L$, then the following equality holds:
$$h_L(\alpha^0) = h_L(\alpha^1).$$
\end{proposition}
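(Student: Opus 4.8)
The plan is to exhibit $h_L(\alpha^0)$ and $h_L(\alpha^1)$ as the two signed boundary counts of a single compact oriented $1$-manifold, and then invoke the standard fact that the signed count of the boundary of such a manifold vanishes. First I would note that $D_L$ is open: the ``bad set'' of those $\alpha$ for which some $\rho_{\omega_\varepsilon}$ \emph{is} a limit of irreducibles is closed by a diagonal argument, so connected components of $D_L$ are open and path-connected. Thus I may fix a smooth path $(\alpha^t)_{t\in[0,1]}$ joining $\alpha^0$ to $\alpha^1$ inside the relevant component of $D_L$, together with a $(c,c)$-braid $\beta$ with $\widehat{\beta}=L$. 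Assembling the fiberwise spaces of Section~\ref{sec:MultivariableCassonLin} over the interval, I form the total spaces $\mathcal{H}:=\bigcup_{t}\{t\}\times\widehat{H}_n^{\alpha^t,c}$, $\mathcal{L}:=\bigcup_t\{t\}\times\widehat{\Lambda}_n^{\alpha^t,c}$ and $\mathcal{G}:=\bigcup_t\{t\}\times\widehat{\Gamma}_\beta^{\alpha^t}$. Since $\widehat{H}_n^{\alpha,c}$ has dimension $4n-6$ and $\widehat{\Lambda}_n^{\alpha,c},\widehat{\Gamma}_\beta^{\alpha}$ have dimension $2n-3$, the total spaces $\mathcal{H},\mathcal{L},\mathcal{G}$ have dimensions $4n-5,\,2n-2,\,2n-2$, so a transverse intersection $\mathcal{L}\cap\mathcal{G}$ inside $\mathcal{H}$ is $(2n-2)+(2n-2)-(4n-5)=1$-dimensional.

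Next I would arrange transversality relative to the endpoints. Starting from the transverse perturbations of $\widehat{\Gamma}_\beta^{\alpha^0}$ and $\widehat{\Gamma}_\beta^{\alpha^1}$ used to define $h_L(\alpha^0)$ and $h_L(\alpha^1)$ in Definition~\ref{def:MultivariableCassonLin}, a relative transversality (extension) argument lets me perturb $\mathcal{G}$, fixing the two end slices, so that $\mathcal{L}$ and $\mathcal{G}$ meet transversally in $\mathcal{H}$. Then $W:=\mathcal{L}\cap\mathcal{G}$ is a properly embedded $1$-manifold whose boundary lies in the end fibers, namely
$$\partial W = \bigl(\{0\}\times(\widehat{\Lambda}_n^{\alpha^0,c}\cap\widehat{\Gamma}_\beta^{\alpha^0})\bigr)\ \sqcup\ \bigl(\{1\}\times(\widehat{\Lambda}_n^{\alpha^1,c}\cap\widehat{\Gamma}_\beta^{\alpha^1})\bigr),$$
the two finite sets whose signed cardinalities are precisely $h_L(\alpha^0)$ and $h_L(\alpha^1)$. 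Orienting $\mathcal{L},\mathcal{G},\mathcal{H}$ by the product of the interval orientation with the fiber orientations of Section~\ref{sec:MultivariableCassonLin} induces an orientation on $W$; the two end slices then receive opposite induced boundary orientations, so that \emph{granting that $W$ is compact}, the vanishing of the signed boundary count of a compact oriented $1$-manifold gives $h_L(\alpha^1)-h_L(\alpha^0)=0$, as desired.

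The main obstacle, and the only place where the hypothesis $\alpha^t\in D_L$ is used, is the compactness of $W$; I would establish it by running the parametrized analogue of Proposition~\ref{prop:Compact}. Since $\SO(3)$ is compact, it suffices to show that the total set of representations $\widetilde{W}\subset[0,1]\times\SU(2)^{2n}$ lying over $W$ is compact, and as $\widetilde{W}$ sits inside the compact $[0,1]\times\SU(2)^{2n}$, I only need it to be closed. So let $(t_k,\rho_k)$ be a sequence in $\widetilde{W}$ with $t_k\to t_\infty$ and $\rho_k\to\rho_\infty$. Because $\Lambda_n^{\alpha^t,c}$ and $\Gamma_\beta^{\alpha^t}$ vary continuously with $t$ and are closed in $\SU(2)^{2n}$, the limit $\rho_\infty$ lies in $\Lambda_n^{\alpha^{t_\infty},c}\cap\Gamma_\beta^{\alpha^{t_\infty}}$. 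If $\rho_\infty$ were abelian, its components would be simultaneously conjugate to a diagonal representation $\rho_{\omega_\varepsilon}$ for some $\omega_\varepsilon\in S(\alpha^{t_\infty})$, and the irreducible representations $\rho_k$ would then realize $\rho_{\omega_\varepsilon}$ as a limit of irreducibles, contradicting $\alpha^{t_\infty}\in D_L$. Hence $\rho_\infty$ is irreducible, so $(t_\infty,\rho_\infty)\in\widetilde{W}$, proving $\widetilde{W}$ closed and therefore $W$ compact. This completes the argument; the remaining points (openness and path-connectedness of components of $D_L$, the relative transversality extension, and the orientation bookkeeping) are routine, while the compactness step is the technical heart and is exactly the parametrized version of Proposition~\ref{prop:Compact}.
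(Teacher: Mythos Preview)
Your proof is correct and follows the same core strategy as the paper: build a compact oriented $1$-dimensional cobordism between the two intersection counts, with compactness supplied by the parametrized analogue of Proposition~\ref{prop:Compact} (this is exactly where the hypothesis $\alpha^t\in D_L$ enters, as you identify). The paper's implementation differs only in packaging: rather than forming the total spaces $\mathcal{H},\mathcal{L},\mathcal{G}$ directly over $[0,1]$, it embeds every fiber $\widehat{H}_n^{\alpha,c}$ into a single ambient manifold $\widehat{H}_n^c$ (obtained by relaxing the trace constraints), proves the normal bundle is trivial (Lemma~\ref{lem:Hnsubmanifolds}), and then transports a transversalizing isotopy along the path via the flow of a normal vector field. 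Your total-space formulation is the more standard and streamlined version of the same argument; the one point you label ``routine'' that deserves a word is the smoothness of $\mathcal{H}$, which is precisely the content of the paper's Lemma~\ref{lem:Hnsubmanifolds} read parametrically.
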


We first describe the strategy of the proof which is inspired by \cite[Proposition 3.8]{HeusenerKroll}. Let $\al \in (0,\pi)^\mu$. Given $\varepsilon >0$, we denote by $B(\al, \varepsilon)$ the ball of radius $\varepsilon$ centered at~$\al$. We will show that if $\varepsilon$ is small enough, then $h_L(\alpha)$ coincides with $h_L(\alpha')$ for any~$\al'$ in~$B(\al, \varepsilon)$. Writing~$L$ as the closure of an $n$-stranded $(c,c)$-braid $\beta$, this will be carried out by constructing a cobordism which joins $\widehat{\Lambda}_n^{\al,c} \cap \widehat{\Gamma}_\beta^{\al}$ to $\widehat{\Lambda}_n^{\al',c} \cap \widehat{\Gamma}_\beta^{\al'}$. This cobordism will take place in an ambient space whose description requires us to introduce the following spaces:
\begin{align*}
R_{n,2n}^c&= \lbrace (A_1, \ldots, A_n, B_1, \ldots, B_n) \in \SU(2)^{2n} \vert \ \tr(A_i)=\tr(B_i)=\tr(A_j)=\tr(B_j) \text{ if } c_i=c_j \rbrace, \\
 H_n^c& = \lbrace(A_1, \ldots, A_n, B_1, \ldots, B_n) \in R_{n,2n}^c  \ \vert \ \prod_{i=1}^n A_i = \prod_{i=1}^n B_i \rbrace.
\end{align*}
Recalling the notations from Section~\ref{sec:MultivariableCassonLin}, observe that we have the inclusions $R_{n}^{\alpha,c} \times R_{n}^{\alpha,c} \subset R_{n,2n}^c$ and $H_n^{\alpha,c} \subset H_n^c$.
Just as in Section~\ref{sec:MultivariableCassonLin}, we then define $S_n^c$ as the space of abelian representations in~$R_{n,2n}^c$ (i.e. we impose the same relations as in~\eqref{eq:AbelianLocus}) 
and define $\widehat{H}_n^c$ by removing $S_n^c \cap H_n^c$ from $H_n^c$
 and moding out by the action of $\SO(3)$. The next lemma is an analogue of Lemma~\ref{lem:DimensionIrred}; we also refer to~\cite[Corollary 3.2]{HeusenerOrientation} where a similar statement is made.

\begin{lemma}
\label{lem:Hnsubmanifolds}
The space $\widehat{H}_n^c$ is a smooth open manifold which contains $\widehat{H}_n^{\al,c}$ as a codimension~$\mu$ submanifold. Furthermore, the normal bundle of $\widehat{H}_n^{\al,c}$ inside $\widehat{H}_n^c$ is trivial.
\end{lemma}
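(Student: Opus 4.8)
The plan is to show $\widehat{H}_n^c$ is a smooth manifold by realizing it as a regular preimage, then to identify $\widehat{H}_n^{\al,c}$ as the zero set of a submersion into $\R^\mu$, which simultaneously gives the codimension and the triviality of the normal bundle. First I would extend the map $f_n$ from Lemma~\ref{lem:DimensionIrred} to this setting: define $F_n \colon R_{n,2n}^c \to \SU(2)$ by $F_n(A_1,\ldots,A_n,B_1,\ldots,B_n) = A_1\cdots A_n B_n^{-1}\cdots B_1^{-1}$, so that $H_n^c = F_n^{-1}(\Id)$. The same computation as in~\cite[Lemma 1.5]{Lin} and~\cite[Lemma 3.3]{HeusenerKroll} shows that $F_n$ restricts to a submersion away from the abelian locus $S_n^c$; hence $H_n^c \setminus S_n^c$ is a smooth manifold, and since $\SO(3)$ acts freely and properly on it (the representations being non-abelian), the quotient $\widehat{H}_n^c$ is a smooth open manifold.

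Next I would produce the trace map cutting out $\widehat{H}_n^{\al,c}$. Define $\tau \colon R_{n,2n}^c \to \R^\mu$ by sending a tuple to $(\tr(A_{k_1}),\ldots,\tr(A_{k_\mu}))$, where $k_i$ is a chosen strand index with color $i$; by the defining trace-equality conditions of $R_{n,2n}^c$ this is independent of the choices, and $\widehat{H}_n^{\al,c}$ is exactly the slice of $\widehat{H}_n^c$ where $\tau = (2\cos\al_1,\ldots,2\cos\al_\mu)$. The key point is that $\tau$ is a submersion on the irreducible locus: the trace of a single $\SU(2)$ matrix is a submersion away from $\pm\Id$, and since distinct colors involve independent strands, the $\mu$ components of $d\tau$ are linearly independent there. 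This exhibits $\widehat{H}_n^{\al,c} = (\tau|_{\widehat{H}_n^c})^{-1}(2\cos\al_1,\ldots,2\cos\al_\mu)$ as a codimension-$\mu$ submanifold, consistent with the dimension count of Lemma~\ref{lem:DimensionIrred} (indeed $\dim \widehat{H}_n^c = \dim\widehat{H}_n^{\al,c} + \mu = (4n-6)+\mu$). One must check that $\tau$ and $F_n$ are jointly transverse, i.e. that $(F_n,\tau)$ is a submersion on the irreducible locus, so that the slice is genuinely smooth inside $\widehat{H}_n^c$; this follows because the trace directions and the $\SU(2)$-valued direction of $F_n$ probe independent tangent directions.

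For the final assertion, triviality of the normal bundle is essentially automatic once $\widehat{H}_n^{\al,c}$ is cut out as the regular level set of a map to $\R^\mu$: the normal bundle of a regular preimage of a point under a submersion $g \colon \widehat{H}_n^c \to \R^\mu$ is canonically isomorphic to the trivial bundle $\widehat{H}_n^{\al,c} \times \R^\mu$ via the differential $dg$, since $dg$ restricted to the normal bundle is a fiberwise isomorphism onto $T\R^\mu = \R^\mu$. Concretely the $\mu$ gradient vector fields of the components of $\tau$ (with respect to any invariant metric) furnish a global trivializing frame, which descends to the $\SO(3)$-quotient because $\tau$ is conjugation-invariant.

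I expect the main obstacle to be the transversality/submersion verification for $\tau$ (equivalently, checking that the trace functions remain submersions and stay jointly transverse to $F_n$ precisely on the irreducible locus), since the abelian points are exactly where this fails and must be excised; this is the analogue of the delicate argument in~\cite[Lemma 1.5]{Lin}. The bundle-triviality step, by contrast, is a formal consequence of the regular-value description and should require only the remark that a level set of an $\R^\mu$-valued submersion has trivial normal bundle.
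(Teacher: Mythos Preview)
Your proposal is correct and follows essentially the same approach as the paper: both realize $\widehat{H}_n^c$ as the regular preimage of $\Id$ under the product map $f_n$, and both use the trace map $\tau$ into $\R^\mu$ (the paper writes it as $\Tr_\mu\colon (R_{n,2n}^c)^*\to(-2,2)^\mu$) to simultaneously establish the codimension and the triviality of the normal bundle via the regular-value/fiberwise-isomorphism argument. The only cosmetic difference is that the paper first checks normal-bundle triviality for $R_n^{\alpha,c}\times R_n^{\alpha,c}$ inside $R_{n,2n}^c$ and then descends to $H_n^{\alpha,c}\subset H_n^c$, whereas you work directly on $\widehat{H}_n^c$ and are a bit more explicit about the joint transversality of $\tau$ and $f_n$; both routes are equivalent.
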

\begin{proof}
The proof of the first statement is the same as in Lemma~\ref{lem:DimensionIrred}. Namely, the map $f_n \colon R_{n,2n}^c   \to\SU(2)$ defined~by $f_n(A_1, \ldots, A_n, B_1, \ldots, B_n) = A_1\cdots A_n B_n^{-1} \cdots B_1^{-1}$ restricts to a submersion~${f_n}_|$ on $H_n^c \setminus S_n^c$ and therefore $H_n^c \setminus S_n^c={f_n}_|^{-1}(\Id)$ is a smooth manifold whose dimension is equal to $\dim (R_{n, 2n}^c) - \dim(\SU(2))=4n+\mu-3$. Since $\SO(3)$ acts freely on~$H_n^c \setminus S_n^c$, the quotient $\widehat{H}_n^c$ is a smooth open manifold of dimension $4n-6+\mu$.
It is clear that $\widehat{H}_n^{\al,c}$ has codimension $\mu$ in $\widehat{H}_n^c$ because that many traces are fixed.

We now show that $\widehat{H}_n^{\al,c}$ has trivial normal bundle in $\widehat{H}_n^c$. Recall that for any $\theta \in (0,\pi)$, the $2$-sphere $\mS_\theta = \lbrace A \in \SU(2) \ \vert \ \Tr(A) = 2\cos(\theta) \rbrace$ has trivial normal bundle in $\SU(2)$: the Lie algebra $\su(2)$ splits as $\mC \oplus \mR$, the complex line being mapped onto the tangent space of $\mS_\al$ at~$A$ by the tangent map of multiplication by~$A$ and the real direction is spanned by the tangent map of the trace function $\Tr \colon \SU(2) \setminus \lbrace \pm \Id \rbrace \to (-2,2)$ at $A$. Denoting by~$(R_{n, 2n}^c)^*$ the subspace of $R_{n,2n}^c$ with none of its coordinates equal to $\pm \Id$,  and by $i_1, \ldots, i_\mu$ some preimages of $1, \ldots, \mu$ by the coloring $c$, the following map is thus a submersion:
\begin{align*}
\Tr_\mu \colon (R_{n,2n}^c)^* &\to (-2,2)^\mu \\
(A_1,\ldots, A_n, B_1, \ldots, B_n) &\mapsto (\Tr(A_{i_1}), \ldots, \Tr(A_{i_\mu})).
\end{align*}
Fiberwise, the normal bundle of $R_n^{\alpha,c} \times R_n^{\alpha,c}$ in $R_{n,2n}^c$ is given by $T_x R_{n,2n}^c /T_x (R_n^{\alpha,c} \times R_n^{\alpha,c})$, for any $x$ in $R_n^{\alpha,c} \times R_n^{\alpha,c}$. As a consequence, using $\mathcal{N}((R_n^{\alpha,c} \times R_n^{\alpha,c})/R_{n,2n}^{c})$ to denote the normal bundle of $R_n^{\alpha,c} \times R_n^{\alpha,c}$ inside $R_{n,2n}^c$, the map $\Tr_\mu $ induces a fiberwise isomorphism $\mathcal{N}((R_n^{\alpha,c} \times R_n^{\alpha,c})/R_{n,2n}^{c}) \to T (-2,2)^\mu$.
Since this latter bundle is trivial, so is the former. The statement now descends to the normal bundle of $H_n^{\alpha,c}$ inside $H_n^c$: indeed $H_n^{\al,c} \setminus S_n^{\al,c}$ (resp. $R_n^{\al,c} \times R_n^{\al,c}$) is a submanifold of codimension $\mu$ in $H_n^c\setminus S_n^c$ (resp. $R_{n,2n}^c$). This concludes the proof of the lemma.
\end{proof}

Using Lemma~\ref{lem:Hnsubmanifolds}, we can now prove  Proposition~\ref{prop:locallyconst} which asserts that $h_L$ is locally constant on $D_L$. The main idea is inspired by the proof of Ehresmann's fibration theorem~\cite{Ehresmann}.

\begin{proof}[Proof of Proposition~\ref{prop:locallyconst}]
Let $\al \in (0,\pi)^\mu$, fix $\varepsilon >0$ and use $B(\al, \varepsilon)$ to denote the ball of radius $\varepsilon$ centered in $\al$. We want to show that if $\varepsilon$ is small enough, then $h_L(\al')$ coincides with $h_L(\alpha)$  for any $\alpha' \in B(\al, \varepsilon)$. Pick an isotopy $F  \colon \widehat{H}_n^{\al,c} \times [0,1] \to \widehat{H}_n^{\al,c}$ which makes the intersection $\widehat{\Lambda}_n^{\al,c} \cap \widehat{\Gamma}_\beta^{\al}$ transverse in $\widehat{H}_n^{\al,c}$. Choose a path $\al \colon [0,1] \to (0,\pi)^\mu$ joining~$\al$ to $\al'$ 
In order to build a cobordism joining $\widehat{\Lambda}_n^{\al,c} \cap \widehat{\Gamma}_\beta^{\al}$ to $\widehat{\Lambda}_n^{\al',c} \cap \widehat{\Gamma}_\beta^{\al'}$, we will prove that~$F$ can be ``transported" along~$\al(t)$ so that for each $t$, the intersection $\widehat{\Lambda}_n^{\al(t),c} \cap \widehat{\Gamma}_\beta^{\al(t)}$ becomes transverse in $\widehat{H}_n^{\al(t),c}$.

Let $\mathcal{N}(\widehat{H}_n^{\al,c}/\widehat{H}_n^{c})$ denote the normal bundle of $\widehat{H}_n^{\al,c}$ inside of $\widehat{H}_n^{c}$. Since Lemma~\ref{lem:Hnsubmanifolds} ensures that this bundle is trivial, we can pick a nowhere vanishing normal vector field $X \colon \widehat{H}_n^{\al,c} \to \mathcal{N}(\widehat{H}_n^{\al,c}/\widehat{H}_n^{c})$ whose flow we denote by $\phi_X^t \colon \widehat{H}_n^c \to \widehat{H}_n^c$. Since the intersection $\widehat{\Lambda}_n^{\al(t),c}\cap \widehat{\Gamma}_\beta^{\al(t)}$ is compactly supported for each $t$, there is a compact set $K_0 \subset \widehat{H}_n^{\al,c}$ containing $\widehat{\Lambda}_n^{\al,c}\cap \widehat{\Gamma}_\beta^{\al}$ and such that for each $t$, the compact set $K_t =\phi_X^t(K_0)$ is a subset of  $\widehat{H}_n^{\al(t),c}$ containing $\widehat{\Lambda}_n^{\al(t),c} \cap \widehat{\Gamma}_\beta^{\al(t)}$. It can in fact safely be assumed that $K_0$ is a manifold. Let $\lbrace U_i \ \vert \ i \in I \rbrace$ be an open cover of $\widehat{H}_n^{\al,c}$, with finite subcover $\lbrace U_i \vert i= 1, \ldots, k \rbrace$ of $K_0$. Refining this sub-cover if necessary, one can assume that each open set $U_i\subset \widehat{H}_n^{\al,c}$ verifies the following property: for some $t \in [0,1]$, the set $\phi_X^t(U_i)$ contains only one component of the non-transverse intersection $\widehat{\Lambda}_n^{\al(t),c} \cap \widehat{\Gamma}_\beta^{\al(t)}$ in $\widehat{H}_n^{\al(t),c}$ (there are finitely number such components because we are dealing with (semi-)algebraic sets).


Since there are only finitely many non-transverse intersections, it is enough to show that for one such $U \subset \widehat{H}_n^{\al,c}$, one can transport the isotopy $F$ so that, for the corresponding $t$, the non-transverse intersection point of $\widehat{\Lambda}_n^{\al(t),c} \cap \widehat{\Gamma}_\beta^{\al(t)}$ in $\widehat{H}_n^{\al(t),c}$ is perturbed to a transverse one. To make this possible, consider the isotopy 
$$(\phi_X^t)^*F  \colon \phi_X^t(U) \times [0,1] \to \phi_X^t(U)$$
$$(p,s) \mapsto \phi_X^t\circ F(\phi_X^{-t}(p),s).$$
As vector fields $X$ such that the isotopy $(\phi_X^t)^* F$ makes this intersection transverse are generic in the set of normal vector fields, this procedure can always be carried out. 

We now conclude the proof. Pick $\varepsilon$ small enough so that each $\phi_X^t \colon U_i \to \widehat{H}_n^c$ is an embedding. The set $K = \bigcup\limits_{t \in [0,1]} \phi_X^t(K_0)$ is therefore a compact submanifold
of~$\widehat{H}_n^c$. The previous construction now ensures that $\bigcup\limits_{t \in [0,1]} \phi_X^t(\widehat{\Lambda}_n^{\al,c})$ and $\bigcup\limits_{t\in  [0,1]} \phi_X^t(\widehat{\Gamma}_\beta^{\al})$ can be assumed to intersect transversally in a one dimensional submanifold of $K$. This latter submanifold realizes the desired cobordism between  $\widehat{\Lambda}_n^{\al,c} \cap \widehat{\Gamma}_\beta^{\al}$ and $\widehat{\Lambda}_n^{\al',c} \cap \widehat{\Gamma}_\beta^{\al'}$. As a consequence, the corresponding intersections numbers are equal and therefore the proposition is proved.
\end{proof}

\subsection{Deformations of $\SU(2)$ representations of link groups}
\label{sub:Deformation}
The goal of this subsection is to prove Theorem~\ref{thm:DeformationIntro} from the introduction.
\medbreak

Recall that for a $\mu$-colored link $L$, the multivariable Alexander polynomial $\Delta_L(t_1^{\pm1},\ldots, t_\mu^{\pm1})$ restricts to a polynomial on the $\mu$-dimensional torus $\T^\mu$. 
Our interest lies in the zero locus 
$$V(\Delta_L)= \lbrace(\omega_1, \ldots, \omega_\mu) \in \T_*^\mu \vert \  \Delta_L(\omega_1,\ldots,\omega_\mu) = 0\rbrace.$$
We can now prove Theorem~\ref{thm:DeformationIntro} from the introduction. 

\begin{theorem}
\label{thm:DeformationTheorem}
Let $L$ be a 2-component ordered link with linking number $1$.
 Let $(\omega_1,\omega_2) \in~\T_*^2$ be such that $\Delta_L(\omega_1,\omega_2)=0$ and $\Delta_L(\omega_1,\omega_2^{-1}) \neq 0$. 
Assume that for any open subset $U \subset~\T_*^2$ containing~$(\omega_1,\omega_2)$, the multivariable signature $\sigma_L$ is not constant on~$U \setminus (V(\Delta_L) \cap U)$.
Then the abelian representation $\rho_{(\omega_1,\omega_2)}$  is a limit of irreducible representations.
\end{theorem}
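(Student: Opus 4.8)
The plan is to argue by contradiction, playing the formula of Theorem~\ref{thm:CassonLinSignature} against the local constancy of $h_L$ from Proposition~\ref{prop:locallyconst}. Fix $\alpha^*=(\alpha_1,\alpha_2)\in(0,\pi)^2$ with $\omega_j=e^{2i\alpha_j}$, so that $(\omega_1,\omega_2)\in S(\alpha^*)$, and suppose for contradiction that $\rho_{(\omega_1,\omega_2)}$ is \emph{not} a limit of irreducible representations. The first step is to check that $\alpha^*$ lies in the set $D_L$ of Remark~\ref{rem:HypothesisDefinition}, i.e. that none of the four abelian representations $\rho_{\omega_\varepsilon}$, $\omega_\varepsilon\in S(\alpha^*)=\{(\omega_1^{\pm1},\omega_2^{\pm1})\}$, is a limit of irreducibles. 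For $\omega_\varepsilon\in\{(\omega_1,\omega_2^{-1}),(\omega_1^{-1},\omega_2)\}$ this follows from Corollary~\ref{cor:AlexReducibleSU2} (in the form used in the proof of Proposition~\ref{prop:Compact}), since $\Delta_L(\omega_1,\omega_2^{-1})\neq0$ and, by the symmetry $\Delta_L(\omega^{-1})\doteq\Delta_L(\omega)$, also $\Delta_L(\omega_1^{-1},\omega_2)\neq0$. For $\omega_\varepsilon=(\omega_1,\omega_2)$ it is the standing assumption, and for $\omega_\varepsilon=(\omega_1^{-1},\omega_2^{-1})$ it holds because $\rho_{(\omega_1^{-1},\omega_2^{-1})}=g\,\rho_{(\omega_1,\omega_2)}\,g^{-1}$ for the Weyl element $g=\left(\begin{smallmatrix}0&1\\-1&0\end{smallmatrix}\right)\in\SU(2)$, so that conjugation carries a sequence of irreducibles converging to one representation to one converging to the other. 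Hence $\alpha^*\in D_L$.

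The second step is to upgrade this to an \emph{open} neighborhood of $\alpha^*$ inside $D_L$. For this I would show that the ``bad set'' $B=\{\omega\in\T_*^2\mid\rho_\omega\text{ is a limit of irreducibles}\}$ is closed: since $\pi_1(M_L)$ is finitely generated, $R(M_L)$ is a closed subset of a compact product of copies of $\SU(2)$, the closure $\overline{R(M_L)\setminus S(M_L)}$ of the irreducible locus is compact, and $\omega\mapsto\rho_\omega$ is continuous, so $B$ is the preimage of a closed set. Consequently $D_L$, being the set of $\alpha$ all of whose associated $\omega_\varepsilon$ avoid $B$, is open. Choosing $\varepsilon>0$ small enough, the ball $B(\alpha^*,\varepsilon)$ is a \emph{connected} open subset of $D_L$, so Proposition~\ref{prop:locallyconst} forces $h_L$ to be constant on $B(\alpha^*,\varepsilon)$. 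The crucial feature here is that this ball straddles the hypersurface $V(\Delta_L)$: it contains $\alpha^*\in V(\Delta_L)$ together with points on both sides of it.

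The final step produces the contradiction. Shrinking $\varepsilon$, for every $\alpha'\in B(\alpha^*,\varepsilon)$ lying off $V(\Delta_L)$ all four evaluations $\Delta_L(\omega_1'^{\varepsilon_1},\omega_2'^{\varepsilon_2})$ are nonzero: the two ``reflected'' points near $(\omega_1,\omega_2^{-1})$ and $(\omega_1^{-1},\omega_2)$ stay off $V(\Delta_L)$ by continuity, while $\Delta_L(\omega_1'^{-1},\omega_2'^{-1})\doteq\Delta_L(\omega_1',\omega_2')\neq0$ by symmetry. Thus Theorem~\ref{thm:CassonLinSignature} applies and gives
$$h_L(\alpha')=-\tfrac12\bigl(\sigma_L(\omega_1',\omega_2')+\sigma_L(\omega_1',\omega_2'^{-1})\bigr).$$
Because $(\omega_1,\omega_2^{-1})\notin V(\Delta_L)$ and $\sigma_L$ is locally constant off $V(\Delta_L)$ (\cite[Corollary 4.2]{CimasoniFlorens}), the second summand is constant on $B(\alpha^*,\varepsilon)$, whereas the hypothesis asserts that $\sigma_L(\omega_1',\omega_2')$ is \emph{non}-constant on $B(\alpha^*,\varepsilon)\setminus V(\Delta_L)$. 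Hence $h_L$ is non-constant on $B(\alpha^*,\varepsilon)\setminus V(\Delta_L)\subset B(\alpha^*,\varepsilon)$, contradicting its constancy from the previous step. Therefore $\rho_{(\omega_1,\omega_2)}$ must be a limit of irreducible representations.

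I expect the main obstacle to be the openness of $D_L$ (equivalently the closedness of $B$): this is exactly what lets the single point $\alpha^*\in V(\Delta_L)\cap D_L$ ``bridge'' the two sides of $V(\Delta_L)$ and force $h_L$ to be constant across the hypersurface, which is precisely where the assumed signature jump yields the contradiction. One should also take minor care with the degenerate cases $\omega_j^2=1$: note $\omega_2=-1$ is excluded by $\Delta_L(\omega_1,\omega_2)=0\neq\Delta_L(\omega_1,\omega_2^{-1})$, and $\omega_1=\pm1$ is handled similarly, so that the four points of $S(\alpha^*)$ behave as described above.
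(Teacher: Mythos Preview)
Your argument is correct and follows essentially the same route as the paper: argue by contradiction, use Proposition~\ref{prop:locallyconst} to get $h_L$ constant on a neighborhood of $\alpha^*$ in $D_L$, invoke Theorem~\ref{thm:CassonLinSignature} off $V(\Delta_L)$, and derive a contradiction from the local constancy of $\sigma_L(\omega_1,\omega_2^{-1})$ via~\cite[Corollary~4.2]{CimasoniFlorens}. In fact you are more explicit than the paper on two points it leaves implicit: (i) you verify that all four $\rho_{\omega_\varepsilon}$ with $\omega_\varepsilon\in S(\alpha^*)$ fail to be limits of irreducibles (using Corollary~\ref{cor:AlexReducibleSU2} for the two ``off-diagonal'' points and the Weyl conjugation for $(\omega_1^{-1},\omega_2^{-1})$), and (ii) you argue that $D_L$ is open, which is what actually yields the connected neighborhood on which $h_L$ is constant.
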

\begin{proof}
Set $\omega:=(\omega_1,\omega_2)$.
By way of contradiction, assume that $\rho_\omega$ is not a limit of irreducible representations. 
Recall from Remark~\ref{rem:HypothesisDefinition} that the invariant $h_L$ is defined on the set
$$D_L =\lbrace \alpha \in (0,\pi)^2 \ | \ \rho_\omega \text{ is not a limit of irreducible representations for all } \omega \in S(\alpha) \rbrace.$$
Consider the continuous map
$\Pi \colon \T_\ast^2 \to (0,\pi)^2$ defined by $$\Pi(\omega_1, \omega_2) = \left(\frac 1 2 \arccos\left(\frac{\omega_1+\omega_1^{-1}}2\right), \frac 1 2 \arccos \left(\frac{\omega_2+\omega_2^{-1}}2\right) \right).$$
Since $\Pi$ is continuous and, by Proposition~\ref{prop:locallyconst}, $h_L \colon D_L \to \Z$ is locally constant, we see that the composition $h_L \circ \Pi$ defines a locally constant function on $\Pi^{-1}(D_L)$. 
Since Remark~\ref{rem:HypothesisDefinition} implies that $V(\Delta_L) \subset \Pi^{-1}(D_L),$ we can apply $h_L \circ \Pi$ to $(\omega_1,\omega_2)$.

Combining these facts, there is a small open neighborhood $U \subset  \Pi^{-1}(D_L)$ containing $(\omega_1,\omega_2)$ such that $h_L \circ \Pi$ is constant on $U$.
In particular $h_L \circ \Pi$ is constant on $U \setminus V(\Delta_L) \cap U$. 
Writing $\omega_i+\omega_i^{-1}=2 \operatorname{cos}(2 \alpha_i)$ and applying Theorem~\ref{thm:CassonLinSignature}, we deduce that 
\begin{align*}
h_L \circ\Pi(\omega_1, \omega_2) 
= -\frac{1}{2} (\sigma_L(\omega_1, \omega_2) + \sigma_L(\omega_1, \omega_2^{-1})).
\end{align*}
Since we established that $h_L \circ\Pi$ is constant on $U \setminus V(\Delta_L) \cap U$, the same holds for $(\omega_1,\omega_2) \mapsto -\frac{1}{2} (\sigma_L(\omega_1, \omega_2) + \sigma_L(\omega_1, \omega_2^{-1})).$
 Now observe that~$(\omega_1,\omega_2) \mapsto \sigma_L(\omega_1,\omega_2^{-1})$ is locally constant around $(\omega_1, \omega_2)$ because $\Delta_L(\omega_1, \omega_2^{-1}) \neq 0$~\cite[Corollary 4.2]{CimasoniFlorens}.
These facts imply that $\sigma_L$ is constant in a neighborhood of $(\omega_1,\omega_2) $ in $U \setminus (V(\Delta_L) \cap U)$.
This contradicts the hypothesis of the theorem, concluding the proof.
\end{proof}

\bibliography{biblioCassonLin}

\begin{thebibliography}{10}

\bibitem{Abdulrahim}
Mohammad~N. Abdulrahim.
\newblock Complex specializations of the reduced {G}assner representation of
  the pure braid group.
\newblock {\em Proc. Amer. Math. Soc.}, 125(6):1617--1624, 1997.

\bibitem{AkbulutMcCarthy}
Selman Akbulut and John~D. McCarthy.
\newblock {\em Casson's invariant for oriented homology {$3$}-spheres},
  volume~36 of {\em Mathematical Notes}.
\newblock Princeton University Press, Princeton, NJ, 1990.
\newblock An exposition.

\bibitem{BenAbdelghani}
Leila Ben~Abdelghani.
\newblock Espace des repr\'esentations du groupe d'un n\oe ud classique dans un
  groupe de {L}ie.
\newblock {\em Ann. Inst. Fourier (Grenoble)}, 50(4):1297--1321, 2000.

\bibitem{Birman}
Joan~S. Birman.
\newblock {\em Braids, links, and mapping class groups}.
\newblock Princeton University Press, Princeton, N.J.; University of Tokyo
  Press, Tokyo, 1974.
\newblock Annals of Mathematics Studies, No. 82.

\bibitem{BodenHarper}
Hans~U. Boden and Eric Harper.
\newblock The {${\rm SU}(N)$} {C}asson-{L}in invariants for links.
\newblock {\em Pacific J. Math.}, 285(2):257--282, 2016.

\bibitem{BodenHerald}
Hans~U. Boden and Christopher~M. Herald.
\newblock The {$\rm SU(2)$} {C}asson-{L}in invariant of the {H}opf link.
\newblock {\em Pacific J. Math.}, 285(2):283--288, 2016.

\bibitem{Burde}
Gerhard {Burde}.
\newblock {Darstellungen von Knotengruppen.}
\newblock {\em {Math. Ann.}}, 173:24--33, 1967.

\bibitem{BurdeZieschang}
Gerhard Burde, Heiner Zieschang, and Michael Heusener.
\newblock {\em Knots}, volume~5 of {\em De Gruyter Studies in Mathematics}.
\newblock De Gruyter, Berlin, extended edition, 2014.

\bibitem{CimasoniPotential}
David Cimasoni.
\newblock A geometric construction of the {C}onway potential function.
\newblock {\em Comment. Math. Helv.}, 79(1):124--146, 2004.

\bibitem{CimasoniFlorens}
David Cimasoni and Vincent Florens.
\newblock Generalized {S}eifert surfaces and signatures of colored links.
\newblock {\em Trans. Amer. Math. Soc.}, 360(3):1223--1264 (electronic), 2008.

\bibitem{CimasoniTuraev}
David Cimasoni and Vladimir Turaev.
\newblock A {L}agrangian representation of tangles.
\newblock {\em Topology}, 44(4):747--767, 2005.

\bibitem{CollinSteer}
Olivier Collin and Brian Steer.
\newblock Instanton {F}loer homology for knots via {$3$}-orbifolds.
\newblock {\em J. Differential Geom.}, 51(1):149--202, 1999.

\bibitem{ConwayTwistedBurau}
Anthony Conway.
\newblock {B}urau maps and twisted {A}lexander polynomials.
\newblock {\em Proc. Edinburgh Math. Soc.}, (to appear), 2017.

\bibitem{ConwayFriedlToffoli}
Anthony Conway, Stefan Friedl, and Enrico Toffoli.
\newblock The {B}lanchfield pairing of colored links.
\newblock {\em Indiana Univ. Math. J.}, (to appear), 2017.

\bibitem{ConwayEstier}
Anthony Conway and Estier Solenn.
\newblock Conway's potential function via the gassner representation.
\newblock September 2017.
\newblock http://arxiv.org/pdf/1709.03479.pdf.

\bibitem{ConwayJohn}
John~H. Conway.
\newblock An enumeration of knots and links, and some of their algebraic
  properties.
\newblock In {\em Computational {P}roblems in {A}bstract {A}lgebra ({P}roc.
  {C}onf., {O}xford, 1967)}, pages 329--358. Pergamon, Oxford, 1970.

\bibitem{Cooper}
Daryl Cooper.
\newblock The universal abelian cover of a link.
\newblock In {\em Low-dimensional topology ({B}angor, 1979)}, volume~48 of {\em
  London Math. Soc. Lecture Note Ser.}, pages 51--66. Cambridge Univ. Press,
  Cambridge-New York, 1982.

\bibitem{CS83}
Marc Culler and Peter~B Shalen.
\newblock Varieties of group representations and splittings of 3-manifolds.
\newblock {\em Annals of Mathematics}, pages 109--146, 1983.

\bibitem{DR67}
Georges {deRham}.
\newblock {Introduction aux polyn\^omes d'un n{\oe}ud.}
\newblock {\em {Enseign. Math. (2)}}, 13:187--194, 1967.

\bibitem{Ehresmann}
Charles Ehresmann.
\newblock Les connexions infinit\'esimales dans un espace fibr\'e
  diff\'erentiable.
\newblock In {\em Colloque de topologie (espaces fibr\'es), {B}ruxelles, 1950},
  pages 29--55. Georges Thone, Li\`ege; Masson et Cie., Paris, 1951.

\bibitem{FriedlPowellConcordanceHopf}
Stefan Friedl and Mark Powell.
\newblock Links not concordant to the {H}opf link.
\newblock {\em Math. Proc. Cambridge Philos. Soc.}, 156(3):425--459, 2014.

\bibitem{FrohmanKlassen}
Charles~D Frohman and Eric~P Klassen.
\newblock Deforming representations of knot groups in su(2).
\newblock {\em Commentarii Mathematici Helvetici}, 66(1):340--361, 1991.

\bibitem{Garoufalidis}
Stavros Garoufalidis.
\newblock Does the {J}ones polynomial determine the signature of a knot?
\newblock October 2003.
\newblock https://arxiv.org/pdf/math/0310203.pdf.

\bibitem{HarperSaveliev}
Eric Harper and Nikolai Saveliev.
\newblock A {C}asson-{L}in type invariant for links.
\newblock {\em Pacific J. Math.}, 248(1):139--154, 2010.

\bibitem{HarperSavelievFloer}
Eric Harper and Nikolai Saveliev.
\newblock Instanton {F}loer homology for two-component links.
\newblock {\em J. Knot Theory Ramifications}, 21(5):1250054, 8, 2012.

\bibitem{Hartley}
Richard Hartley.
\newblock The {C}onway potential function for links.
\newblock {\em Comment. Math. Helv.}, 58(3):365--378, 1983.

\bibitem{Herald}
Christopher~M. Herald.
\newblock Flat connections, the {A}lexander invariant, and {C}asson's
  invariant.
\newblock {\em Comm. Anal. Geom.}, 5(1):93--120, 1997.

\bibitem{HeusenerOrientation}
Michael Heusener.
\newblock An orientation for the {$\rm SU(2)$}-representation space of knot
  groups.
\newblock In {\em Proceedings of the {P}acific {I}nstitute for the
  {M}athematical {S}ciences {W}orkshop ``{I}nvariants of {T}hree-{M}anifolds''
  ({C}algary, {AB}, 1999)}, volume 127, pages 175--197, 2003.

\bibitem{HeusenerKroll}
Michael Heusener and Jochen Kroll.
\newblock Deforming abelian {${\rm SU}(2)$}-representations of knot groups.
\newblock {\em Comment. Math. Helv.}, 73(3):480--498, 1998.

\bibitem{HeusenerPorti}
Michael {Heusener} and Joan {Porti}.
\newblock {Representations of knot groups into $\mathrm{SL}_n(\mathbb{C})$ and
  twisted Alexander polynomials.}
\newblock {\em {Pac. J. Math.}}, 277(2):313--354, 2015.

\bibitem{HeusenerPortiSuarez}
Michael Heusener, Joan Porti, and Eva Su{\'a}rez-Peir{\'o}.
\newblock Deformations of reducible representations of 3-manifold groups into
  $sl_2(c)$.
\newblock {\em Journal fur die Reine und Angewandte Mathematik}, pages
  191--228, 2001.

\bibitem{Hillman}
Jonathan Hillman.
\newblock {\em Algebraic invariants of links}, volume~52 of {\em Series on
  Knots and Everything}.
\newblock World Scientific Publishing Co. Pte. Ltd., Hackensack, NJ, second
  edition, 2012.

\bibitem{KawauchiBook}
Akio Kawauchi.
\newblock {\em A survey of knot theory}.
\newblock Birkh\"auser Verlag, Basel, 1996.
\newblock Translated and revised from the 1990 Japanese original by the author.

\bibitem{KirkLivingstonWang}
Paul Kirk, Charles Livingston, and Zhenghan Wang.
\newblock The {G}assner representation for string links.
\newblock {\em Commun. Contemp. Math.}, 3(1):87--136, 2001.

\bibitem{Klassen}
Eric Klassen.
\newblock Representations of knot groups in {${\rm SU}(2)$}.
\newblock {\em Trans. Amer. Math. Soc.}, 326(2):795--828, 1991.

\bibitem{Lickorish}
W.~B.~Raymond Lickorish.
\newblock {\em An introduction to knot theory}, volume 175 of {\em Graduate
  Texts in Mathematics}.
\newblock Springer-Verlag, New York, 1997.

\bibitem{Lin}
Xiao-Song Lin.
\newblock A knot invariant via representation spaces.
\newblock {\em J. Differential Geom.}, 35(2):337--357, 1992.

\bibitem{Long}
Darren~D. Long.
\newblock On the linear representation of braid groups.
\newblock {\em Trans. Amer. Math. Soc.}, 311(2):535--560, 1989.

\bibitem{LM85}
Alexander {Lubotzky} and Andy~R. {Magid}.
\newblock {\em {Varieties of representations of finitely generated groups.}},
  volume 336.
\newblock American Mathematical Society (AMS), Providence, RI, 1985.

\bibitem{Milnor}
John Milnor.
\newblock Link groups.
\newblock {\em Ann. of Math. (2)}, 59:177--195, 1954.

\bibitem{Murakami}
Jun Murakami.
\newblock A state model for the multivariable {A}lexander polynomial.
\newblock {\em Pacific J. Math.}, 157(1):109--135, 1993.

\bibitem{NagelOwens}
Matthias Nagel and Brendan Owens.
\newblock Unlinking information from 4-manifolds.
\newblock {\em Bull. Lond. Math. Soc.}, 47(6):964--979, 2015.

\bibitem{Nicolaescu}
Liviu~I. Nicolaescu.
\newblock {\em The {R}eidemeister torsion of 3-manifolds}, volume~30 of {\em de
  Gruyter Studies in Mathematics}.
\newblock Walter de Gruyter \& Co., Berlin, 2003.

\bibitem{Porti97}
Joan {Porti}.
\newblock {\em {Torsion de Reidemeister pour les vari\'et\'es hyperboliques.}},
  volume 612.
\newblock American Mathematical Society (AMS), Providence, RI, 1997.

\end{thebibliography}
\bibliographystyle{plain}

\end{document}